\newcommand{\N}{\ensuremath{\mathbb{N}}}% natural numbers
\newcommand{\R}{\ensuremath{\mathbb{R}}}% real numbers
\theoremstyle{definition}
        \newtheorem{definition}{Definition}
        \newtheorem*{remark}{Remark}
\theoremstyle{plain}
        \newtheorem{theorem}{Theorem}
        \newtheorem{lemma}{Lemma}
        \newtheorem{prop}{Proposition}
\title{\textbf{Schauder estimates for degenerate stable \\
Kolmogorov equations}}
\author{\textbf{Lorenzo Marino}\footnote{Laboratoire de Mod\'elisation Math\'ematique d'Evry (LaMME), Universit\'e d'Evry Val d'Essonne, $23$
Boulevard de France $91037$ Evry, France and Dipartimento di Matematica, Universit\`a di Pavia, Via Adolfo Ferrata $5$, $27100$ Pavia, Italy.\newline
\emph{E-mail Adress:} lorenzo.marino@univ-evry.fr}\phantom{,} \!\footnote{This work was supported by a public grant
as part of the FMJH project.}}
\begin{document}
\maketitle

\begin{abstract}
We provide here global Schauder-type estimates for a chain of integro-partial differential equations (IPDE) driven by a degenerate
stable Ornstein-Uhlenbeck operator possibly perturbed by a deterministic drift, when the coefficients lie in some suitable anisotropic
H\"older spaces. Our approach mainly relies on a perturbative method based on forward parametrix expansions and, due to the low
regularizing properties on the degenerate variables and to some integrability constraints linked to the stability index, it also exploits
duality results between appropriate Besov Spaces. In particular, our method also applies in some super-critical cases. Thanks to these
estimates, we show in addition the well-posedness of the considered IPDE in a suitable functional space.
\end{abstract}

{\small{\textbf{Keywords:} Schauder estimates, degenerate IPDEs, perturbation techniques, parametrix, Besov spaces.}}

{\small{\textbf{MSC:} Primary: $35$K$65$, $35$R$09$, $35$B$45$; Secondary: $60$H$30$.}}

\section{Introduction}
For a fixed time horizon $T>0$ and two integers $n,d$ in $\N$, we are interested in proving global Schauder estimates for the following
parabolic integro-partial differential equation (IPDE):
\begin{equation}
\label{Degenerate_Stable_PDE}
\begin{cases}
   \partial_t u(t,\bm{x}) + \langle A \bm{x} + \bm{F}(t,\bm{x}), D_{\bm{x}}u(t,\bm{x})\rangle +
L_\alpha u(t,\bm{x}) \, = \, -f(t,\bm{x}), & \mbox{on } [0,T]\times \R^{nd} \\
    u(T,\bm{x}) \, = \, g(\bm{x}) & \mbox{on }\R^{nd}.
  \end{cases}
\end{equation}
where $\bm{x}=(\bm{x}_1,\dots,\bm{x}_n)$ is in $\R^{nd}$ with each $\bm{x}_i$ in $\R^d$ and $\langle \cdot,\cdot \rangle$ represents the
inner product on  $\R^{nd}$. We consider a symmetric non-local $\alpha$-stable operator $L_{\alpha}$ acting non-degenerately only on the
first $d$ variables and a matrix $A$ in $\R^{{nd}} \otimes\R^{{nd}}$ with the following sub-diagonal structure:
\begin{equation}\label{eq:def_matrix_A}
A \, := \, \begin{pmatrix}
               0_{d\times d} & \dots         & \dots         & \dots     & 0_{d\times d} \\
               A_{2,1}       & 0_{d\times d} & \dots         & \dots     & 0_{d\times d} \\
               0_{d\times d} & A_{3,2}       & 0_{d\times d} & \dots     & 0_{d\times d} \\
               \vdots        & \ddots        & \ddots        & \vdots    & \vdots        \\
               0_{d\times d} & \dots         & 0_{d\times d} & A_{n,n-1} & 0_{d\times d}
             \end{pmatrix}.
\end{equation}
We will assume moreover that it satisfies a H\"ormander-like condition, allowing the smoothing effect of $L_\alpha$ to propagate into the system. \newline
Above, the source $f\colon [0,T]\times \R^{nd} \to \R$ and the terminal condition $g\colon \R^{nd}\to \R$ are assumed to be bounded and to
belong to some suitable anisotropic H\"older space. \newline
The additional drift term $\bm{F}(t,\bm{x})=\bigl(F_1(t,\bm{x}),\dots,F_n(t,\bm{x})\bigr)$ can be seen as a perturbation of the
 Ornstein-Ulhenbeck operator $L_\alpha+\langle A\bm{x}, D_{\bm{x}}\rangle$ and it has  structure "compatible" with $A$, i.e.\ at level $i$, it depends only on the super diagonal entries:
\[F_i(t,\bm{x}) \, := \, F_i(t,\bm{x}_i,\dots,\bm{x}_n).\]
It may be unbounded but we assume it to be H\"older continuous with an index depending on the level of the chain.

\paragraph{Related Results.}
A large literature on the topic of Schauder estimates in the $\alpha$-stable non-local framework has been developed in the recent years (\textcolor{black}{see e.g.\ Lunardi and R\"ockner \cite{Lunardi:Rockner19} for an overview of the field}), mainly
in the non-degenerate setting and assuming that $\alpha\ge 1$, the so called sub-critical case. We mention for instance the stable-like setting, corresponding to time-inhomogeneous operators of the form
\begin{equation}\label{eq_Mik_Prag}
L_t\phi(\bm{x}) \, = \, \int_{\R^{nd}}\bigl[\phi(\bm{x}+\bm{y})-\phi(\bm{x}) - \mathds{1}_{1\le \alpha <2}\langle \bm{y}, D_{\bm{x}} \rangle
\bigr] m(t,\bm{x},\bm{y}) \frac{d\bm{y}}{\vert \bm{y} \vert^{d+\alpha}} + \mathds{1}_{1\le \alpha <2}\langle \bm{F}(t,\bm{x}), D_{\bm{x}}u(t,
\bm{x})\rangle
\end{equation}
where the diffusion coefficient $m$ is bounded from above and below, H\"older continuous in the spatial variable $\bm{x}$ and even in $\bm{y}$ if $\alpha=1$.
Under these conditions and assuming the drift $\bm{F}$ to be bounded and H\"older continuous in space, Mikulevicius and Pragarauskas in \cite{Mikulevicius:Pragarauskas14} obtained parabolic Schauder type bounds on the whole space and derived
from those estimates the well-posedness of the corresponding martingale problem. We notice however that for the super-critical case
(when $\alpha<1$), the drift term in \eqref{eq_Mik_Prag} is set to zero. This is mainly due to the fact that in the super-critical case,
$L_\alpha$ is of order $\alpha$ (in the Fourier space) and does not dominate the drift term $\bm{F}$ which is roughly speaking of order one.
\newline
In the non-degenerate, driftless framework (i.e.\ when $A\bm{x} +\bm{F}=0$ and $n=1$ in \eqref{Degenerate_Stable_PDE}), Bass \cite{Bass09} was the first to derive
elliptic Schauder estimates for stable like operators. We can refer as well to the recent work of Imbert and collaborators
\cite{Imbert:Jin:Shvydkoy18} concerning Schauder estimates for stable-like operator \eqref{eq_Mik_Prag} with $\alpha=1$ and some related applications to non-local
Burgers equations. Eventually, still in the driftless case, Ros-Oton and Serra worked in \cite{Ros-Oton:Serra16} for interior and boundary
elliptic-regularity in a general, symmetric $\alpha$-stable setting, assuming that the L\'evy measure $\nu_\alpha$ associated with $L_\alpha$
writes in polar coordinates $y=\rho s$, $(\rho,s)\in [0,\infty)\times \mathbb{S}^{d-1}$ as
\[\nu_\alpha(dy) \, = \, \tilde{\mu}(ds)\frac{d\rho}{\rho^{1+\alpha}}\]
where $\tilde{\mu}$ is a non-degenerate, symmetric measure on the sphere $\mathbb{S}^{d-1}$. Related to the above, we can mention also the
associated work of Fernandez-Real and Ros-Oton \cite{Fernanadez:Ros-Oton17} for parabolic equations.

In the elliptic setting, when $\alpha\in [1,2)$ and $L_\alpha$ is a non-degenerate, symmetric $\alpha$-stable operator and for bounded H\"older
drifts, global Schauder estimates were obtained by Priola in \cite{Priola12} or in \cite{priola18} for respective applications to
the strong well-posedness and Davie's uniqueness for the corresponding SDE. We notice furthermore that in the
sub-critical case, elliptic Schauder estimates can be proven for more general, translation invariant, L\'evy-type generators for
 following \cite{priola18} (see Section $6$, and Remark $5$ therein).

In the super-critical case, parabolic Schauder estimates were established by Chaudru de Raynal, Menozzi and Priola in
\cite{Chaudru:Menozzi:Priola19} under similar assumptions to \cite{Ros-Oton:Serra16}. An existence result is also provided therein. \newline
We  mention as well the work of Zhang and Zhao \cite{Zhang:Zhao18} who address through probabilistic arguments the parabolic
Dirichlet problem for stable-like operators of the form \eqref{eq_Mik_Prag} with a non-trivial bounded drift, i.e.\ getting rid of the
indicator function for the drift. They also obtain interior Schauder estimates and some boundary decay estimates (see e.g.\ Theorem $1.5$
therein).

As we have seen, most of the literature is focused on the non-degenerate case. In the degenerate diffusive setting,  Lunardi
\cite{Lunardi97} was the first one to prove Schauder estimates for linear Kolmogorov equations under weak H\"ormander assumptions,
exploiting anisotropic H\"older spaces (where the H\"older index depends on the variable considered), in order exactly to control the
multiple scales appearing in the different directions, due to the degeneracy of the system.\newline
After, in \cite{Lorenzi05} and \cite{Priola09}, the authors established Schauder-like estimates for hypoelliptic  Kolmogorov equations
driven by partially nonlinear smooth drifts.
On the other hand, let us also mention \cite{Chaudru:Honore:Menozzi18_Sharp} where the authors first establish Schauder estimates
for nonlinear Kolmogorov equations under some weak H\"ormander-type assumption. Their method is based on a perturbative approach through
proxies that we here adapt and exploit.
In the degenerate, stable setting, we have to refer also to a recent work of Zhang and collaborators \cite{Hao:Wu:Zhang19} who show Schauder
estimates for the degenerate kinetic  dynamics ($n=2$ above) extending a method based on Littlewood-Paley decompositions already used in
other \textcolor{black}{works by Zhang (see e.g. \cite{Zhang:Zhao18}),} to the degenerate, multi-scaled framework.  Even with different approaches and frameworks, we consider here a
generic $d$-level chain and we exploit thermic characterizations of Besov norms, our and their works bring to the same results in the
intersecting cases, at least to the best of our knowledge.
About a different but correlated argument, we mention that the L$^p$-maximal regularity for degenerate non-local Kolmogorov equations with
constant coefficients was also obtained in \cite{Chen:Zhang18} for the kinetic dynamics ($n=2$ above) and in \cite{Huang:Menozzi:Priola19}
for the general $n$-levels chain.

In the diffusive setting, Equation \eqref{Degenerate_Stable_PDE} appears naturally as a microscopic model for heat diffusion phenomena (see
\cite{Rey-Bellet:Thomas00}) or, in the kinetic case ($n=2$), it can be naturally associated with speed/position (or Hamiltonian)
dynamics where the speed component is noisy. It can be found in many fields of application from physics to finance, see for example
\cite{Herau:Nier04} or \cite{Barucci:Polidoro:Vespri01}. When noised by stable processes, it can be used to model the appearance
 of turbulence (cf.\cite{Cushman:Park:Kleinfelter:Moroni05}) or some abnormal diffusion phenomena. \newline
Moreover, the Schauder estimates will be a fundamental first step in order to study the weak and strong well-posedness for the following
stochastic differential equation (SDE):
\begin{equation}\label{SDE_Associated}
\begin{cases}
  d\bm{X}^1_t \, = \, \bm{F}_1(t,\bm{X}^1_t,\dots,\bm{X}^n_t)dt +dZ_t \\
  d\bm{X}^2_t \, = \, A_{2,1}\bm{X}^1_t+\bm{F}_2(t,\bm{X}^2_t,\dots,\bm{X}^n_t)dt \\
 \vdots \\
  d\bm{X}^n_t \, = \, A_{n,n-1}\bm{X}^{n-1}_t+\bm{F}_n(t,\bm{X}^n_t)dt
\end{cases}
\end{equation}
where $Z_t$ is a symmetric, $\R^d$-valued $\alpha$-stable process with non-degenerate L\'evy measure $\nu_\alpha$ on some filtered
probability space $(\Omega,(\mathcal{F}_t)_{t\ge0},\mathbb{P})$. The complete operator $L_\alpha + \langle A \bm{x} + \bm{F}(t,\bm{x}),
D_{\bm{x}}\rangle$ then corresponds to the infinitesimal generator of the process $\bigl( \bm{X}\bigr)_{t\ge 0}$, solution of Equation
\eqref{SDE_Associated}.

\paragraph{Mathematical Outline.} In this work, we will establish global Schauder estimates for the solution of the IPDE
\eqref{Degenerate_Stable_PDE} exploiting the perturbative approach firstly introduced in \cite{Chaudru:Honore:Menozzi18_Sharp} to derive
such estimates for degenerate Kolmogorov equations. Roughly speaking, the idea is to perform a first order parametrix expansion, such as a
Duhamel-type representation, to a solution of the IPDE \eqref{Degenerate_Stable_PDE} around a suitable proxy. The main idea behind consists
in exploiting this easier framework in order to
subsequently obtain a tractable control on the error expansion. When applying such a strategy, we basically have two ways to proceed.\newline
On the one hand, one can adopt a backward parametrix approach, as introduced by McKean and Singer \cite{Mckean:Singer67} in the
non-degenerate, diffusive setting. This technique has been extended to the degenerate Brownian case involving unbounded
perturbation, and successfully exploited for handling the corresponding martingale problem in \cite{Chaudru:Menozzi17_Weak}.
Anyway, this approach does not seem very adapted to our framework especially because it does not allow to deal easily with point-wise
gradient estimates which will, at least along the non-degenerate variable $\bm{x}_1$,  be fundamental to establish our result. \newline
On the other hand, the so-called forward parametrix approach has been successfully used by Friedman \cite{book:Friedman08} or Il'in et al.\ \cite{ilcprimein:Kalavsnikov:Oleuinik62} in the non-degenerate, diffusive
setting to obtain point-wise bounds on the fundamental solution and its derivatives for the corresponding heat-type equation or in
\cite{Chaudru17} to derive strong uniqueness for the associated SDE \eqref{SDE_Associated} (i.e. $n=2$ with the previous
notations). Especially, this approach is better tailored to exploit cancellation techniques that are crucial when derivatives come in,
 as opposed to the backward one.

The main difficulties to overcome in order to prove Schauder estimates in our framework will be linked to the degeneracy of the operator
$L_\alpha$ that acts only on the first $d$ variables, as well as the unboundedness of the perturbation $\bm{F}$. Concerning this second
issue, let us also mention that Schauder estimates for unbounded non-linear drift coefficients in the non-degenerate diffusive setting were
obtained under mild smoothness assumptions by Krylov and Priola \cite{Krylov:Priola10} who heavily used an auxiliary, deterministic flow associated with the
transport term in
\eqref{Degenerate_Stable_PDE}, i.e.\ for a fixed couple $(t,x)$,
\begin{equation}\label{eq:_INtro_def_flusso}
\begin{cases}
\partial_s\bm{\theta}_s(\bm{x}) =A\bm{\theta}_s(\bm{x})+\bm{F}(s,\bm{\theta}_s(\bm{x})); & \mbox{if } s>t \\
  \bm{\theta}_t(\bm{x}) \, = \, \bm{x},
\end{cases}
 \quad
\end{equation}
to precisely get rid of the unbounded terms.

%The perturbative approach is not usual to establish Schauder type estimates. The standard way is to proceed through a priori estimates to
%obtain for a given solution the expected bound. Existence and uniqueness issues, in the considered
%functional space, for the solution of the equation are addressed only in a second time. We can refer to \cite{krylov_Holder} for a clear
%presentation of this approach and to \cite{krylov_priola_10} for an extension of this method to non-degenerate operators with unbounded
%drift coefficients.
%
The drawback of this approach is that we will need at first to establish Schauder estimates in a small time interval.
This seems quite intuitive since the expansion along the chosen proxy on which the method relies is precisely designed for small times
because it requires that the original operator and the proxy are "close" enough in a suitable sense. To obtain the result for an arbitrary
but finite time, we will then iterate the reasoning, which is quite natural since Schauder estimates provide a sort of stability in the
considered functional space. We are therefore far from the optimal constants for the Schauder estimates established in the non-degenerate,
diffusive setting for time dependent coefficients by Krylov and Priola \cite{Krylov:Priola17}.

On the other hand, we want to establish the Schauder estimates in the sharpest possible H\"older setting for the coefficients of the IPDE
\eqref{Degenerate_Stable_PDE}. To do so, we will need to establish some subtle controls, in particular we have no true derivatives of the
coefficients. This is the reason why we will heavily rely on duality results on Besov spaces (see Section $4.1$ below, Chapter $3$ in
\cite{book:Lemarie-Rieusset02} or \cite{book:Triebel83} for a more complete survey of the argument). However, in contrast with the
non-degenerate case (cf. \cite{Chaudru:Menozzi:Priola19}), we will need to ask
for the perturbation $\bm{F}$ some additional regularity, represented by parameter $\gamma_i$ in assumption (\textbf{R}) below, on the
degenerate entries $\bm{F}_i$ $(i>1)$. This assumption seems quite natural if we think that, due to the  degenerate structure of the system
(cf. Section $2.2$ below), the more we descend on the chain, the lower the smoothing effect of $L_\alpha$ will be.
The additional smoothness on $\bm{F}$ can be then seen as the "price" to pay to re-equilibrate \textcolor{black}{the increasing time
singularities appearing along the chain.}

%%%%%%%%%%%%%%%%%%%%%%%%%%%%%%%%%%%%%%%%%%%%%%%%%%%%%%%%%%%%%%%%%%%%%%%%
\paragraph{Organization of the paper.} The article is organized as follows. We state our precise framework and give our main results in the
following Section $2$. Section $3$ is then dedicated to the perturbative approach which is the central argument to derive our estimates. In
particular, we obtain therein some Schauder estimates for drifted operators along the inhomogeneous flow $\bm{\theta}_{t,s}$ defined above
in \eqref{eq:_INtro_def_flusso}, as well as the key Duhamel representation for solutions. Since the arguments to show the Schauder estimates
will be quite long and involved, we postpone the proofs of these results in the next Sections $4$ and $5$. The existence results are then
established in Section $6$. In the last Section $7$, we are going to explain briefly how the perturbative approach presented before could be
applied with slight modifications to prove Schauder-type estimates for a class of completely non-linear, locally H\"older continuous drifts with an additional "diffusion" coefficient. \newline
Finally, the proof of some technical results concerning the stability properties of H\"older flows are postponed to the Appendix.

\setcounter{equation}{0}
\section{Setting and Main Results}
\subsection{Considered Operators}
The operator $L_\alpha$ we consider is the generator of a non-degenerate, symmetric, stable process and it acts only on the first $d$
coordinates of the system. More precisely, $L_\alpha$ can be represented for any sufficiently regular $\phi\colon [0,T]\times\R^{nd}\to \R$
as
\[L_\alpha\phi(t,\bm{x})\, := \, \text{p.v.}\int_{\R^d}\bigl[\phi(t,\bm{x}+B\bm{y})-\phi(t,\bm{x}) \bigr] \,\nu_\alpha(d\bm{y}) \,\, \text{ where }
\,\, B \, := \,
    \begin{bmatrix}
           I_{d\times d} \\
            0_{d\times d}\\
            \vdots\\
           0_{d\times d}
    \end{bmatrix}\]
and $\nu_\alpha$ is a symmetric, stable L\'evy measure on $\R^d$ of order $\alpha$ that we assume to be non-degenerate in a sense that we
are going to specify below. \newline
Passing to polar coordinates $y=\rho s$ where $(\rho,s) \in [0,\infty)\times \mathbb{S}^{d-1}$, it is well-known (see for example Chapter $3$ in \cite{book:Sato99}) that the stable L\'evy measure $\nu_\alpha$ can be decomposed as
\begin{equation}\label{eq:decomposition_measure}
\nu_\alpha(dy) \, :=\, \frac{d\rho\tilde{\mu}(ds)}{\rho^{1+\alpha}}
\end{equation}
where $\tilde{\mu}$ is a symmetric measure on $\mathbb{S}^{d-1}$ which represents the spherical part of $\nu_\alpha$.\newline
We remember now that the L\'evy symbol associated with $L_\alpha$ is defined through the Levy-Khitchine formula (see, for instance
\cite{book:Jacob05}) as:
\[\Psi(p) \, := \, \int_{\R^d}\bigl[e^{i p\cdot y}-1\bigr]\, \nu_\alpha(dy) \quad \text{ for any $p$ in $\R^d$},\]
where $"\cdot"$ represents the inner product on the smaller space $\R^d$. In the current symmetric setting, it can be rewritten (cf.
Theorem $14.10$ in \cite{book:Sato99}) as
\begin{equation}\label{def:Levy_Symbol_stable}
\Psi(p) \, = \, -\int_{\mathbb{S}^{d-1}}\vert p\cdot s \vert^\alpha \, \mu(ds)
\end{equation}
where $\mu=C_{\alpha,d}\tilde{\mu}$ is usually called the spherical measure associated with $\nu_\alpha$ . Following \cite{Kolokoltsov00}, we then say that
$\nu_\alpha$ is non-degenerate if the associated L\'evy symbol $\Psi$ is equivalent, up to some multiplicative constant, to $\vert p \vert^\alpha$.
More precisely, we suppose that $\mu$ is non-degenerate if
\begin{description}
  \item[(ND)] there exists a constant $\eta\ge 1$ such that for any $p$ in $\R^d$.
\begin{equation}\label{non-degeneracy_of_measure}
\eta^{-1}\vert p \vert^\alpha \, \le \, \int_{\mathbb{S}^{d-1}}\vert p\cdot s \vert^\alpha \, \mu(ds) \, \le\,\eta
\vert p \vert^\alpha
\end{equation}
\end{description}
It is important to remark that such a condition does not restrict our model too much. Indeed, there are many
different kind of spherical measures $\mu$ that are non-degenerate in the above sense, from the stable-like case, i.e.\ measures that are
absolutely continuous with respect to the Lebesgue measure on $\mathbb{S}^{d-1}$, to very singular ones such that the spherical measure induced by the sum of Dirac masses along the canonical directions:
\[\sum_{i=1}^{d}(\partial^2_{x_k})^{\alpha/2}.\]
We can introduce now the complete Ornstein-Uhlenbeck operator $L_{ou}$, defined for any sufficiently regular $\phi\colon \R^{nd}\to \R$ as
\begin{equation}\label{eq:def_of_OU_operator}
L^{ou}\phi(\bm{x}) \, := \, \langle A\bm{x}, D_{\bm{x}}\phi(\bm{x})\rangle +L_\alpha\phi(\bm{x})
\end{equation}
where $A$ is the matrix in $\R^{nd}\times\R^{nd}$ defined in Equation \eqref{eq:def_matrix_A}. We assume
that $A$ satisfies the following H\"ormander-like condition of non-degeneracy:
\begin{description}
  \item[(H)] $A_{i,i-1}$ is non-degenerate (i.e.\ it has full rank $d$) for any $i$ in $\llbracket2,n \rrbracket$.
\end{description}
Above, $\llbracket2,n \rrbracket$ denotes the set of all the integers in the interval. It is well known (see for example \cite{book:Sato99}) that under these assumptions, the operator $L^{ou}$ generates a convolution Markov
semigroup $\bigl(P^{ou}_t\bigr)_{t\ge 0}$ on $B_b(\R^{nd})$, the family of all the bounded and Borel measurable functions on $\R^{nd}$,
defined by
\[
\begin{cases}
  P^{ou}_t\phi(\bm{x}) \, = \, \int_{\R^{nd}}\phi(\bm{x}+\bm{y}) \, \mu_t(d\bm{y}) \, \, \text{ for }t>0, \\
  P^{ou}_0\phi(\bm{x}) \, = \, \phi(\bm{x}).
\end{cases}
\]
where $\bigl(\mu_t\bigr)_{t>0}$ is a family of Borel probability measures on $\R^{nd}$. In particular, the function
$P^{ou}_t\phi(x)$ provides the classical solution to the Cauchy problem

\begin{equation}\label{PDE_of_OU}
\begin{cases}
  \partial_tu(t,\bm{x})+L_\alpha u(t,\bm{x})+\langle A\bm{x}, D_{\bm{x}}u(t,\bm{x})\rangle \, = \, 0 \,\, \mbox{ on } (0,\infty)\times \R^{nd}, \\
 u(0,\bm{x}) \, = \, \phi(\bm{x}) \,\,  \mbox{ on } \R^{nd}.
\end{cases}
\end{equation}

Moving to the stochastic counterpart if necessary, it is readily derived from \cite{Priola:Zabczyk09} that the semigroup $(P^{ou}_t)_{t\ge0}$
admits a smooth density $p^{ou}(t,\cdot)$ with respect to the Lebesgue measure on $\R^{nd}$. Moreover,such a density $p^{ou}$ has the
following useful representation:
\begin{equation}\label{eq:Representation_of_p_ou}
  p^{ou}(t,\bm{x},\bm{y}) \, = \, \frac{1}{\det \mathbb{M}_t}p_S(t,\mathbb{M}^{-1}_t\bigl(e^{At}\bm{x}-\bm{y})\bigr)
\end{equation}
where $p_S$ is the density of $\bigl(S_t\bigr)_{t\ge 0}$, a stable process in $\R^{nd}$ whose L\'evy measure satisfies the assumption
(\textbf{ND}) above on $\R^{nd}$ and $\mathbb{M}_t$ is a diagonal matrix on $\R^{nd}\times\R^{nd}$ given by
\begin{equation}\label{eq:def_of_Mt}
\bigl[\mathbb{M}_t\bigr]_{i,j} \, := \,
\begin{cases}
  t^{i-1}I_{d\times d}, & \mbox{if } i=j \\
  0_{d\times d}, & \mbox{otherwise}.
\end{cases}
\end{equation}
We remark already that the appearance of the matrix $\mathbb{M}_t$ in Equation \eqref{eq:Representation_of_p_ou} and its particular
structure reflect the multi-scaled structure of the dynamics considered (cf. Paragraph below for a more precise explanation).
\newline
Moreover, the density $p_S$ shows a useful property we will call the smoothing effect since it will be fundamental to reduce the
singularities appearing when working with time integrals. Fixed $\gamma$ in $[0,\alpha)$, there exists a constant
$C:=C(\gamma)$ such that for any $l$ in $\llbracket0,3\rrbracket$,
\begin{equation}\label{Smoothing_effect_of_S}
\int_{\R^{nd}}\vert \bm{y} \vert^\gamma \vert D^l_{\bm{y}}p_S(t,\bm{y}) \vert \, d\bm{y} \, \le \, Ct^{\frac{\gamma-l}{\alpha}} \, \, \text{
for any }t>0.
\end{equation}
These results can be proven following the arguments of Proposition $2.3$ and Lemma $4.3$ in \cite{Huang:Menozzi:Priola19}. We will provide
however a complete proof in the Appendix for the sake of completeness.

\subsection{Intrinsic Time Scale and Associated H\"older spaces}
In this section, we are going to choose which is the most suitable functional space in which to state our Schauder
estimates.\newline
To answer this question, we need firstly to understand how the system typically behaves. We focus for the moment on the Ornstein-Uhlenbeck
case:
\[\bigl(\partial_t  + L^{ou}\bigr)u(t,\bm{x}) \, = \, -f(t,\bm{x}) \,\, \text{ on } (0,\infty)\times \R^{nd} \]
and search for a dilation operator $\delta_\lambda\colon (0,\infty)\times \R^{nd} \to (0,\infty)\times \R^{nd}$ that is invariant for the
considered dynamics, i.e.\ a dilation that transforms solutions of the above equation into other solutions of the same equation.\newline
Due to the structure of $A$ and the $\alpha$-stability of $\nu$, we can consider for any fixed $\lambda>0$, the following
\[ \delta_\lambda(t,\bm{x}) := (\lambda^\alpha t,\lambda\bm{x}_1,\lambda^{1+\alpha}\bm{x}_2,\dots,\lambda^{1+\alpha(n-1)}\bm{x}_n),\]
i.e.\ with a slight abuse of notation, $\bigl(\delta_\lambda(t,\bm{x})\bigr)_0:=\lambda^\alpha t$ and for any $i$ in $\llbracket
1,n\rrbracket$, $\bigl(\delta_\lambda(t,\bm{x})\bigr)_i := \lambda^{1+\alpha(i-1)}\bm{x}_i$. It then holds that
\[\bigl(\partial_t +L^{ou}\bigr) u = 0 \, \Longrightarrow \bigl(\partial_t +L^{ou} \bigr)(u \circ \delta_\lambda) = 0.\]
The previous reasoning suggests us to introduce a parabolic distance $d_P$ that is homogenous with respect to the dilation $\delta_\lambda$,
so that $d_P\bigl(\delta_\lambda(t,\bm{x});\delta_\lambda(s,\bm{x}')\bigr) = \lambda d_P\bigl((t,\bm{x});(s,\bm{x}')\bigr)$. Precisely,
following the notations in \cite{Huang:Menozzi:Priola19}, we set for any $s,t$ in $[0,T]$ and any $\bm{x},\bm{x}'$ in $\R^{nd}$,
\begin{equation}\label{Definition_distance_d_P}
d_P\bigl((t,\bm{x}),(s,\bm{x}')\bigr)  \, := \, \vert s-t\vert^\frac{1}{\alpha}+\sum_{j=1}^{n} \vert(\bm{x}-\bm{x}')_j\vert^{
\frac{1}{1+\alpha(j-1)}}.
\end{equation}
The idea of a dilation $\delta_\lambda$ that summarizes the multi-scaled behaviour of the dynamics was firstly introduced by
Lanconelli and Polidoro in \cite{Lanconelli:Polidoro94} for degenerate Kolmogorov equations in the diffusive setting. Since then, it has
become a "standard" tool in the analysis of degenerate equations (see for example \cite{Lunardi97}, \cite{Huang:Menozzi:Priola19}
or \cite{Hao:Wu:Zhang19}). \newline
Since we will quite always use only the spatial part of the distance $d_P$, we denote for simplicity
\begin{equation}\label{Definition_distance}
d(\bm{x},\bm{y}) \, = \, \sum_{j=1}^{n} \vert(\bm{x}-\bm{x}')_j\vert^{\frac{1}{1+\alpha(j-1)}}.
\end{equation}
Technically speaking, $d_P$ (and thus, $d$) does not however induce a norm on $[0,T]\times \R^{nd}$ in the usual sense since it lacks of linear homogeneity. \textcolor{black}{We remark anyhow again that for any $\lambda>0$, it precisely holds that $d\bigl(\delta_\lambda(t,\bm{x});\delta_\lambda(s,\bm{x}')\bigr) = \lambda d\bigl((t,\bm{x});(s,\bm{x}')\bigr)$.}
As it can be seen, $d_P$ is an extension of the standard parabolic distance in the stable case, adapted to respect the multi-scaled nature
of our dynamics. Indeed, the exponents appearing in \eqref{Definition_distance_d_P} are those which make each space component homogeneous to
the characteristic time scale $t^{1/\alpha}$.\newline
 The appearance of this kind of phenomena is due essentially by the particular structure of the
matrix $A$ (cf. Equation \eqref{Degenerate_Stable_PDE}) that allows the smoothing effect of $L_\alpha$, acting only on the first variable,
to propagate in the system, as it can be seen in the following lemma:
\begin{lemma}[Scaling Lemma]\label{lemma:Scaling_Lemma}
Let $i$ be in $\llbracket 1,n \rrbracket$. Then, there exist $\{C_j\}_{j \in \llbracket 1,n\rrbracket}$ positive constants, depending only
from $A$ and $i$, such that
\[D_{\bm{x}_i}p^{ou}(t,\bm{x},\bm{y}) \, = \, -\sum_{j=i}^{n}C_jt^{j-i}D_{\bm{y}_j}p^{ou}(t,\bm{x},\bm{y})\]
for any $t>0$ and any $\bm{x}$, $\bm{y}$ in $\R^{nd}$.
\end{lemma}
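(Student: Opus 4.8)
The plan is to differentiate the explicit representation \eqref{eq:Representation_of_p_ou} of $p^{ou}$ and to exploit the algebraic compatibility between the matrix exponential $e^{At}$ and the scaling matrix $\mathbb{M}_t$. Set $\bm{\xi}:=\mathbb{M}_t^{-1}\bigl(e^{At}\bm{x}-\bm{y}\bigr)$, so that $p^{ou}(t,\bm{x},\bm{y})=(\det\mathbb{M}_t)^{-1}p_S(t,\bm{\xi})$; since $p_S(t,\cdot)$ is smooth (cf.\ \eqref{Smoothing_effect_of_S}) the chain rule applies freely. Writing $\Pi_i\colon\R^d\to\R^{nd}$ for the canonical injection onto the $i$-th $d$-block, we have $\partial_{\bm{x}_i}\bm{\xi}=\mathbb{M}_t^{-1}e^{At}\Pi_i$ and $\partial_{\bm{y}_j}\bm{\xi}=-\mathbb{M}_t^{-1}\Pi_j$, hence
\[
D_{\bm{x}_i}p^{ou}=(\det\mathbb{M}_t)^{-1}\bigl(\mathbb{M}_t^{-1}e^{At}\Pi_i\bigr)^{\!\top}\nabla_{\bm{\xi}}p_S(t,\bm{\xi}),\qquad D_{\bm{y}_j}p^{ou}=-(\det\mathbb{M}_t)^{-1}\bigl(\mathbb{M}_t^{-1}\Pi_j\bigr)^{\!\top}\nabla_{\bm{\xi}}p_S(t,\bm{\xi}).
\]
As $\mathbb{M}_t^{-1}$ is block-diagonal with $j$-th block $t^{-(j-1)}I_d$, the second identity says precisely that the $j$-th block of $\nabla_{\bm{\xi}}p_S(t,\bm{\xi})$ equals $-(\det\mathbb{M}_t)\,t^{j-1}D_{\bm{y}_j}p^{ou}$. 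So it only remains to rewrite $\mathbb{M}_t^{-1}e^{At}\Pi_i$ in terms of the injections $\Pi_j$, $j\ge i$, with the correct powers of $t$.

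This is where the sub-diagonal structure \eqref{eq:def_matrix_A} of $A$ (and the H\"ormander condition (\textbf{H})) enters: $A$ is nilpotent, $A^n=0$, so $e^{At}=\sum_{k=0}^{n-1}\tfrac{t^k}{k!}A^k$ with $A^k$ carried by the $k$-th sub-diagonal of blocks, $(A^k)_{j,j-k}=A_{j,j-1}A_{j-1,j-2}\cdots A_{j-k+1,j-k}$. A one-line block computation gives $\mathbb{M}_t^{-1}A\,\mathbb{M}_t=t^{-1}A$, whence $\mathbb{M}_t^{-1}e^{At}\mathbb{M}_t=\exp\!\bigl(\mathbb{M}_t^{-1}(tA)\mathbb{M}_t\bigr)=e^{A}$, i.e.\ $\mathbb{M}_t^{-1}e^{At}=e^{A}\mathbb{M}_t^{-1}$; applying this to $\Pi_i$ gives
\[
\mathbb{M}_t^{-1}e^{At}\Pi_i=t^{-(i-1)}e^{A}\Pi_i=t^{-(i-1)}\sum_{j=i}^{n}\Pi_j\,\frac{1}{(j-i)!}\,A_{j,j-1}\cdots A_{i+1,i}.
\]

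Plugging this together with the block identity for $\nabla_{\bm{\xi}}p_S$ into the formula for $D_{\bm{x}_i}p^{ou}$, the prefactor $(\det\mathbb{M}_t)^{-1}$ cancels and the powers $t^{-(i-1)}$ and $t^{j-1}$ combine into $t^{j-i}$, yielding
\[
D_{\bm{x}_i}p^{ou}(t,\bm{x},\bm{y})=-\sum_{j=i}^{n}\frac{t^{j-i}}{(j-i)!}\bigl(A_{j,j-1}\cdots A_{i+1,i}\bigr)^{\!\top}D_{\bm{y}_j}p^{ou}(t,\bm{x},\bm{y}),
\]
which is the claimed identity with $C_j:=\tfrac{1}{(j-i)!}\bigl(A_{j,j-1}\cdots A_{i+1,i}\bigr)^{\top}$, a constant $d\times d$ matrix depending only on $A$ and $i$ (equal to $I_d$ for $j=i$); the scalar $\tfrac{1}{(j-i)!}$ is the positive constant of the statement. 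I do not expect a real obstacle: the argument is pure linear algebra once \eqref{eq:Representation_of_p_ou} is available. The only points requiring care are keeping track of the transpose in the chain rule (the $\bm{x}$-dependence of $p^{ou}$ passes through $e^{At}\bm{x}$, not linearly in $\bm{x}$) and checking that every power of $t$ cancels except the surviving $t^{j-i}$ that encodes the multi-scaled structure of the dynamics — the commutation $\mathbb{M}_t^{-1}e^{At}\mathbb{M}_t=e^{A}$ being the cleanest way to organize this and worth isolating as a preliminary step.
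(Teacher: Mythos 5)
Your proof is correct and follows essentially the same route as the paper: differentiate the representation \eqref{eq:Representation_of_p_ou} via the chain rule and use that, $A$ being sub-diagonal and nilpotent, the $i$-th block column of $e^{At}$ is supported on the blocks $j\ge i$ with weights $t^{j-i}$ — the paper states this entrywise in \eqref{Proof:Scaling_Lemma1}, while you obtain it through the commutation $\mathbb{M}_t^{-1}e^{At}\mathbb{M}_t=e^{A}$, which is just a tidier packaging of the same algebra (and is the identity the paper itself invokes in the Appendix). Your explicit $C_j=\frac{1}{(j-i)!}\bigl(A_{j,j-1}\cdots A_{i+1,i}\bigr)^{\top}$ merely makes precise the matrix nature of the constants that the paper leaves implicit; note only that the H\"ormander condition (\textbf{H}) is not actually needed for this identity, only the structure \eqref{eq:def_matrix_A} of $A$.
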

\begin{proof}
Recalling the representation of $p^{ou}$ in Equation \eqref{eq:Representation_of_p_ou}, it is easy to see that
\[D_{\bm{x}_i}p^{ou}(t,\bm{x},\bm{y}) \,=\, \frac{1}{\det \mathbb{M}_t}D_{\bm{z}}p_S(t,\cdot)\bigl(\mathbb{M}^{-1}_t(e^{At}\bm{x}- \bm{y})
\bigr)\mathbb{M}^{-1}_tD_{\bm{x}_i}\bigl[e^{At}\bm{x}-\bm{y}\bigr].\]
Hence, in order to conclude, we need to show that
\begin{equation}\label{Proof:Scaling_Lemma}
D_{\bm{x}_i}\bigl[e^{At}\bm{x}-\bm{y}\bigr] \, = \, -\sum_{j=i}^{n}C_jt^{j-i}D_{\bm{y}_j}\bigl[e^{At}\bm{x}-\bm{y}\bigr].
\end{equation}
To prove the above equality, we need to analyze more in depth the structure of the resolvent $e^{At}$. Recalling from Equation
\eqref{eq:def_matrix_A} that $A$ has a sub-diagonal structure, we notice that for any $i,j$ in $\llbracket 1,n\rrbracket$,
\begin{equation}\label{Proof:Scaling_Lemma1}
\Bigl[e^{At}\Bigr]_{i,j} \, = \,
    \begin{cases}
     C_{i,j} t^{j-i}, & \mbox{if } j\ge i;\\
     0, & \mbox{otherwise},
    \end{cases}
\end{equation}
for a family of constants $\{C_{i,j}\}_{i,j \in \llbracket 1,n\rrbracket}$ depending only from $A$. It then follows that for any
$\bm{x},\bm{y}$ in $\R^{nd}$, it holds that
\begin{equation}\label{Proof:Scaling_Lemma2}
\Bigl[e^{At}\bm{x}-\bm{y}\Bigr]_{i} \, = \, \sum_{k=1}^{i}C_{i,k}t^{i-k}\bm{x}_k-\bm{y}_i.
\end{equation}
Equation \eqref{Proof:Scaling_Lemma} then follows immediately. \textcolor{black}{For a more detailed proof of this result, see also
\cite{Huang:Menozzi15} or \cite{Huang:Menozzi:Priola19}.}

\end{proof}

We finally remark the link with the stochastic counterpart of equation \eqref{Degenerate_Stable_PDE}. From a more
probabilistic point of view, the exponents in equation \eqref{Definition_distance_d_P}, can be related to the characteristic
time scales of the iterated integrals of an $\alpha$-stable process.

We are now ready to define the suitable H\"older spaces for our estimates. We start recalling some useful notations we will need below.
Fixed $k$ in $\N\cup \{0\}$ and $\beta$ in $(0,1)$, we follow Krylov \cite{book:Krylov96_Holder}, denoting the usual \emph{homogeneous}
H\"older space $C^{k+\beta}(\R^d)$ as the family of functions $\phi\colon \R^d \to \R$ such that
\[\Vert \phi \Vert_{C^{k+\beta}} \,:=\, \sum_{i=1}^{k}\sup_{\vert\vartheta\vert = i}\Vert D^\vartheta\phi
\Vert_{L^\infty}+\sup_{\vert\vartheta\vert=k}[D^{\vartheta}\phi]_\beta \, < \, \infty\]
where
\[[D^{\vartheta}\phi]_\beta \, := \, \sup_{\bm{x}\neq \bm{y}}\frac{\vert D^{\vartheta}\phi(\bm{x}) -
D^{\vartheta}\phi(\bm{y}) \vert}{\vert\bm{x} - \bm{y}\vert^{\beta}}.\]
Additionally, we are going to need the associated subspace $C^{k+\beta}_b(\R^d)$ of bounded functions in $C^{k+\beta}(\R^d)$, equipped with
the norm
\[\Vert \cdot \Vert_{C^{k+\beta}_b} \, = \, \Vert \cdot \Vert_{L^\infty} + \Vert \cdot \Vert_{C^{k+\beta}}.\]
We can now define the anisotropic H\"older space with multi-index of regularity associated with the distance $d$.
For sake of brevity and readability, we firstly define for a function $\phi \colon \R^{nd} \to \R$, a point $z$ in $\R^{d(n-1)}$ and $i$ in $\llbracket 1,n\rrbracket$, the
function
\[\Pi_z^i \phi \colon x \in \R^d \to \phi(z_1,\dots,z_{i-1},x,z_{i+1},z_n)\]
with the obvious modifications if $i=1$ or $i=n$.
Intuitively speaking, the function $\Pi_z^i\phi$ is the restriction of $\phi$ on its $i$-th $d$-dimensional variable while fixing all the other coordinates
in $z$. The space $C^{k+\beta}_d(\R^{nd})$ is then defined as the family of all the function $\phi \colon \R^{nd} \to \R$ such that
\[\Vert \phi \Vert_{C^{k+\beta}_d} \, := \, \sum_{i=1}^{n}\sup_{z\in \R^{d(n-1)}}\Vert \Pi_z^i\phi(x)
\Vert_{C^{\frac{k+\beta}{1+\alpha(i-1)}}}.\]
The modification to the bounded subspace $C^{k+\beta}_{b,d}$ is straightforward.\newline
Roughly speaking, the anisotropic norm works component-wise, i.e.\ we firstly fix a coordinate and then calculate the standard H\"older norm along that particular direction, but with
index scaled according to the dilation of the system in that direction, uniformly over time and the other space components. We conclude
summing the contributions associated with each component.
\newline We highlight however that it is possible to recover the expected joint regularity for the partial derivatives, when
they exist. In such a case, they actually turn out to be H\"older continuous in the pseudo-metric $d$ with order one less than the function.
(cf. Lemma \ref{lemma:Reverse_Taylor_Expansion} in the Appendix for the case $i=1$).

Since we are working with evolution equations, the functions we consider will quite often depend on time, too. For this reason, we denote by
$L^\infty(0,T,C^{k+\beta}_d(\R^{nd}))$ (respectively, $L^\infty(0,T,C^{k+\beta}_{b,d}(\R^{nd}))$) the family of functions $\psi \colon
[0,T]\times \R^{nd} \to \R$ with finite $C^{k+\beta}_d$-norm (respectively, $C^{k+\beta}_{b,d}$-norm), uniformly in time.

\subsection{Assumptions and Main Results}

From this point further, we consider two fixed numbers $\alpha$ in $(0,2)$ and $\beta$ in $(0,1)$ such that $\alpha$ will represent the
index of stability of the operator $L_\alpha$ while $\beta$ will stand for the index of H\"older regularity of the coefficients.

From this point further, we assume the following:
\begin{description}
  \item[(S)] assumptions (\textbf{ND}) and (\textbf{H}) are satisfied and the drift $\bm{F}=(\bm{F}_1,\dots,\bm{F}_n)$ is such that for
      any $i$ in $\llbracket 1,n\rrbracket$, $\bm{F}_i$ depends only on time and on the last $n-(i-1)$ components, i.e.\
      $\bm{F}_i(t,\bm{x}_i,\dots,\bm{x}_n)$;
  \item[(P)] $\alpha$ is a number in $(0,2)$, $\beta$ is in $(0,1)$ such that $\alpha+\beta\in (1,2)$ and if $\alpha<1$ (super-critical
      case),
  \[\beta<\alpha, \quad  1-\alpha <\frac{\alpha-\beta}{1+\alpha(n-1)};\]
  \item[(R)] Recalling the notations in Section $2.2$, the source $f$ is in $L^\infty(0,T;C^{\beta}_{b,d}(\R^{nd}))$, the terminal
      condition $g$ is in $C^{\alpha+\beta}_{b,d}(\R^{nd})$ and for any $i$ in $\llbracket 1,n\rrbracket$, $\bm{F}_i$ belongs to
      $L^\infty(0,T;C^{\gamma_i+\beta}_{d}(\R^{nd}))$ where
    \begin{equation}\label{Drift_assumptions}
    \gamma_i \,:= \,
    \begin{cases}
        1+ \alpha(i-2), & \mbox{if } i>1; \\
        0, & \mbox{if } i=1.
   \end{cases}
   \end{equation}
\end{description}
From now on, we will say that assumption (\textbf{A}) holds when the above conditions (\textbf{S}), (\textbf{P}) and
(\textbf{R}) are in force.

\begin{remark}[About the Assumptions]
We remark that the constraints (\textbf{P}) we are imposing in the super-critical case ($\alpha<1$) seem quite natural for our system. The
condition $\beta<\alpha$ reflects essentially the low integrability properties of the stable density $p_S$ (cf. Equation
\eqref{Smoothing_effect_of_S}). Even if one is interested only on the fractional Laplacian case, i.e. $L_\alpha=\Delta^{\alpha/2}$, such a
condition cannot be dropped in general, since it does not refer to the integrability property of $p_\alpha$ and its derivatives but instead
\textcolor{black}{to those of }its "projection" $p_S$ on the bigger space $\R^{nd}$ (cf. Equation
\eqref{eq:Representation_of_p_ou}).\newline
About the second condition $\alpha+\beta>1$, it is necessary to give a point-wise definition of the gradient of a solution $u$ with respect
to the non-degenerate variable $x_1$. Moreover, there is a famous counterexample of Tanaka and his collaborators
\cite{Tanaka:Tsuchiya:Watanabe74_counterexample} that shows that even in the scalar case, weak uniqueness (a direct consequence of Schauder
estimates) may fail for the associated SDE if $\alpha+\beta$ is smaller than one. \newline
The last assumption is indeed a technical constraint and it is necessary to work properly with the perturbation $\bm{F}$ at any level
$i=1,\dots,n$. In particular, it seems the minimal threshold that allows us to exploit the smoothing effect of the density (see for example
Equation \eqref{Proof:Second_Besov_Control_I_2} in the proof of Lemma \ref{lemma:Second_Besov_COntrols} for more details). We conclude
highlighting that these assumptions are always fulfilled if $\alpha\ge 1$ (sub-critical case).
\end{remark}

At this stage, it should be clear that under our assumptions (\textbf{A}), the IPDE \eqref{Degenerate_Stable_PDE} will be understood in a
\emph{distributional} sense. Indeed, we cannot hope to find a "classical" solution for \eqref{Degenerate_Stable_PDE}, since for such a
function $u$ in $L^\infty(0,T;C^{\alpha+\beta}_{b,d}(\R^{nd}))$, the total gradient $D_{\bm{x}}u$ is not defined point-wise.\newline
Let us denote for any function $\phi\colon [0,T]\to\R^{nd}$ regular enough, the complete operator $\mathcal{L}_\alpha$ as
\begin{equation}\label{eq:Complete_Operator}
\mathcal{L}_\alpha\phi(t,\bm{x}) \, := \, \langle A \bm{x} + \bm{F}(t,\bm{x}), D_{\bm{x}}u(t,\bm{x})\rangle + L_\alpha u(t,\bm{x}).
\end{equation}
We will say that a function $u$ in $L^\infty(0,T;C^{\alpha+\beta}_{b,d}(\R^{nd}))$ is a distributional (or weak) solution of the IPDE
\eqref{Degenerate_Stable_PDE} if for any $\phi$ in $C^\infty_0((0,T]\times\R^{nd})$, it holds that
\begin{equation}\label{eq:Def_weak_sol}
\int_{0}^{T}\int_{\R^{nd}}\Bigl(-\partial_t+\mathcal{L}_\alpha^*\Bigr)\phi(t,\bm{y})u(t,\bm{y}) \, d\bm{y} +
\int_{\R^{nd}}g(\bm{y})\phi(T,\bm{y}) \, d\bm{y} \, = \, - \int_{0}^{T}\int_{\R^{nd}}\phi(t,\bm{y})f(t,\bm{y}) \, d\bm{y}
\end{equation}
where $\mathcal{L}^*_\alpha$ denotes the formal adjoint of $\mathcal{L}_\alpha$.
\textcolor{black}{On the other hand, denoting from now on,
\begin{equation}\label{eq:norm_H_for_F}
\Vert \bm{F}\Vert_H \, := \sup_{i\in\llbracket 1,n \rrbracket}\Vert \bm{F}_i\Vert_{L^\infty(C^{\gamma_i+\beta}_{d})},
\end{equation}
we will quite often use the following other notion of solution:}
\begin{definition}
\label{definition:mild_sol}
A function $u$ is a mild solution in $L^\infty\bigl(0,T;C^{\alpha+\beta}_{b,d}(\R^{nd})\bigr)$ of Equation \eqref{Degenerate_Stable_PDE} if
for any triple of sequences $\{f_m\}_{m\in \N}$, $\{g_m\}_{m\in \N}$ and $\{\bm{F}_m\}_{m\in \N}$ such that
\begin{itemize}
  \item $\{f_m\}_{m\in \N}$ is in $C^\infty_b((0,T)\times\R^{nd})$ and $f_m$ converges to $f$ in $L^\infty\bigl(0,T;C^\beta_{b,d}(\R^{nd})\bigr)$;
  \item $\{g_m\}_{m\in \N}$ is in $C^\infty_b(\R^{nd})$ and $g_m$ converges to $g$ in $C^{\alpha+\beta}_{b,d}(\R^{nd})$;
   \item $\{\bm{F}_m\}_{m\in \N}$ is in $C^\infty_b((0,T)\times\R^{nd};\R^{nd})$ and $\Vert\bm{F}_m-\bm{F}\Vert_H$ converges to $0$,
\end{itemize}
there exists a sub-sequence $\{u_{m}\}_{m\in \N}$ in $C^\infty_b\bigl((0,T)\times\R^{nd}\bigr)$ such that
\begin{itemize}
  \item $u_{m}$ converges to $u$ in $L^\infty\bigl(0,T;C^{\alpha+\beta}_{b,d}(\R^{nd})\bigr)$;
  \item for any fixed $m$ in $\N$, $u_{m}$ is a classical solution of the following "regularized" IPDE:
  \begin{equation}\label{Regularizied_PDE}
\begin{cases}
   \partial_t u_m(t,\bm{x})+L_\alpha u_m(t,\bm{x}) + \langle A \bm{x} + \bm{F}_m(t,\bm{x}), D_{\bm{x}}
   u_m(t,\bm{x})\rangle  \, = \, -f_m(t,\bm{x}) &\mbox{on }   (0,T)\times \R^{nd}, \\
    u_m(T,\bm{x}) \, = \, g_m(\bm{x}) & \mbox{on }\R^{nd}.
  \end{cases}
\end{equation}
\end{itemize}
\end{definition}

We can now state our main result:
\begin{theorem}(Schauder Estimates)
\label{theorem:Schauder_Estimates}
Let $u$ be a mild solution in $L^{\infty}\bigl(0,T;C^{\alpha+\beta}_{b,d}(\R^{nd})\bigr)$ of Equation \eqref{Degenerate_Stable_PDE}. Under
(\textbf{A}), there exists a constant $C:=C(T,(\bm{A}))$ such that
\begin{equation}\label{equation:Schauder_Estimates}
\Vert u \Vert_{L^\infty(C^{\alpha+\beta}_d)} \, \le \, C \bigl[\Vert f \Vert_{L^\infty(C^{\beta}_{b,d})} + \Vert g
\Vert_{C^{\alpha+\beta}_{b,d}}\bigr].
\end{equation}
\end{theorem}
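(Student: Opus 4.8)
The plan is to run the forward parametrix (perturbative) scheme of \cite{Chaudru:Honore:Menozzi18_Sharp}, adapted to the degenerate stable setting. By Definition \ref{definition:mild_sol} it suffices to prove \eqref{equation:Schauder_Estimates} for the classical solutions $u_m$ of the regularised problem \eqref{Regularizied_PDE}, with a constant depending only on $T$ and $(\bm{A})$ (in particular uniform in $m$); letting $m\to\infty$ then yields the bound for the mild solution $u$. We may thus assume $u,f,g,\bm{F}$ smooth and bounded together with all their derivatives, provided we keep track of the dependence of every constant on $(\bm{A})$ only. Moreover, the parametrix expansion below forces the operator and its proxy to be "close", so it produces \eqref{equation:Schauder_Estimates} only on an interval of length $\delta=\delta((\bm{A}))>0$; we recover the estimate on $[0,T]$ by splitting into $\lceil T/\delta\rceil$ subintervals of length $\le\delta$ and iterating, the $C^{\alpha+\beta}_{b,d}$-bound at the end of one subinterval serving as terminal datum for the next. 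Hence it is enough to treat $T\le\delta$.

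\textbf{Proxy and Duhamel representation.} Fix the point $\bm{\xi}\in\R^{nd}$ at which we estimate the anisotropic seminorms of $u(t,\cdot)$, and let $(\bm{\theta}_s(\bm{\xi}))_s$ be the flow \eqref{eq:_INtro_def_flusso} issued from $\bm{\xi}$. Define the frozen operator $\widetilde{\mathcal{L}}^{\bm\xi}_\alpha$ by replacing, in $\mathcal L_\alpha$, the drift $\bm F(s,\bm x)$ with the time-only vector $\bm F(s,\bm\theta_s(\bm\xi))$. Since $L_\alpha$ and $A$ are left unchanged, $\widetilde{\mathcal L}^{\bm\xi}_\alpha$ is an inhomogeneous Ornstein--Uhlenbeck generator whose Green kernel $\widetilde p^{\,\bm\xi}(t,s,\bm x,\bm y)$ is explicit --- a space-translation of $p^{ou}$ --- hence by \eqref{eq:Representation_of_p_ou} a rescaling of $p_S$ through $\mathbb M_{s-t}$, and it inherits the smoothing effect \eqref{Smoothing_effect_of_S} and the Scaling Lemma \ref{lemma:Scaling_Lemma}. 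Writing $\widetilde P^{\bm\xi}_{t,s}$ for the associated two-parameter family, the first order parametrix (Duhamel) identity for a solution of \eqref{Regularizied_PDE} reads
\[
u(t,\bm x)=\widetilde P^{\bm\xi}_{t,T}g(\bm x)+\int_t^T \widetilde P^{\bm\xi}_{t,s}\Bigl(f(s,\cdot)+\bigl\langle \bm F(s,\cdot)-\bm F(s,\bm\theta_s(\bm\xi)),\,D_{\bm x}u(s,\cdot)\bigr\rangle\Bigr)(\bm x)\,ds.
\]
The last term is the error $\mathcal R^{\bm\xi}u$; it is small as long as $\bm x$ lives at the natural $d_P$-scale around $\bm\theta_s(\bm\xi)$.

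\textbf{Component-wise estimates.} For each level $i\in\llbracket1,n\rrbracket$ we bound $\sup_z\|\Pi^i_z u(t,\cdot)\|_{C^{(\alpha+\beta)/(1+\alpha(i-1))}}$, choosing $\bm\xi$ near the points entering each Hölder difference. The principal part is handled by the convolution and smoothing bounds on $\widetilde p^{\,\bm\xi}$: $\widetilde P^{\bm\xi}_{t,T}g$ keeps the $C^{\alpha+\beta}_{b,d}$-regularity of $g$, while $\int_t^T\widetilde P^{\bm\xi}_{t,s}f(s,\cdot)\,ds$ gains two anisotropic derivatives over $f$ thanks to \eqref{Smoothing_effect_of_S} and the integrability of $(s-t)^{-1+\beta/\alpha}$. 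For the error $\mathcal R^{\bm\xi}u$: since $u\in C^{\alpha+\beta}_{b,d}$ has no pointwise total gradient, we integrate by parts, moving the $\bm x$-derivatives onto $\widetilde p^{\,\bm\xi}$ and onto the coefficients, and pair the surviving Hölder factor ($u$, respectively $\bm F_i$) with (derivatives of) $\widetilde p^{\,\bm\xi}$ through the Besov-space duality of Section $4.1$ --- this replaces differentiating the coefficients. Derivatives of $\widetilde p^{\,\bm\xi}$ along a degenerate direction $\bm x_i$, $i\ge2$, are first rewritten via Lemma \ref{lemma:Scaling_Lemma} as $\sum_{j\ge i}C_j(s-t)^{j-i}D_{\bm y_j}\widetilde p^{\,\bm\xi}$ and then integrated by parts onto the data; each such step trades a factor $(s-t)^{j-i}$ against a gain $(s-t)^{(\gamma-l)/\alpha}$ from \eqref{Smoothing_effect_of_S}, and the thresholds $\gamma_i=1+\alpha(i-2)$ of (\textbf{R}) together with $1-\alpha<(\alpha-\beta)/(1+\alpha(n-1))$ of (\textbf{P}) are exactly what make all the time integrals converge. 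Hölder differences are controlled by the standard diagonal/off-diagonal split in $d_P$ (separate bounds when $d_P\ge(s-t)^{1/\alpha}$, first order Taylor expansion of $\widetilde p^{\,\bm\xi}$ with anisotropic remainder otherwise), using the flow stability estimates of the Appendix to compare $\bm\theta_s(\bm\xi)$ at nearby base points. Summing over $i$ and taking the supremum over $\bm\xi$ gives, for $T\le\delta$,
\[
\|u\|_{L^\infty(C^{\alpha+\beta}_d)}\le C\bigl(\|f\|_{L^\infty(C^\beta_{b,d})}+\|g\|_{C^{\alpha+\beta}_{b,d}}\bigr)+C\,\delta^{\varepsilon}\,\|u\|_{L^\infty(C^{\alpha+\beta}_d)}
\]
with $\varepsilon=\varepsilon((\bm{A}))>0$; taking $\delta$ small absorbs the last term, and the reduction step concludes.

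\textbf{Main obstacle.} The delicate point is genuinely the interaction of degeneracy and low regularity. On the one hand, the smoothing of $L_\alpha$ reaches the $i$-th level of the chain only after $i-1$ scalings, so the error expansion carries time singularities of order $(s-t)^{j-i}$ against the stable gain $(s-t)^{(\gamma-l)/\alpha}$, and Duhamel integrability holds only in the narrow window pinned down by (\textbf{P}) and by the extra smoothness $\gamma_i$ of $\bm F_i$ demanded in (\textbf{R}). On the other hand, the coefficients are merely Hölder and, when $\alpha<1$, the unbounded drift $\bm F$ is of order one and is \emph{not} dominated by the order-$\alpha$ operator; this is why its unbounded part must be removed through the flow $\bm\theta$ before $D_{\bm x_1}u$ can even be defined pointwise, and why $\mathcal R^{\bm\xi}u$ must be treated by Besov duality rather than by a naive loss-of-one-derivative bound. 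Making $\mathcal R^{\bm\xi}u$ truly $O(\delta^\varepsilon)$ --- not merely finite --- is the heart of the proof, since only then do the absorption and the iteration over $[0,T]$ close.
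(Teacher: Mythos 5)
Your overall architecture (proxy frozen along the flow $\bm{\theta}$, Duhamel expansion, Besov duality for the degenerate directions, diagonal/off-diagonal dichotomy, iteration in time to reach an arbitrary horizon) is the same as the paper's. The genuine gap is in the absorption step: you claim the parametrix error contributes $C\,\delta^{\varepsilon}\Vert u\Vert_{L^\infty(C^{\alpha+\beta}_d)}$ on a time interval of length $\delta$ and that choosing $\delta$ small closes the circular argument. This is not what the estimates give, and it cannot be: the Hölder-seminorm part of the remainder is scale-critical and carries no positive power of the time horizon. Concretely, in the local diagonal regime one must cut the time integral at $t_0=t+c_0d^\alpha(\bm{x},\bm{x}')$ as in \eqref{eq:def_t0}, and the Taylor-expanded perturbative term yields $\vert(\bm{x}-\bm{x}')_1\vert\,(t_0-t)^{\frac{\alpha+\beta-2}{\alpha}}\Vert\bm{F}\Vert_H\Vert u\Vert \simeq c_0^{\frac{\alpha+\beta-2}{\alpha}}\Vert\bm{F}\Vert_H\Vert u\Vert\, d^{\alpha+\beta-1}(\bm{x},\bm{x}')$, with $c_0^{\frac{\alpha+\beta-2}{\alpha}}$ \emph{large} as $c_0\to0$ and no factor $T^{\varepsilon}$; similarly the change-of-freezing-point (discontinuity) term, which your sketch omits, contributes $c_0^{\frac{\alpha+\beta-1}{1+\alpha(n-1)}}\Vert u\Vert d^{\alpha+\beta-1}$, again small only in $c_0$, not in the horizon. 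This is exactly the structure of the a priori bound \eqref{eq:A_Priori_Estimates} and of Lemmas \ref{lemma:Holder_modulus_Non-Deg} and \ref{lemma:Holder_modulus_Deg}: the coefficient of $\Vert u\Vert$ is $C\bigl(c_0^{\frac{\beta-\gamma_n}{\alpha}}\Vert\bm{F}\Vert_H+c_0^{\frac{\alpha+\beta-1}{1+\alpha(n-1)}}\bigr)$, where the first exponent is negative, so smallness cannot be bought with $c_0$ alone nor with a short horizon.

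The mechanism that actually closes the argument is the one you dropped: one first fixes $c_0$ small to handle the $c_0^{\frac{\alpha+\beta-1}{1+\alpha(n-1)}}$ term, and then makes $\Vert\bm{F}\Vert_H$ small relative to $c_0$ by the anisotropic scaling $(t,\bm{x})\mapsto(\lambda t,\mathbb{T}_\lambda\bm{x})$, under which $\Vert\bm{F}_\lambda\Vert_H=\lambda^{\beta/\alpha}\Vert\bm{F}\Vert_H$ while the data norms are essentially preserved (Lemma \ref{lemma:Scaling_Homogeneity_Norms}); the condition $T_\lambda\le1$ is only a normalization, not the source of smallness. Without this scaling step (or an equivalent way of exploiting that $\Vert\bm{F}\Vert_H$ is subcritical), your inequality with $C\delta^{\varepsilon}\Vert u\Vert$ is unsubstantiated and the absorption, hence the whole proof, does not close; the subsequent iteration over subintervals of $[0,T]$ is fine and coincides with the paper's final step.
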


Associated with an existence result we will exhibit in Section $6$, we will eventually derive the well-posedness for Equation
\eqref{Degenerate_Stable_PDE}.

\begin{theorem}
Under (\textbf{A}), there exists a unique mild solution $u$ in $L^{\infty}\bigl(0,T;C^{\alpha+\beta}_{b,d}(\R^{nd})\bigr)$ of the IPDE
\eqref{Degenerate_Stable_PDE}. Moreover, such a function $u$ is a weak solution, too.
\end{theorem}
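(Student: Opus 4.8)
The plan is to derive the statement — existence, uniqueness, and the weak-solution property — entirely from the a priori bound of Theorem~\ref{theorem:Schauder_Estimates} together with a compactness argument along the regularized problems \eqref{Regularizied_PDE}.

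For existence (carried out in detail in Section~6) I would fix one admissible triple $(f_m,g_m,\bm F_m)_m$ as in Definition~\ref{definition:mild_sol}; since these data are smooth and bounded and the $\bm F_m$ keep the structure~(\textbf S), the regularized IPDE \eqref{Regularizied_PDE} admits a classical solution $u_m\in C^\infty_b((0,T)\times\R^{nd})$, obtained through the forward parametrix construction of Section~3 in this smooth setting, the flow $\bm{\theta}_{t,s}$ of \eqref{eq:_INtro_def_flusso} being used to absorb the unbounded part of $\bm F_m$. Each $u_m$ is then a mild solution of its own problem (which satisfies (\textbf A) with $\Vert\bm F_m\Vert_H$ bounded uniformly in $m$), so Theorem~\ref{theorem:Schauder_Estimates} applies and yields, with a constant independent of $m$,
\[\sup_{m}\Vert u_m\Vert_{L^\infty(C^{\alpha+\beta}_{d})}\;\le\;C\Bigl[\sup_m\Vert f_m\Vert_{L^\infty(C^{\beta}_{b,d})}+\sup_m\Vert g_m\Vert_{C^{\alpha+\beta}_{b,d}}\Bigr]\;<\;\infty,\]
the right-hand side being finite by the convergence of the approximating sequences. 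By the anisotropic Ascoli--Arzel\`a theorem (the embedding $C^{\alpha+\beta}_{b,d}\hookrightarrow C^{\alpha+\beta-\varepsilon}_{b,d}$ being compact on compact subsets of $\R^{nd}$ for $\varepsilon>0$ small) together with a diagonal extraction in $(t,\bm x)$, a subsequence of $(u_m)$ converges, locally uniformly and in the weaker anisotropic norm, to some $u$; the uniform bound forces $u\in L^\infty(0,T;C^{\alpha+\beta}_{b,d}(\R^{nd}))$, and one checks that $u$ does not depend on the chosen triple, so that it is a mild solution of \eqref{Degenerate_Stable_PDE}.

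For uniqueness I would first note that \eqref{Regularizied_PDE} has a unique bounded classical solution: the difference of two such is a classical — hence, via the constant approximating triple $(0,0,\bm F^{\mathrm{sm}})$ with $\bm F^{\mathrm{sm}}$ the smooth drift at hand, a mild — solution of \eqref{Regularizied_PDE} with zero source and terminal data, so Theorem~\ref{theorem:Schauder_Estimates} forces it to vanish. Then, given two mild solutions $u,v$ of \eqref{Degenerate_Stable_PDE} with the same data, I fix an admissible triple and use that $u$ is mild to extract an infinite $I\subseteq\N$ and classical solutions $u_m$ $(m\in I)$ of the corresponding regularized problems with $u_m\to u$; the sub-triple indexed by $I$ is again admissible, so using that $v$ is mild I extract a further infinite $J\subseteq I$ and classical solutions $v_m$ $(m\in J)$ with $v_m\to v$. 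By the uniqueness just established $u_m=v_m$ for every $m\in J$, whence $u=\lim_{m\in J}u_m=\lim_{m\in J}v_m=v$. Finally, to see that $u$ is a weak solution I would test \eqref{Regularizied_PDE} against $\phi\in C^\infty_0((0,T]\times\R^{nd})$ and integrate by parts — legitimate since $u_m$ is smooth and bounded together with the relevant derivatives — obtaining that $u_m$ satisfies \eqref{eq:Def_weak_sol} with $(f,g,\mathcal{L}^*_\alpha)$ replaced by $(f_m,g_m)$ and the formal adjoint built with $\bm F_m$; along the convergent subsequence $u_m\to u$ locally uniformly and boundedly, $f_m\to f$, $g_m\to g$ in the relevant spaces and $\Vert\bm F_m-\bm F\Vert_H\to0$, and since $\phi$ is compactly supported every integral runs over a fixed compact set, so one passes to the limit term by term — in particular in the transport contribution $\langle\bm F_m,D_{\bm x}\phi\rangle$ — recovering \eqref{eq:Def_weak_sol} for $u$ with the original data.

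The hard part will be the uniform-in-$m$ control of the contributions coming from the mismatch between the regularized drifts and $\bm F$: both in checking that the limit $u$ is independent of the approximating triple and in the last passage to the limit one is led to estimate products of the form $\langle\bm F_m-\bm F_{m'},D_{\bm x}u_{m'}\rangle$ along the degenerate directions, where no genuine derivative of $u_{m'}$ is available. This has to be handled precisely by the Besov-space duality and the additional regularity assumption~(\textbf R) on $\bm F$ that constitute the technical core of the proof of Theorem~\ref{theorem:Schauder_Estimates}.
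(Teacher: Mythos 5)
Your proposal is correct and follows essentially the same route as the paper: existence by regularizing the data, applying the Schauder estimates of Theorem \ref{theorem:Schauder_Estimates} uniformly in $m$ and extracting a limit by Arzel\`a--Ascoli, uniqueness as a consequence of the same a priori estimate applied at the level of the regularized problems, and the weak-solution property by testing the regularized IPDE, integrating by parts and passing to the limit, with the degenerate transport terms $D_{\bm{y}_j}\cdot\{\phi\,\bm{F}_j\}$ handled exactly by the Besov duality \eqref{Besov:duality_in_Besov} and the Second Besov Control (Lemma \ref{lemma:Second_Besov_COntrols}) that you defer to. The only minor divergences are that the paper obtains classical solvability of the regularized problem \eqref{Regularizied_PDE} from stochastic-flow techniques (Kunita) rather than from the parametrix construction, and states uniqueness directly from Theorem \ref{theorem:Schauder_Estimates} instead of your (slightly more careful, and valid) subsequence-matching argument through uniqueness of the regularized classical solutions.
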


In the following, we will denote for sake of brevity
\begin{equation}\label{eq:def_alpha_i_and_beta_i}
\alpha_i \, := \, \frac{\alpha}{1+\alpha(i-1)} \, \, \text{ and } \, \, \beta_i \, := \, \frac{\beta}{1+\alpha(i-1)}\,\,  \text{ for any
$i$ in }\llbracket 1,n\rrbracket.
\end{equation}
Clearly, these quantities were introduced to reflect exactly the relative scale of the system at every considered level $i$ (cf. Section
$2.2$ above).\newline
\textcolor{black}{In the following, as well as in Theorem \ref{theorem:Schauder_Estimates} above,} $C$ denotes a generic constant that may
change from line to line but depending only on the parameters in assumption (\textbf{A}). Other dependencies that may occur are explicitly
specified.

\setcounter{equation}{0}
\section{Proof through Perturbative Approach}
As already said in the Introduction, our method of proof relies on a perturbative approach introduced in
\cite{Chaudru:Honore:Menozzi18_Sharp} for the degenerate, Kolmogorov, diffusive setting.\newline
Roughly speaking, we will firstly choose a suitable proxy for the equation of interest, i.e.\ an operator whose associated semigroup and
density are known and that is close enough to the original one:
\[L_\alpha+\langle A \bm{x} +\bm{F}(t,\bm{x}), D_{\bm{x}}\rangle.\]
Furthermore, we will exhibit suitable regularization properties for the proxy and in particular, we will show that it satisfies the
Schauder estimates \eqref{equation:Schauder_Estimates}. This will be the purpose of the Sub-section $3.1$.\newline
In Sub-section $3.2$ below, we will then expand a solution $u$ of the IPDE \eqref{Degenerate_Stable_PDE} along the chosen proxy through a
Duhamel-type formula and eventually show that the expansion error only brings a negligible contribution so that the Schauder estimates still
holds for $u$. Due to our choice of method, this will be possible only adding some more assumptions on the system. Namely, we will assume in
addition to be in a small time interval, so that the proxy and the original operator do not differ too much. \newline
The last Sub-section $3.3$ will finally show how to remove the additional assumption in order to prove the Schauder
estimates (Theorem \ref{theorem:Schauder_Estimates}) through a scaling argument.

\subsection{Frozen Semigroup}
The crucial element in our approach consists in choosing wisely a suitable proxy operator along which to expand a solution $u$ in
$L^\infty(0,T;C^{\alpha+\beta}_{b,d}(\R^{nd}))$ of Equation \eqref{Degenerate_Stable_PDE}. In order to deal with potentially unbounded
perturbations $\bm{F}$, it is natural to use a proxy involving a non-zero first order term  associated with a flow
representing the dynamics driven by $A\bm{x}+\bm{F}$, the transport part of Equation \eqref{Degenerate_Stable_PDE} (see e.g.
\cite{Krylov:Priola10} or \cite{Chaudru:Menozzi:Priola19}).\newline
Remembering that we assume $\bm{F}$ to be H\"older continuous, we know that there exists a solution of
\[
\begin{cases}
  d \bm{\theta}_{\tau,s}(\bm{\xi}) \, = \, \bigl[A\bm{\theta}_{\tau,s}(\bm{\xi})+\bm{F}(s,\bm{\theta}_{\tau,s}(\bm{\xi}))\bigr]ds \mbox{ on
  } [\tau,T],\\
  \bm{\theta}_{\tau,\tau}(\bm{\xi}) \, = \, \bm{\xi},
\end{cases}
\]
even if it may be not unique. For this reason, we are going to choose one particular flow, denoted by
$\bm{\theta}_{\tau,s}(\bm{\xi})$, and consider it fixed throughout the work.\newline
More precisely, given a freezing couple $(\tau,\bm{\xi})$ in $[0,T]\times\R^{nd}$, the flow will be defined on $[\tau,T]$ as
\begin{equation}\label{Flow}
  \bm{\theta}_{\tau,s}(\bm{\xi}) \, = \, \bm{\xi} + \int_{\tau}^{s}\bigl[A\bm{\theta}_{\tau,v}(\bm{\xi})+\bm{F}(v,\bm{\theta}_{\tau,v}(\bm{\xi}))\bigr] \, dv.
\end{equation}
We can now introduce the "frozen" IPDE associated with the chosen proxy:
\begin{equation}\label{Frozen_PDE}
\begin{cases}
   \partial_t \tilde{u}^{\tau,\bm{\xi}}(t,\bm{x})+L_\alpha \tilde{u}^{\tau,\bm{\xi}}(t,\bm{x}) + \langle A \bm{x} +
   \bm{F}(t,\bm{\theta}_{\tau,t}(\bm{\xi})), D_{\bm{x}} \tilde{u}^{\tau,\bm{\xi}}(t,\bm{x})\rangle  \, = \, -f(t,\bm{x}) &
   \mbox{on }   (0,T)\times \R^{nd}, \\
    \tilde{u}^{\tau,\bm{\xi}}(T,\bm{x}) \, = \, g(\bm{x}) & \mbox{on }\R^{nd}.
  \end{cases}
\end{equation}
Remarking that the proxy operator $L_\alpha+\langle A \bm{x} + \bm{F}(t,\bm{\theta}_{\tau,t}(\bm{\xi})), D_{\bm{x}}\rangle$ can be seen as
an Ornstein-Ulhenbeck operator with an additional time-dependent component $\bm{F}(t,\bm{\theta}_{\tau,t}(\bm{\xi}))$,  it is clear that
under assumption (\textbf{A}), it generates a two parameters semigroups we will denote by $\bigl(\tilde{P}^{\tau,\bm{\xi}}_{t,s}\bigr)_{t\le
s}$. Moreover, it admits a density given by
\begin{equation}\label{eq:definition_tilde_p}
\tilde{p}^{\tau,\bm{\xi}}(t,s,x,y)\,  = \, \frac{1}{\det (\mathbb{M}_{s-t})} p_S\bigl( s-t,\mathbb{M}^{-1}_{s-t} (\bm{y} -
\tilde{\bm{m}}^{ \tau,\bm{\xi}}_{t,s}(\bm{x}))\bigr),
\end{equation}
remembering Equation \eqref{eq:Representation_of_p_ou} for the definition of $p_S$ and with the following notation for the "frozen shift"
$\tilde{\bm{m}}^{\tau,\bm{\xi}}_{t,s}(\bm{x})$:
\begin{equation}\label{eq:def_tilde_m}
\tilde{\bm{m}}^{\tau,\bm{\xi}}_{t,s}(\bm{x}) \, = \, e^{A(s-t)}\bm{x} + \int_{t}^{s}e^{A(s-v)} \bm{F}(v, \bm{\theta}_{\tau,v}
(\bm{\xi})) \, dv.
\end{equation}

We point out already the following important property of the shift $\tilde{\bm{m}}^{\tau,\bm{\xi}}_{t,s}(\bm{x})$:

\begin{lemma}%identifications theta m
\label{lemma:identification_theta_m}
Let $t<s$ in $[0,T]$ and $\bm{x}$ a point in $\R^{nd}$. Then,
\begin{equation}\label{eq:identification_theta_m}
\tilde{\bm{m}}^{\tau,\bm{\xi}}_{t,s}(\bm{x}) \, = \, \bm{\theta}_{\tau,s}(\bm{\xi}),
\end{equation}
taking $\tau=t$ and $\bm{\xi}=\bm{x}$.
\end{lemma}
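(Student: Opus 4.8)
The plan is to recognize \eqref{eq:identification_theta_m} as nothing more than the variation-of-constants (Duhamel) formula for the affine ODE that defines the flow. Fix $t<s$ in $[0,T]$ and $\bm{x}$ in $\R^{nd}$, and specialize the freezing parameters to $\tau=t$, $\bm{\xi}=\bm{x}$. Since $\bm{F}$ is H\"older continuous (in particular continuous), the integrand $v\mapsto A\bm{\theta}_{t,v}(\bm{x})+\bm{F}(v,\bm{\theta}_{t,v}(\bm{x}))$ in \eqref{Flow} is continuous on $[t,T]$; hence the curve $v\mapsto\bm{\theta}_{t,v}(\bm{x})$ is of class $C^1$ and solves $\partial_v\bm{\theta}_{t,v}(\bm{x})=A\bm{\theta}_{t,v}(\bm{x})+\bm{F}(v,\bm{\theta}_{t,v}(\bm{x}))$ with initial datum $\bm{\theta}_{t,t}(\bm{x})=\bm{x}$.

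First I would freeze the forcing term: writing $\bm{G}(v):=\bm{F}(v,\bm{\theta}_{t,v}(\bm{x}))$, the curve $v\mapsto\bm{\theta}_{t,v}(\bm{x})$ is the solution of the \emph{linear} inhomogeneous system $\partial_v y(v)=Ay(v)+\bm{G}(v)$, $y(t)=\bm{x}$. Next I would introduce the integrating factor $\psi(v):=e^{-A(v-t)}\bm{\theta}_{t,v}(\bm{x})$ and compute, using the product rule and the ODE above, $\partial_v\psi(v)=e^{-A(v-t)}\bigl(-A\bm{\theta}_{t,v}(\bm{x})+\partial_v\bm{\theta}_{t,v}(\bm{x})\bigr)=e^{-A(v-t)}\bm{G}(v)$. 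Integrating between $t$ and $s$ and using $\psi(t)=\bm{x}$ gives $e^{-A(s-t)}\bm{\theta}_{t,s}(\bm{x})=\bm{x}+\int_t^s e^{-A(v-t)}\bm{G}(v)\,dv$; multiplying by $e^{A(s-t)}$ yields $\bm{\theta}_{t,s}(\bm{x})=e^{A(s-t)}\bm{x}+\int_t^s e^{A(s-v)}\bm{F}(v,\bm{\theta}_{t,v}(\bm{x}))\,dv$, which is exactly $\tilde{\bm{m}}^{t,\bm{x}}_{t,s}(\bm{x})$ by \eqref{eq:def_tilde_m}. This closes the argument.

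There is essentially no hard step here; the only point deserving a word of care is the regularity needed to differentiate $v\mapsto\bm{\theta}_{t,v}(\bm{x})$, which follows from the continuity of the integrand in \eqref{Flow}, and the observation that since we have fixed one specific flow once and for all, no uniqueness statement for \eqref{Flow} is invoked — the identity holds for whichever selected solution. Alternatively, one can avoid differentiation entirely and work at the level of integral equations: insert the Duhamel candidate into the right-hand side of \eqref{Flow} and verify, using $\frac{d}{dv}e^{A(s-v)}=-Ae^{A(s-v)}$ together with Fubini's theorem to swap the order of integration, that it satisfies the same relation; but the integrating-factor computation above is the most economical route.
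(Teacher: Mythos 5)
Your argument is correct, but it proceeds in the opposite direction from the paper. You derive the variation-of-constants representation of the chosen flow directly: with $\tau=t$, $\bm{\xi}=\bm{x}$ the forcing along the flow and the frozen forcing coincide, so the integrating factor $e^{-A(v-t)}$ turns the flow equation into an exact derivative, and integration gives $\bm{\theta}_{t,s}(\bm{x})=e^{A(s-t)}\bm{x}+\int_t^s e^{A(s-v)}\bm{F}(v,\bm{\theta}_{t,v}(\bm{x}))\,dv=\tilde{\bm{m}}^{t,\bm{x}}_{t,s}(\bm{x})$. The paper instead checks that $\tilde{\bm{m}}^{t,\bm{x}}_{t,s}(\bm{x})$, as defined by \eqref{eq:def_tilde_m}, satisfies the \emph{same integral equation} as $\bm{\theta}_{t,s}(\bm{x})$ (the $\bm{F}$-terms being identical after freezing), and then applies Gr\"onwall's lemma to the difference. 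Your route is more economical in that it dispenses with Gr\"onwall altogether; the paper's route stays at the level of integral equations and so never differentiates the flow. On that point, one small caveat in your write-up: under assumption (\textbf{R}) the drift is only $L^\infty$ in time, so $v\mapsto\bm{F}(v,\bm{\theta}_{t,v}(\bm{x}))$ need not be continuous and the flow need not be $C^1$; it is, however, absolutely continuous with the stated derivative almost everywhere, and the integrating-factor identity integrates exactly as you claim (or one can fall back on your Fubini variant, which is closest in spirit to the paper's computation). This is a cosmetic fix, not a gap.
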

\begin{proof}
We start noticing that by construction, $\tilde{\bm{m}}^{\tau,\bm{\xi}}_{t,s}(\bm{x})$ satisfies
\[\tilde{\bm{m}}^{\tau,\bm{\xi}}_{t,s}(\bm{x}) \, = \, \bm{x} +\int_{t}^{s} \bigl[A\tilde{\bm{m}}^{t,\bm{x}}_{t,v}(\bm{x})+\bm{F}(v, \bm{\theta}_{\tau,v} (\bm{\xi}))\bigr] \, dv.\]
It then holds that
\[\vert \tilde{\bm{m}}^{t,\bm{x}}_{t,s}(\bm{x}) - \bm{\theta}_{t,s}(\bm{x}) \vert \, \le \, \int_{t}^{s}A\vert
\tilde{\bm{m}}^{t,\bm{x}}_{t,v}(\bm{x})-\bm{\theta}_{t,v}(\bm{x}) \vert \, dv.\]
The above Equation \eqref{eq:identification_theta_m} then follows immediately applying the Gr\"onwall lemma.
\end{proof}

Moreover, we can extend the smoothing effect \eqref{Smoothing_effect_of_S} of $p_S$ to the frozen density $\tilde{p}^{\tau,\bm{\xi}}$
through the representation \eqref{eq:definition_tilde_p}:

\begin{lemma}[Smoothing effects of the frozen density]
\label{lemma:Smoothing_effect_frozen}
Let $\vartheta,\varrho$ be two multi-indexes in $\N^n$ such that $\vert \varrho+\vartheta\vert \le 3$ and $\gamma$ in
$[0,\alpha)$. Under (\textbf{A}), there exists a constant $C:=C(\vartheta,\varrho,\gamma)$ such that
\begin{equation}\label{eq:Smoothing_effects_of_tilde_p}
\int_{\R^{nd}} \vert D^\varrho_{\bm{y}}D^\vartheta_{\bm{x}}
\tilde{p}^{ \tau,\bm{\xi}} (t,s,\bm{x},\bm{y}) \vert d^\gamma\bigl(\bm{y},\tilde{\bm{m}}^{\tau,\bm{\xi}}_{\tau,s}(\bm{x})\bigr) \,
d\bm{y} \, \le \, C (s-t)^{\frac{\gamma}{\alpha}-\sum_{i=k}^{n} \frac{\vartheta_k+\varrho_k}{\alpha_k}}
\end{equation}
for any $t<s$ in $[0,T]$, any $\bm{x}$ in $\R^{nd}$ and any frozen couple $(\tau,\bm{\xi})$ in $[0,T]\times\R^{nd}$.
In particular, if $\vert \vartheta \vert \neq 0$, it holds for any $\phi$ in $C^\gamma_d(\R^{nd})$ that
\begin{equation}\label{eq:Control_of_semigroup}
\bigl{\vert} D^\vartheta_{\bm{x}}\tilde{P}^{\tau,\bm{\xi}}_{t,s}\phi(\bm{x}) \bigr{\vert} \, \le \, C \Vert\phi\Vert_{C^\gamma_d}
(s-t)^{\frac{\gamma}{\alpha}-\sum_{k=1}^{n}\frac{\vartheta_k}{\alpha_k}}.
\end{equation}
\end{lemma}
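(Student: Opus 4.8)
The plan is to transfer the smoothing estimate \eqref{Smoothing_effect_of_S} for $p_S$ to the frozen density $\tilde p^{\tau,\bm\xi}$ via the explicit representation \eqref{eq:definition_tilde_p}, exploiting the scaling built into the matrix $\mathbb{M}_{s-t}$ and the pseudo-distance $d$. First I would perform the change of variables $\bm z = \mathbb{M}^{-1}_{s-t}(\bm y - \tilde{\bm m}^{\tau,\bm\xi}_{t,s}(\bm x))$, so that $d\bm y = \det(\mathbb{M}_{s-t})\,d\bm z$, which exactly cancels the prefactor $1/\det(\mathbb{M}_{s-t})$ in \eqref{eq:definition_tilde_p}. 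Under this substitution the left-hand side of \eqref{eq:Smoothing_effects_of_tilde_p} becomes an integral against $|D^{l}_{\bm z} p_S(s-t,\bm z)|$ (for the appropriate total order $l=|\vartheta+\varrho|$), and two things must be tracked: how the differential operators $D^\varrho_{\bm y} D^\vartheta_{\bm x}$ acting on $\tilde p^{\tau,\bm\xi}$ translate into $\bm z$-derivatives of $p_S$, and how the factor $d^\gamma(\bm y, \tilde{\bm m}^{\tau,\bm\xi}_{\tau,s}(\bm x))$ rescales.

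For the derivatives: differentiating \eqref{eq:definition_tilde_p} in $\bm y$ produces a factor $\mathbb{M}^{-1}_{s-t}$ per derivative, and since $[\mathbb{M}^{-1}_{s-t}]_{k,k} = (s-t)^{-(k-1)}I_d$, a $\bm y_k$-derivative contributes $(s-t)^{-(k-1)}$. Differentiating in $\bm x$ is subtler because of the shift $\tilde{\bm m}^{\tau,\bm\xi}_{t,s}(\bm x) = e^{A(s-t)}\bm x + \cdots$; here I would invoke the Scaling Lemma (Lemma~\ref{lemma:Scaling_Lemma}), or rather its computational heart \eqref{Proof:Scaling_Lemma1}, namely $[e^{A(s-t)}]_{i,j} = C_{i,j}(s-t)^{j-i}$ for $j\ge i$, to see that a $\bm x_k$-derivative brings down at worst a factor $(s-t)^{-(k-1)}$ as well (the diagonal term dominates the small-time behaviour). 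Collecting, a mixed derivative $D^\varrho_{\bm y}D^\vartheta_{\bm x}$ produces a factor bounded by $(s-t)^{-\sum_k (k-1)(\vartheta_k+\varrho_k)}$ times $|D^{|\vartheta+\varrho|}_{\bm z}p_S(s-t,\bm z)|$. Noting that $\tfrac{1}{\alpha_k} = \tfrac{1+\alpha(k-1)}{\alpha} = \tfrac1\alpha + (k-1)$, one checks that $\sum_k (k-1)(\vartheta_k+\varrho_k) + \tfrac{|\vartheta+\varrho|}{\alpha} = \sum_k \tfrac{\vartheta_k+\varrho_k}{\alpha_k}$, which is precisely the exponent appearing on the right-hand side after we account for the $(s-t)^{-|\vartheta+\varrho|/\alpha}$ coming from the scaling of $p_S$'s derivatives in \eqref{Smoothing_effect_of_S}.

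For the distance factor: by Lemma~\ref{lemma:identification_theta_m} (with the obvious extension to the $\tau$-frozen shift, or simply because $\tilde{\bm m}^{\tau,\bm\xi}_{\tau,s}(\bm x)$ plays the role of the recentering point), the argument $\bm y - \tilde{\bm m}^{\tau,\bm\xi}_{\tau,s}(\bm x)$ differs from $\mathbb{M}_{s-t}\bm z$ by a shift that is handled by the semigroup/flow structure; the point is that $d(\bm y, \tilde{\bm m}^{\tau,\bm\xi}_{\tau,s}(\bm x))$, when the $j$-th block of $\bm y - \tilde{\bm m}$ is of order $(s-t)^{j-1}|\bm z_j|$, scales like $(s-t)^{1/\alpha}$ times a function of $\bm z$ with at-most-polynomial growth. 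Raising to the power $\gamma$ gives $(s-t)^{\gamma/\alpha}$ times $d^\gamma(\bm z, 0)$ (up to constants), and since $\gamma < \alpha$, the residual integral $\int |\bm z|^{\gamma}\cdots |D^l_{\bm z}p_S(s-t,\bm z)|\,d\bm z$ is controlled by \eqref{Smoothing_effect_of_S} — actually one wants to first rescale $p_S(s-t,\cdot)$ to $p_S(1,\cdot)$ using $\alpha$-stable self-similarity so the remaining integral is a finite constant. Multiplying the three contributions $(s-t)^{\gamma/\alpha}$, $(s-t)^{-\sum_k \vartheta_k/\alpha_k}$ (for the $\bm x$-part, after the cancellation above only the $\vartheta$-exponent survives when $\varrho=0$; in general the full $\vartheta+\varrho$ exponent), and the finite constant from the self-similar integral yields \eqref{eq:Smoothing_effects_of_tilde_p}.

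Finally, \eqref{eq:Control_of_semigroup} follows from \eqref{eq:Smoothing_effects_of_tilde_p} by a standard cancellation argument: since $|\vartheta|\neq 0$, $\int D^\vartheta_{\bm x}\tilde p^{\tau,\bm\xi}(t,s,\bm x,\bm y)\,d\bm y = D^\vartheta_{\bm x}\!\int \tilde p^{\tau,\bm\xi}(t,s,\bm x,\bm y)\,d\bm y = D^\vartheta_{\bm x}(1) = 0$, so one may write
\[
D^\vartheta_{\bm x}\tilde P^{\tau,\bm\xi}_{t,s}\phi(\bm x) = \int_{\R^{nd}} D^\vartheta_{\bm x}\tilde p^{\tau,\bm\xi}(t,s,\bm x,\bm y)\,\bigl[\Pi\text{-wise increments of }\phi\bigr]\,d\bm y,
\]
bound the bracket by $\Vert\phi\Vert_{C^\gamma_d}\,d^\gamma(\bm y,\tilde{\bm m}^{\tau,\bm\xi}_{\tau,s}(\bm x))$ using the definition of the anisotropic H\"older norm (splitting $\gamma$ across the $n$ blocks and the at-most-cubic regularity available), and apply \eqref{eq:Smoothing_effects_of_tilde_p} with $\varrho=0$.

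The main obstacle I expect is bookkeeping the exact power of $(s-t)$ produced by the $\bm x$-derivatives through the exponential $e^{A(s-t)}$ in the shift $\tilde{\bm m}^{\tau,\bm\xi}_{t,s}(\bm x)$: one has to be sure that the off-diagonal entries $C_{i,j}(s-t)^{j-i}$ with $j>i$ only \emph{improve} the small-time bound (they contribute positive powers of $(s-t)$) and never spoil the claimed exponent, and that the integral term $\int_t^s e^{A(s-v)}\bm F(v,\bm\theta_{\tau,v}(\bm\xi))\,dv$ in $\tilde{\bm m}$ has no $\bm x$-dependence at all, so it is harmless for the $D^\vartheta_{\bm x}$ estimates (it matters only for locating the recentering point in the distance factor). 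The rest is a careful but routine change-of-variables and self-similarity computation built on \eqref{Smoothing_effect_of_S}, Lemma~\ref{lemma:Scaling_Lemma}, and Lemma~\ref{lemma:identification_theta_m}.
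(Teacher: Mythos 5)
Your proposal is correct and follows essentially the same route as the paper: the representation of $\tilde{p}^{\tau,\bm{\xi}}$ through $p_S$, the structure of $e^{A(s-t)}$ from the Scaling Lemma combined with $\mathbb{M}^{-1}_{s-t}$ to get the factor $(s-t)^{-1/\alpha_k}$ per derivative, the change of variables plus $\alpha$-self-similarity to reduce to the smoothing effect \eqref{Smoothing_effect_of_S} (using $\gamma/(1+\alpha(i-1))\le\gamma<\alpha$), and the cancellation argument $\int D^\vartheta_{\bm{x}}\tilde{p}\,d\bm{y}=0$ for \eqref{eq:Control_of_semigroup}. The only nitpick is that after multiplying by $\mathbb{M}^{-1}_{s-t}$ the off-diagonal entries of $e^{A(s-t)}$ contribute the \emph{same} power $(s-t)^{-(i-1)}$ as the diagonal one rather than a better one, and the appeal to Lemma \ref{lemma:identification_theta_m} is unnecessary since the recentering point is exactly the shift appearing in the density; neither affects the validity of the argument.
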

\begin{proof}
Let us start assuming that $\vert\vartheta\vert=1$ and $\vert\varrho\vert=1$ . The other cases can be treated in a similar way. \newline
Since $p_S$ is the density of an $\alpha$-stable process, we remember that the following $\alpha$-scaling property
\begin{equation}\label{eq:alpha_scaling_p_S}
p_S(t,\bm{y}) \, = \, t^{-\frac{nd}{\alpha}}p_S(1,t^{-\frac{1}{\alpha}}\bm{y})
\end{equation}
holds for any $t>0$ and any $\bm{y}$ in $\R^{nd}$. Fixed $i$ in $\llbracket1,n\rrbracket$, we then denote for simplicity
\[\mathbb{T}_{s-t}\, := \, (s-t)^{\frac{1}{\alpha}}\mathbb{M}_{s-t}\]
and we calculate the derivative of $\tilde{p}^{\tau,\bm{\xi}}$ with respect to $\bm{x}_i$ through
\begin{multline*}
\vert D_{\bm{x}_i}\tilde{p}^{\tau,\bm{\xi}}(t,s,\bm{x},\bm{y}) \vert \, = \, \Bigl{\vert} \frac{1}{\det (\mathbb{M}_{s-t})} D_{\bm{x}_i} \bigl[p_S\bigl(s-t,\mathbb{M}^{-1}_{s-t} (\tilde{\bm{m}}^{\tau,\bm{\xi}}_{t,s}(\bm{x})-\bm{y})\bigr)] \Bigr{\vert} \\
= \, \Bigl{\vert} \frac{1}{\det(\mathbb{T}_{s-t})}D_{\bm{x}_i}\bigl[p_S\bigl(1,\mathbb{T}^{-1}_{s-t}(\tilde{\bm{m}}^{\tau,\bm{\xi}}_{t,s}
(\bm{x})-\bm{y})\bigr)] \Bigr{\vert} \\
= \,\Bigl{\vert} \frac{1}{\det (\mathbb{T}_{s-t})} D_{\bm{z}}p_S\bigl(1,\cdot\bigr)(\mathbb{T}^{-1}_{s-t}
(\tilde{\bm{m}}^{\tau,\bm{\xi}}_{t,s}(\bm{x})-\bm{y}))\mathbb{T}^{-1}_{s-t}D_{\bm{x}_i}(\tilde{\bm{m}}^{\tau,\bm{\xi}}_{t,s}
(\bm{x})) \Bigr{\vert}.
\end{multline*}
where in the second equality we exploited the $\alpha$-scaling property \eqref{eq:alpha_scaling_p_S}. From Equation \eqref{Proof:Scaling_Lemma1} in the Scaling Lemma \ref{lemma:Scaling_Lemma}, we notice now that
\[\bigl{\vert} \mathbb{T}^{-1}_{s-t}D_{\bm{x}_i}(\tilde{\bm{m}}^{\tau,\bm{\xi}}_{t,s} (\bm{x})) \bigr{\vert} \, = \, \bigl{\vert}
\mathbb{T}^{-1}_{s-t}D_{\bm{x}_i}\bigl(e^{A(t-s)}(\bm{x})\bigr) \bigr{\vert} \, = \,
(s-t)^{-\frac{1}{\alpha}}\sum_{k=i}^{n}C_k(s-t)^{-(k-1)}(s-t)^{k-i} \, \le \,  C(s-t)^{-\frac{1+\alpha(i-1)}{\alpha}}\]
and we use it to show that
\[\vert D_{\bm{x}_i}\tilde{p}^{\tau,\bm{\xi}}(t,s,\bm{x},\bm{y}) \vert \, \le \, C(s-t)^{-\frac{1+\alpha(i-1)}{\alpha}}\frac{1}{\det
(\mathbb{T}_{s-t})} \bigl{\vert}D_{\bm{z}}p_S\bigl(1,\cdot)(\mathbb{T}^{-1}_{s-t}
(\tilde{\bm{m}}^{\tau,\bm{\xi}}_{t,s}(\bm{x})-\bm{y})\bigr)\bigr{\vert}.\]
Similarly, if we fix $j$ in $\llbracket1,n\rrbracket$, it holds that
\[\vert D_{\bm{y}_j}D_{\bm{x}_i}\tilde{p}^{\tau,\bm{\xi}}(t,s,\bm{x},\bm{y}) \vert \, \le \, C(s-t)^{-\frac{1}{\alpha_i}-\frac{1}{\alpha_j}}
\frac{1}{\det (\mathbb{T}_{s-t})} \bigl{\vert}D^2_{\bm{z}}p_S\bigl(1,\cdot)(\mathbb{T}^{-1}_{s-t}(\tilde{\bm{m}}^{\tau,\bm{\xi}}_{t,s}
(\bm{x})-\bm{y})\bigr)\bigr{\vert}.\]
It is then easy to show by iteration of the same argument that
\begin{equation}\label{eq:derivative_frozen_density}
\vert D^\varrho_{\bm{y}}D^\vartheta_{\bm{x}}\tilde{p}^{\tau,\bm{\xi}}(t,s,\bm{x},\bm{y}) \vert \, \le \,
C(s-t)^{-\sum_{k=1}^{n}\frac{\varrho_k+\vartheta_k}{\alpha_k}}\frac{1}{\det (\mathbb{T}_{s-t})}
\bigl{\vert}D^{\vert \varrho+\vartheta\vert}_{\bm{z}}p_S\bigl(1,\cdot)(\mathbb{T}^{-1}_{s-t} (\tilde{\bm{m}}^{\tau,\bm{\xi}}_{t,s}
(\bm{x})-\bm{y})\bigr)\bigr{\vert}.
\end{equation}
Control \eqref{eq:Smoothing_effects_of_tilde_p} follows immediately from the analogous smoothing effect for $p_S$ (cf.\ Equation
\eqref{Smoothing_effect_of_S}) and the change of variables $\bm{z}=\mathbb{T}^{-1}_{s-t} (\tilde{\bm{m}}^{\tau,
\bm{\xi}}_{t,s} (\bm{x})-\bm{y})$. Indeed,
\begin{multline*}
\int_{\R^{nd}} \vert D^\varrho_{\bm{y}}D^\vartheta_{\bm{x}}\tilde{p}^{ \tau,\bm{\xi}} (t,s,\bm{x},\bm{y}) \vert
d^\gamma\bigl(\bm{y},\tilde{\bm{m}}^{\tau,\bm{\xi}}_{\tau,s}(\bm{x})\bigr) \,
d\bm{y} \\
\le \, C(s-t)^{-\sum_{k=1}^{n}\frac{\varrho_k+\vartheta_k}{\alpha_k}}\int_{\R^{nd}}\frac{1}{\det(\mathbb{T}_{s-t})}\bigl{\vert}D^{\vert
\varrho+\vartheta\vert}_{\bm{z}}p_S\bigl(1,\cdot\bigr)(\mathbb{T}^{-1}_{s-t}(\tilde{\bm{m}}^{\tau,\bm{\xi}}_{t,s} (\bm{x})-\bm{y}))
\bigr{\vert} d^\gamma\bigl(\bm{y},\tilde{\bm{m}}^{\tau,\bm{\xi}}_{t,s}(\bm{x})\bigr) \, d\bm{y} \\
= \,(s-t)^{-\sum_{k=1}^{n}\frac{\varrho_k+\vartheta_k}{\alpha_k}}\int_{\R^{nd}}\bigl{\vert}D^{\vert \varrho+\vartheta\vert}_{\bm{z}}p_S(1,
\bm{z})\bigr{\vert}d^\gamma\bigl(\mathbb{T}_{s-t}(\bm{z})+\tilde{\bm{m}}^{\tau,\bm{\xi}}_{t,s}(\bm{x}),\tilde{\bm{m}}^{\tau,
\bm{\xi}}_{t,s}(\bm{x})\bigr) \,d\bm{y}
\end{multline*}
To conclude, we notice that
\[d^\gamma\bigl(\mathbb{T}_{s-t}(\bm{z})+\tilde{\bm{m}}^{\tau,\bm{\xi}}_{t,s}(\bm{x}),\tilde{\bm{m}}^{\tau,\bm{\xi}}_{t,s}(\bm{x})\bigr) \,
\le \, C\sum_{i=1}^{n}\vert (s-t)^{\frac{1+\alpha(i-1)}{\alpha}}\bm{z}_i\vert^{\frac{\gamma}{1+\alpha(i-1)}} \, = \,
(s-t)^{\frac{\gamma}{\alpha}}\sum_{i=1}^{n}\vert \bm{z}_i\vert^{\frac{\gamma}{1+\alpha(i-1)}}\]
and use it to write that
\begin{multline*}
\int_{\R^{nd}} \bigl{\vert} D^\varrho_{\bm{y}}D^\vartheta_{\bm{x}}\tilde{p}^{\tau,\bm{\xi}} (t,s,\bm{x},\bm{y}) \bigr{\vert}
d^\gamma\bigl(\bm{y},\tilde{\bm{m}}^{\tau,\bm{\xi}}_{\tau,s}(\bm{x})\bigr) \, d\bm{y} \\
\le \,C(s-t)^{\frac{\gamma}{\alpha}-\sum_{k=1}^{n}\frac{\varrho_k+\vartheta_k}{\alpha_k}}\sum_{i=1}^{n}\int_{\R^{nd}}\bigl{\vert}D^{\vert
\varrho+\vartheta\vert}_{\bm{z}}p_S(1,z)\bigr{\vert}\vert \bm{z}_i\vert^{\frac{\gamma}{1+\alpha(i-1)}} \, d\bm{y}\, \le \, C(s-t)^{\frac{
\gamma}{\alpha}- \sum_{k=1}^{n}\frac{\varrho_k+\vartheta_k}{\alpha_k}}
\end{multline*}
where in the last passage we used the smoothing effect for $p_S$ (Equation \eqref{Smoothing_effect_of_S}), recalling that for any $i$ in
$\llbracket 1,n\rrbracket$, it holds that
\[\frac{\gamma}{1+\alpha(i-1)} \, \le \, \gamma \, <\, \alpha\]
and we have thus the required integrability.

To prove instead the second inequality \eqref{eq:Control_of_semigroup}, we use a cancellation argument to write
\begin{multline*}
\bigl{\vert} D^\vartheta_{\bm{x}}\tilde{P}^{\tau,\bm{\xi}}_{t,s}\phi(\bm{x}) \bigr{\vert} \, = \, \Bigl{\vert}\int_{\R^{nd}}
D^\vartheta_{\bm{x}}\tilde{p}^{\tau,\bm{\xi}}(t,s,\bm{x},\bm{y}) \bigl[\phi(\bm{y})- \phi(\tilde{\bm{m}}^{\tau,\bm{\xi}}_{t,s}
(\bm{x}))\bigr] \, d\bm{y}\Bigr{\vert} \\
\le \, \int_{\R^{nd}} \vert D^\vartheta_{\bm{x}}\tilde{p}^{\tau,\bm{\xi}}(t,s,\bm{x},\bm{y})\vert\, \vert
\phi(\bm{y})- \phi(\tilde{\bm{m}}^{\tau,\bm{\xi}}_{t,s} (\bm{x}))\vert \, d\bm{y}.
\end{multline*}
But since we assume $\phi$ to be in $C^\gamma_d(\R^{nd})$, we can control the last expression as
\[\bigl{\vert} D^\vartheta_{\bm{x}}\tilde{P}^{\tau,\bm{\xi}}_{t,s}\phi(\bm{x}) \bigr{\vert} \, \le \, \Vert
\phi\Vert_{C^\gamma_d}\int_{\R^{nd}} d^\gamma\bigl(\bm{y}, \tilde{\bm{m}}^{\tau,\bm{\xi}}_{t,s} (\bm{x})\bigr) \vert
D^\vartheta_{\bm{x}}\tilde{p}^{\tau,\bm{\xi}}(t,s,\bm{x},\bm{y})\vert  \, d\bm{y} \, \le \, C \Vert\phi\Vert_{C^\gamma_d}
(s-t)^{\frac{\gamma}{\alpha}-\sum_{k=1}^{n}\frac{\vartheta_k}{\alpha_k}}\]
where in the last passage we used Equation \eqref{eq:Smoothing_effects_of_tilde_p}.
\end{proof}

We can define now our candidate to be the mild solution of the "frozen" IPDE. If it exists and it is
smooth enough, such a candidate appears to be the representation of the solution of \eqref{Frozen_PDE} obtained through the Duhamel
principle. For this reason, the following expression:
\begin{equation}\label{Duhamel_representation_of_proxy}
\tilde{u}^{\tau,\bm{\xi}}(t,x) \, := \, \tilde{P}^{\tau,\bm{\xi}}_{t,T}g(\bm{x}) + \int_{t}^{T}\tilde{P}^{\tau,\bm{\xi}}_{t,s}f(s,\bm{x}) \,
ds \quad \text{ for any $(t,\bm{x})$ in }[0,T]\times \R^{nd},
\end{equation}
will be called the Duhamel representation of the proxy. As it seems, under our assumption (\textbf{A}) such a representation is robust
enough to satisfy Schauder estimates similar to \eqref{equation:Schauder_Estimates}. Since the proof of this result is quite long, we will
postpone it to Section $4.2$ for clarity.

\begin{prop}(Schauder Estimates for the Proxy)
\label{prop:Schauder_Estimates_for_proxy}
Under (\textbf{A}), there exists a constant $C:=C(T)$ such that
\begin{equation}\label{equation:Schauder_Estimates_for_proxy}
\Vert \tilde{u}^{\tau,\bm{\xi}} \Vert_{L^\infty(C^{\alpha+\beta}_{b,d})} \, \le \, C\bigl[\Vert g \Vert_{C^{\alpha+\beta}_{b,d}} + \Vert f
\Vert_{L^\infty(C^{\beta}_{b,d})}\bigr]
\end{equation}
for any freezing couple $(\tau,\bm{\xi})$ in $[0,T]\times\R^{nd}$.
\end{prop}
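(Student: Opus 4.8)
The plan is to bound separately the two pieces of the Duhamel representation \eqref{Duhamel_representation_of_proxy} --- the terminal contribution $\tilde P^{\tau,\bm\xi}_{t,T}g$ and the source contribution $w(t,\bm x):=\int_t^T\tilde P^{\tau,\bm\xi}_{t,s}f(s,\bm x)\,ds$ --- and, for each of them, to control every component of the anisotropic norm $\Vert\cdot\Vert_{C^{\alpha+\beta}_{b,d}}$. First I would record that this norm splits into the $L^\infty$ part plus, for each level $i\in\llbracket1,n\rrbracket$, the directional $C^{(\alpha+\beta)/(1+\alpha(i-1))}$ seminorm; since $\beta<1$ one has $(\alpha+\beta)/(1+\alpha(i-1))<1$ for every $i\ge2$, so on the degenerate levels only a genuine Hölder increment must be estimated, whereas for $i=1$ the exponent is $\alpha+\beta\in(1,2)$ and one must control $D_{\bm x_1}$ of the solution together with its $(\alpha+\beta-1)$-Hölder seminorm along $\bm x_1$. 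The $L^\infty$ bound is immediate: $\tilde p^{\tau,\bm\xi}(t,s,\bm x,\cdot)$ is a probability density, so $\tilde P^{\tau,\bm\xi}_{t,s}$ is an $L^\infty$-contraction and $\Vert\tilde u^{\tau,\bm\xi}\Vert_{L^\infty}\le\Vert g\Vert_{L^\infty}+T\Vert f\Vert_{L^\infty}$.

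The engine of the estimate is an on-diagonal/off-diagonal splitting driven by the smoothing effect \eqref{eq:Smoothing_effects_of_tilde_p}--\eqref{eq:Control_of_semigroup}, always combined with the cancellation $\int D^\vartheta_{\bm x}\tilde p^{\tau,\bm\xi}\,d\bm y=0$ for $\vartheta\neq0$. I illustrate it on the most delicate piece, the $(\alpha+\beta-1)$-seminorm of $D_{\bm x_1}w$ along $\bm x_1$: given two points at $d$-distance $h$ in the first block, cut the time integral at $s=t+h^\alpha$. On $[t,t+h^\alpha]$ apply \eqref{eq:Control_of_semigroup} with $\vartheta=e_1$ and $\gamma=\beta$ --- legitimate since $\beta<\alpha$ under \textbf{(P)} and $\beta<1\le\alpha$ otherwise --- which produces the integrable singularity $(s-t)^{(\beta-1)/\alpha}$, whose integral is $\asymp h^{\alpha+\beta-1}$ precisely because $\alpha+\beta>1$. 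On $[t+h^\alpha,T]$ use the mean value theorem in $\bm x_1$ together with \eqref{eq:Control_of_semigroup} for $\vartheta=2e_1$, producing $h\,(s-t)^{(\beta-2)/\alpha}$, whose integral is again $\asymp h\cdot h^{\alpha+\beta-2}=h^{\alpha+\beta-1}$, this time because $\alpha+\beta<2$ forces the time integral to be governed by its lower endpoint. Dividing by $h^{\alpha+\beta-1}$ yields the seminorm bound $C\Vert f\Vert_{C^\beta_{b,d}}$. The remaining pieces are handled by exactly the same mechanism, with the level-$i$ scalings $\alpha_i$ in place of $\alpha_1=\alpha$: the $L^\infty$ bound on $D_{\bm x_1}w$, the degenerate-level seminorms $i\ge2$ (no derivative, just an increment, with $\gamma=\beta$ and the cutoff at the natural scale $(s-t)^{1/\alpha}$), and the terminal term $\tilde P^{\tau,\bm\xi}_{t,T}g$, which is treated as a ``stability'' statement ($\Vert\tilde P^{\tau,\bm\xi}_{t,T}g\Vert_{C^{\alpha+\beta}_d}\le C\Vert g\Vert_{C^{\alpha+\beta}_d}$) by splitting at $h$ versus $(T-t)^{1/\alpha}$ and absorbing the already-available $C^{\alpha+\beta}_d$ regularity of $g$ through a Taylor expansion of $g$ in the $\bm x_1$ variable against which $\tilde p^{\tau,\bm\xi}$ is integrated.

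The main obstacle is the cancellation step in the super-critical regime $\alpha<1$. Subtracting the first-order Taylor polynomial of $g$ (or of $f(s,\cdot)$) in the non-degenerate variable makes one pair $D_{\bm x_1}\tilde p^{\tau,\bm\xi}(t,s,\bm x,\cdot)$ with the affine increment $\bm y\mapsto(\bm y-\tilde{\bm m}^{\tau,\bm\xi}_{t,s}(\bm x))_1$, which is not absolutely integrable when $\alpha\le1$ because the stable density $p_S$ has no first moment (cf.\ the constraint $\gamma<\alpha$ in \eqref{Smoothing_effect_of_S}). This is resolved not pointwise but by the Besov duality of Section~$4.1$: one reads $D_{\bm x_1}\tilde p^{\tau,\bm\xi}(t,s,\bm x,\cdot)$ as an element of a negative-order Besov space with the correct $(s-t)$-scaling, pairs it with the Besov regularity carried by the increment of $g$ (resp.\ $f(s,\cdot)$), and recovers exactly the power of $(s-t)$ of the naive computation; the condition $1-\alpha<(\alpha-\beta)/(1+\alpha(n-1))$ in \textbf{(P)} is precisely what keeps this pairing admissible at the deepest level $i=n$ of the chain, hence at all levels. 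Once this point is secured, summing the level-by-level bounds gives \eqref{equation:Schauder_Estimates_for_proxy} with a constant depending only on $T$ and the structural data in \textbf{(A)}, and uniform in $(\tau,\bm\xi)$ since the smoothing estimate \eqref{eq:Smoothing_effects_of_tilde_p} is itself uniform in the freezing couple.
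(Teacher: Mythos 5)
Your overall architecture matches the paper's: split the Duhamel representation \eqref{Duhamel_representation_of_proxy} into the semigroup and Green-kernel parts, treat the $L^\infty$ bounds, and control the H\"older moduli through a diagonal/off-diagonal splitting in time, using \eqref{eq:Smoothing_effects_of_tilde_p}--\eqref{eq:Control_of_semigroup} with first-derivative bounds near the singularity and a Taylor expansion with second derivatives away from it; this is exactly how the paper handles the Green kernel in Lemmas \ref{lemma:supremum_norm_proxy}--\ref{lemma:Holder_modulus_proxy_Deg}.

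The genuine gap is in the terminal-data term $\tilde P^{\tau,\bm\xi}_{t,T}g$, i.e.\ the bounds on $D_{\bm x_1}\tilde P^{\tau,\bm\xi}_{t,T}g$ and on its $(\alpha+\beta-1)$-H\"older modulus. Your plan (``Taylor expansion of $g$ in the $\bm x_1$ variable against which $\tilde p^{\tau,\bm\xi}$ is integrated'') cannot be closed by \eqref{eq:Control_of_semigroup}: that estimate is capped at $\gamma<\alpha$, while here one must exploit the full regularity $\alpha+\beta$ of $g$, which exceeds the moment threshold $\alpha$ of $p_S$ for \emph{every} $\alpha\in(0,2)$ (since $\beta>0$), not only in the super-critical case $\alpha<1$ and not only through the first moment in the non-degenerate variable. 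The obstruction sits in the degenerate directions: after cancellation one would need $\int|D\tilde p^{\tau,\bm\xi}|\,d^{\alpha+\beta}(\bm y,\tilde{\bm m}^{\tau,\bm\xi}_{t,T}(\bm x))\,d\bm y$, which \eqref{eq:Smoothing_effects_of_tilde_p} does not provide. The paper's resolution has three ingredients absent from your proposal: (i) the scaling identity $D_{\bm x_1}\tilde p^{\tau,\bm\xi}=\sum_j C_j(T-t)^{j-1}D_{\bm y_j}\tilde p^{\tau,\bm\xi}$ (Lemma \ref{lemma:link_derivative_density}), which converts the non-degenerate derivative into degenerate-direction derivatives compensated by time powers; (ii) for $j=1$, integration by parts onto $D_{\bm y_1}g$ followed by cancellation at exponent $\alpha+\beta-1<\alpha$ (this is where (\textbf{P}) enters); (iii) for $j\ge2$, the First Besov Control (Lemma \ref{lemma:First_Besov_COntrols}), a quantitative bound on $\int_{\R^{(n-1)d}}\Vert D_{\bm y_j}D^l_{\bm x_1}\tilde p^{\tau,\bm\xi}(t,s,\bm x,\bm y_{\smallsetminus j},\cdot)\Vert_{B^{-(\alpha_j+\beta_j)}_{1,1}}d\bm y_{\smallsetminus j}$ proved via the thermic characterization with the tuned cutoff $\delta_j=1+\alpha(j-1)$, the duality being applied one degenerate coordinate at a time (not to $D_{\bm x_1}\tilde p^{\tau,\bm\xi}$ as a whole paired with ``the increment of $g$'', which is not a pairing Section $4.1$ makes available). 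Without (i)--(iii) the gradient sup-norm bound and, a fortiori, the diagonal-regime estimate $\vert D_{\bm x_1}\tilde P_{t,T}g(\bm x)-D_{\bm x_1}\tilde P_{t,T}g(\bm x')\vert\lesssim \vert(\bm x-\bm x')_1\vert (T-t)^{\frac{\alpha+\beta-2}{\alpha}}$ cannot be reached. Finally, the constraint $1-\alpha<\frac{\alpha-\beta}{1+\alpha(n-1)}$ plays no role in this proposition; in the paper it is used for the Second Besov Control (Lemma \ref{lemma:Second_Besov_COntrols}) in the a priori estimates for the perturbative term, so attributing the admissibility of the pairing here to that condition is a misdiagnosis.
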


We conclude this section showing that $\tilde{u}^{\tau,\bm{\xi}}$ is indeed a mild solution in
$L^\infty(0,T;C^{\alpha+\beta}_{b,d}(\R^{nd}))$ of the "frozen" IPDE \eqref{Frozen_PDE}.
Moreover, the converse statement is also true. If regular enough, any solution of \eqref{Frozen_PDE} corresponds to the Duhamel
representation \eqref{Duhamel_representation_of_proxy}.

\begin{prop}
\label{prop:frozen_Duhamel_Formula}
Let us assume to be under assumption (\textbf{A}). Then,
\begin{itemize}
  \item the function $\tilde{u}^{\tau,\bm{\xi}}$ defined in \eqref{Duhamel_representation_of_proxy} is a mild solution
      in $L^\infty(0,T;C^{\alpha+\beta}_{b,d}(\R^{nd}))$ of the "frozen" IPDE \eqref{Frozen_PDE} for any freezing couple $(\tau,\bm{\xi})$
      in $[0,T]\times \R^{nd}$;
  \item Fixed a freezing couple $(\tau,\bm{\xi})$ in $[0,T]\times \R^{nd}$, let $\tilde{v}^{\tau,\bm{\xi}}$ be a mild solution in
      $L^\infty(0,T;C^{\alpha+\beta}_{b,d}(\R^{nd}))$ of the IPDE \eqref{Frozen_PDE}. Then,
    \[\tilde{v}^{\tau,\bm{\xi}}(t,x) \, = \, \tilde{P}^{\tau,\bm{\xi}}_{t,T}g(\bm{x}) +  \int_{t}^{T}\tilde{P}^{\tau,\bm{\xi}}_{t,s}f
    (s,\bm{x}) \, ds.\]
\end{itemize}
\end{prop}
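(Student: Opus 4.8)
The plan is to prove the two assertions separately, the first being a regularization/approximation argument and the second a uniqueness argument, both relying crucially on the Schauder estimate for the proxy (Proposition \ref{prop:Schauder_Estimates_for_proxy}) and on the smoothing effects of the frozen density (Lemma \ref{lemma:Smoothing_effect_frozen}).

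For the first bullet, I would fix a freezing couple $(\tau,\bm{\xi})$ and take approximating sequences $f_m$, $g_m$, $\bm{F}_m$ as in Definition \ref{definition:mild_sol}. The induced flow $\bm{\theta}^m_{\tau,s}(\bm{\xi})$ associated with $A\bm{x}+\bm{F}_m$ converges locally uniformly to $\bm{\theta}_{\tau,s}(\bm{\xi})$ by Gr\"onwall (stability of the flow, cf.\ the Appendix), hence the frozen shifts $\tilde{\bm{m}}^{\tau,\bm{\xi},m}_{t,s}$ converge to $\tilde{\bm{m}}^{\tau,\bm{\xi}}_{t,s}$ uniformly on compacts. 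Since $\bm{F}_m$ is smooth and bounded, the operator $L_\alpha+\langle A\bm{x}+\bm{F}_m(t,\bm{\theta}^m_{\tau,t}(\bm{\xi})),D_{\bm{x}}\rangle$ is a genuine (time-inhomogeneous) Ornstein--Uhlenbeck operator, so the function
\[
\tilde{u}^{\tau,\bm{\xi}}_m(t,\bm{x}) \, := \, \tilde{P}^{\tau,\bm{\xi},m}_{t,T}g_m(\bm{x}) + \int_t^T \tilde{P}^{\tau,\bm{\xi},m}_{t,s}f_m(s,\bm{x})\,ds
\]
is a classical solution of the regularized frozen IPDE; smoothness in $\bm{x}$ follows from differentiating under the integral using \eqref{eq:derivative_frozen_density}, and the time regularity from the semigroup property together with the equation itself. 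By Proposition \ref{prop:Schauder_Estimates_for_proxy} applied to $\tilde{u}^{\tau,\bm{\xi}}_m$, the sequence is bounded in $L^\infty(0,T;C^{\alpha+\beta}_{b,d}(\R^{nd}))$; the $C^{\beta}$-continuity of the integrands and dominated convergence (again via the smoothing bounds of Lemma \ref{lemma:Smoothing_effect_frozen}, which are uniform in the frozen couple) give pointwise convergence of $\tilde{u}^{\tau,\bm{\xi}}_m$ to $\tilde{u}^{\tau,\bm{\xi}}$, and an interpolation argument upgrades this to convergence in $L^\infty(0,T;C^{\alpha+\beta'}_{b,d})$ for any $\beta'<\beta$; absorbing a slightly larger H\"older index in the data then yields the required convergence. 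This exhibits $\tilde{u}^{\tau,\bm{\xi}}$ as a mild solution in the sense of Definition \ref{definition:mild_sol}.

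For the second bullet, let $\tilde{v}^{\tau,\bm{\xi}}$ be a mild solution of \eqref{Frozen_PDE}, so that it is an $L^\infty(0,T;C^{\alpha+\beta}_{b,d})$-limit of classical solutions $v_m$ of the regularized frozen equations with data $(f_m,g_m,\bm{F}_m)$. For each such $v_m$, since it is a \emph{classical} (bounded, sufficiently smooth) solution of a linear IPDE with smooth bounded coefficients whose generator is $\partial_t + L_\alpha + \langle A\bm{x}+\bm{F}_m(t,\bm{\theta}^m_{\tau,t}(\bm{\xi})),D_{\bm{x}}\rangle$, the classical Duhamel/It\^o argument for the associated (Ornstein--Uhlenbeck) semigroup gives
\[
v_m(t,\bm{x}) \, = \, \tilde{P}^{\tau,\bm{\xi},m}_{t,T}g_m(\bm{x}) + \int_t^T \tilde{P}^{\tau,\bm{\xi},m}_{t,s}f_m(s,\bm{x})\,ds;
\]
concretely one applies the semigroup to the equation and integrates $\frac{d}{ds}\bigl(\tilde{P}^{\tau,\bm{\xi},m}_{t,s}v_m(s,\cdot)\bigr)(\bm{x})$ from $t$ to $T$. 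Passing to the limit $m\to\infty$ on both sides — using the same stability of the flows and the uniform smoothing bounds of Lemma \ref{lemma:Smoothing_effect_frozen} to handle the right-hand side — identifies $\tilde{v}^{\tau,\bm{\xi}}$ with the Duhamel representation \eqref{Duhamel_representation_of_proxy}, which by the first bullet equals $\tilde{u}^{\tau,\bm{\xi}}$.

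The step I expect to be the main obstacle is justifying the passage to the limit in the Duhamel formula uniformly in the frozen couple and, more delicately, controlling the time regularity of the approximants $\tilde{u}^{\tau,\bm{\xi}}_m$ near the terminal time $s=T$: differentiating the time integral $\int_t^T\tilde{P}^{\tau,\bm{\xi},m}_{t,s}f_m\,ds$ produces a singular kernel in $s-t$, and one must split off the diagonal part $f_m(t,\bm{x})$ and use the cancellation together with \eqref{Smoothing_effect_of_S} (exactly as in the proof of Lemma \ref{lemma:Smoothing_effect_frozen}) to see that the resulting integrals converge absolutely; the bookkeeping of the anisotropic exponents $\alpha_k$ there, and checking the integrability thresholds $\gamma/(1+\alpha(i-1))<\alpha$ imposed by assumption (\textbf{P}), is where the real work lies.
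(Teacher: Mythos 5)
There is a genuine gap in your argument, and it sits exactly where the paper is most careful: you regularize the flow, replacing $\bm{\theta}_{\tau,\cdot}(\bm{\xi})$ by the flow $\bm{\theta}^m_{\tau,\cdot}(\bm{\xi})$ of $A\bm{x}+\bm{F}_m$, and you claim that $\bm{\theta}^m_{\tau,\cdot}(\bm{\xi})\to\bm{\theta}_{\tau,\cdot}(\bm{\xi})$ "by Gr\"onwall". Under assumption (\textbf{A}) the drift $\bm{F}$ is only H\"older continuous, so the ODE $\dot{\bm{\theta}}=A\bm{\theta}+\bm{F}(s,\bm{\theta})$ has in general \emph{no} uniqueness, and the flow $\bm{\theta}_{\tau,\cdot}(\bm{\xi})$ appearing in \eqref{Frozen_PDE} is one \emph{fixed, arbitrarily chosen} solution. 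Gr\"onwall requires a Lipschitz bound and is not available; the sensitivity estimate of Lemma \ref{lemma:Controls_on_Flow1} only gives $d(\bm{\theta}_{t,s}(\bm{x}),\bm{\theta}_{t,s}(\bm{x}'))\le C\Vert\bm{F}\Vert_H[d(\bm{x},\bm{x}')+(s-t)^{1/\alpha}]$, with an additive term reflecting precisely this non-uniqueness, and says nothing about stability under perturbation of the vector field. Hence the regularized flows (or any subsequential limit of them) need not converge to the chosen $\bm{\theta}_{\tau,\cdot}(\bm{\xi})$; your approximants would then be classical solutions of a frozen equation built on a \emph{different} flow, their shifts need not converge to $\tilde{\bm{m}}^{\tau,\bm{\xi}}_{t,s}$, and the limit cannot be identified with $\tilde{u}^{\tau,\bm{\xi}}$ of \eqref{Duhamel_representation_of_proxy} nor exhibited as a mild solution of \eqref{Frozen_PDE}. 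The paper avoids this entirely by never regularizing the flow: it regularizes only $(f,g,\bm{F})$ and freezes $\bm{F}_m$ along the \emph{same} fixed flow, so the regularized proxy operator is $L_\alpha+\langle A\bm{x}+\bm{F}_m(t,\bm{\theta}_{\tau,t}(\bm{\xi})),D_{\bm{x}}\rangle$ and no flow-convergence issue arises. Your proof can be repaired by doing the same, but as written the convergence step is unjustified. A secondary weak point: the detour through convergence in $C^{\alpha+\beta'}_{b,d}$ for $\beta'<\beta$ followed by "absorbing a slightly larger H\"older index in the data" is not available (the data are only $C^{\beta}$, $C^{\alpha+\beta}$); the clean route is linearity — apply the proxy Schauder estimate \eqref{equation:Schauder_Estimates_for_proxy} to the difference of two approximants to get Cauchy-ness in the full $L^\infty(C^{\alpha+\beta}_{b,d})$ norm, which is what the paper does.

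On the second bullet, your route is genuinely different from the paper's and, modulo the flow issue above, viable: you derive the Duhamel identity for each regularized classical solution by differentiating $s\mapsto\tilde{P}^{\tau,\bm{\xi}}_{t,s}v_m(s,\cdot)(\bm{x})$ and integrating, whereas the paper first removes the time-dependent drift by the change of unknown $h_m(t,\bm{x})=\tilde{v}^{\tau,\bm{\xi}}_m(t,\bm{x}-\int_t^Te^{A(t-s)}\bm{F}_m(s,\bm{\theta}_{\tau,s}(\bm{\xi}))ds)$, localizes with cut-offs to gain integrability, and solves the resulting Ornstein--Uhlenbeck equation explicitly via the Fourier transform and the symbol $\Psi^{ou}$. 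Your semigroup argument is shorter but requires justifying $\partial_s\tilde{P}^{\tau,\bm{\xi}}_{t,s}\phi=\tilde{P}^{\tau,\bm{\xi}}_{t,s}\bigl[L_\alpha+\langle A\cdot+\bm{F}_m(s,\bm{\theta}_{\tau,s}(\bm{\xi})),D\rangle\bigr]\phi$ for smooth bounded $\phi$ (the commutation of the unbounded drift $\langle A\bm{x},D_{\bm{x}}\rangle$ with the semigroup is the point the paper's Fourier computation settles by hand); the paper's route buys a self-contained verification at the price of the cut-off and Fourier bookkeeping.
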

\begin{proof}
The first assertion is quite straightforward. Let $\{f_m\}_{m\in \N}$, $\{g_m\}_{m\in \N}$ and $\{\bm{F}_m\}_{m\in \N}$ be three sequences of
smooth and bounded coefficients such that $f_m \to f$ in $L^\infty\bigl(0,T;C^\beta_{b,d}(\R^{nd})\bigr)$, $g_m \to g$ in
$C^{\alpha+\beta}_{b,d}(\R^{nd})$ and $\Vert \bm{F}_m-\bm{F}\Vert_H \to 0$. Denoting now by $\Bigl(\tilde{P}^{m,\tau, \bm{\xi}}_{t,s}
\Bigr)_{t\le s}$ the semigroup associated with the "regularized" operator
\[L_\alpha +\langle A\bm{x}+\bm{F}_m(t,\bm{\theta}_{\tau,t}(\bm{\xi})),D_{\bm{x}}\rangle,\]
it is not difficult to show that for any fixed $m$ in $\N$, the following
\[\tilde{u}^{\tau,\bm{\xi}}_m \, := \, \tilde{P}^{m,\tau,\bm{\xi}}_{t,T}g_m(\bm{x})+\int_{t}^{T}\tilde{P}^{m,\tau,\bm{\xi}}_{t,s}f_m
(s,\bm{x} )\, ds\]
is a classical solution of the "frozen" IPDE \eqref{Frozen_PDE} with regularized coefficients $f_m,g_m$ and $\bm{F}_m$.
A detailed guide of this result can be found, even if in the diffusive setting, in Lemma $3.3$ in \cite{Krylov:Priola10}. Using now the
Schauder Estimates \eqref{equation:Schauder_Estimates_for_proxy} for the regularized solutions $\tilde{u}^{\tau,\bm{\xi}}_m$, it follows
immediately that $\tilde{u}^{\tau,\bm{\xi}}_m \to \tilde{u}^{\tau,\bm{\xi}}$ in $L^\infty\bigl(0,T;C^{\alpha+\beta}_{b,d}(\R^{nd})\bigr)$
and thus, that
$\tilde{u}^{\tau,\bm{\xi}}$ is a mild solution of \eqref{Frozen_PDE} in $L^\infty\bigl(0,T;C^{\alpha+\beta}_{b,d}(\R^{nd})\bigr)$.

To prove the second statement, we start fixing a freezing couple $(\tau,\bm{\xi})$ in $[0,T]\times \R^{nd}$ and consider three sequences
$\{f_m\}_{m\in \N}$, $\{g_m\}_{m\in \N}$ and $\{\bm{F}_m\}_{m\in \N}$ of bounded and smooth coefficients such that $f_m \to f$ in
$L^\infty\bigl(0,T;C^\beta_{b,d}(\R^{nd})\bigr)$, $g_m \to g$ in $C^{\alpha+\beta}_{b,d}(\R^{nd})\bigr)$ and $\Vert \bm{F}_m-\bm{F}\Vert_H
\to 0$. They can be constructed through mollification.\newline
Since $\tilde{v}^{\tau,\bm{\xi}}$ is a mild solution of the "frozen" IPDE \eqref{Frozen_PDE}, we
know that there exists a sequence $\{\tilde{v}^{\tau,\bm{\xi}}_m\}_{m \in \N}$ of classical solutions of the "regularized frozen" IPDE
\eqref{Frozen_PDE} with coefficients $f_m,g_m$ and $\bm{F}_m$ such that $\tilde{v}^{\tau,\bm{\xi}}_m \to \tilde{v}^{\tau,\bm{\xi}}$ in $L^\infty\bigl(0,T;C^{\alpha+\beta}_{b,d}(\R^{nd})\bigr)$. Fixed
$m$ in $\N$, we then denote
\[h_m(t,\bm{x}) \, : = \, \tilde{v}^{\tau,\bm{\xi}}_m\bigl(t,\bm{x}-\int_{t}^{T}e^{A(t-s)}\bm{F}_m(s, \bm{\theta}_{\tau,s}(\bm{\xi}))\,
ds\bigr)\]
for any $t$ in $[0,T]$ and any $x$ in $\R^{nd}$. Direct calculations imply that
\begin{gather*}
D_{\bm{x}} h_m(t,\bm{x}) \, = \, D_{\bm{x}} \tilde{v}^{\tau,\bm{\xi}}_m\bigl(t,\bm{x}-\int_{t}^{T}e^{A(t-s)}\bm{F}_m(s,
\bm{\theta}_{\tau,s}(\bm{\xi})) \,
ds\bigr) \\
L_\alpha h_m(t,\bm{x}) \, = \, L_\alpha \tilde{v}^{\tau,\bm{\xi}}_m\bigl(t,\bm{x}-\int_{t}^{T}e^{A(t-s)}\bm{F}_m(s,
\bm{\theta}_{\tau,s}(\bm{\xi})) \, ds\bigr)
\end{gather*}
and
\begin{multline*}
  \partial_t h_m(t,\bm{x}) \, = \, \partial_t \tilde{v}^{\tau,\bm{\xi}}_m\bigl(t,\bm{x}-\int_{t}^{T}e^{A(t-s)}\bm{F}_m(s,
  \bm{\theta}_{\tau,s}(\bm{\xi})) \, ds\bigr)\\
  + \bigl{\langle} \bm{F}_m(t,\bm{\theta}_{\tau,t}(\bm{\xi}))-A\int_{t}^{T}e^{A(t-s)}\bm{F}_m(s, \bm{\theta}_{\tau,s}(\bm{\xi})) \, ds,
  D_{\bm{x}}\tilde{v}^{\tau,\bm{\xi}}_m\bigl(t,\bm{x}-\int_{t}^{T}e^{A(t-s)}\bm{F}_m(s,\bm{\theta}_{\tau,s}(\bm{\xi}))\,ds\bigr)\bigr{\rangle}.
\end{multline*}
Remembering that $\tilde{v}^{\tau,\bm{\xi}}_m$ is a classical solution of Equation \eqref{Frozen_PDE} replacing therein $f$, $g$ and
$\bm{F}$ with coefficients $f_m,g_m$ and $\bm{F}_m$, it follows immediately that the function $h_m$ solves for any $m$ in $\N$ the following:
\begin{equation}\label{Frozen_PDE_No_Perturb}
\begin{cases}
   \partial_t h_m(t,\bm{x})+L_\alpha h_m(t,\bm{x}) + \langle A \bm{x}, D_{\bm{x}} h_m(t,\bm{x})\rangle  \, = \, -l_m(t,\bm{x}),\\
    h_m(T,\bm{x}) \, = \, g_m(\bm{x})
  \end{cases}
\end{equation}
where $l_m(t,\bm{x}):=f_m\bigl(t,\bm{x}-\int_{t}^{T}e^{A(t-s)}\bm{F}_m(s, \bm{\theta}_{\tau,s}(\bm{\xi})) \, ds\bigr)$.\newline
Since we are going to exploit reasonings in Fourier spaces, we need however to have
integrability properties on the solution $h_m$. For this reason, we introduce now a family $\{\rho_R\}_{R>0}$ of smooth functions such that
any $\rho_R$ is equal to $1$ in
$B(0,R)$ and vanishes outside $B(0,R+1)$. We then denote for any $R>0$,
\[h_{m,R}(t,\bm{x}) \, := \,h_m(t,\bm{x})\rho_R(\bm{x}).\]
It is then straightforward that $h_{m,R}$ solves
\begin{equation}\label{Frozen_PDE_No_Perturb2}
\begin{cases}
   \partial_t h_{m,R}(t,\bm{x})+L_\alpha h_{m,R}(t,\bm{x}) + \langle A \bm{x}, D_{\bm{x}} h_{m,R}(t,\bm{x})\rangle  \, = \, -\tilde{l}_{m,R}(t,\bm{x}),\\
    h_{m,R}(T,\bm{x}) \, = \, g_{m,R}(\bm{x})
  \end{cases}
\end{equation}
where $g_{m,R}(\bm{x})=g_m(\bm{x})\rho_R(\bm{x})$ and
\[\tilde{l}_{m,R}(t,\bm{x}) \, = \, \rho_R(\bm{x})l_m(t,\bm{x})+h_m(t,\bm{x})L_\alpha\rho_R(\bm{x})+ \int_{\R^d}\bigl[h_m(t,\bm{x}+By)-h_m(t,\bm{x})\bigr]
\bigl[\rho_R(\bm{x}+By)-\rho_R(\bm{x}) \bigr]\, \nu_\alpha(dy).\]
Noticing now that $\tilde{l}_{m,R}$ is integrable with integrable Fourier transform, we can  apply the Fourier transform in space to
equation \eqref{Frozen_PDE_No_Perturb2} in order to write that
\[
\begin{cases}
   \partial_t \widehat{h}_{m,R}(t,\bm{p})+\mathcal{F}_x \bigl(\bigl[L_\alpha + \langle A \bm{x}, D_{\bm{x}}\rangle\bigr] h_{m,R}\bigr)(t,\bm{p})  \,
   =
   \, -\widehat{\tilde{l}}_{m,R}(t,\bm{p}), \\
    \widehat{h}_{m,R}(T,\bm{p}) \, = \, \widehat{g}_{m,R}(\bm{p}).
  \end{cases}
\]
We remember in particular that the above operator $L_\alpha + \langle A \bm{x}, D_{\bm{x}}\rangle$ has an associated L\'evy symbol
$\Psi^{ou}(\bm{p})$ and, following Section $3.3.2$ in \cite{book:Applebaum09}, it holds that
\[\mathcal{F}_x \bigl(\bigl[L_\alpha + \langle A \bm{x}, D_{\bm{x}}\rangle\bigr] h_{m,R}\bigr)(t,\bm{p}) \, = \,
\Psi^{ou}(\bm{p})\widehat{h}_{m,R}(t,\bm{p}).\]
We can then use it to show that $\widehat{h}_{m,R}$ is a classical solution of the following equation:
\[
\begin{cases}
   \partial_t \widehat{h}_{m,R}(t,\bm{p})+ \Psi^{ou}(\bm{p})\widehat{h}_{m,R}(t,\bm{p})  \, = \, -\widehat{\tilde{l}}_{m,R}(t,\bm{p}), \\
    \widehat{h}_{m,R}(T,\bm{p}) \, = \, \widehat{g}_{m,R}(\bm{p}).
  \end{cases}
\]
The above equation can be easily solved by integration in time, giving the following representation of $\widehat{h}_{m,R}(t,\bm{p})$:
\[\widehat{h}_{m,R}(t,\bm{p}) \, = \, e^{(T-t)\Psi^{ou}(\bm{p})}\widehat{g}_{m,R}(\bm{p}) + \int_{t}^{T} e^{(s-t)\Psi^{ou}(\bm{p})} \widehat{\tilde{l}}_{m,R} (s,\bm{p})\, ds.\]
In order to go back to $\tilde{v}^{\tau,\bm{\xi}}_m$, we apply now the inverse Fourier transform to write that
\[h_{m,R}(t,\bm{x}) \, = \, P^{ou}_{T-t}g_{m,R}(\bm{x}) + \int_{t}^{T}P^{ou}_{s-t}\tilde{l}_{m,R}(s,\bm{x}) \, ds,\]
remembering that $\bigl(P^{ou}_t\bigr)_{t\ge 0}$ is the convolution Markov semigroup associated with the Ornstein-Uhlenbeck operator
$L_\alpha + \langle A \bm{x}, D_{\bm{x}}\rangle $. Letting $m$ go to $\infty$, it then follows immediately that $g_{m,R}\to g_m$,
$h_{m,R}\to h_{m}$ and $\tilde{l}_{m,R} \to l_m$. A change of variable allows us to show the Duhamel representation, at least in the
regularized setting:
\[\tilde{v}^{\tau,\bm{\xi}}_m(t,\bm{y})  = \, P^{ou}_{T-t}g_m\Bigl(\bm{y}+\int_{t}^{T}e^{A(t-s)}\bm{F}_m(s,\bm{\theta}_{\tau,s}(\bm{\xi}))\,
ds\Bigr) + \int_{t}^{T}P^{ou}_{s-t}f_m\Bigl(s,\bm{y}+\int_{t}^{s}e^{A(t-u)}\bm{F}_m(u, \bm{\theta}_{\tau,u}(\bm{\xi}))\, du\Bigr) \, ds.\]
Letting $m$ goes to zero and remembering that $\tilde{v}^{\tau,\bm{\xi}}_m \to \tilde{v}^{\tau,\bm{\xi}}$, $f_m \to f$ , $g_m \to g$
and $\bm{F}_m \to \bm{F}$ in the right functional spaces, we can conclude that
$\tilde{v}^{\tau,\bm{\xi}}=\tilde{u}^{\tau,\bm{\xi}}$.
\end{proof}

\subsection{Expansion along the Proxy}

We are going to use now the "frozen" IPDE \eqref{Frozen_PDE} in order to derive appropriate quantitative
controls of a solution $u$ of Equation \eqref{Degenerate_Stable_PDE}. Up to now, the freezing parameters $(\tau,\bm{\xi})$ were set free but
they will be later chosen appropriately depending on the control we aim to establish.\newline
The main idea is to exploit the Duhamel formula (Proposition \ref{prop:frozen_Duhamel_Formula}) for the proxy to expand any solution $u$ of
the original IPDE \eqref{Degenerate_Stable_PDE} along the proxy.
To make things more precise, let $u$ be a mild solution in $L\bigl(0,T;C^{\alpha+\beta}_{b,d}(\R^{nd})\bigr)$ of the IPDE
\eqref{Degenerate_Stable_PDE}. Mollifying if necessary, it is possible to construct three sequences $\{f_m\}_{m\in \N}$, $\{g_m\}_{m\in \N}$
and $\{\bm{F}_m\}_{m\in \N}$ of bounded and smooth functions with bounded derivatives such that $f_m \to f$ in
$L^\infty\bigl(0,T;C^\beta_{b,d}(\R^{nd})\bigr)$, $g_m \to g$ in $C^{\alpha+\beta}_{b,d}(\R^{nd})$ and
$\Vert \bm{F}_m-\bm{F}\Vert_H \to 0$. Since $u$ is a mild solution of \eqref{Degenerate_Stable_PDE}, we know that there exists a smooth
sequence $\{u_m\}_{m\in \N}$ \textcolor{black}{converging to} $u$ in $L\bigl(0,T;C^{\alpha+\beta}_{b,d}(\R^{nd})\bigr)$ \textcolor{black}{and such that for any fixed $m$ in $\N$,} $u_m$ solves in a classical sense the "regularized" IPDE \eqref{Regularizied_PDE}.\newline
Exploiting now that $\bm{F}_m$ is bounded and smooth, we can define the "regularized" flow $\bm{\theta}^m_{\tau,\cdot}
(\bm{\xi})$ as the \emph{unique} flow satisfying
\begin{equation}\label{eq:def_reg_flow}
\bm{\theta}^m_{\tau,t}(\bm{\xi}) \, = \, \bm{\xi} +
\int_{\tau}^{t}\bigl[A\bm{\theta}^m_{\tau,s}(\bm{\xi})+\bm{F}_m(s,\bm{\theta}^m_{\tau,s}(\bm{\xi}))\bigr] \, ds, \quad t \, \in \, [\tau,T].
\end{equation}
It is then easy to notice that $u_m$ is also a classical solution in $L\bigl(0,T;C^{\alpha+\beta}_{b,d}(\R^{nd})\bigr)$ of
\[\partial_t u_m(t,\bm{x}) +L_\alpha u_m(t,\bm{x})+ \langle A \bm{x}  +\bm{F}_m(t,\bm{\theta}^m_{\tau,t}(\bm{\xi})),
D_{\bm{x}}u_m(t,\bm{x})\rangle \, = \, -\bigl[f_m(t,\bm{x})+R_m^{\tau,\bm{\xi}}(s,\bm{x})\bigr] \]
on $(0,T)\times \R^{nd}$ with terminal condition $g_m$. Above, we have denoted
\begin{equation}\label{eq:def_remainder_regul}
R_m^{\tau,\bm{\xi}}(t,\bm{x})\,:=\, \langle \bm{F}_m(t,\bm{x})-\bm{F}_m(t,\bm{\theta}^m_{\tau,t}(\bm{\xi})),D_{\bm{x}}u_m(t,\bm{x})
\rangle.
\end{equation}

\textcolor{black}{Since clearly, $R^{\tau,\bm{\xi}}_m$ is in $L^\infty\bigl(0,T;C^{\alpha+\beta}_{b,d}(\R^{nd})\bigr)$,} we can use the Duhamel Formula (Proposition \ref{prop:frozen_Duhamel_Formula}) for the proxy to write that
\[
u_m(t,\bm{x}) \, = \,  \tilde{P}^{m,\tau,\bm{\xi}}_{t,T}g_m(\bm{x}) +  \int_{t}^{T}\tilde{P}^{m,\tau,\bm{\xi}}_{t,s}\bigl[f_m(s,\bm{x})+
R_m^{\tau,\bm{\xi}}(s,\bm{x})\bigr] \, ds, \quad (t,\bm{x}) \, \in \, (0,T)\times\R^{nd}
\]
where $\Bigl(\tilde{P}^{m,\tau,\bm{\xi}}_{t,s}\Bigr)_{t\le s}$ is the semigroup associated with the operator $L_\alpha+\langle
A\bm{x}+\bm{F}_m(t,\bm{\theta}^m_{\tau,t}(\bm{\xi})),D_{\bm{x}} \rangle$.

The reasoning above is summarized in the following Duhamel-type formula that allows to expand any classical solution $u_m$ of the
"regularized" IPDE \eqref{Regularizied_PDE} along the "regularized frozen" proxy.

\begin{prop}[Duhamel Type Formula]
\label{prop:Expansion_along_proxy}
Let $(\tau,\bm{\xi})$ a freezing couple in $[0,T]\times\R^{nd}$. Under (\textbf{A}), any classical solution $u_m$ of the "regularized" IPDE \eqref{Regularizied_PDE} can be represented  as
\begin{equation}\label{eq:Expansion_along_proxy}
u_m(t,\bm{x}) \, = \, \tilde{u}^{\tau,\bm{\xi}}_m(t,\bm{x}) + \int_{t}^{T}  \tilde{P}^{m,\tau,\bm{\xi}}_{t,s}R^{m,\tau,\bm{\xi}}(s,\bm{x})\,
ds, \quad (t,\bm{x}) \, \in \, (0,T)\times\R^{nd}
\end{equation}
where $R_m^{\tau,\bm{\xi}}$ is as in \eqref{eq:def_remainder_regul} and $\tilde{u}^{\tau,\bm{\xi}}_m$ is defined through the Duhamel
representation \eqref{Duhamel_representation_of_proxy} with the "regularized" coefficients $f_m$, $g_m$.
\end{prop}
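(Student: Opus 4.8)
The plan is to recognize \eqref{eq:Expansion_along_proxy} as a Duhamel formula in disguise: once $(\tau,\bm{\xi})$ is fixed, I want to show that the classical solution $u_m$ of the regularized IPDE \eqref{Regularizied_PDE} is itself a (classical, hence mild) solution of a regularized version of the frozen IPDE \eqref{Frozen_PDE} whose source is $f_m$ augmented by the remainder $R^{\tau,\bm{\xi}}_m$ of \eqref{eq:def_remainder_regul}; applying the second assertion of Proposition \ref{prop:frozen_Duhamel_Formula} and then splitting off the contribution of the data $(f_m,g_m)$ will produce exactly \eqref{eq:Expansion_along_proxy}.

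First I would note that, because $\bm{F}_m$ is smooth with bounded derivatives, the Cauchy--Lipschitz theorem makes the regularized flow $\bm{\theta}^m_{\tau,\cdot}(\bm{\xi})$ of \eqref{eq:def_reg_flow} well defined and $C^1$ on $[\tau,T]$, so that $t\mapsto \bm{F}_m(t,\bm{\theta}^m_{\tau,t}(\bm{\xi}))$ is a bounded function of time only. Adding and subtracting $\langle \bm{F}_m(t,\bm{\theta}^m_{\tau,t}(\bm{\xi})),D_{\bm{x}}u_m(t,\bm{x})\rangle$ in \eqref{Regularizied_PDE} then shows that $u_m$ solves, classically on $(0,T)\times\R^{nd}$ and with terminal datum $g_m$,
\[\partial_t u_m + L_\alpha u_m + \langle A\bm{x}+\bm{F}_m(t,\bm{\theta}^m_{\tau,t}(\bm{\xi})),D_{\bm{x}}u_m\rangle \,=\, -\bigl[f_m + R^{\tau,\bm{\xi}}_m\bigr],\]
which is precisely \eqref{Frozen_PDE} with $\bm{F}$, $\bm{\theta}$, $g$, $f$ replaced by $\bm{F}_m$, $\bm{\theta}^m$, $g_m$, $f_m+R^{\tau,\bm{\xi}}_m$. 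Next I would check that this modified source is admissible: since $\bm{F}_m(t,\cdot)$ is bounded and Lipschitz uniformly in $t$, $D_{\bm{x}}u_m$ is bounded with bounded derivatives, and $\bm{\theta}^m_{\tau,t}(\bm{\xi})$ depends on $t$ alone, the remainder $R^{\tau,\bm{\xi}}_m$ is smooth and bounded with bounded spatial derivatives, hence lies in $C^\infty_b((0,T)\times\R^{nd})$ and a fortiori in $L^\infty(0,T;C^{\beta}_{b,d}(\R^{nd}))$. In particular $u_m$, being a smooth bounded classical solution of this regularized frozen problem, is a mild solution of it in the sense of Definition \ref{definition:mild_sol} (use constant approximating sequences).

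At this point I would invoke the second item of Proposition \ref{prop:frozen_Duhamel_Formula} with coefficients $f_m+R^{\tau,\bm{\xi}}_m$, $g_m$, $\bm{F}_m$ to get
\[u_m(t,\bm{x}) \,=\, \tilde{P}^{m,\tau,\bm{\xi}}_{t,T}g_m(\bm{x}) + \int_t^T \tilde{P}^{m,\tau,\bm{\xi}}_{t,s}\bigl[f_m(s,\bm{x})+R^{\tau,\bm{\xi}}_m(s,\bm{x})\bigr]\,ds,\]
and then, using linearity of $\tilde{P}^{m,\tau,\bm{\xi}}_{t,s}$, I would separate the terms carrying $f_m$ and $g_m$; by definition these assemble to the Duhamel representation $\tilde{u}^{\tau,\bm{\xi}}_m$ of \eqref{Duhamel_representation_of_proxy} built with the regularized data, leaving the remaining term $\int_t^T \tilde{P}^{m,\tau,\bm{\xi}}_{t,s}R^{\tau,\bm{\xi}}_m(s,\bm{x})\,ds$. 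This is exactly \eqref{eq:Expansion_along_proxy}.

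The only step requiring genuine care is the passage through Proposition \ref{prop:frozen_Duhamel_Formula} with the perturbed source, i.e.\ making sure $u_m$ really qualifies there as a mild solution of the regularized frozen IPDE; this is precisely where the boundedness and smoothness of $\bm{F}_m$, $u_m$ and of the flow $\bm{\theta}^m$ enter. If one prefers to sidestep that proposition, the alternative is to set $w_m(t,\bm{x}) := u_m(t,\bm{x}) - \tilde{u}^{\tau,\bm{\xi}}_m(t,\bm{x}) - \int_t^T \tilde{P}^{m,\tau,\bm{\xi}}_{t,s}R^{\tau,\bm{\xi}}_m(s,\bm{x})\,ds$, differentiate the Duhamel terms (licit thanks to the smoothing estimates of Lemma \ref{lemma:Smoothing_effect_frozen} and the regularity of the data), verify that $w_m$ is a bounded classical solution of the homogeneous frozen equation with zero terminal value, and conclude $w_m\equiv 0$ by uniqueness.
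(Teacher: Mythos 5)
Your proposal is correct and follows essentially the same route as the paper: add and subtract the frozen drift $\langle \bm{F}_m(t,\bm{\theta}^m_{\tau,t}(\bm{\xi})),D_{\bm{x}}u_m\rangle$ in \eqref{Regularizied_PDE} so that $u_m$ solves the frozen IPDE with source $f_m+R^{\tau,\bm{\xi}}_m$, then apply the second item of Proposition \ref{prop:frozen_Duhamel_Formula} and split off $\tilde{u}^{\tau,\bm{\xi}}_m$ by linearity. Your extra care in checking that $R^{\tau,\bm{\xi}}_m$ is admissible and that $u_m$ qualifies as a mild solution of the frozen problem is exactly the point the paper treats briefly, so no gap.
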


Thanks to the above representation (Equation \eqref{eq:Expansion_along_proxy}), we know that, since we have already shown the suitable control for the frozen solution $u^{\tau,\bm{\xi}}_m$ (namely, Proposition \ref{prop:Schauder_Estimates_for_proxy} with $f_m,g_m$), the main term which remains to be investigated in order to show the Schauder Estimates (Theorem \ref{theorem:Schauder_Estimates}) is the remainder
\begin{equation}\label{eq:qqq}
  \int_{t}^{T}\tilde{P}^{m,\tau,\bm{\xi}}_{t,s}R_m^{\tau,\bm{\xi}}(s,\bm{x}) \, ds,
\end{equation}
that represents exactly the error in the expansion along the proxy.

To be precise, we could have passed to the limit in Equation \eqref{eq:Expansion_along_proxy} in order to obtain a similar Duhamel-type
formula for a mild solution $u$ in $L^\infty\bigl([0,T];C^{\alpha+\beta}_{b,d}(\R^{nd})\bigr)$. However, a problem appears when trying to
give a precise meaning at the limit for the remainder contribution \eqref{eq:qqq}. We already know that the limit exists point-wise by
difference, but for our approach to work, we need to establish precise quantitative controls on this term. Such estimates could be obtained
through duality techniques in Besov spaces (cf. Section $5.1$) but only at the expense of fixing
already the freezing couple as $(\tau,\bm{\xi})=(t,\bm{x})$. The drawback of this method is that it does not allow to differentiate Equation
\eqref{eq:Expansion_along_proxy}, which is needed to estimate $D_{\bm{x}_1}u$.

In order to show the suitable estimates for \eqref{eq:qqq}, we will need at first an additional constraint on the behaviour of
the system. In particular, we will say to be under assumption (\textbf{A'}) when assumption (\textbf{A}) is considered and if moreover,
\begin{description}
  \item[(ST)] we assume to be in a small time interval, i.e.\ $T\le 1$.
\end{description}

Under these stronger assumptions, we will then be  able to show in Section $5$ below that the following control holds:

\begin{prop}[A Priori Estimates]
\label{prop:A_Priori_Estimates}
Let $u$ be a mild solution in $L^\infty\bigl([0,T];C^{\alpha+\beta}_{b,d}(\R^{nd})\bigr)$ of IPDE \eqref{Degenerate_Stable_PDE}. Under
(\textbf{A'}), there exists a constant $C\ge 1$ such that
\begin{equation}\label{eq:A_Priori_Estimates}
 \Vert u \Vert_{L^\infty(C^{\alpha+\beta}_{b,d})} \, \le \, Cc_0^{\frac{\beta-\gamma_n}{\alpha}}\bigl[\Vert g
\Vert_{C^{\alpha+\beta}_{b,d}}+\Vert f \Vert_{L^\infty(C^{\beta}_{b,d})}\bigr]+C\bigl(c_0^{\frac{\beta-\gamma_n}{\alpha}}\Vert
\bm{F}\Vert_H + c_0^{\frac{\alpha+\beta-1}{1+\alpha(n-1)}}\bigr)\Vert u \Vert_{L^\infty(C^{\alpha+\beta}_{b,d})}
\end{equation}
where $c_0 \in (0,1)$ is assumed to be fixed but chosen later.
\end{prop}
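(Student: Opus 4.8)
The plan is to fix a generic mild solution $u$ and, as in Proposition~\ref{prop:Expansion_along_proxy}, work with a smooth approximating sequence $u_m$ solving the regularized IPDE, so that the Duhamel-type expansion \eqref{eq:Expansion_along_proxy} holds along the regularized frozen proxy; at the very end I would pass to the limit $m\to\infty$, using that $u_m\to u$ in $L^\infty(C^{\alpha+\beta}_{b,d})$ and that $\Vert\bm{F}_m-\bm{F}\Vert_H\to 0$. The key point is that $\Vert u\Vert_{L^\infty(C^{\alpha+\beta}_d)}$ decomposes (by definition of the anisotropic norm) into the supremum norm plus, for each level $i\in\llbracket 1,n\rrbracket$, the H\"older seminorm of order $(\alpha+\beta)/(1+\alpha(i-1))$ of $x\mapsto \Pi^i_z u$. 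For each such quantity I would estimate the two pieces in \eqref{eq:Expansion_along_proxy} separately: the frozen part $\tilde u^{\tau,\bm\xi}_m$ is already controlled by Proposition~\ref{prop:Schauder_Estimates_for_proxy} (applied with $f_m,g_m$), which after passing to the limit gives the first term $Cc_0^{(\beta-\gamma_n)/\alpha}[\Vert g\Vert + \Vert f\Vert]$ — here the exponent $c_0^{(\beta-\gamma_n)/\alpha}$ is forced by the rescaling in Section~3.3 that reduces an arbitrary horizon to the unit interval, with $c_0$ the scaling parameter. So the real work is the remainder term $\int_t^T \tilde P^{m,\tau,\bm\xi}_{t,s} R_m^{\tau,\bm\xi}(s,\cdot)\,ds$ with $R_m^{\tau,\bm\xi}(s,\bm x)=\langle \bm F_m(s,\bm x)-\bm F_m(s,\bm\theta^m_{\tau,s}(\bm\xi)),D_{\bm x}u_m(s,\bm x)\rangle$.

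The strategy for the remainder is the classical parametrix bootstrap: one chooses the freezing couple adapted to the quantity being estimated. For the supremum bound and for the H\"older seminorms at the degenerate levels $i\ge 2$ (where no pointwise gradient is available), I would freeze $(\tau,\bm\xi)=(t,\bm x)$ and estimate $\int_t^T\tilde P^{t,\bm x}_{t,s}R^{t,\bm x}_m(s,\cdot)(\bm x)\,ds$ using the duality between Besov spaces announced in Section~4.1: $R^{t,\bm x}_m$ pairs a H\"older-type factor (coming from $\bm F_m(s,\cdot)-\bm F_m(s,\bm\theta^m_{t,s}(\bm x))$, whose size along level $j$ is controlled by $\Vert\bm F\Vert_H$ times $d(\bm x,\bm\theta^m_{t,s}(\bm x))^{\gamma_j+\beta}$, and the flow deviation $d(\bm x,\bm\theta_{t,s}(\bm x))$ is itself controlled by a power of $(s-t)$ via the stability lemmas in the Appendix) against $D_{\bm x}u_m$, which we only know to lie in a negative-order Besov space; the smoothing effect of the frozen density \eqref{eq:Smoothing_effects_of_tilde_p} in Lemma~\ref{lemma:Smoothing_effect_frozen} then produces an integrable time singularity of the form $(s-t)^{-1+\varepsilon}$ with the residual power summing, after integration over $[t,T]\subset[0,c_0]$, to exactly $c_0^{(\beta-\gamma_n)/\alpha}$ on the part multiplying $\Vert\bm F\Vert_H\Vert u\Vert$ and to $c_0^{(\alpha+\beta-1)/(1+\alpha(n-1))}$ on the part multiplying only $\Vert u\Vert$. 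For the non-degenerate level $i=1$, where a genuine gradient estimate is needed, I would instead differentiate \eqref{eq:Expansion_along_proxy} in $\bm x_1$ directly (this is exactly why the \emph{forward} parametrix with a free freezing point was set up), use the cancellation trick from the proof of Lemma~\ref{lemma:Smoothing_effect_frozen} (subtracting $\phi(\tilde{\bm m}^{\tau,\bm\xi}_{t,s}(\bm x))$ before differentiating), and bound the resulting kernel with \eqref{eq:Control_of_semigroup}; the off-diagonal term coming from the $\bm\xi$-dependence of the frozen shift is handled by the same Besov duality together with the scaling Lemma~\ref{lemma:Scaling_Lemma}. Summing the $n+1$ contributions and absorbing constants into $C$ yields \eqref{eq:A_Priori_Estimates}.

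The main obstacle I expect is bookkeeping the time singularities so that everything remains integrable under assumption~(\textbf{P}): at level $n$ the smoothing effect \eqref{eq:Smoothing_effects_of_tilde_p} contributes $(s-t)^{-(\gamma_n+\beta)/\alpha}$ (to regularize the $\bm F_n$-increment) together with the loss from the $(\alpha+\beta)$-th derivative distributed across the chain, and one needs $\gamma_n=1+\alpha(n-2)$ precisely so that $(\beta-\gamma_n)/\alpha>-1$, i.e. the integral $\int_t^T(s-t)^{(\beta-\gamma_n)/\alpha}\,ds$ converges — this is the raison d'\^etre of the extra regularity on the degenerate drift components in (\textbf{R}). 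The super-critical subtlety is that $D_{\bm x}u$ is only a distribution, so one cannot integrate by parts naively against $\tilde p$; the Besov duality $\langle D_{\bm x}u_m, \cdot\rangle = -\langle u_m, D_{\bm x}(\cdot)\rangle$ moves the derivative onto the smooth kernel at the cost of one more power $(s-t)^{-1/\alpha_1}$, and the condition $1-\alpha<(\alpha-\beta)/(1+\alpha(n-1))$ in (\textbf{P}) is exactly what keeps the worst such term integrable. Once these exponent inequalities are checked level by level, the estimate \eqref{eq:A_Priori_Estimates} follows by collecting the powers of $c_0$ and noting that the only terms not already absorbed into the data norms carry a strictly positive power of $c_0\in(0,1)$, which will later allow absorption of $\Vert u\Vert_{L^\infty(C^{\alpha+\beta}_{b,d})}$ into the left-hand side for $c_0$ small enough.
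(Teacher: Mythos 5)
Your overall architecture (regularize, expand along the frozen proxy via Proposition \ref{prop:Expansion_along_proxy}, reuse Proposition \ref{prop:Schauder_Estimates_for_proxy} for $\tilde u^{\tau,\bm\xi}_m$, control the remainder by Besov duality after moving the derivative onto the kernel, then let $m\to\infty$) matches the paper, and your treatment of the degenerate remainder components is essentially the Second Besov Control (Lemma \ref{lemma:Second_Besov_COntrols}). But there are two genuine gaps. First, you misidentify $c_0$: it is not a time-horizon scaling parameter (that is $\lambda$ in Section $3.3$), and it is not true that $[t,T]\subset[0,c_0]$. In the paper $c_0$ is the threshold defining the diagonal/off-diagonal dichotomy, $T-t\lessgtr c_0 d^\alpha(\bm x,\bm x')$, through the transition time $t_0=(t+c_0d^\alpha(\bm x,\bm x'))\wedge T$ of \eqref{eq:def_t0}. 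The negative powers $c_0^{(\beta-\gamma_n)/\alpha}$, $c_0^{(\alpha+\beta-2)/\alpha}$ arise when factors $(t_0-t)^{\text{neg.\ exponent}}$ are converted into powers of $d(\bm x,\bm x')$ in the diagonal regime (already in the proxy Lemmas \ref{lemma:Holder_modulus_proxy_Non-Deg} and \ref{lemma:Holder_modulus_proxy_Deg}), not from any rescaling of $T$; so your derivation of the exponents in \eqref{eq:A_Priori_Estimates} would not come out as stated.

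Second, and more seriously, your plan for the spatial H\"older moduli has no mechanism to compare $u(t,\bm x)$ and $u(t,\bm x')$ in the diagonal regime. Freezing at $(t,\bm x)$ for one point and $(t,\bm x')$ for the other only works off-diagonally (where the two contributions are bounded separately); when $T-t\ge c_0d^\alpha(\bm x,\bm x')$ one must have the \emph{same} frozen density at both points in order to Taylor-expand in $\bm x$, yet one cannot keep a single freezing point on all of $[t,T]$ either, because the local off-diagonal portion $s\in[t,t_0]$ requires freezing along the points themselves. The paper resolves this with the change-of-frozen-point representation \eqref{eq:Change_of_Freez_point_HOlder}, which splits the time integral at $t_0$ and produces the discontinuity term $\tilde P^{\tau,\bm\xi}_{t,t_0}u(t_0,\bm x)-\tilde P^{\tau,\bm\xi'}_{t,t_0}u(t_0,\bm x)$; this term is controlled through the sensitivity estimates for the flow and the frozen shift (Lemmas \ref{lemma:Controls_on_Flow1}, \ref{lemma:Controls_on_means1} and \ref{lemma:Controls_on_means}), and it is precisely the source of the small factors $c_0^{(\alpha+\beta-1)/(1+\alpha(n-1))}$ and $c_0^{(\alpha+\beta)/(1+\alpha(n-1))}$ multiplying $\Vert u\Vert_{L^\infty(C^{\alpha+\beta}_{b,d})}$, which the later circular argument needs. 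Your proposal contains neither the regime split at $t_0$, nor the change-of-freezing-point formula, nor the flow-sensitivity lemmas (nor the partial smoothing effect, Lemma \ref{lemma:Partial_Smoothing_Effect}, needed because the drift increment at level $j$ has H\"older index $\gamma_j+\beta$ which may exceed $\alpha$), so the estimate cannot be closed along the route you describe.
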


\textcolor{black}{
We remark already that in the above control, the constants multiplying $\Vert u \Vert_{L^\infty(C^{\alpha+\beta}_{b,d})}$ have to be small
if one wants to derive the expected Schauder estimates. If $c_0$ is small enough, then $Cc_0^{\frac{\alpha+\beta-1}{1+\alpha(n-1)}}$ can be
made smaller than $1/4$. Anyhow, for this chosen small $c_0$, the quantity $c_0^{\frac{\beta-\gamma_n}{\alpha}}$ becomes large and therefore, it needs to be balanced with $C\Vert \bm{F}\Vert_H$. Namely, we can conclude if for instance, $Cc_0^{\frac{\beta-\gamma_n}{\alpha}}\Vert \bm{F}\Vert_H<1/4$ that implies in particular that $\Vert \bm{F}\Vert_H$ has to be small with respect to $c_0$.}bbbbbbbbb

\subsection{Conclusion of Proof}
In the first part of this section, we prove the Schauder estimates (Theorem \ref{theorem:Schauder_Estimates}) from the A Priori estimates (Proposition \ref{prop:A_Priori_Estimates}) through a suitable scaling procedure. Roughly speaking, the idea is to start from a general dynamics and then use the scaling procedure to make the H\"older norm $\Vert \bm{F} \Vert_H$ small enough in order to make a \emph{circular} argument work. Again, if $c_0$ and $\Vert \bm{F}\Vert_H$ are small enough in \eqref{eq:A_Priori_Estimates}, the
$L^\infty\bigl(0,T;C^{\alpha+\beta}_{b,d}(\R^{nd})\bigr)$-norm of $u$ on the right-hand side of \eqref{eq:A_Priori_Estimates} can be absorbed by the left-hand one. Once the Schauder estimates \eqref{equation:Schauder_Estimates} holds in the scaled dynamics, we will conclude going back to the original IPDE through the inverse scaling procedure, even if for a small final time horizon $T$. \newline
The second part of the section focuses on showing how to drop the additional assumption (\textbf{A'}). The key point here is to proceed through iteration up to an arbitrary, but finite, given time $T$ thanks to the stability of a solution $u$ in the space $L^{\infty}([0,T],C^{\alpha+\beta}_{b,d}(\R^{nd}))$.
\subsubsection{Scaling Argument}
Under (\textbf{A}), we start considering a mild solution $u$ of the IPDE \eqref{Degenerate_Stable_PDE} on $[0,T]$ for some final time $T\le
1$ to be fixed later. For a scaling parameter $\lambda$ in $(0,1]$ to be chosen later, we would like to analyze the IPDE
\eqref{Degenerate_Stable_PDE} under the change of variables
\begin{equation}\label{eq:change_of_variable}
(t,\bm{x}) \, \mapsto \, (\lambda t,\mathbb{T}_\lambda\bm{x})
\end{equation}
where $\mathbb{T}_\lambda:=\lambda^{1/\alpha}\mathbb{M}_\lambda$. Again, the scaling is performed accordingly to the homogeneity induced by
the distance $d_P$ in \eqref{Definition_distance_d_P}.\newline
To this purpose, we firstly introduce the scaled solution $u_\lambda$ defined by
\[u_\lambda(t,\bm{x}) \,:= \, u(\lambda t,\mathbb{T}_\lambda\bm{x}).\]
It then follows immediately that this function $u_\lambda$ is a mild solution of
\[\begin{cases}
    \lambda^{-1}\partial_tu_\lambda(t,\bm{x}) +\lambda^{-1}L_\alpha u_\lambda+\bigl{\langle} A \mathbb{T}_\lambda \bm{x} +\bm{F}(\lambda
    t,\mathbb{T}_\lambda\bm{x}), \mathbb{T}^{-1}_\lambda
    D_{\bm{x}}u_\lambda(t,\bm{x})\bigr{\rangle}  \, = \, -f(\lambda t,\mathbb{T}_\lambda\bm{x}), &
    \mbox{on } (0,T_\lambda)\times \R^{nd}, \\
    u_\lambda(T_\lambda,\bm{x}) \, = \, g(\mathbb{T}_\lambda\bm{x})\ & \mbox{on }\R^{nd},
  \end{cases}\]
where $T_\lambda:=T/\lambda$. Since we want the scaled dynamics to satisfy assumption $(\bm{A}')$, we choose now $T$ so that
$T_\lambda \le 1$. It is important to notice that this is possible since we assumed $\lambda$ to be fixed, even if we have not chosen it
yet. Denoting now
\begin{align*}
  f_\lambda(t,\bm{x}) &:= \, \lambda f(\lambda t,\mathbb{T}_\lambda \bm{x}); \\
  g_\lambda(\bm{x}) &:= \, g(\mathbb{T}_\lambda \bm{x});\\
  A_\lambda &:= \, \lambda\mathbb{T}^{-1}_\lambda A\mathbb{T}_\lambda; \\
  \bm{F}_\lambda(t,\bm{x}) &:= \lambda\mathbb{T}^{-1}_\lambda \bm{F}(\lambda t,\mathbb{T}_\lambda \bm{x}),
\end{align*}
we can rewrite the scaled dynamics as:
\begin{equation}
\label{Scaled_Degenerate_Stable_PDE}
\begin{cases}
    \partial_tu_\lambda(t\bm{x}) +\bigl{\langle} A_\lambda \bm{x} +\bm{F}_\lambda(t\bm{x}), D_{\bm{x}}u_\lambda(t\bm{x}) \bigr{\rangle} +
    L_\alpha u_\lambda(t\bm{x}) \, =
    \, -f_\lambda(t\bm{x}), &
    \mbox{on } (0,T_\lambda)\times \R^{nd}, \\
    u_\lambda(T_\lambda,\bm{x}) \, = \, g_\lambda(\bm{x}) & \mbox{on }\R^{nd}.
  \end{cases}
\end{equation}

To continue, we need now the following lemma that exploits how the scaling procedure reflects on the norms of the
coefficients. Recalling Equation \eqref{eq:norm_H_for_F} for the definition of $\Vert\cdot \Vert_H$, a direct calculation on the norms leads
to the following result:
\begin{lemma}[Scaling Homogeneity of Norms]
\label{lemma:Scaling_Homogeneity_Norms}
Under (\textbf{A}), it holds that
\begin{align} \label{eq:Scaling_Homogeneity_Norms}
  \Vert \bm{F}_{\lambda}\Vert_H \, &= \, \lambda^{\beta/\alpha}\Vert \bm{F}\Vert_H; \notag\\
  \lambda^{\frac{\alpha+\beta}{\alpha}}\Vert f\Vert_{L^\infty(C^\beta_{b,d})}\, \le \, \Vert &f_\lambda\Vert_{L^\infty(C^\beta_{b,d})} \, \le
  \, \Vert f\Vert_{L^\infty(C^\beta_{b,d})}\\
  \lambda^{\frac{\alpha+\beta}{\alpha}}\Vert g\Vert_{C^{\alpha+\beta}_{b,d}} \, \le \, \Vert &g_\lambda\Vert_{C^{\alpha+\beta}_{b,d}} \, \le
  \, \Vert g\Vert_{C^{\alpha+\beta}_{b,d}};\notag \\
  \lambda^{\frac{\alpha+\beta}{\alpha}}\Vert u\Vert_{L^\infty(C^{\alpha+\beta}_{b,d})}\, \le \, \Vert
  &u_\lambda\Vert_{L^\infty(C^{\alpha+\beta}_{b,d})} \, \le \, \Vert u\Vert_{L^\infty(C^{\alpha+\beta}_{b,d})} \notag
\end{align}
\end{lemma}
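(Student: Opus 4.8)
The statement is a change-of-variables bookkeeping. Insert \eqref{eq:change_of_variable} into each norm and track how the anisotropic scales propagate; the only structural input is that $\mathbb{T}_\lambda=\lambda^{1/\alpha}\mathbb{M}_\lambda$ is block-diagonal, its $i$-th $d\times d$ block being $\lambda^{1/\alpha_i}I_{d\times d}$ with $\alpha_i$ as in \eqref{eq:def_alpha_i_and_beta_i} (since $\lambda^{1/\alpha}\lambda^{i-1}=\lambda^{(1+\alpha(i-1))/\alpha}$, by \eqref{eq:def_of_Mt}), so that $\mathbb{T}_\lambda^{-1}$ has $i$-th block $\lambda^{-1/\alpha_i}I_{d\times d}$. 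The plan is to record once the rescaling rule for a single directional piece of the anisotropic norm: if $\psi(\bm{x})=\phi(\mathbb{T}_\lambda\bm{x})$, then for each $j$ the restriction $\Pi^j_z\psi$ is, up to relabelling the frozen coordinates, the map $x\mapsto\Pi^j_{z'}\phi(\lambda^{1/\alpha_j}x)$, so a change of variable in the defining supremum multiplies a H\"older seminorm of order $r/(1+\alpha(j-1))$ by $\lambda^{\frac{r}{1+\alpha(j-1)}\cdot\frac1{\alpha_j}}=\lambda^{r/\alpha}$, which is \emph{independent of $j$}; moreover each $\bm{x}_1$-derivative contributes an extra $\lambda^{1/\alpha}$. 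Using $\beta<\alpha$ one checks that in the spaces $C^{\gamma_i+\beta}_d$ all directional indices stay in $(0,1)$ (pure H\"older, no derivatives), while in $C^{\alpha+\beta}_{b,d}$ only the direction-$1$ index lies in $(1,2)$, so the sole derivative ever appearing is a first $\bm{x}_1$-derivative.

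With this rule, each identity is read off termwise. For $\bm{F}_\lambda$: by definition $(\bm{F}_\lambda)_i(t,\bm{x})=\lambda(\mathbb{T}_\lambda^{-1})_{ii}\bm{F}_i(\lambda t,\mathbb{T}_\lambda\bm{x})=\lambda^{\,1-1/\alpha_i}\bm{F}_i(\lambda t,\mathbb{T}_\lambda\bm{x})$, and rescaling the (derivative-free) $C^{\gamma_i+\beta}_d$-norm multiplies it by $\lambda^{(\gamma_i+\beta)/\alpha}$; for $i\ge 2$ the identity $1-\tfrac1{\alpha_i}+\tfrac{\gamma_i}{\alpha}=0$ (using $\gamma_i=1+\alpha(i-2)$, see \eqref{Drift_assumptions}) collapses the total power to exactly $\lambda^{\beta/\alpha}$, while for $i=1$ the prefactor yields $1-\tfrac1{\alpha_1}+\tfrac{\beta}{\alpha}=\tfrac{\alpha+\beta-1}{\alpha}$, a positive power of $\lambda$ that must then be compared with $\lambda^{\beta/\alpha}$ (handled via (\textbf{P}), i.e.\ $\alpha+\beta>1$, together with $\lambda\le 1$); taking $\sup_i$ and noting that $\sup_{t\in[0,T_\lambda]}$ in rescaled time coincides with $\sup_{s\in[0,T]}$ gives the claimed relation. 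For $f_\lambda=\lambda f(\lambda\cdot,\mathbb{T}_\lambda\cdot)$ the $L^\infty$-part scales by $\lambda$ and the $C^\beta_d$-seminorm by $\lambda^{1+\beta/\alpha}=\lambda^{(\alpha+\beta)/\alpha}$; since $\lambda\in(0,1]$ and $(\alpha+\beta)/\alpha>1$, both factors lie in $[\lambda^{(\alpha+\beta)/\alpha},1]$, which simultaneously gives the upper bound $\Vert f_\lambda\Vert\le\Vert f\Vert$ and the lower bound $\lambda^{(\alpha+\beta)/\alpha}\Vert f\Vert\le\Vert f_\lambda\Vert$. For $g_\lambda=g(\mathbb{T}_\lambda\cdot)$ and $u_\lambda=u(\lambda\cdot,\mathbb{T}_\lambda\cdot)$ the argument is the same, the only new term being the direction-$1$ first-derivative, which scales by the intermediate power $\lambda^{1/\alpha}\in[\lambda^{(\alpha+\beta)/\alpha},1]$; so every contribution to $\Vert g_\lambda\Vert$ (resp. $\Vert u_\lambda\Vert$) is pinched between $\lambda^{(\alpha+\beta)/\alpha}$ and $1$ times the corresponding contribution to $\Vert g\Vert$ (resp. $\Vert u\Vert$), yielding the two-sided estimates.

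I expect the only real difficulty to be bookkeeping: keeping straight which terms of each anisotropic norm are present (an $L^\infty$ term, a first-$\bm{x}_1$-derivative term, or a pure H\"older seminorm) and verifying that each power of $\lambda$ that shows up is $\ge$ the target power, so that the sandwich closes for $\lambda\le 1$. The one slightly special case is the direction-$1$ component $\bm{F}_1$, where $\gamma_1=0$ prevents the collapsing identity above, so the relative scales along the chain have to be compared by hand via (\textbf{P}) and $\lambda\le1$; every other line is a routine substitution.
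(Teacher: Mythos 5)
Your computation is correct and is precisely the ``direct calculation on the norms'' the paper invokes without giving details: the diagonal action of $\mathbb{T}_\lambda$ makes every directional H\"older seminorm of order $r/(1+\alpha(j-1))$ scale by the direction-independent factor $\lambda^{r/\alpha}$, each $\bm{x}_1$-derivative contributes an extra $\lambda^{1/\alpha}$, and the two-sided bounds for $f_\lambda$, $g_\lambda$, $u_\lambda$ follow by sandwiching all occurring powers ($1$, $\lambda$, $\lambda^{1/\alpha}$, $\lambda^{(\alpha+\beta)/\alpha}$) between $\lambda^{(\alpha+\beta)/\alpha}$ and $1$ for $\lambda\le 1$. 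Two small precisions, neither of which affects the outcome: your claim that all directional indices of $C^{\gamma_i+\beta}_d$ lie in $(0,1)$ is only true for the directions $j\ge i$ on which $\bm{F}_i$ actually depends (this uses assumption (\textbf{S}), the remaining directions contributing nothing), and, as you correctly detect, the first component scales with exponent $(\alpha+\beta-1)/\alpha$ rather than $\beta/\alpha$, so the first line of the lemma holds as an exact identity only through the components $i\ge 2$; in general one gets $\Vert \bm{F}_\lambda\Vert_H\le\lambda^{\min\{\beta,\alpha+\beta-1\}/\alpha}\Vert\bm{F}\Vert_H$, a positive power of $\lambda$ by (\textbf{P}), which is all that the scaling argument of Section $3.3.1$ actually uses.
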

Since the scaled dynamics \eqref{Scaled_Degenerate_Stable_PDE} satisfies assumption (\textbf{A'}), we know from Proposition
\ref{prop:A_Priori_Estimates} that the scaled solution $u_\lambda$ satisfies the A Priori Estimates (Equation \eqref{eq:A_Priori_Estimates}):
\begin{equation}\label{eq:A_priori_Estimate_Scaled}
\Vert u_\lambda \Vert_{L^\infty(C^{\alpha+\beta}_{b,d})} \, \le \, Cc_0^{\frac{\beta-\gamma_n}{\alpha}}\bigl[\Vert g_\lambda
\Vert_{C^{\alpha+\beta}_{b,d}}+\Vert f_\lambda \Vert_{L^\infty(C^{\beta}_{b,d})}\bigr]+C\bigl(c_0^{\frac{\beta-\gamma_n}{\alpha}}\Vert
\bm{F}_\lambda\Vert_H + c_0^{\frac{\alpha+\beta-1}{1+\alpha(n-1)}}\bigr)\Vert u_\lambda \Vert_{L^\infty(C^{\alpha+\beta}_{b,d})}
\end{equation}
for some constant $c_0$ in $(0,1]$ to be chosen later.\newline
We would like now to exploit a circular argument in order to bring to the left-hand side of \eqref{eq:A_priori_Estimate_Scaled} the term involving $u_\lambda$ on the right-hand one.
To do that, we need to choose properly $\lambda$ and $c_0$ in order to have
\[C\bigr(c_0^{\frac{\beta-\gamma_n}{\alpha}}\Vert\bm{F}_\lambda\Vert_H + c_0^{\frac{\alpha+\beta-1}{1+\alpha(n-1)}}\bigr)\, < \, 1.\]
This is true if for example we choose firstly $c_0$ such that
\[Cc_0^{\frac{\alpha+\beta-1}{1+\alpha(n-1)}} \, = \, \frac{1}{4}\]
and fixed $c_0$, we choose $\lambda$ so that
\[Cc_0^{\frac{\beta-\gamma_n}{\alpha}}\lambda^{\beta/\alpha}\Vert\bm{F}\Vert_H  \, = \,
Cc_0^{\frac{\beta-\gamma_n}{\alpha}}\Vert\bm{F}_\lambda\Vert_H \, = \, \frac{1}{4}.\]
With this choice, it thus follows from \eqref{eq:A_priori_Estimate_Scaled} that
\[\Vert u_\lambda \Vert_{L^\infty(C^{\alpha+\beta}_{b,d})} \, \le \, 2Cc_0^{\frac{\beta-\gamma_n}{\alpha}}\bigl[\Vert g_\lambda
\Vert_{C^{\alpha+\beta}_{b,d}}+\Vert f_\lambda \Vert_{L^\infty(C^{\beta}_{b,d})}\bigr].\]
We can finally conclude using Lemma \ref{lemma:Scaling_Homogeneity_Norms} to go back to the original dynamics and write that
\[\Vert u \Vert_{L^\infty(C^{\alpha+\beta}_{b,d})} \, \le \,\lambda^{-\frac{\alpha+\beta}{\alpha}}\Vert u_\lambda
\Vert_{L^\infty(C^{\alpha+\beta}_{b,d})} \, \le \, \overline{C}\bigl[\Vert g\Vert_{C^{\alpha+\beta}_{b,d}}+\Vert f
\Vert_{L^\infty(C^{\beta}_{b,d})}\bigr]\]
for some constant $\overline{C}>0$ defined by
\[\overline{C} \, := \,  2\lambda^{-\frac{\alpha+\beta}{\alpha}}Cc_0^{\frac{\beta-\gamma_n}{\alpha}}.\]
\subsubsection{Schauder Estimates for General Time}
Up to this point, we have assumed to be in a small enough final time horizon (i.e. $T\le 1$) to let our procedure work. We are going now to
extend the Schauder estimates (Equation \eqref{equation:Schauder_Estimates}) to an arbitrary but fixed final time $T_0>0$. Our proof
will consist essentially in a backward iterative procedure through a chain of identical differential dynamics on different, small
enough, time intervals. We recall indeed that the Schauder estimates precisely provide a stability result in the chosen functional space.

\begin{prop}
Under (\textbf{A}), let $T_0>T$ and $u$ a mild solution in $L^{\infty}(0,T_0,C^{\alpha+\beta}_{b,d}(\R^{nd}))$ of the IPDE
\eqref{Degenerate_Stable_PDE} on $[0,T_0]$ that satisfies the Schauder Estimates (Equation \eqref{equation:Schauder_Estimates}) on $[0,T]$. Then, there
exists a constant $C_0:=C_0(T_0)$ such that
\[\Vert u \Vert_{L^{\infty}(0,T_0;C^{\alpha+\beta}_{b,d})} \, \le \, C_0\Bigl[\Vert f\Vert_{L^{\infty}( 0,T_0;C^{\beta}_{b,d})} +
\Vert g \Vert_{C^{\alpha+\beta}_{b,d}}\Bigr].\]
\end{prop}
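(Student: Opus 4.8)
The plan is to extend the Schauder estimates from the small-time interval $[0,T]$ to the arbitrary finite horizon $[0,T_0]$ by a backward iteration over a finite chain of sub-intervals, each of length at most $T$. First I would partition $[0,T_0]$ into $N$ sub-intervals $[T_{k+1},T_k]$ with $T_0 > T_1 > \cdots > T_N = 0$ and $T_k - T_{k+1} \le T$, where $N = \lceil T_0/T\rceil$. The key observation is that, restricting a mild solution $u$ of \eqref{Degenerate_Stable_PDE} on $[0,T_0]$ to any such sub-interval $[T_{k+1},T_k]$, the restriction $u|_{[T_{k+1},T_k]}$ is itself a mild solution of the same IPDE on that sub-interval, but now with \emph{terminal condition} $u(T_k,\cdot)$ at time $T_k$ and source $f|_{[T_{k+1},T_k]}$. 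This is essentially immediate from Definition \ref{definition:mild_sol}: the approximating classical solutions $u_m$ restrict to classical solutions on the sub-interval with terminal data $u_m(T_k,\cdot)$, and the convergences in the relevant H\"older spaces are preserved under restriction in time.

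Next I would apply the small-time Schauder estimate \eqref{equation:Schauder_Estimates} (valid on intervals of length $\le T$, which we have already established in the previous subsection) on each sub-interval, obtaining a constant $C = C(T,(\bm A))$ such that
\begin{equation}
\Vert u \Vert_{L^\infty(T_{k+1},T_k;C^{\alpha+\beta}_{b,d})} \, \le \, C\bigl[\Vert f \Vert_{L^\infty(T_{k+1},T_k;C^{\beta}_{b,d})} + \Vert u(T_k,\cdot)\Vert_{C^{\alpha+\beta}_{b,d}}\bigr].
\end{equation}
Since $u(T_k,\cdot) \in C^{\alpha+\beta}_{b,d}(\R^{nd})$ with $\Vert u(T_k,\cdot)\Vert_{C^{\alpha+\beta}_{b,d}} \le \Vert u \Vert_{L^\infty(T_k,T_{k-1};C^{\alpha+\beta}_{b,d})}$ (the value at the left endpoint of the previous interval), this gives a recursive bound on $a_k := \Vert u \Vert_{L^\infty(T_{k+1},T_k;C^{\alpha+\beta}_{b,d})}$ of the form $a_k \le C\Vert f\Vert_{L^\infty(0,T_0;C^\beta_{b,d})} + C\,a_{k-1}$, starting from the topmost interval where $a_0 \le C\bigl[\Vert f\Vert_{L^\infty(0,T_0;C^\beta_{b,d})} + \Vert g\Vert_{C^{\alpha+\beta}_{b,d}}\bigr]$ since there the terminal condition is exactly $g$. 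Unwinding the recursion over the $N$ steps yields $a_k \le C^{k+1}\bigl[\Vert f\Vert_{L^\infty(0,T_0;C^\beta_{b,d})} + \Vert g\Vert_{C^{\alpha+\beta}_{b,d}}\bigr]$, so that taking the maximum over $k \in \{0,\dots,N-1\}$ and summing (or simply bounding by the largest term) gives
\begin{equation}
\Vert u \Vert_{L^\infty(0,T_0;C^{\alpha+\beta}_{b,d})} \, \le \, C_0\bigl[\Vert f\Vert_{L^\infty(0,T_0;C^\beta_{b,d})} + \Vert g\Vert_{C^{\alpha+\beta}_{b,d}}\bigr]
\end{equation}
with $C_0 := N\,C^{N}$, which depends only on $T_0$ (and on the parameters in (\textbf{A}), through $C$ and $T$). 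Note that since $\Vert f\Vert_{L^\infty(T_{k+1},T_k;C^\beta_{b,d})} \le \Vert f\Vert_{L^\infty(0,T_0;C^\beta_{b,d})}$ for every $k$, the source term never grows along the iteration.

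The only genuinely delicate point is the \textbf{patching step}: one must check that the piecewise bounds on the $N$ sub-intervals really do combine into a bound on the $L^\infty(0,T_0;C^{\alpha+\beta}_{b,d})$ norm of $u$ over the whole interval, i.e.\ that there is no loss at the gluing times $T_k$. This is transparent here because the norm $\Vert\cdot\Vert_{L^\infty(0,T_0;C^{\alpha+\beta}_{b,d})}$ is an essential supremum over time of a \emph{spatial} norm $\Vert\cdot\Vert_{C^{\alpha+\beta}_{b,d}}$, so it is automatically the maximum of the corresponding sub-interval norms; no compatibility condition at the junctions is needed beyond the fact that $u(T_k,\cdot)$ is a well-defined element of $C^{\alpha+\beta}_{b,d}$, which is part of the hypothesis that $u \in L^\infty(0,T_0;C^{\alpha+\beta}_{b,d})$ together with the (weak-$*$ / continuity-in-time) regularity inherited from the mild-solution structure. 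The exponential-in-$N$ growth of $C_0$ — hence the loss of optimal constants — is expected and already flagged in the introduction as a price of the perturbative method.
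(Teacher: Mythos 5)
Your proposal is correct and follows essentially the same route as the paper: a backward iteration over $N=\lceil T_0/T\rceil$ sub-intervals of length at most $T$, viewing the restriction of $u$ as a mild solution on each sub-interval with terminal datum the value at the right endpoint, applying the small-time Schauder estimate recursively, and unwinding to obtain a constant of order $C^N$ depending only on $T_0$. No substantive difference from the paper's argument.
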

\begin{proof}
Fixed $N=\lceil\frac{T_0}{T}\rceil$, we are going to consider a system of $N$ Cauchy problems:
\[
\begin{cases}
    \partial_tu_k(t,\bm{x}) +\bigl{\langle}A \bm{x} + \bm{F}(t,\bm{x}), D_{\bm{x}}u_k(t,\bm{x})\bigr{\rangle} +
L_\alpha u_k(t,\bm{x}) \, = \, -f(t,\bm{x}), & \mbox{on } ((1-\frac{k}{N})T_0,(1-\frac{k-1}{N})T_0)\times \R^{nd} \\
    u_k((1-\frac{k-1}{N})T_0,\bm{x}) \, = \, u_{k-1}((1-\frac{k-1}{N})T_0,\bm{x}) & \mbox{on }\R^{nd}.
  \end{cases}
\]
for $k=1,\dots,N$ with the notation that $u_0(T_0,\bm{x})=g(\bm{x})$. Reasoning iteratively, we find that any mild solution of the IPDE
\eqref{Degenerate_Stable_PDE} on $[0,T_0]$ is also a mild solution of any of the equations of the system. Moreover, since any solution $u_k$ is
defined on $[(1-\frac{k}{N})T_0,(1-\frac{k-1}{N})T_0]$ and
\[(1-\frac{k-1}{N})T_0-(1-\frac{k}N)T_0 \, = \, \frac{k}{N}T_0 - \frac{k-1}{N}T_0 \, = \, \frac{1}{N}T_0 \, \le \, T,\]
the Schauder Estimates (Equation \eqref{theorem:Schauder_Estimates}) hold for any $u_k$ with terminal condition
$u_{k-1}((1-\frac{k-1}{N})T_0,\cdot)$. In particular,
\begin{multline*}
\Vert u_k \Vert_{L^{\infty}((1-\frac{k}{N})T_0,(1-\frac{k-1}{N})T_0;C^{\alpha+\beta}_{b,d})} \, \le \, C\Bigl[\Vert f
\Vert_{L^{\infty}((1-\frac{k}{N})T_0,(1-\frac{k-1}{N})T_0;C^{\beta}_{b,d})} + \Vert u_{k-1}((1-\frac{k-1}{N})T_0,\cdot)
\Vert_{C^{\alpha+\beta}_{b,d}}\Bigr] \\
\le \, C^2\Bigl[\Vert f \Vert_{L^{\infty}((1-\frac{k}{N})T_0,(1-\frac{k-1}{N})T_0;C^{\beta}_{b,d})} + \Vert f \Vert_{ L^{\infty}((1-
\frac{k-1}{N})T_0,(1-\frac{k-2}{N})T_0;C^{\beta}_{b,d})}+ \Vert u_{k-2}((1-\frac{k-2}{N})T_0,\cdot)
\Vert_{C^{\alpha+\beta}_{b,d}}\Bigr]\\
\le \, C^2\Bigl[\Vert f \Vert_{L^{\infty}((1-\frac{k}{N})T_0,(1-\frac{k-2}{N})T_0;C^{\beta}_{b,d})}+\Vert
u_{k-2}((1-\frac{k-2}{N})T_0, \cdot) \Vert_{C^{\alpha+\beta}_{b,d}}\Bigr]
\end{multline*}
since $u_{k-1}$ satisfies the Schauder Estimates with terminal condition $u_{k-2}((1-\frac{k-2}{N})T_0,\cdot)$. Applying the same procedure
recursively, we finally find that
\[\Vert u_k \Vert_{L^{\infty}((1-\frac{k}{N})T_0,(1-\frac{k-1}{N})T_0;C^{\alpha+\beta}_{b,d})} \, \le \, C^k\Bigl[\Vert f
\Vert_{L^{\infty}((1-\frac{k}{N})T_0,T_0;C^{\beta}_{b,d})} + \Vert g \Vert_{C^{\alpha+\beta}_{b,d}}\Bigr].\]
Hence,
\[\Vert u \Vert_{L^{\infty}(0,T_0;C^{\alpha+\beta}_{b,d})} \, \le \, C^N\Bigl[\Vert f\Vert_{L^{\infty}(0,T_0;C^{\beta}_{b,d})} +
\Vert g\Vert_{C^{\alpha+\beta}_{b,d}}\Bigr]\]
and we have concluded.
\end{proof}

\setcounter{equation}{0}

\section{Schauder Estimates for the Proxy}
The aim of this section is to show how to properly control a solution $\tilde{u}^{\tau,\bm{\xi}}$ of the "frozen" IPDE \eqref{Frozen_PDE} in order to prove the Schauder estimates (Proposition \ref{prop:Schauder_Estimates_for_proxy}) for the proxy.
We recall the definition of $\tilde{u}^{\tau,\bm{\xi}}$ through the Duhamel representation \eqref{Duhamel_representation_of_proxy}. Namely, for any freezing couple $(\tau,\bm{\xi})$ in $[0,T]\times\R^{nd}$, it holds that
\begin{equation}\label{align:Representation}
  \tilde{u}^{\tau,\bm{\xi}}(t,\bm{x}) \, = \, \tilde{P}^{\tau,\bm{\xi}}_{t,T}g(\bm{x}) + \tilde{G}^{\tau,\bm{\xi}}_{t,T}f(t,\bm{x})
\end{equation}
where we have denoted for simplicity with $\bigl(\tilde{G}^{\tau,\bm{\xi}}_{v,r}\bigr)_{t>v\ge0}$ the family of Green kernels associated with
the frozen density $\tilde{p}^{\tau,\bm{\xi}}$. Namely, for any $v<r$ in $[0,T]$,
\begin{equation}\label{eq:def_Green_Kernel}
\tilde{G}^{\tau,\bm{\xi}}_{v,r}f(t,x) \, := \, \int_{v}^{r}\int_{\R^{nd}}\tilde{p}^{\tau,\bm{\xi}}(t,s,\bm{x},\bm{y}) f(s,\bm{y}) \,
d\bm{y}\, ds.
\end{equation}
We can then differentiate the above equation with respect to $\bm{x}_1$ so that to obtain an analogous Duhamel type representation for the derivative $D_{\bm{x}_1}\tilde{u}^{\tau,\bm{\xi}}$:
\begin{equation}\label{align:Representation_deriv}
 D_{\bm{x}_1}\tilde{u}^{\tau,\bm{\xi}}(t,\bm{x})\, =
  \,D_{\bm{x}_1}\tilde{P}^{\tau,\bm{\xi}}_{t,T}g(\bm{x})+D_{\bm{x}_1}\tilde{G}^{\tau,\bm{\xi}}_{t,T}f(t,\bm{x})
 \end{equation}
It is then clear that in order to control $\tilde{u}^{\tau,\bm{\xi}}(t,\bm{x})$ in the norm $\Vert \cdot \Vert_{L^\infty(C^{\alpha+\beta}_{b,d})}$, we can analyze separately the contributions appearing from the frozen semigroup $\tilde{P}^{\tau,\bm{\xi}}g(\bm{x})$ and those from the frozen Green kernel $\tilde{G}^{\tau,\bm{\xi}}_{t,T}f(t,\bm{x})$.

\subsection{First Besov Control}
We focus for the moment on the contribution in the Duhamel representation \eqref{align:Representation} associated with the source $g$ that, as it will be seen, is the more delicate to treat. In the non-degenerate setting (i.e. with respect to $\bm{x}_1$), it precisely write:
\[D_{\bm{x}_1}\tilde{P}^{\tau,\bm{\xi}}g(\bm{x}) \, = \, \int_{\R^{nd}} D_{\bm{x}_1}\tilde{p}^{\tau,\bm{\xi}}(t,T,\bm{x},\bm{y})
g(\bm{y}) \, d\bm{y}.\]
Looking at the particular structure of $\tilde{p}^{\tau,\bm{\xi}}$ (cf.\ Equation \eqref{eq:definition_tilde_p}), it can be seen from Lemma
\ref{lemma:Scaling_Lemma} that
\begin{lemma}\label{lemma:link_derivative_density}
Let $i$ in $\llbracket 1,n \rrbracket$. Then, there exist constants $\{C_j\}_{j\in \llbracket i,n \rrbracket}$ such that
\begin{equation}\label{eq:link_derivative_density}
D_{\bm{x}_i}\tilde{p}^{\tau,\bm{\xi}}(t,s,\bm{x},\bm{y}) \, = \, \sum_{j=i}^{n}C_j(s-t)^{j-i} D_{\bm{y}_j}
\tilde{p}^{\tau,\bm{\xi}}(t,s,\bm{x},\bm{y})
\end{equation}
for any $t< s$ in $[0,T]$, any $\bm{x},\bm{y}$ in $\R^{nd}$ and any freezing couple $(\tau,\bm{\xi})$ in $[0,T]\times \R^{nd}$.
\end{lemma}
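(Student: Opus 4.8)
The plan is to mimic the proof of the Scaling Lemma \ref{lemma:Scaling_Lemma}, the only new feature being that the drift part of the frozen shift does not depend on $\bm{x}$ or $\bm{y}$. Starting from the representation \eqref{eq:definition_tilde_p}, the variable $\bm{x}$ enters $\tilde{p}^{\tau,\bm{\xi}}(t,s,\bm{x},\bm{y})$ only through $\tilde{\bm{m}}^{\tau,\bm{\xi}}_{t,s}(\bm{x}) = e^{A(s-t)}\bm{x} + \int_{t}^{s}e^{A(s-v)}\bm{F}(v,\bm{\theta}_{\tau,v}(\bm{\xi}))\,dv$, where the integral term is a constant in both $\bm{x}$ and $\bm{y}$. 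Applying the chain rule exactly as in the proof of Lemma \ref{lemma:Scaling_Lemma}, I would obtain
\[D_{\bm{x}_i}\tilde{p}^{\tau,\bm{\xi}}(t,s,\bm{x},\bm{y}) \, = \, -\frac{1}{\det\mathbb{M}_{s-t}}\,D_{\bm{z}}p_S(s-t,\cdot)\bigl(\mathbb{M}^{-1}_{s-t}(\bm{y}-\tilde{\bm{m}}^{\tau,\bm{\xi}}_{t,s}(\bm{x}))\bigr)\,\mathbb{M}^{-1}_{s-t}\,D_{\bm{x}_i}\bigl(e^{A(s-t)}\bm{x}\bigr),\]
while differentiating in $\bm{y}_j$ produces the very same prefactor multiplied on the right by $\mathbb{M}^{-1}_{s-t}D_{\bm{y}_j}\bm{y}$.

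Since the prefactor $\frac{1}{\det\mathbb{M}_{s-t}}D_{\bm{z}}p_S(s-t,\cdot)(\cdots)\mathbb{M}^{-1}_{s-t}$ is common to both sides, it suffices to establish the linear-algebraic identity
\[D_{\bm{x}_i}\bigl(e^{A(s-t)}\bm{x}\bigr) \, = \, -\sum_{j=i}^{n}C_j(s-t)^{j-i}\,D_{\bm{y}_j}\bm{y}\]
for suitable constants $\{C_j\}_{j\in\llbracket i,n\rrbracket}$ depending only on $A$ and $i$. Since $D_{\bm{x}_i}\bm{y}=0$ and $D_{\bm{y}_j}(e^{A(s-t)}\bm{x})=0$, this is precisely \eqref{Proof:Scaling_Lemma} with $t$ replaced by $s-t$, so it is already contained in the proof of Lemma \ref{lemma:Scaling_Lemma}. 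To spell it out I would use \eqref{Proof:Scaling_Lemma1}: from $[e^{A(s-t)}\bm{x}]_k = \sum_{l=1}^{k}C_{k,l}(s-t)^{k-l}\bm{x}_l$ one reads off that $D_{\bm{x}_i}[e^{A(s-t)}\bm{x}]_k$ equals $C_{k,i}(s-t)^{k-i}I_{d\times d}$ when $k\ge i$ and $0_{d\times d}$ otherwise, so matching block by block against $\sum_{j\ge i}C_j(s-t)^{j-i}D_{\bm{y}_j}\bm{y}$ forces $C_j := -C_{j,i}$, which proves the identity and therefore \eqref{eq:link_derivative_density}.

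There is essentially no obstacle: the whole content is the sub-diagonal (nilpotent) structure of $A$ already exploited in Lemma \ref{lemma:Scaling_Lemma}, and the only thing worth emphasizing is that the $\bm{F}$-integral in $\tilde{\bm{m}}^{\tau,\bm{\xi}}_{t,s}(\bm{x})$ contributes nothing to these first-order derivatives. Equivalently, one could first note, using the symmetry of $p_S$, that $\tilde{p}^{\tau,\bm{\xi}}(t,s,\bm{x},\bm{y}) = p^{ou}(s-t,\bm{x},\bm{y}-\bm{c})$ with $\bm{c} := \int_{t}^{s}e^{A(s-v)}\bm{F}(v,\bm{\theta}_{\tau,v}(\bm{\xi}))\,dv$ a fixed vector, and then quote Lemma \ref{lemma:Scaling_Lemma} verbatim, the translation by $\bm{c}$ being irrelevant since it is constant in $\bm{x}$ and $\bm{y}$.
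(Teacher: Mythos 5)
Your argument is correct and is essentially the paper's own: the paper proves this lemma simply by pointing to the representation \eqref{eq:definition_tilde_p} and invoking Lemma \ref{lemma:Scaling_Lemma}, which is exactly your reduction, since the $\bm{F}$-integral in $\tilde{\bm{m}}^{\tau,\bm{\xi}}_{t,s}(\bm{x})$ is constant in $\bm{x}$ and $\bm{y}$ and so the whole content is the block identity for $e^{A(s-t)}$ already established there. Your closing remark identifying $\tilde{p}^{\tau,\bm{\xi}}(t,s,\bm{x},\bm{y})$ with a translated $p^{ou}(s-t,\bm{x},\bm{y}-\bm{c})$ just makes that citation explicit, so nothing is missing.
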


We can now use equation \eqref{eq:link_derivative_density} to rewrite $D_{\bm{x}_1}\tilde{P}^{\tau,\bm{\xi}}g(\bm{x})$ as
\begin{equation}\label{Besov:eq_Introduction}
\bigl{\vert}D_{\bm{x}_1}\tilde{P}^{\tau,\bm{\xi}}g(\bm{x})\bigr{\vert} \, = \, \Bigl{\vert}\int_{\R^{nd}}
D_{\bm{x}_1}\tilde{p}^{\tau,\bm{\xi}}(t,T,\bm{x},\bm{y})
g(\bm{y}) \, d\bm{y}\Bigr{\vert}\, \le \, C\sum_{j=1}^{n}(s-t)^{j-1}\Bigl{\vert}
\int_{\R^{nd}}D_{\bm{y}_j}\tilde{p}^{\tau,\bm{\xi}}(t,T,\bm{x},\bm{y})
g(\bm{y}) \,d\bm{y} \Bigr{\vert}.
\end{equation}
Remembering that $g$ is in $C^{\alpha+\beta}_{b,d}(\R^{nd})$ for $\alpha+\beta>1$ by hypothesis,  we know that it is differentiable with respect to
the first (non-degenerate) variable $\bm{x}_1$. Then, the above expression can be controlled easily for $j=1$ as
\[\Bigl{\vert}\int_{\R^{nd}}D_{\bm{y}_1}\tilde{p}^{\tau,\bm{\xi}}(t,T,\bm{x},\bm{y})g(\bm{y}) \, d\bm{y} \Bigr{\vert} \, = \,
\Bigl{\vert}\int_{\R^{nd}}\tilde{p}^{\tau,\bm{\xi}}(t,T,\bm{x},\bm{y})D_{\bm{y}_1}g(\bm{y}) \, d\bm{y} \Bigr{\vert} \, \le \,
\Vert D_{\bm{y}_1}g \Vert_{L^\infty} \, \le \, \Vert g \Vert_{C^{\alpha+\beta}_{b,d}}\]
using integration by parts formula. We can then focus on the degenerate components in \eqref{Besov:eq_Introduction}, i.e.
\begin{equation}\label{Besov:eq_Introduction2}
\Bigl{\vert}\int_{\R^{nd}}D_{\bm{y}_j}\tilde{p}^{\tau,\bm{\xi}}(t,T,\bm{x},\bm{y})g(\bm{y}) \, d\bm{y} \Bigr{\vert}
\end{equation}
for some $j>1$. Since $g$ is not differentiable with respect to $\bm{y}_j$ if $j>1$, we cannot apply the same reasoning above but we will
need a more subtle control. Our main idea will be to use the duality in Besov spaces to derive bounds for expression
\eqref{Besov:eq_Introduction2}. Namely, we introduce for a given $\bm{y}$ in $\R^d$,
\[\bm{y}_{\smallsetminus j} \, :=  \, (\bm{y}_1,\dots,\bm{y}_{j-1},\bm{y}_{j+1},\dots,\bm{y}_n) \, \in \, \R^{(n-1)d}.\]
With this definition at hand, we then denote for any function $\phi$ on $\R^{nd}$, the function $\phi(\bm{y}_{\smallsetminus j},\cdot)$ on $\R^d$ with a slight abuse of notation as
\begin{equation}\label{eq:notation_smallsetminus}
\phi(\bm{y}_{\smallsetminus j},z) \, := \, \phi(\bm{y}_1,\dots,\bm{y}_{j-1},z,\bm{y}_{j+1},\dots,\bm{y}_n).
\end{equation}
The key point now is to control the H\"older modulus of $g(\bm{y}_{\smallsetminus j},\cdot)$ on $\R^d$, uniformly in $\bm{y}_{\smallsetminus j}
\in \R^{(n-1)d}$. To do so, we will need the identification $C^{\alpha_j+\beta_j}_b(\R^d) = B^{\alpha_j+\beta_j}_{\infty,\infty}
(\R^d)$ with the usual notations for the Besov spaces.

We recall now some useful definitions/characterizations about Besov spaces $B^{\tilde{\gamma}}_{p,q}(\R^d)$. For a more detailed
analysis of this argument, we suggest the reader to see Section $2.6.4$ of Triebel \cite{book:Triebel83}. For $\tilde{\gamma}$ in $(0,1)$,
$q,p$ in $(0,+\infty]$, we define the Besov space of indexes $(\tilde{\gamma},p,q)$ on $\R^d$ as:
\[B^{\tilde{\gamma}}_{p,q}(\R^d):= \{f \in \mathcal{S}'(\R^d)\colon \Vert f \Vert_{\mathcal{H}^{\tilde{\gamma}}_{p,q}} \, < + \infty\}\]
where $\mathcal{S}(\R^d)$ denotes the Schwartz class on $\R^d$ and
\begin{equation}\label{alpha-thermic_Characterization}
\Vert f \Vert_{\mathcal{H}^{\tilde{\gamma}}_{p,q}} \, := \, \Vert (\phi_0\hat{f})^\vee \Vert_{L^p}+ \Bigl(\int_{0}^{1}
v^{-\frac{\tilde{\gamma}}{\alpha}}\Vert \partial_vp_h(v,\cdot)\ast f \Vert^q_{L^p} \, dv\Bigr)^{\frac{1}{q}}
\end{equation}
with $\phi_0$ a function in $C^\infty_0(\R^d)$ such that $\phi_0(0) \neq 0$ and $p_h$ the isotropic $\alpha$-stable heat kernel on
$\R^d$, i.e.\ the stable density on $\R^d$ whose L\'evy symbol is equivalent to $\vert \lambda \vert^\alpha$. \newline
We point out that the quantities in \eqref{alpha-thermic_Characterization} are well-defined for any $q\neq +\infty$. The modifications for
$q=+\infty$ are obvious and can be written passing to the limit. The previous definition of $B^{\tilde{\gamma}}_{p,q}(\R^d)$ is known as the
stable thermic characterization of Besov spaces and it is particularly adapted to our framework. By a little abuse of notation, we will
write $\Vert f \Vert_{B^{\tilde{\gamma}}_{p,q}}:=\Vert f \Vert_{\mathcal{H}^{\tilde{\gamma}}_{p,q}}$ when this quantity is finite.

For the heat-kernel $p_h$, it is possible to show an improvement of the smoothing effect (cf. equation \eqref{Smoothing_effect_of_S}),
due essentially to its better decay at infinity. Namely, we are no more bounded to the condition $\gamma<\alpha$ but we can integrate up to
an order $\gamma$ strictly smaller than $1+\alpha$.

\begin{lemma}[Smoothing Effect of the Isotropic Stable Heat-Kernel]
\label{lemma:Smoothing_effect_of_Heat_Kern}
Let $l$ be in $\{1,2\}$ and $\gamma$ in $[0,1+\alpha)$. Then, there exists a positive constant $C:=C(\gamma)$ such that
\begin{equation}\label{Smoothing_effect_of_Heat_Kern}
\int_{\R^d}\vert y \vert^\gamma \vert \partial_vD^l_yp_h(v,y) \vert \, dy \, \le \, Ct^{\frac{\gamma-l}{\alpha}-1}.
\end{equation}
\end{lemma}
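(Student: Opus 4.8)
The plan is to reduce the whole estimate to the value $v=1$ by exploiting the $\alpha$-self-similarity of $p_h$, and then to read off the exponent of $v$ while checking that the remaining $v$-independent integral is finite exactly in the range $\gamma<1+\alpha$. First I would recall the scaling identity
\[p_h(v,y)\,=\,v^{-\frac d\alpha}\,p_h\bigl(1,v^{-\frac1\alpha}y\bigr),\qquad v>0,\ y\in\R^d,\]
which holds because the L\'evy symbol of $p_h$ is homogeneous of degree $\alpha$. Differentiating this identity $l$ times in $y$ and once in $v$ and writing $z=v^{-1/\alpha}y$, a direct computation gives
\[\partial_v D^l_y p_h(v,y)\,=\,-\frac1\alpha\,v^{-\frac{d+l}\alpha-1}\,\Phi_l\bigl(v^{-\frac1\alpha}y\bigr),\qquad \Phi_l(z)\,:=\,(d+l)\,(D^l p_h)(1,z)+z\cdot(\nabla D^l p_h)(1,z),\]
where $D^l p_h=D^l_x p_h$ is viewed as a function of its spatial argument and $\nabla$ denotes its gradient. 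Inserting this into the integral and performing the change of variables $y=v^{1/\alpha}z$ yields
\[\int_{\R^d}\vert y\vert^\gamma\,\bigl\vert\partial_v D^l_y p_h(v,y)\bigr\vert\,dy\,=\,\frac1\alpha\,v^{\frac{\gamma-l}\alpha-1}\int_{\R^d}\vert z\vert^\gamma\,\vert\Phi_l(z)\vert\,dz,\]
so that the lemma follows as soon as the last integral is shown to be finite for every $l\in\{1,2\}$ and $\gamma\in[0,1+\alpha)$; the constant $C$ in the statement is then $\frac1\alpha\int_{\R^d}\vert z\vert^\gamma\vert\Phi_l(z)\vert\,dz$.

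To bound that integral I would invoke the classical pointwise estimates for the isotropic stable density and its derivatives,
\[\bigl\vert D^k_x p_h(1,x)\bigr\vert\,\le\,C_k\,(1+\vert x\vert)^{-(d+\alpha+k)},\qquad k\in\N\cup\{0\},\]
which are exactly the bounds that distinguish $p_h$ from the general stable density $p_S$ appearing in \eqref{Smoothing_effect_of_S}: for the isotropic, full-dimensional kernel each spatial derivative produces one extra power of $\vert x\vert$ of decay, rather than the bare order $\vert x\vert^{-(d+\alpha)}$ that is all one can use for $p_S$. Since $D^l p_h(1,\cdot)$ and $\nabla D^l p_h(1,\cdot)$ are in particular bounded near the origin and $\gamma\ge0$, the contribution of $\{\vert z\vert\le1\}$ to $\int\vert z\vert^\gamma\vert\Phi_l(z)\vert\,dz$ is finite. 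On $\{\vert z\vert\ge1\}$ the estimates above give $\vert\Phi_l(z)\vert\le C\bigl[(1+\vert z\vert)^{-(d+\alpha+l)}+\vert z\vert\,(1+\vert z\vert)^{-(d+\alpha+l+1)}\bigr]\le C\,(1+\vert z\vert)^{-(d+\alpha+l)}$, and passing to polar coordinates
\[\int_{\vert z\vert\ge1}\vert z\vert^\gamma\,\vert\Phi_l(z)\vert\,dz\,\le\,C\int_1^{\infty}r^{\gamma-(d+\alpha+l)}\,r^{d-1}\,dr\,=\,C\int_1^{\infty}r^{\gamma-\alpha-l-1}\,dr,\]
which converges precisely when $\gamma<\alpha+l$. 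Since $l\ge1$, the hypothesis $\gamma<1+\alpha$ is enough (with room to spare when $l=2$, where the threshold is $2+\alpha$), and combining the two pieces closes the argument.

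The one ingredient here that is not a routine computation is the derivative bound $\vert D^k_x p_h(1,x)\vert\le C_k(1+\vert x\vert)^{-(d+\alpha+k)}$: replacing it by the crude decay $(1+\vert x\vert)^{-(d+\alpha)}$ for all orders would only reproduce the range $\gamma<\alpha$ of \eqref{Smoothing_effect_of_S}, and it is exactly the gain of one power of $\vert x\vert$ per spatial derivative, together with the presence of at least one such derivative ($l\ge1$), that lifts the admissible exponent to $\gamma<1+\alpha$. This estimate is standard for the isotropic $\alpha$-stable kernel; it can be obtained, for instance, from the subordination representation of $p_h$ against the Gaussian heat kernel by differentiating under the integral sign and combining Gaussian bounds with the tail of the $\alpha/2$-stable subordinator, or from the known asymptotic expansions of stable densities. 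A self-contained proof can be given in the Appendix, along the lines of the corresponding statement for $p_S$.
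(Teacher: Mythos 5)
Your argument is correct and is essentially the route the paper intends: the paper gives no written proof but simply refers to the pointwise estimates of Kolokoltsov (and Bogdan--Jakubowski), and your proof is exactly that route spelled out, namely the $\alpha$-self-similarity reduction to $v=1$ together with the derivative decay bounds $\vert D^k_x p_h(1,x)\vert \le C_k(1+\vert x\vert)^{-(d+\alpha+k)}$, whose gain of one power of decay per derivative (with $l\ge 1$) is precisely what extends the admissible range from $\gamma<\alpha$ to $\gamma<1+\alpha$. The scaling computation, the formula for $\Phi_l$, and the integrability threshold $\gamma<\alpha+l$ are all accurate, so nothing further is needed.
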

A proof of the above result can be derived using the estimates of Kolokoltsov \cite{Kolokoltsov00} (see also \cite{Bogdan:Jakubowski07}).

As already indicated before, it can be seen from the $\alpha-$thermic characterization \eqref{alpha-thermic_Characterization} that
\begin{equation}\label{Besov:ident_Holder_Besov}
C^{\tilde{\gamma}}_b(\R^d) \, = \, B^{\tilde{\gamma}}_{\infty,\infty}(\R^d).
\end{equation}
Moreover, it is well known (see for example Proposition $3.6$ in \cite{book:Lemarie-Rieusset02}) that
$B^{\tilde{\gamma}}_{\infty,\infty}(\R^d)$
and $B^{-\tilde{\gamma}}_{1,1}(\R^d)$ are in duality. Namely, it holds
\begin{equation}\label{Besov:duality_in_Besov}
\bigl{\vert}\int_{\R^d} fg \, dx \bigr{\vert}\, \le \, C\Vert f \Vert_{B^{\tilde{\gamma}}_{\infty,\infty}}\Vert g
\Vert_{B^{-\tilde{\gamma}}_{1,1}}.
\end{equation}
for any $f$ in $B^{\tilde{\gamma}}_{\infty,\infty}(\R^d)$ and any function $g$ in $B^{-\tilde{\gamma}}_{1,1}(\R^d)$.

With these definitions and properties at hand, we can now go back at expression \eqref{Besov:eq_Introduction2} to write that
\begin{multline*}
\Bigl{\vert}\int_{\R^{nd}}D_{\bm{y}_j}\tilde{p}^{\tau,\bm{\xi}}(t,T,\bm{x},\bm{y})g(\bm{y}) \, d\bm{y} \Bigr{\vert} \, \le \,
\int_{\R^{(n-1)d}}\Bigl{\vert}D_{\bm{y}_j}\tilde{p}^{\tau,\bm{\xi}}(t,T,\bm{x},\bm{y})g(\bm{y})d\bm{y}_j\Bigr{\vert} d\bm{y}_{\smallsetminus
j} \\
\le \,\int_{\R^{(n-1)d}}\Bigl{\Vert}D_{\bm{y}_j}\tilde{p}^{\tau,\bm{\xi}} (t,T,\bm{x},\bm{y}_{\smallsetminus j},\cdot) \Bigr{\Vert}_{
B^{-(\alpha_j+\beta_j)}_{1,1}}\Bigl{\Vert}g(\bm{y}_{\smallsetminus j},\cdot)\Bigr{\Vert}_{B^{\alpha_j+\beta_j}_{\infty,\infty}}
d\bm{y}_{\smallsetminus j} \\
\le \, \Vert g \Vert_{C^{\alpha+\beta}_{b,d}}\int_{\R^{(n-1)d}}\Bigl{\Vert}D_{\bm{y}_j}\tilde{p}^{\tau,\bm{\xi}}(t,T,\bm{x},
\bm{y}_{\smallsetminus j},\cdot) \Bigr{\Vert}_{B^{-(\alpha_j+\beta_j)}_{1,1}} \, d\bm{y}_{\smallsetminus j}.
\end{multline*}

In order to control the above quantities, we will then need a control on the integral of the Besov norms of the derivatives of the proxy.
Since however an additional derivative with respect to $\bm{x}_1$ will often appear, for example in
Equation \eqref{Proof:H\"older_Frozen_Semigroup_Idj} below, we state the following result in a more general way.

\begin{lemma}[First Besov Control]
\label{lemma:First_Besov_COntrols}
Let $j$ be  in $\llbracket 2,n\rrbracket$ and $l\in \{0,1\}$. Under (\textbf{A}), there exists a constant $C:=C(j,l)$ such that
\[\int_{\R^{(n-1)d}}\Bigl{\Vert}D_{\bm{y}_j}D^{l}_{\bm{x}_1}\tilde{p}^{\tau,\bm{\xi}}(t,s,\bm{x},\bm{y}_{\smallsetminus
j},\cdot)\Bigr{\Vert}_{B^{-(\alpha_j+\beta_j
)}_{1,1}} \, d\bm{y}_{\smallsetminus j} \, \le \, C(s-t)^{\frac{\alpha+\beta}{\alpha}-\frac{1}{\alpha_j}-\frac{l}{\alpha}}\]
for any $t<s$ in $[0,T]$, any $\bm{x}$ in $\R^{nd}$ and any frozen couple $(\tau,\bm{\xi})$ in $[0,T]\times \R^{nd}$.
\end{lemma}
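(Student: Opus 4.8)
The plan is to reduce the Besov norm $\Vert D_{\bm{y}_j}D^l_{\bm{x}_1}\tilde p^{\tau,\bm\xi}(t,s,\bm x,\bm y_{\smallsetminus j},\cdot)\Vert_{B^{-(\alpha_j+\beta_j)}_{1,1}}$ to an integral in the auxiliary ``thermic'' variable $v$ against $\partial_v p_h(v,\cdot)$, and then to exchange the order of integration so that we first integrate the relevant derivatives of $\tilde p^{\tau,\bm\xi}$ in the full spatial variable $\bm y$. Concretely, using the thermic characterization \eqref{alpha-thermic_Characterization} with $p=q=1$ and $\tilde\gamma=-(\alpha_j+\beta_j)$, we write, up to the low-frequency term (which is harmless because it is smoothing of every order),
\[
\Bigl\Vert D_{\bm{y}_j}D^l_{\bm{x}_1}\tilde p^{\tau,\bm\xi}(t,s,\bm x,\bm y_{\smallsetminus j},\cdot)\Bigr\Vert_{B^{-(\alpha_j+\beta_j)}_{1,1}}
\;\lesssim\; \int_0^1 v^{\frac{\alpha_j+\beta_j}{\alpha}}\Bigl\Vert \partial_v p_h(v,\cdot)\ast_{\bm y_j} D_{\bm{y}_j}D^l_{\bm{x}_1}\tilde p^{\tau,\bm\xi}(t,s,\bm x,\bm y_{\smallsetminus j},\cdot)\Bigr\Vert_{L^1(\R^d)}\,dv,
\]
where the convolution is only in the $j$-th block. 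Integrating this in $\bm y_{\smallsetminus j}$ and using Fubini, the inner object becomes, after the change of variables moving the derivative $D_{\bm{y}_j}$ onto $\tilde p^{\tau,\bm\xi}$ and recalling the scaling identity of Lemma \ref{lemma:link_derivative_density} only if convenient, an $L^1(\R^{nd})$-norm of $\partial_v p_h(v,\cdot)\ast D_{\bm{y}_j}D^l_{\bm{x}_1}\tilde p^{\tau,\bm\xi}$.

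The second step is the estimate of this $L^1$ norm. The idea is to split the $v$-integral at $v = s-t$. For $v\le s-t$ one keeps the derivative on $\tilde p^{\tau,\bm\xi}$: Young's inequality gives $\Vert \partial_v p_h(v,\cdot)\ast D_{\bm y_j}D^l_{\bm x_1}\tilde p^{\tau,\bm\xi}\Vert_{L^1}\le \Vert\partial_v p_h(v,\cdot)\Vert_{L^1}\cdot\Vert D_{\bm y_j}D^l_{\bm x_1}\tilde p^{\tau,\bm\xi}\Vert_{L^1}$; here $\Vert\partial_v p_h(v,\cdot)\Vert_{L^1}\lesssim v^{-1}$ (the $\gamma=l=0$ case of Lemma \ref{lemma:Smoothing_effect_of_Heat_Kern}, read as a bound on the isotropic kernel's time derivative) and the $L^1$ norm of the frozen-density derivative is $\lesssim (s-t)^{-1/\alpha_j-l/\alpha}$ by the smoothing effect \eqref{eq:Smoothing_effects_of_tilde_p} with $\gamma=0$. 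For $v\ge s-t$ one instead transfers $\partial_v$ and one derivative $D_{\bm y_j}$ onto $p_h$ (integrating by parts in $\bm y_j$, which is legitimate because both factors are Schwartz-class smooth and decaying), so that $\Vert \partial_v p_h(v,\cdot)\ast D_{\bm y_j}D^l_{\bm x_1}\tilde p^{\tau,\bm\xi}\Vert_{L^1}\le \Vert\partial_v D_{y_j}p_h(v,\cdot)\Vert_{L^1}\cdot\Vert D^l_{\bm x_1}\tilde p^{\tau,\bm\xi}\Vert_{L^1}\lesssim v^{-1/\alpha-1}\cdot(s-t)^{-l/\alpha}$, again using Lemma \ref{lemma:Smoothing_effect_of_Heat_Kern} (now $\gamma=0$, $l=1$) and the smoothing effect for $\tilde p^{\tau,\bm\xi}$. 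Plugging these two bounds into the $v$-integral and computing, the contribution from $v\le s-t$ is $\lesssim (s-t)^{-1/\alpha_j-l/\alpha}\int_0^{s-t}v^{\frac{\alpha_j+\beta_j}{\alpha}-1}\,dv \simeq (s-t)^{\frac{\alpha_j+\beta_j}{\alpha}-1/\alpha_j-l/\alpha}$, and the contribution from $v\ge s-t$ is $\lesssim (s-t)^{-l/\alpha}\int_{s-t}^1 v^{\frac{\alpha_j+\beta_j}{\alpha}-1/\alpha-1}\,dv$, which, since $\alpha_j+\beta_j<\alpha$ forces the exponent $\frac{\alpha_j+\beta_j}{\alpha}-\frac1\alpha<0$, is again $\lesssim (s-t)^{\frac{\alpha_j+\beta_j}{\alpha}-1/\alpha-l/\alpha}$. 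One then checks the elementary identity $\frac{\alpha_j+\beta_j}{\alpha}-\frac1{\alpha_j}-\frac l\alpha = \frac{\alpha+\beta}{\alpha}-\frac1{\alpha_j}-\frac l\alpha$ (using $\alpha_j=\alpha/(1+\alpha(j-1))$ and $\beta_j=\beta/(1+\alpha(j-1))$, together with $\frac1{\alpha_j}=\frac1\alpha+(j-1)$), so both pieces match the claimed exponent.

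The main obstacle I expect is the bookkeeping of exponents and the justification of the integration by parts / Fubini exchanges in the regime $v\ge s-t$: one must be careful that the anisotropic $d$-weight does not sneak in (here there is no weight, $\gamma=0$, which simplifies matters), that the frozen density and its derivatives are genuinely integrable with the stated time-scaling (this is exactly Lemma \ref{lemma:Smoothing_effect_frozen}), and that the borderline integrability $\alpha_j+\beta_j<\alpha$ — guaranteed because $\alpha+\beta<2$ and $j\ge 2$ give $\alpha_j+\beta_j=\frac{\alpha+\beta}{1+\alpha(j-1)}\le\frac{\alpha+\beta}{1+\alpha}<1<\ldots$, and more precisely $<\alpha$ in the relevant range — is what makes the $v\ge s-t$ tail converge. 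A minor additional point is handling the low-frequency term $(\phi_0\widehat{(\cdot)})^\vee$ in \eqref{alpha-thermic_Characterization}: since $\phi_0$ is compactly supported and smooth, this term is controlled by any Schwartz seminorm of $\tilde p^{\tau,\bm\xi}$ and contributes a strictly better power of $(s-t)$, so it can be absorbed. Everything else is the routine change of variables $\bm z=\mathbb T_{s-t}^{-1}(\tilde{\bm m}^{\tau,\bm\xi}_{t,s}(\bm x)-\bm y)$ already used in the proof of Lemma \ref{lemma:Smoothing_effect_frozen}.
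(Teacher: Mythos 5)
Your overall architecture (thermic characterization of $B^{-(\alpha_j+\beta_j)}_{1,1}$, a split of the $v$-integral, keeping the derivatives on $\tilde p^{\tau,\bm\xi}$ for small $v$, transferring $D_{\bm y_j}$ onto $p_h$ for large $v$, then the smoothing effects of $p_h$ and of the frozen density) is the same as the paper's, but the quantitative conclusion fails because of where you cut the $v$-integral. Cutting at $v=s-t$, your small-$v$ piece gives
\[
(s-t)^{-\frac{1}{\alpha_j}-\frac l\alpha}\int_0^{s-t}v^{\frac{\alpha_j+\beta_j}{\alpha}-1}\,dv \;\simeq\;(s-t)^{\frac{\alpha_j+\beta_j}{\alpha}-\frac1{\alpha_j}-\frac l\alpha},
\]
and the ``elementary identity'' you invoke to match this with the statement is false: by definition $\alpha_j+\beta_j=\frac{\alpha+\beta}{1+\alpha(j-1)}$, so $\frac{\alpha_j+\beta_j}{\alpha}<\frac{\alpha+\beta}{\alpha}$ strictly for $j\ge2$, and your exponent is strictly smaller than the required $\frac{\alpha+\beta}{\alpha}-\frac1{\alpha_j}-\frac l\alpha$. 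Since the lemma is used precisely as $s-t\to0$ (its power must compensate the factor $(s-t)^{j-1}$ from the scaling lemma so as to produce $(s-t)^{\frac{\alpha+\beta-1}{\alpha}}$), this loss cannot be absorbed into the constant, so your argument does not prove the statement. (Your large-$v$ piece, by contrast, comes out with a larger exponent than needed and is harmless; a secondary slip is that its convergence requires only $\alpha_j+\beta_j<1$, which always holds, whereas the condition $\alpha_j+\beta_j<\alpha$ you quote can fail when $\alpha<1$.)

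The missing idea is exactly the degree of freedom the paper exploits: split at $v=(s-t)^{\delta_j}$ and choose $\delta_j$ to balance the two regimes. With your own two bounds, the small-$v$ piece is of order $(s-t)^{\delta_j\frac{\alpha_j+\beta_j}{\alpha}-\frac1{\alpha_j}-\frac l\alpha}$ and the large-$v$ piece of order $(s-t)^{\delta_j\frac{\alpha_j+\beta_j-1}{\alpha}-\frac l\alpha}$; equating them forces $\delta_j=\frac{\alpha}{\alpha_j}=\frac{\alpha+\beta}{\alpha_j+\beta_j}=1+\alpha(j-1)$, and then both pieces equal $(s-t)^{\frac{\alpha+\beta}{\alpha}-\frac1{\alpha_j}-\frac l\alpha}$, as claimed (note $\delta_j>1$, so the cut lies well below $s-t$). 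In the small-$v$ regime the paper additionally performs a cancellation with respect to $\partial_v p_h$ together with a Taylor expansion of $D^l_{\bm x_1}\tilde p^{\tau,\bm\xi}$ in $\bm y_j$, but this produces the same integrand power $v^{\frac{\alpha_j+\beta_j}{\alpha}-1}$ as your direct Young bound (which needs $\Vert\partial_v p_h(v,\cdot)\Vert_{L^1}\lesssim v^{-1}$, a scaling fact not literally contained in Lemma \ref{lemma:Smoothing_effect_of_Heat_Kern} but standard), so that part of your plan is acceptable; the essential correction is the anisotropic cut $(s-t)^{1+\alpha(j-1)}$. Your handling of the low-frequency term is fine in spirit: the paper bounds it by $C(s-t)^{-l/\alpha}$, which suffices because $\frac{\alpha+\beta}{\alpha}-\frac1{\alpha_j}\le0$ and $s-t\le T$.
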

\begin{proof}
To control the Besov norm in $B^{-(\alpha_j+\beta_j)}_{1,1}(\R^d)$, we are going to use the stable thermic characterization
\eqref{alpha-thermic_Characterization} with $\tilde{\gamma}=-(\alpha_j+\beta_j)$.
We start considering the second term in the characterization, i.e.
\[\int_{0}^{1}v^{\frac{\alpha_j+\beta_j}{\alpha}}\int_{\R^d}\Bigl{\vert}\int_{\R^d}\partial_vp_h(v,z-\bm{y}_j)D_{\bm{y}_j}
D^{l}_{\bm{x}_1}\tilde{p}^{\tau,\bm{\xi}}(t, s,\bm{x},\bm{y})\,d\bm{y}_j  \Bigr{\vert} \, dzdv.\]
Fixed a constant $\delta_j\ge1$ to be chosen later, we split the integral with respect to $v$ in two components:
\begin{multline*}
\Vert D_{\bm{y}_j}D^{l}_{\bm{x}_1}\tilde{p}^{\tau,\bm{\xi}}(t,s,\bm{x},\bm{y}_{\smallsetminus j},\cdot)
\Vert_{B^{-(\alpha_j+\beta_j)}_{1,1}} \\
= \,\int_{0}^{(s-t)^{\delta_j}}v^{\frac{\alpha_j+\beta_j}{\alpha}}\int_{\R^d}\Bigl{\vert}\int_{\R^d} \partial_v p_h(v,z-\bm{y}_j)
D_{\bm{y}_j} D^{l}_{\bm{x}_1}\tilde{p}^{\tau,\bm{\xi}}(t,s,\bm{x},\bm{y}) \,d\bm{y}_j  \Bigr{\vert} \, dzdv \\
+\int_{(s-t)^{\delta_j}}^{1}v^{\frac{\alpha_j+\beta_j}{\alpha}}\int_{\R^d}\Bigl{\vert}\int_{\R^d}\partial_vp_h(v,z-\bm{y}_j)
D_{\bm{y}_j}D^{l}_{\bm{x}_1}\tilde{p}^{\tau,\bm{\xi}}(t,s,\bm{x},\bm{y})\,d\bm{y}_j  \Bigr{\vert} \, dzdv \, =: \,
\bigl(I_1 +I_2\bigr)(\bm{y}_{\smallsetminus j}).
\end{multline*}
The second component $I_2$ has no time-singularity and can be easily controlled by
\[I_2(\bm{y}_{\smallsetminus j}) \, = \,\int_{(s-t)^{\delta_j}}^{1}v^{\frac{\alpha_j+\beta_j}{\alpha}}\int_{\R^d} \Bigl{\vert}
\int_{\R^d}D_z\partial_vp_h(v,z-\bm{y}_j)\otimes D^{l}_{\bm{x}_1}\tilde{p}^{\tau,\bm{\xi}}(t,s,\bm{x}, \bm{y})\,d\bm{y}_j
\Bigr{\vert} \, dzdv\]
using integration by parts formula and noticing that $D_{\bm{y}_j}p_h(v,z-\bm{y}_j)=-D_zp_h(v,z-\bm{y}_j)$. Then,
\[I_2(\bm{y}_{\smallsetminus j}) \, \le \, \int_{(s-t)^{\delta_j}}^{1}v^{\frac{\alpha_j+\beta_j}{\alpha}}\int_{
\R^d}\int_{\R^d}\vert D_z\partial_vp_h(v,z-\bm{y}_j)\vert \, \vert D^{l}_{\bm{x}_1}\tilde{p}^{\tau,\bm{\xi}}(t,s,\bm{x}, \bm{y}) \vert
\,d\bm{y}_j  \, dzdv.\]
We can then use Fubini theorem to separate the integrals and apply the smoothing effect of the heat-kernel $p_h$ (Lemma
\ref{lemma:Smoothing_effect_of_Heat_Kern}) to show that
\begin{multline*}
I_2(\bm{y}_{\smallsetminus j}) \, \le \, \int_{(s-t)^{\delta_j}}^{1}v^{\frac{\alpha_j+\beta_j}{\alpha}}\int_{\R^d}\Bigl(\int_{
\R^d}\vert D_z\partial_vp_h(v,z-\bm{y}_j)\vert \, dz\Bigr)  \vert D^{l}_{\bm{x}_1}\tilde{p}^{\tau,\bm{\xi}}(t,s,\bm{x},
\bm{y}) \vert \, d\bm{y}_jdv \\
\le \, C\Bigl(\int_{(s-t)^{\delta_j}}^{1}v^{\frac{\alpha_j+\beta_j-1}{\alpha}-1} \, dv\Bigr)\Bigl(\int_{\R^d}\vert D^{l}_{\bm{x}_1}
\tilde{p}^{\tau,\bm{\xi}}(t,s,\bm{x},\bm{y})\vert \,d\bm{y}_j\Bigr) \\
\le \, C(s-t)^{\frac{\delta_j(\alpha_j+\beta_j-1)}{\alpha}}\int_{\R^d}\vert
D^{l}_{\bm{x}_1}\tilde{p}^{\tau,\bm{\xi}}(t,s,\bm{x},\bm{y})\vert \,d\bm{y}_j.
\end{multline*}
Using the smoothing effect (Equation \eqref{eq:Smoothing_effects_of_tilde_p}) of the frozen density $\tilde{p}^{\tau,\bm{\xi}}$, we have
thus found that
\begin{equation}\label{Proof:First_Besov_COntrol_I2}
\int_{\R^{(n-1)d}}I_2(\bm{y}_{\smallsetminus j}) \, d\bm{y}_{\smallsetminus j} \, \le \,
(s-t)^{\frac{\delta_j(\alpha_j+\beta_j-1)}{\alpha}}
\int_{\R^{nd}}\vert D^{l}_{\bm{x}_1}\tilde{p}^{\tau,\bm{\xi}}(t,s,\bm{x},\bm{y})\vert\,d\bm{y} \, \le \,
C(s-t)^{\frac{\delta_j(\alpha_j+\beta_j-1)-l}{\alpha}}.
\end{equation}
%%%%%%%%%%%%%%%%%%%%%%%%%%%%%%%%%%%%%%%%%%%%%%%%%%%%%%%%%%%%%%%%%%%%%%%%%
On the other hand, the term $I_1$ needs a more delicate treatment in order to avoid time-integrability problems. We start using a
cancellation argument with respect to the derivative $\partial_vp_h$ of the heat-kernel to rewrite $I_1$ as
\begin{multline*}
I_1(\bm{y}_{\smallsetminus j}) \, = \\
\int_{0}^{(s-t)^{\delta_j}}\!\!v^{\frac{\alpha_j+\beta_j}{\alpha}}\int_{\R^d}\Bigl{\vert}\int_{\R^d}\partial_vp_h(v,z-\bm{y}_j)
\bigl[D_{\bm{y}_j}D^l_{\bm{x}_1}\tilde{p}^{\tau,\bm{\xi}}(t,s,\bm{x},\bm{y})-D_{\bm{y}_j}D^l_{\bm{x}_1}\tilde{p}^{\tau,\bm{\xi}}(t,
s,\bm{x},\bm{y}_{\smallsetminus j},z)\bigr]\,d\bm{y}_j  \Bigr{\vert} \,dzdv \\
= \, \int_{0}^{(s-t)^{\delta_j}}\!\! v^{\frac{\alpha_j+\beta_j}{\alpha}}\int_{\R^d}\Bigl{\vert}\int_{\R^d}D_z\partial_vp_h(v,z-\bm{y}_j)
\otimes\bigl[D^l_{\bm{x}_1}\tilde{p}^{\tau,\bm{\xi}}(t,s,\bm{x},\bm{y})-D^l_{\bm{x}_1}\tilde{p}^{\tau,\bm{\xi}}(t,s,\bm{x},
\bm{y}_{\smallsetminus j},z)\bigr]\,d\bm{y}_j  \Bigr{\vert} \,dzdv
\end{multline*}
where in the second passage we used again integration by parts formula to move the derivative to $p_h$ and the equality $D_{\bm{y}_j}p_h(v,
z-\bm{y}_j)=-D_zp_h(v,z-\bm{y}_j)$. We can then apply a Taylor expansion with respect to variable $\bm{y}_j$ to write that
\begin{multline*}
I_1(\bm{y}_{\smallsetminus j}) \, = \\
\int_{0}^{(s-t)^{\delta_i}}\!\!v^{\frac{\alpha_j+\beta_j}{\alpha}}\int_{\R^d}\Bigl{\vert}\int_{\R^d}D_z\partial_vp_h(v,z-\bm{y}_j)
\int_{0}^{1}\!D_{\bm{y}_j}D^l_{\bm{x}_1}\tilde{p}^{\tau,\bm{\xi}}(t,s,\bm{x},\bm{y}_{\smallsetminus
j},\bm{y}_j+\lambda(z-\bm{y}_j))\cdot(z-\bm{y}_j) \,  d\mu d\bm{y}_j \Bigr{\vert} \, dzdv \\
\le \, \int_{0}^{(s-t)^{\delta_i}}\!\!v^{\frac{\alpha_j+\beta_j}{\alpha}}\int_{\R^d}\int_{\R^d}\int_{0}^{1}\vert
D_z\partial_vp_h(v,z-\bm{y}_j)\vert\, \vert D_{\bm{y}_j}D^l_{\bm{x}_1}\tilde{p}^{\tau,\bm{\xi}}(t,s,\bm{x},
\bm{y}_{\smallsetminus j},\bm{y}_j+\lambda(z-\bm{y}_j))\vert\, \vert z-\bm{y}_j\vert \,  d\lambda d\bm{y}_jdzdv
\end{multline*}
We can then use Fubini theorem and changes of variables $\tilde{z}=z-\bm{y}_j$ (fixed $\bm{y}_j$) and
$\tilde{\bm{y}}_j=\bm{y}_j+\lambda\tilde{z}$ (considering $\tilde{z}$ and $\lambda$ fixed) to separate the integrals so that
\[I_1(\bm{y}_{\smallsetminus j}) \, \le \, \int_{0}^{(s-t)^{\delta_i}}v^{\frac{\alpha_j+\beta_j}{\alpha}} \Bigl(\int_{\R^d} \vert D_z\partial_vp_h(v,\tilde{z})\vert \,
\vert \tilde{z} \vert \, dz\Bigr) \Bigl(\int_{\R^d} \vert D_{\bm{y}_j}D^l_{\bm{x}_1}\tilde{p}^{\tau,\bm{\xi}}(t,s,\bm{x}, \bm{y
}_{\smallsetminus j },\tilde{\bm{y}}_j) \vert\,d\bm{y}_j\Bigr)\, dv.
\]
The smoothing effect of the heat-kernel $p_h$ (Lemma \ref{lemma:Smoothing_effect_of_Heat_Kern}) allows now to control the first term:
\begin{multline*}
I_1(\bm{y}_{\smallsetminus j}) \, \le \, C\Bigl(\int_{0}^{(s-t)^{\delta_i}}v^{\frac{\alpha_j+\beta_j-1}{\alpha}}\, dv\Bigr)
\Bigl(\int_{\R^d} \vert D_{\bm{y}_j}D^l_{\bm{x}_1}\tilde{p}^{\tau,\bm{\xi}}(t,s,\bm{x}, \bm{y}_{\smallsetminus j },z+\lambda(\bm{y}_j-z))
\vert\,d\bm{y}_j\Bigr) \\
\le \, C(s-t)^{\delta_j\frac{\alpha_j+\beta_j}{\alpha}} \int_{\R^d}\vert D_{\bm{y}_j}D^l_{\bm{x}_1}\tilde{p}^{\tau,\bm{\xi}}
(t,s,\bm{x}, \bm{y}_{\smallsetminus j },z+\lambda(\bm{y}_j-z)) \vert \,d\bm{y}_j.
\end{multline*}
It then follows using the smoothing effect of the frozen semigroup (Lemma \ref{lemma:Smoothing_effect_frozen}) that
\begin{multline}\label{Proof:First_Besov_COntrol_I1}
\int_{\R^{(n-1)d}}I_1(\bm{y}_{\smallsetminus j}) \, d\bm{y}_{\smallsetminus j} \, \le \,
C(s-t)^{\delta_j\frac{\alpha_j+\beta_j}{\alpha}}\int_{\R^{nd}}\vert D_{\bm{y}_j}D^l_{\bm{x}_1}\tilde{p}^{\tau,\bm{\xi}}
(t,s,\bm{x}, \bm{y}_{\smallsetminus j },z+\lambda(\bm{y}_j-z)) \vert \,d\bm{y}  \\
\le \, C(s-t)^{\delta_j\frac{\alpha_j+\beta_j}{\alpha}-\frac{l}{\alpha} -\frac{l}{\alpha_j}}.
\end{multline}
Going back to equations \eqref{Proof:First_Besov_COntrol_I2} and \eqref{Proof:First_Besov_COntrol_I1}, we notice that we need $\delta_j$ to
be such that
\[\delta_j\bigl[\frac{\alpha_j +\beta_j}{\alpha}\bigr] \, = \, \frac{\alpha+\beta}{\alpha} \,\, \text{ and } \,\,
\delta_j\bigl[\frac{\alpha_j+\beta_j-1}{\alpha}\bigr] \, = \, \frac{\alpha+\beta}{\alpha}-\frac{1}{\alpha_j}.\]
Recalling Equation \eqref{eq:def_alpha_i_and_beta_i} for the relative definitions, we can thus conclude choosing
$\delta_j=(\alpha+\beta)/(\alpha_j+\beta_j)=1+\alpha(j-1)$. \newline
Reproducing the previous computations, we can also write for a test function in $\phi_0$ in $C^\infty_0(\R^d)$,
\begin{multline*}
\int_{\R^{(n-1)d}}\Bigl{\Vert} \bigl(\phi_0\bigl(D_{\bm{y}_j}D^{l}_{\bm{x}_1}\tilde{p}^{\tau,\bm{\xi}}(t,s,\bm{x},\bm{y}_{\smallsetminus
j},\cdot)\bigr)\hat{\phantom{A}}\bigr)^\vee \Bigr{\Vert}_{L^1} \, d\bm{y}_{\smallsetminus j} \\
= \, \int_{\R^{(n-1)d}}\int_{\R^d}\Bigl{\vert}\int_{\R^d}D_{\bm{y}_j}\hat{\phi_0}(z-\bm{y}_j)\cdot
D^l_{\bm{x}_1}\tilde{p}^{\tau,\bm{\xi}}(t,s,\bm{x},\bm{y})\, d\bm{y}_j \Bigr{\vert} \, dzd\bm{y}_{\smallsetminus j} \\
\le \, C \int_{\R^{nd}}\vert D^l_{\bm{x}_1}\tilde{p}^{\tau,\bm{\xi}}(t,s,\bm{x},\bm{y}) \vert \, d\bm{y} \, \le \,
C(s-t)^{-\frac{l}{\alpha}}.
\end{multline*}
\end{proof}

\subsection{Proof of Proposition \ref{prop:Schauder_Estimates_for_proxy}}

Thanks to the First Besov Control (Lemma \ref{lemma:First_Besov_COntrols}), we are now ready to prove the Schauder Estimates for the proxy (Proposition \ref{prop:Schauder_Estimates_for_proxy}). Such a proof will be divided in three parts: the estimates for the supremum norms of the solution and its non-degenerate gradient are stated in Lemma \ref{lemma:supremum_norm_proxy} while the controls of the H\"older moduli of the solution and its gradient with respect to the non-degenerate variable are given in Lemmas \ref{lemma:Holder_modulus_proxy_Non-Deg} and \ref{lemma:Holder_modulus_proxy_Deg}, respectively.

\begin{lemma}(Controls on Supremum Norm)
\label{lemma:supremum_norm_proxy}
Under (\textbf{A}), there exists a constant $C:=C(T)\ge 1$ such that for any freezing couple $(\tau,\bm{\xi})$ in
$[0,T]\times\R^{nd}$, any $t$ in $[0,T]$ and any $\bm{x}$ in $\R^{nd}$,
\[\vert \tilde{u}^{\tau,\bm{\xi}}(t,\bm{x}) \vert + \vert D_{\bm{x}_1}\tilde{u}^{\tau,\bm{\xi}}(t,\bm{x})\vert \, \le \,
C\Bigl[\Vert g \Vert_{C^{\alpha+\beta}_{b,d}} + \Vert f \Vert_{L^\infty(C^\beta_{b,d})} \Bigr].\]
\end{lemma}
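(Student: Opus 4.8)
The plan is to split the estimate into the zeroth-order bound on $\tilde{u}^{\tau,\bm{\xi}}$ and the first-order bound on $D_{\bm{x}_1}\tilde{u}^{\tau,\bm{\xi}}$, working from the Duhamel representation \eqref{Duhamel_representation_of_proxy} and its differentiated form \eqref{align:Representation_deriv}. For the sup-norm of $\tilde{u}^{\tau,\bm{\xi}}$ itself the argument is immediate: the frozen density $\tilde{p}^{\tau,\bm{\xi}}(t,s,\bm{x},\cdot)$ is non-negative and, after the change of variables $\bm{z}=\mathbb{M}^{-1}_{s-t}(\bm{y}-\tilde{\bm{m}}^{\tau,\bm{\xi}}_{t,s}(\bm{x}))$ together with $\int_{\R^{nd}}p_S(s-t,\bm{z})\,d\bm{z}=1$, it has total mass one; hence each $\tilde{P}^{\tau,\bm{\xi}}_{t,s}$ is an $L^\infty$-contraction and
\[
|\tilde{u}^{\tau,\bm{\xi}}(t,\bm{x})|\,\le\,\Vert g\Vert_{L^\infty}+\int_t^T\Vert f(s,\cdot)\Vert_{L^\infty}\,ds\,\le\,\Vert g\Vert_{C^{\alpha+\beta}_{b,d}}+T\,\Vert f\Vert_{L^\infty(C^\beta_{b,d})}.
\]

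For the gradient I would treat the two terms in \eqref{align:Representation_deriv} separately. The source contribution $\int_t^T D_{\bm{x}_1}\tilde{P}^{\tau,\bm{\xi}}_{t,s}f(s,\bm{x})\,ds$ is the easier one: since $f(s,\cdot)\in C^\beta_{b,d}(\R^{nd})$ with $\beta<\alpha$ (by assumption (\textbf{P}), or trivially when $\alpha\ge 1$), the smoothing effect \eqref{eq:Control_of_semigroup} of Lemma \ref{lemma:Smoothing_effect_frozen} applies with $\gamma=\beta$ and the first-order multi-index, giving (recall $\alpha_1=\alpha$) the bound $|D_{\bm{x}_1}\tilde{P}^{\tau,\bm{\xi}}_{t,s}f(s,\bm{x})|\le C\Vert f\Vert_{L^\infty(C^\beta_{b,d})}(s-t)^{\frac{\beta-1}{\alpha}}$. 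Because $\alpha+\beta>1$ the exponent $\frac{\beta-1}{\alpha}$ exceeds $-1$, so the time singularity is integrable and $\int_t^T(s-t)^{\frac{\beta-1}{\alpha}}\,ds\le C\,T^{\frac{\alpha+\beta-1}{\alpha}}$, which is finite uniformly in $t\in[0,T]$.

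The terminal contribution $D_{\bm{x}_1}\tilde{P}^{\tau,\bm{\xi}}_{t,T}g(\bm{x})=\int_{\R^{nd}}D_{\bm{x}_1}\tilde{p}^{\tau,\bm{\xi}}(t,T,\bm{x},\bm{y})g(\bm{y})\,d\bm{y}$ is the delicate one, and I would proceed exactly as in the discussion preceding Lemma \ref{lemma:First_Besov_COntrols}. First use Lemma \ref{lemma:link_derivative_density} with $i=1$ to transfer the $\bm{x}_1$-derivative onto the forward variable, obtaining $\sum_{j=1}^{n}C_j(T-t)^{j-1}\int_{\R^{nd}}D_{\bm{y}_j}\tilde{p}^{\tau,\bm{\xi}}(t,T,\bm{x},\bm{y})g(\bm{y})\,d\bm{y}$. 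For $j=1$ the restriction of $g$ to the first variable is $C^1$ (as $\alpha+\beta>1$), so integration by parts bounds the term by $\Vert D_{\bm{y}_1}g\Vert_{L^\infty}\le\Vert g\Vert_{C^{\alpha+\beta}_{b,d}}$. For $j\ge 2$, $g$ has no classical $\bm{y}_j$-derivative, so one slices in $\bm{y}_{\smallsetminus j}$ and uses the identification $C^{\alpha_j+\beta_j}_b(\R^d)=B^{\alpha_j+\beta_j}_{\infty,\infty}(\R^d)$ — legitimate since $\alpha_j+\beta_j=\tfrac{\alpha+\beta}{1+\alpha(j-1)}<1$ because $\beta<1$ — together with the duality \eqref{Besov:duality_in_Besov}, noting $\sup_{z}\Vert\Pi^j_zg\Vert_{C^{\alpha_j+\beta_j}}\le\Vert g\Vert_{C^{\alpha+\beta}_{b,d}}$ by definition of the anisotropic norm. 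The First Besov Control (Lemma \ref{lemma:First_Besov_COntrols} with $l=0$) then yields $\int_{\R^{(n-1)d}}\Vert D_{\bm{y}_j}\tilde{p}^{\tau,\bm{\xi}}(t,T,\bm{x},\bm{y}_{\smallsetminus j},\cdot)\Vert_{B^{-(\alpha_j+\beta_j)}_{1,1}}\,d\bm{y}_{\smallsetminus j}\le C(T-t)^{\frac{\alpha+\beta}{\alpha}-\frac{1}{\alpha_j}}$; multiplying by the prefactor $(T-t)^{j-1}$ and using $\frac{1}{\alpha_j}=\frac{1+\alpha(j-1)}{\alpha}$ (cf. \eqref{eq:def_alpha_i_and_beta_i}) the total power telescopes to $(T-t)^{\frac{\alpha+\beta-1}{\alpha}}$, which is bounded on $[0,T]$ precisely because $\alpha+\beta>1$.

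Summing the three contributions gives $|\tilde{u}^{\tau,\bm{\xi}}(t,\bm{x})|+|D_{\bm{x}_1}\tilde{u}^{\tau,\bm{\xi}}(t,\bm{x})|\le C(T)[\Vert g\Vert_{C^{\alpha+\beta}_{b,d}}+\Vert f\Vert_{L^\infty(C^\beta_{b,d})}]$. The only genuinely delicate step is the terminal term for $j\ge 2$: one must check that the Besov thermic machinery applies (i.e. that $\alpha_j+\beta_j\in(0,1)$) and, above all, that the time exponent produced by Lemma \ref{lemma:First_Besov_COntrols} combines with the degenerate scaling prefactor $(T-t)^{j-1}$ into a non-negative power; everything else is either the Markov contractivity of $\tilde{P}^{\tau,\bm{\xi}}$ or a direct application of the smoothing effect of the frozen density.
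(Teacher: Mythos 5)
Your proposal is correct and follows essentially the same route as the paper: the $L^\infty$-contraction of the frozen semigroup for the zeroth-order terms, the smoothing control \eqref{eq:Control_of_semigroup} for $D_{\bm{x}_1}\tilde{G}^{\tau,\bm{\xi}}_{t,T}f$, and for $D_{\bm{x}_1}\tilde{P}^{\tau,\bm{\xi}}_{t,T}g$ the transfer of the derivative via Lemma \ref{lemma:link_derivative_density}, integration by parts for $j=1$, and Besov duality together with the First Besov Control for $j\ge 2$, with the prefactor $(T-t)^{j-1}$ telescoping to $(T-t)^{\frac{\alpha+\beta-1}{\alpha}}$ exactly as in the paper. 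Your extra check that $\alpha_j+\beta_j<1$ for $j\ge 2$ is a harmless addition not spelled out in the paper but consistent with it.
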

\begin{proof}
We start noticing that $\tilde{P}^{\tau,\bm{\xi}}_{t,T}g(\bm{x})$ and $\tilde{G}^{\tau,\bm{\xi}}_{t,T}f(t,\bm{x})$ can be easily
bounded using the supremum norm of $f$ and $g$, respectively. \newline
Moreover, we can use the controls on the frozen semigroup (Equation \eqref{eq:Control_of_semigroup}) to control
$D_{\bm{x}_1}\tilde{G}^{\tau,\bm{\xi}}_{t,T}f(t,\bm{x})$. Indeed,
\[\bigl{\vert} D_{\bm{x}_1}\tilde{G}^{\tau,\bm{\xi}}_{t,T}f(t,\bm{x}) \bigr{\vert} \, \le \, \int_{t}^{T}
\bigl{\vert}D_{\bm{x}_1}\tilde{P}^{\tau,\bm{\xi}}_{t,s}
f(s,\bm{x}) \bigr{\vert} \, ds \, \le \, C (T-t)^{\frac{\alpha+\beta-1}{\alpha}}\Vert f\Vert_{L^\infty(C^\beta)} \, \le \, C
T^{\frac{\alpha+\beta-1}{\alpha}}\Vert f\Vert_{L^\infty C^\beta)}\]
remembering in the last inequality that $\alpha + \beta -1 >0$ by hypothesis (\textbf{P}). \newline
It remains to control $D_{\bm{x}_1}\tilde{P}^{\tau,\bm{\xi}}_{t,T}g(\bm{x})$. As shown the previous Sub-section $4.1$, we start using
the scaling lemma \ref{lemma:link_derivative_density} to write that
\begin{multline*}
\bigl{\vert} D_{\bm{x}_1}\tilde{P}^{\tau,\bm{\xi}}_{t,T}g(\bm{x}) \bigr{\vert}\, = \,
\Bigl{\vert}\int_{\R^{nd}}D_{\bm{x}_1}\tilde{p}^{\tau,\bm{\xi}}(t,T,\bm{x},\bm{y})g(\bm{y})
\,d\bm{y}\Bigr{\vert} \\
\le \, C\sum_{j=1}^{n}(T-t)^{j-1} \Bigl{\vert} \int_{\R^{nd}} D_{\bm{y}_j} \tilde{p}^{\tau,\bm{\xi}} (t,T,\bm{x},\bm{y})
g(\bm{y}) \, d\bm{y} \Bigr{\vert}\, =: \, C\sum_{j=1}^{n}(T-t)^{j-1}J_j.
\end{multline*}
Since $g$ is differentiable in the first, non-degenerate variable $\bm{x}_1$, the contribution $J_1$ can be easily bounded using integration
by parts formula:
\begin{equation}\label{Proof:Supremum_for_Frozen_Semigroup_J1}
J_1\, = \,\Bigl{\vert} \int_{\R^{nd}}\tilde{p}^{\tau,\bm{\xi}}(t,T,\bm{x},\bm{y})D_{\bm{y}_1}g(\bm{y}) \, d\bm{y}
\Bigr{\vert} \, \le \,\Vert D_{\bm{y}_1}g\Vert_{L^{\infty}} \, \le \,\Vert g\Vert_{C^{\alpha+\beta}_{b,d}}.
\end{equation}
To control the other terms $J_j$ for $j>1$, we use instead the duality in Besov spaces \eqref{Besov:duality_in_Besov} and the identification
\eqref{Besov:ident_Holder_Besov}, so that
\begin{equation}\label{Proof:Supremum_for_Frozen_Semigroup_Jj}
J_j \, \le \, C\Vert g \Vert_{C^{\alpha+\beta}_{b,d}}\int_{\R^{(n-1)d}}  \Vert D_{\bm{y}_j}\tilde{p}^{\tau,\bm{\xi}}(t,T,\bm{x},
\bm{y}_{\smallsetminus j},\cdot)\Vert_{B^{-(\alpha_j+\beta_j)}_{1,1}} \, d\bm{y}_{\smallsetminus j}\, \le \, C\Vert g
\Vert_{C^{\alpha+\beta}_{b,d}}(T-t)^{\frac{\alpha+\beta}{\alpha}-\frac{1}{\alpha_j}}
\end{equation}
where in the last inequality we applied the first Besov Control (Lemma \ref{lemma:First_Besov_COntrols}).\newline
Looking back at equations \eqref{Proof:Supremum_for_Frozen_Semigroup_J1} and \eqref{Proof:Supremum_for_Frozen_Semigroup_Jj}, it finally
holds that
\[\bigl{\vert} D_{\bm{x}_1}\tilde{P}^{\tau,\bm{\xi}}_{t,T}g(\bm{x}) \bigr{\vert} \, \le \, C\Vert g\Vert_{C^{\alpha+\beta}_{b,d}}
\bigl(1+\sum_{j=2}^{n}(T-t)^{j-1}(T-t)^{\frac{\alpha+\beta}{\alpha}-\frac{1}{\alpha_j}}\bigr) \, \le \,
C\bigl(1+T^{\frac{\alpha+\beta-1}{\alpha}}\bigr)\Vert g \Vert_{C^{\alpha+\beta}_{b,d}}\]
where in the last passage we used again that $\alpha+\beta -1 >0$ by hypothesis (\textbf{P}).
\end{proof}

Before starting with the calculations on the H\"older modulus. For fixed $(t,\bm{x},\bm{x}')$ in $[0,T]\times \R^{2nd}$, we will need to
distinguish two cases. We will say that the \emph{off-diagonal regime} holds if $T-t \le c_0d^\alpha(\bm{x},\bm{x}')$ for a constant $c_0$
to be specified but meant to be smaller than $1$. This means in particular that the spatial distance is larger than the characteristic
time-scale up to the prescribed constant $c_0$ which will be useful further on in the computations for a circular argument. \newline
On the other hand, we will say that the global diagonal regime is in force when $T-t \ge c_0d^\alpha(\bm{x},\bm{x}')$ and the spatial points
are instead closer than the typical time-scale magnitude. In particular, when a time integration is involved (for example in the control of
the frozen Green kernel), the same two regime appears even if in a local base. Considering a
variable $s$ in $[t,T]$, there are again a local off-diagonal regime if $s-t \le c_0d^\alpha(\bm{x},\bm{x}')$ and a local diagonal
regime when $s-t \ge c_0d^\alpha(\bm{x},\bm{x}')$. In particular we will denote with $t_0$  the critical time at
which a change of regime occurs in the globally diagonal regime. Namely,
\begin{equation}\label{eq:def_t0}
t_0 \, := \, \bigl(t+c_0d^\alpha(\bm{x},\bm{x}')\bigr)\wedge T.
\end{equation}
We highlight however that this approach was already used in \cite{Chaudru:Honore:Menozzi18_Sharp} to obtain Schauder estimates for
degenerate Kolmogorov equations and can be adapted in the current setting.

Moreover, it is important to notice that the norm $\Vert \cdot\Vert_{C^{\alpha+\beta}_d}$ is essentially defined as the sum of the norms
$\Vert \cdot \Vert_{C^{\frac{\alpha+\beta}{1+\alpha(i-1)}}}$ with respect to the $i$-th variable and uniformly on the other components.
Thus, there is a big difference between the case $i=1$ where
$\alpha+\beta$ is in $(1,2)$ and we have to deal with a proper derivative and the other situations ($i>1$) where instead
$(\alpha+\beta)/(1+\alpha(i-1))<1$ and the norm is calculated directly on the function. For this reason, we are going to analyze the two
cases separately. Lemma \ref{lemma:Holder_modulus_proxy_Non-Deg} will work on the non-degenerate setting ($i=1$) while Lemma
\ref{lemma:Holder_modulus_proxy_Deg} will concern the degenerate one ($i>1$).

\begin{lemma}[Controls on H\"older Moduli: Non-Degenerate]
\label{lemma:Holder_modulus_proxy_Non-Deg}
Let $\bm{x},\bm{x}'$ be in $\R^{nd}$ such that $\bm{x}_j=\bm{x}'_j$ for any $j\neq 1$. Under (\textbf{A}), there exists a constant $C\ge 1$
such that for any $t$ in $[0,T]$ and any freezing couple $(\tau,\bm{\xi})$ in $[0,T]\times \R^{nd}$, it holds that
\[\bigl{\vert} D_{\bm{x}_1}\tilde{u}^{\tau,\bm{\xi}}(t,\bm{x})- D_{\bm{x}_1}\tilde{u}^{\tau,\bm{\xi}}(t,\bm{x}')\vert\\
\le \, Cc^{\frac{\alpha+\beta-2}{\alpha}}_0\bigl(\Vert g\Vert_{C^{\alpha+\beta}_{b,d}} + \Vert f \Vert_{L^\infty(C^\beta_{b,d})}\bigr)
d^{\alpha+\beta-1}(\bm{x},\bm{x}').\]
\end{lemma}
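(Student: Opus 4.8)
The plan is to estimate separately the two contributions in the Duhamel representation \eqref{align:Representation}, namely $D_{\bm{x}_1}\tilde{P}^{\tau,\bm{\xi}}_{t,T}g(\bm{x})$ and $D_{\bm{x}_1}\tilde{G}^{\tau,\bm{\xi}}_{t,T}f(t,\bm{x})$, and for each of them to split the analysis according to whether the \emph{off-diagonal regime} $T-t\le c_0 d^\alpha(\bm{x},\bm{x}')$ or the \emph{diagonal regime} $T-t\ge c_0 d^\alpha(\bm{x},\bm{x}')$ holds, with a further decomposition of the time integral defining $\tilde{G}^{\tau,\bm{\xi}}_{t,T}$ at the critical time $t_0$ of \eqref{eq:def_t0} in the diagonal case. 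The guiding principle is that each intermediate bound will have the form (bounded power of the time lag), and that the constraint on $d^\alpha(\bm{x},\bm{x}')$ in each regime converts this power into $d^{\alpha+\beta-1}(\bm{x},\bm{x}')$ times a (possibly negative, hence large) power of $c_0$, always dominated by $c_0^{(\alpha+\beta-2)/\alpha}$ since $c_0<1$ and $(\alpha+\beta-2)/\alpha<0$.

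In the diagonal regime we do not separate $\bm{x}$ from $\bm{x}'$: since the two points differ only along the first, non-degenerate direction, I would write $D_{\bm{x}_1}\tilde u^{\tau,\bm{\xi}}(t,\bm{x})-D_{\bm{x}_1}\tilde u^{\tau,\bm{\xi}}(t,\bm{x}')=\int_0^1 D^2_{\bm{x}_1}\tilde u^{\tau,\bm{\xi}}(t,\bm{x}'+\lambda(\bm{x}-\bm{x}'))\,d\lambda\cdot(\bm{x}-\bm{x}')_1$, so that it suffices to bound $|D^2_{\bm{x}_1}\tilde P^{\tau,\bm{\xi}}_{t,T}g|$ and $|D^2_{\bm{x}_1}\tilde P^{\tau,\bm{\xi}}_{t,s}f(s,\cdot)|$ pointwise (the bounds hold at every point, hence at the intermediate ones). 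For $g$, apply Lemma \ref{lemma:link_derivative_density} to turn one $\bm{x}_1$-derivative of the density into $\sum_{j\ge 1}C_j(T-t)^{j-1}D_{\bm{y}_j}\tilde p^{\tau,\bm{\xi}}$; the $j=1$ term is integrated by parts in $\bm{y}_1$ and, using $\int D_{\bm{x}_1}\tilde p^{\tau,\bm{\xi}}\,d\bm{y}=0$, treated by cancellation against $D_{\bm{y}_1}g$ evaluated at the frozen shift (only the first slot is moved, so only the $C^{\alpha+\beta-1}$-regularity of $g$ along direction $1$, contained in $\Vert g\Vert_{C^{\alpha+\beta}_{b,d}}$, is used), invoking the smoothing effect \eqref{eq:Smoothing_effects_of_tilde_p} with $\gamma=\alpha+\beta-1$, admissible since $\beta<1$; the terms $j>1$ are handled by the Besov duality \eqref{Besov:duality_in_Besov}--\eqref{Besov:ident_Holder_Besov} together with the First Besov Control (Lemma \ref{lemma:First_Besov_COntrols}) with $l=1$. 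A direct bookkeeping of exponents shows every $j$ contributes the same power $(T-t)^{(\alpha+\beta-2)/\alpha}$, whence $|D^2_{\bm{x}_1}\tilde P^{\tau,\bm{\xi}}_{t,T}g(\bm{x})|\le C\Vert g\Vert_{C^{\alpha+\beta}_{b,d}}(T-t)^{(\alpha+\beta-2)/\alpha}$. For $f$, after splitting $\int_t^T=\int_t^{t_0}+\int_{t_0}^T$, on $[t_0,T]$ I would use cancellation and \eqref{eq:Smoothing_effects_of_tilde_p} with $\vartheta=(2,0,\dots,0)$, $\gamma=\beta<\alpha$, to get $|D^2_{\bm{x}_1}\tilde P^{\tau,\bm{\xi}}_{t,s}f(s,\cdot)|\le C\Vert f\Vert_{L^\infty(C^\beta_{b,d})}(s-t)^{(\beta-2)/\alpha}$; integrating over $[t_0,T]$ (the singularity being at $t_0$) and using $t_0-t=c_0 d^\alpha(\bm{x},\bm{x}')$ produces $c_0^{(\alpha+\beta-2)/\alpha}d^{\alpha+\beta-1}$, while on $[t,t_0]$ I would rather bound $|D_{\bm{x}_1}\tilde P^{\tau,\bm{\xi}}_{t,s}f(s,\cdot)(\bm{x})|+|D_{\bm{x}_1}\tilde P^{\tau,\bm{\xi}}_{t,s}f(s,\cdot)(\bm{x}')|\le C\Vert f\Vert(s-t)^{(\beta-1)/\alpha}$ via \eqref{eq:Control_of_semigroup} and integrate, the exponent $(\beta-1)/\alpha>-1$ being integrable thanks to $\alpha+\beta>1$. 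Multiplying by $|(\bm{x}-\bm{x}')_1|=d(\bm{x},\bm{x}')$ and using $T-t\ge c_0 d^\alpha(\bm{x},\bm{x}')$ closes the diagonal case.

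In the off-diagonal regime ($t_0=T$) I would bound the increment of $D_{\bm{x}_1}\tilde u^{\tau,\bm{\xi}}$ by the sum of the two terms at $\bm{x}$ and $\bm{x}'$, \emph{except} for the $j=1$ piece of the $g$-contribution: there the $L^\infty$-norm of $D_{\bm{y}_1}g$ carries no smallness in $d(\bm{x},\bm{x}')$, so one must estimate the difference $\int[\tilde p^{\tau,\bm{\xi}}(t,T,\bm{x},\bm{y})-\tilde p^{\tau,\bm{\xi}}(t,T,\bm{x}',\bm{y})]D_{\bm{y}_1}g(\bm{y})\,d\bm{y}$ directly, subtracting $D_{\bm{y}_1}g$ at the fixed first coordinate $\tilde{\bm{m}}^{\tau,\bm{\xi}}_{t,T}(\bm{x})_1$ (the two frozen shifts differ only along direction $1$ and by a quantity comparable to $d(\bm{x},\bm{x}')$, the extra displacement being absorbed), then applying \eqref{eq:Smoothing_effects_of_tilde_p} with $\gamma=\alpha+\beta-1$. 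All the remaining pieces --- the $j>1$ pieces of the $g$-contribution, via Besov duality and Lemma \ref{lemma:First_Besov_COntrols} with $l=0$, and the whole $f$-contribution, via \eqref{eq:Control_of_semigroup} with $\gamma=\beta$ and integration in $s$ --- are bounded by $C\bigl(\Vert g\Vert_{C^{\alpha+\beta}_{b,d}}+\Vert f\Vert_{L^\infty(C^\beta_{b,d})}\bigr)(T-t)^{(\alpha+\beta-1)/\alpha}$, and since $T-t\le c_0 d^\alpha(\bm{x},\bm{x}')$ this is $\le C(\cdots)c_0^{(\alpha+\beta-1)/\alpha}d^{\alpha+\beta-1}(\bm{x},\bm{x}')\le C(\cdots)c_0^{(\alpha+\beta-2)/\alpha}d^{\alpha+\beta-1}(\bm{x},\bm{x}')$.

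I expect the main obstacle to be precisely the $j=1$ term of the $g$-contribution in the off-diagonal regime: it is the only place where the naive ``bound the two terms separately'' strategy fails and a genuine difference estimate is needed, with the cancellation point chosen so as to serve both densities at once and the displacement of the two frozen shifts controlled by $d(\bm{x},\bm{x}')$. A secondary, purely bookkeeping difficulty is to check that the powers of the time lag produced by Lemma \ref{lemma:Scaling_Lemma}/\ref{lemma:link_derivative_density}, the frozen smoothing effect \eqref{eq:Smoothing_effects_of_tilde_p} and the First Besov Control combine, at \emph{every} level $j$ of the chain, into exactly the exponent $(\alpha+\beta-2)/\alpha$ (resp. $(\alpha+\beta-1)/\alpha$), which is what makes the resulting power of $c_0$ uniform in $j$ and yields the announced factor $c_0^{(\alpha+\beta-2)/\alpha}$.
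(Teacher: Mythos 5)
Your proposal is correct and follows essentially the same route as the paper's proof: the same off-diagonal/diagonal split at $c_0d^\alpha(\bm{x},\bm{x}')$ and $t_0$-decomposition of the Green kernel, the scaling lemma plus integration by parts and a cancellation argument for the $j=1$ terms, Besov duality with the First Besov Control (with $l=0$ resp.\ $l=1$) for $j>1$, and the identical exponent bookkeeping yielding $(T-t)^{\frac{\alpha+\beta-1}{\alpha}}$ off-diagonally and $(T-t)^{\frac{\alpha+\beta-2}{\alpha}}$ diagonally. The only cosmetic deviations — Taylor-expanding $\tilde{u}^{\tau,\bm{\xi}}$ itself rather than the density under the integral, and using a single cancellation point for both densities in the off-diagonal $j=1$ term (note the two frozen shifts with the same freezing couple actually differ in \emph{all} components through $e^{A(T-t)}$, though, as you indicate, the displacement is indeed of order $d(\bm{x},\bm{x}')$ in the anisotropic metric since $T-t\le c_0d^\alpha(\bm{x},\bm{x}')$) — do not change the argument.
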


Before proving the above result, we point out the control on the H\"older modulus of $\tilde{u}^{\tau,\bm{\xi}}$ with respect to the
degenerate variables ($i>1$):

\begin{lemma}[Controls on H\"older Moduli: Degenerate]
\label{lemma:Holder_modulus_proxy_Deg}
Let $i$ be in $\llbracket 2,n\rrbracket$ and $\bm{x},\bm{x}'$ in $\R^{nd}$ such that $\bm{x}_j=\bm{x}'_j$ for any $j\neq i$. Under
(\textbf{A}), there exists a constant $C:=C(i)$ such that for any $t$ in $[0,T]$ and any freezing couple $(\tau,\bm{\xi})$ in $[0,T]\times
\R^{nd}$, it holds that
\[\bigl{\vert} \tilde{u}^{\tau,\bm{\xi}}(t,\bm{x})- \tilde{u}^{\tau,\bm{\xi}}(t,\bm{x}') \vert\, \le \,
Cc^{\frac{\beta-\gamma_i}{\alpha}}_0\bigl(\Vert g\Vert_{C^{\alpha+\beta}_{b,d}} + \Vert f \Vert_{L^\infty(C^\beta_{b,d})}\bigr)d^{\alpha+
\beta}(\bm{x},\bm{x}').\]
\end{lemma}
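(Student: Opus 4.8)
The plan is to bound the $i$-th degenerate Hölder modulus of $\tilde{u}^{\tau,\bm{\xi}}$ by expanding it through the Duhamel representation \eqref{align:Representation} as $\tilde{P}^{\tau,\bm{\xi}}_{t,T}g + \tilde{G}^{\tau,\bm{\xi}}_{t,T}f$, and controlling the increment in $\bm{x}_i$ of each piece. A key structural observation, as in the non-degenerate case, is that the natural freezing point should be taken as $(\tau,\bm{\xi})=(t,\bm{x})$, so that Lemma \ref{lemma:identification_theta_m} identifies the frozen shift $\tilde{\bm{m}}^{t,\bm{x}}_{t,s}(\bm{x})$ with $\bm{\theta}_{t,s}(\bm{x})$ and cancellation arguments become available. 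Since here the relevant Hölder exponent $(\alpha+\beta)/(1+\alpha(i-1)) = \alpha_i + \beta_i < 1$ for $i>1$, we work directly on the function (no derivative in $\bm{x}_i$), which actually simplifies matters relative to Lemma \ref{lemma:Holder_modulus_proxy_Non-Deg}. First I would split according to the two regimes introduced before the statement: the off-diagonal regime $T-t \le c_0 d^\alpha(\bm{x},\bm{x}')$ and the diagonal regime $T-t \ge c_0 d^\alpha(\bm{x},\bm{x}')$, using the critical time $t_0$ from \eqref{eq:def_t0} to split the time integral in the Green-kernel term.

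In the off-diagonal regime, the idea is to bound $|\tilde{u}^{\tau,\bm{\xi}}(t,\bm{x}) - \tilde{u}^{\tau,\bm{\xi}}(t,\bm{x}')|$ simply by the triangle inequality using the supremum-type bounds already available: from Lemma \ref{lemma:supremum_norm_proxy} (plus, where needed, the controls on first derivatives along $\bm{x}_1$ which are not directly relevant here), together with smoothing estimates \eqref{eq:Smoothing_effects_of_tilde_p} and \eqref{eq:Control_of_semigroup} applied to increments in the degenerate variable. Concretely, for $g$ one writes $\tilde{P}^{\tau,\bm{\xi}}_{t,T}g(\bm{x}) - \tilde{P}^{\tau,\bm{\xi}}_{t,T}g(\bm{x}')$ and uses the fact that $g \in C^{\alpha+\beta}_{b,d}$ controls its $\bm{x}_i$-increment by $\Vert g\Vert_{C^{\alpha+\beta}_{b,d}} d^{\alpha+\beta}(\bm{x},\bm{x}')$ directly at the level of the density argument, exploiting that $\bm{x}$ and $\bm{x}'$ differ only in the $i$-th block; the regime condition $T-t \le c_0 d^\alpha$ then supplies the factor $c_0^{(\beta-\gamma_i)/\alpha}$ after balancing the time powers from the smoothing effect against the scale $d^{\alpha(i-1)}$ carried by the $i$-th component. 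The source term $\tilde{G}^{\tau,\bm{\xi}}_{t,T}f$ is handled analogously, integrating the increment estimate in $s\in[t,T]$ and using $\alpha+\beta-1>0$ and $\gamma_i = 1+\alpha(i-2)$ to keep the time integral convergent and to produce the correct power of $c_0$.

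In the diagonal regime, I would split the time integral for the Green kernel at $t_0 = (t + c_0 d^\alpha(\bm{x},\bm{x}'))\wedge T$: on $[t,t_0]$ one argues as in the off-diagonal case (the "local off-diagonal" part), while on $[t_0,T]$ one exploits a Taylor/cancellation argument in $\bm{x}_i$ on the frozen density, using Lemma \ref{lemma:link_derivative_density} to trade the $\bm{x}_i$-derivative for $\bm{y}_j$-derivatives ($j\ge i$) weighted by $(s-t)^{j-i}$, and then the smoothing effect \eqref{eq:Smoothing_effects_of_tilde_p} of $\tilde{p}^{\tau,\bm{\xi}}$ together with the Hölder regularity of $f$ (resp.\ $g$ for the semigroup term); the constraint $\alpha+\beta<2$ guarantees integrability of the leftover time singularity near $t_0$, and $\beta<\gamma_i + \text{(something)}$ encoded in assumption (\textbf{P}) when $\alpha<1$ ensures the final exponent $c_0^{(\beta-\gamma_i)/\alpha}$ comes out with the right sign. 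For the terminal-data term $\tilde{P}^{\tau,\bm{\xi}}_{t,T}g$ in the diagonal regime, one uses the same decomposition of $D_{\bm{x}_i}$ via Lemma \ref{lemma:link_derivative_density} combined with the First Besov Control (Lemma \ref{lemma:First_Besov_COntrols}) applied to the degenerate directions $\bm{y}_j$, exactly mirroring the estimate \eqref{Proof:Supremum_for_Frozen_Semigroup_Jj} but now picking up the increment $d^{\alpha+\beta}(\bm{x},\bm{x}')$ from the $\bm{x}_i$-Taylor expansion rather than a pointwise bound. The main obstacle I anticipate is the bookkeeping of the various time-scale exponents so that all contributions collapse to precisely $d^{\alpha+\beta}(\bm{x},\bm{x}')$ with a constant of the form $C c_0^{(\beta-\gamma_i)/\alpha}$ — in particular, making the cancellation in the diagonal regime produce exactly the threshold exponent $\beta-\gamma_i$ (which is why the extra regularity $\gamma_i$ on $\bm{F}_i$, and hence on the flow $\bm{\theta}$ entering the shift, is needed) rather than something that fails to be summable along the chain.
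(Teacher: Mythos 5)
Your diagonal-regime arguments are sound and coincide in substance with the paper's: Taylor expanding in $\bm{x}_i$, converting $D_{\bm{x}_i}$ into $\bm{y}_j$-derivatives (equivalently, invoking \eqref{eq:Control_of_semigroup} with $\gamma=\beta$), and using that $\beta<\gamma_i$ makes the time integral on $[t_0,T]$ concentrate at $t_0$, which is exactly where $c_0^{(\beta-\gamma_i)/\alpha}d^{\alpha+\beta}(\bm{x},\bm{x}')$ comes from. Your treatment of the terminal-data term in the diagonal regime via Lemma \ref{lemma:link_derivative_density} plus the First Besov Control also checks out, though it is heavier than needed: the paper handles $\tilde{P}^{\tau,\bm{\xi}}_{t,T}g$ in \emph{both} regimes at once, with no Besov machinery and no $c_0$, by the change of variables $\bm{z}=\tilde{\bm{m}}^{\tau,\bm{\xi}}_{t,T}(\bm{x})-\bm{y}$ and the affinity of $\bm{x}\mapsto\tilde{\bm{m}}^{\tau,\bm{\xi}}_{t,T}(\bm{x})$ (a plain constant suffices here since $\beta<\gamma_i$ forces $c_0^{(\beta-\gamma_i)/\alpha}\ge1$).

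The genuine gap is in your off-diagonal regime (and hence in the local off-diagonal piece $[t,t_0]$ of the Green kernel, which you handle "as in the off-diagonal case"). A triangle inequality with the supremum bounds of Lemma \ref{lemma:supremum_norm_proxy} gives $2\Vert g\Vert_{\infty}$ for the semigroup term and $C\Vert f\Vert_{\infty}(t_0-t)\le Cc_0\Vert f\Vert_{\infty}d^{\alpha}(\bm{x},\bm{x}')$ for the Green kernel, and neither is bounded by $Cd^{\alpha+\beta}(\bm{x},\bm{x}')$ when $d(\bm{x},\bm{x}')$ is small, which is precisely the relevant case for a H\"older modulus. What is needed is a centering/cancellation argument: for the Green kernel one subtracts $f(s,\tilde{\bm{m}}^{\tau,\bm{\xi}}_{t,s}(\bm{x}))$ (resp.\ $f(s,\tilde{\bm{m}}^{\tau,\bm{\xi}}_{t,s}(\bm{x}'))$), applies the smoothing effect \eqref{eq:Smoothing_effects_of_tilde_p} with $\gamma=\beta$ to get $(s-t)^{\beta/\alpha}$, and uses the affinity of $\bm{x}\mapsto\tilde{\bm{m}}^{\tau,\bm{\xi}}_{t,s}(\bm{x})$ to get $d^{\beta}(\bm{x},\bm{x}')$; only then does integrating over an interval of length $t_0-t\le c_0d^{\alpha}(\bm{x},\bm{x}')$ yield $d^{\alpha+\beta}(\bm{x},\bm{x}')$. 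Likewise, for $g$ your phrase "controls its $\bm{x}_i$-increment directly at the level of the density argument" hides a needed step: since $g$ is only Lipschitz, not $(\alpha+\beta)$-H\"older, in the first variable, the bound $|g(\tilde{\bm{m}}^{\tau,\bm{\xi}}_{t,T}(\bm{x})-\bm{z})-g(\tilde{\bm{m}}^{\tau,\bm{\xi}}_{t,T}(\bm{x}')-\bm{z})|\le C\Vert g\Vert_{C^{\alpha+\beta}_{b,d}}d^{\alpha+\beta}(\bm{x},\bm{x}')$ requires observing that the first components of the shifted arguments coincide (sub-diagonal structure of $e^{A(T-t)}$ together with $\bm{x}_1=\bm{x}'_1$). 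Finally, the conditions you invoke are misplaced: $\alpha+\beta-1>0$ is what makes the time integral converge in the non-degenerate Lemma \ref{lemma:Holder_modulus_proxy_Non-Deg}; here the singularity of the $\bm{x}_i$-derivative bound at $s=t$ is non-integrable (that is why the split at $t_0$ is indispensable), there is no singularity "near $t_0$", and the only sign condition used is $\beta<\gamma_i$, which holds automatically since $\gamma_i\ge1>\beta$.
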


\paragraph{Proof of Lemma \ref{lemma:Holder_modulus_proxy_Non-Deg}}
\emph{Controls on frozen semigroup.} Let us consider firstly the off-diagonal regime, i.e.\ the case $T-t \le c_0d^\alpha(\bm{x},\bm{x}')$.
Using the scaling lemma
\ref{lemma:link_derivative_density}, it holds that
\[D_{\bm{x}_1}\tilde{P}^{\tau,\bm{\xi}}_{t,T}g(\bm{x}) \, = \,
\int_{\R^{nd}}D_{\bm{x}_1}\tilde{p}^{\tau,\bm{\xi}}(t,T,\bm{x},\bm{y})g(\bm{y}) \, d\bm{y} \, = \,
\sum_{j=1}^{n}C_j(T-t)^{j-1}\int_{\R^{nd}}D_{\bm{y}_j}\tilde{p}^{\tau,\bm{\xi}}(t,T,\bm{x},\bm{y})g(\bm{y}) \, d\bm{y}.\]
It then follows that
\begin{multline}\label{Proof:H\"older_for_Frozen_Semigroup_Decomp}
\bigl{\vert}D_{\bm{x}_1}\tilde{P}^{\tau,\bm{\xi}}_{t,T}g(\bm{x}) - D_{\bm{x}_1}\tilde{P}^{\tau,\bm{\xi}}_{t,T}g(\bm{x}')
\bigr{\vert} \, \le \,C\sum_{j=1}^n(T-t)^{j-1}\Bigl{\vert}\int_{\R^{nd}}\bigl[D_{\bm{y}_j}\tilde{p}^{\tau, \bm{\xi}}
(t,T,\bm{x},\bm{y})-D_{\bm{y}_j}\tilde{p}^{\tau,\bm{\xi}}(t,T,\bm{x}',\bm{y}) \bigr] g(\bm{y}) \,
d\bm{y}\Bigr{\vert}\\
=:\, C\sum_{j=1}^n (T-t)^{j-1} I^{od}_j.
\end{multline}
We are going to treat separately the cases $j=1$ and $j>1$ for the \emph{off-diagonal} contributions $\bigl(I^{od}_j\bigr)_{j\in \llbracket
1,n\rrbracket}$. Indeed, the function $g$ is differentiable only with respect to the first
component $\bm{y}_1$. In this first case, we can apply integration by parts formula to move the derivative on $g$, so that
\[I^{od}_1 \, = \, \Bigl{\vert}\int_{\R^{nd}} \bigl[\tilde{p}^{\tau,\bm{\xi}}(t,T,\bm{x},\bm{y}) -\tilde{p}^{\tau,\bm{\xi}}
(t,T,\bm{x}',\bm{y})\bigr] D_{\bm{y}_1}g(\bm{y}) \,d\bm{y}\Bigr{\vert}.\]
Noticing that $D_{\bm{y}_1}g$ is in $C^{\alpha+\beta-1}_{b,d}(\R^{nd})$ thanks to the reverse Taylor expansion (Lemma
\ref{lemma:Reverse_Taylor_Expansion}), the last expression can be then rewritten as
\begin{multline}\label{zz1}
I^{od}_1\le \, \Bigl{\vert}\int_{\R^{nd}} \tilde{p}^{\tau,\bm{\xi}}(t,T,\bm{x},\bm{y})\bigl[D_{\bm{y}_1}g(\bm{y})\pm D_{
\bm{y}_1}g(\tilde{\bm{m}}^{\tau,\bm{\xi}}_{t,T}(\bm{x}))\bigr] -\tilde{p}^{\tau,\bm{\xi}}(t,T,\bm{x}',\bm{y})\bigl[D_{
\bm{y}_1}g(\bm{y})\pm D_{\bm{y}_1}g(\tilde{\bm{m}}^{\tau,\bm{\xi}}_{t,T}(\bm{x}')) \bigr] \, d\bm{y}\Bigr{\vert} \\
\le \, C\Vert g \Vert_{C^{\alpha+\beta}_{b,d}}\Bigl{\{}\int_{\R^{nd}}
\bigl[\tilde{p}^{\tau,\bm{\xi}}(t,T,\bm{x},\bm{y})d^{\alpha+\beta-1}(\bm{y}, \tilde{\bm{m}}^{\tau,\bm{\xi}}_{t,T}(\bm{x}))
+\tilde{p}^{\tau,\bm{\xi}}(t,T,\bm{x}',\bm{y}) d^{\alpha+\beta-1}(\bm{y},\tilde{\bm{m}}^{\tau,\bm{\xi}}_{t,T}(\bm{x}'))\bigr]
\, d\bm{y} \\
+ d^{\alpha+\beta-1}(\tilde{\bm{m}}^{\tau,\bm{\xi}}_{t,T}(\bm{x}), \tilde{\bm{m}}^{\tau,\bm{\xi}}_{t,T}(\bm{x}'))\Bigr{\}}
\end{multline}
Now, we use the smoothing effect of $\tilde{p}^{\tau,\bm{\xi}}$ (Equation \eqref{eq:Smoothing_effects_of_tilde_p}) to control the two
integrals in the last expression, so that
\[I^{od}_1\, \le \, C\Vert g \Vert_{C^{\alpha+\beta}_{b,d}}\bigl[(T-t)^{\frac{\alpha+\beta-1}{\alpha}} +
d^{\alpha+\beta-1}(\tilde{\bm{m}}^{\tau,\bm{\xi}}_{t,T}(\bm{x}),\tilde{\bm{m}}^{\tau,\bm{\xi}}_{t,T}(\bm{x}'))\bigr].\]
We can then conclude the case $j=1$ recalling that the mapping $\bm{x}\to \tilde{\bm{m}}^{\tau,\bm{\xi}}_{t,T}(\bm{x})$ is affine (see
Equation \eqref{eq:def_tilde_m} for definition of $\tilde{\bm{m}}^{\tau,\bm{\xi}}_{t,T}(\bm{x})$) in
order to show that
\begin{equation}\label{Proof:H\"older_for_Frozen_Semigroup_Iod1}
I^{od}_1 \, \le \, C\Vert g \Vert_{C^{\alpha+\beta}_{b,d}}\bigl[(T-t)^{\frac{\alpha + \beta -1}{\alpha}} +
d^{\alpha+\beta-1}(\bm{x},\bm{x}')\bigr].
\end{equation}
Let us consider now the case $j>1$. Using the duality in Besov spaces (Equation \eqref{Besov:duality_in_Besov}) and the identification
\eqref{Besov:ident_Holder_Besov}, we can write from Equation \eqref{Proof:H\"older_for_Frozen_Semigroup_Decomp} that
\begin{multline}\label{Proof:H\"older_for_Frozen_Semigroup_Iodj}
I^{od}_j \, \le \, C\Vert g \Vert_{C^{\alpha+\beta}_{b,d}}\int_{\R^{(n-1)d}}  \Vert D_{\bm{y}_j}\tilde{p}^{\tau,\bm{\xi}}(t,T,\bm{x},
\bm{y}_{\smallsetminus j},\cdot)-D_{\bm{y}_j}\tilde{p}^{\tau,\bm{\xi}} (t,T,\bm{x}',\bm{y}_{\smallsetminus j},\cdot)
\Vert_{B^{-(\alpha_j+\beta_j)}_{1,1}} \, d\bm{y}_{\smallsetminus j}\\
\le \, C\Vert g \Vert_{C^{\alpha+\beta}_{b,d}}\int_{\R^{(n-1)d}}  \Vert D_{\bm{y}_j}\tilde{p}^{\tau,\bm{\xi}}(t,T,\bm{x},
\bm{y}_{\smallsetminus j},\cdot)\Vert_{B^{-(\alpha_j+\beta_j)}_{1,1}} + \Vert D_{\bm{y}_j}\tilde{p}^{\tau,\bm{\xi}}
(t,T,\bm{x}',\bm{y}_{\smallsetminus j},\cdot) \Vert_{B^{-(\alpha_j+\beta_j)}_{1,1}} \, d\bm{y}_{\smallsetminus j} \\
\le \, C\Vert g \Vert_{C^{\alpha+\beta}_{b,d}}(T-t)^{\frac{\alpha+\beta}{\alpha}-\frac{1}{\alpha_j}}
\end{multline}
where in the last inequality we applied the first Besov Control (Lemma \ref{lemma:First_Besov_COntrols}). Going back at equations
\eqref{Proof:H\"older_for_Frozen_Semigroup_Iod1} and \eqref{Proof:H\"older_for_Frozen_Semigroup_Iodj}, we finally conclude
that
\begin{multline}\label{a2}
\bigl{\vert}D_{\bm{x}_1}\tilde{P}^{\tau,\bm{\xi}}_{t,T}g(\bm{x}) - D_{\bm{x}_1}\tilde{P}^{\tau,\bm{\xi}}_{t,T}g(\bm{x}')
\bigr{\vert} \, \le \, C\Vert g\Vert_{C^{\alpha+\beta}_{b,d}}\bigl[(T-t)^{\frac{\alpha + \beta-1}{\alpha}}+d^{\alpha+\beta-1}(\bm{x},\bm{x}')
+\sum_{j=2}^n(T-t)^{j-1}(T-t)^{\frac{\alpha+\beta}{\alpha}-\frac{1}{\alpha_j}}\bigr] \\
\le \, C\Vert g\Vert_{C^{\alpha+\beta}_{b,d}} \bigl[(T-t)^{\frac{\alpha + \beta -1}{\alpha}}+d^{\alpha+\beta-1}(\bm{x},\bm{x}')
\bigr] \, \le \, C\Vert g\Vert_{C^{\alpha+\beta}_{b,d}}d^{\alpha+\beta-1}(\bm{x},\bm{x}')
\end{multline}
where in the last passage we used that $T-t \le c_0 d^\alpha(\bm{x},\bm{x}')$ for some $c_0\le1$.\newline
We focus now on the diagonal regime, i.e.\ when $T-t > c_0d^\alpha(\bm{x},\bm{x}')$. Remembering that we assumed that
$\bm{x}_j=\bm{x}'_j$ for any $j$ in $\llbracket 2,n\rrbracket$, we start using a Taylor expansion on
the density $\tilde{p}^{\tau,\bm{\xi}}$ with respect to the first, non-degenerate variable $\bm{x}_1$. Namely,
\begin{multline*}
D_{\bm{x}_1}\tilde{P}^{\tau,\bm{\xi}}_{t,T}g(\bm{x}) - D_{\bm{x}_1}\tilde{P}^{\tau,\bm{\xi}}_{t,T}g(\bm{x}') \, = \,
\int_{\R^{nd}}\bigl[D_{\bm{x}_1}\tilde{p}^{\tau,\bm{\xi}}(t,T,\bm{x},\bm{y})-D_{\bm{x}_1}\tilde{p}^{\tau,\bm{\xi}}
(t,T,\bm{x}',\bm{y})\bigr]g(\bm{y}) \, d\bm{y} \\
= \, \int_{\R^{nd}}\int_{0}^{1}D^2_{\bm{x}_1}\tilde{p}^{\tau,\bm{\xi}}\bigl(t,T,\bm{x}'+\lambda(\bm{x}-\bm{x}'),\bm{y}\bigr)
(\bm{x}-\bm{x}')_1 g(\bm{y}) \, d\lambda d\bm{y}.
\end{multline*}
Moreover, from the Scaling Lemma \ref{lemma:link_derivative_density}, it holds that
\[D^2_{\bm{x}_1}\tilde{p}^{\tau,\bm{\xi}}\bigl(t,T,\bm{x}'+\lambda(\bm{x}-\bm{x}'),\bm{y}\bigr) \, = \,
\sum_{j=1}^{n}C_j(T-t)^{j-1}D_{\bm{y}_j}D_{\bm{x}_1}\tilde{p}^{\tau,\bm{\xi}}\bigl(t,T,\bm{x}'+\lambda(\bm{x}-\bm{x}'),\bm{y}\bigr)\]
and we can use it to write
\begin{multline}\label{Proof:H\"older_Frozen_Semigroup_Decomp2}
\bigl{\vert}D_{\bm{x}_1}\tilde{P}^{\tau,\bm{\xi}}_{t,T}g(\bm{x}) - D_{\bm{x}_1}\tilde{P}^{\tau,\bm{\xi}}_{t,T}g(\bm{x}')
\bigr{\vert} \\
\le \,C\vert (\bm{x}-\bm{x}')_1\vert\sum_{j=1}^n(T-t)^{j-1}\Bigl{\vert}\int_{0}^{1}\int_{\R^{nd}}D_{\bm{y}_j}D_{\bm{x}_1} \tilde{p}^{\tau,\bm{\xi}}\bigl(t,T,\bm{x}'+\lambda(\bm{x}-\bm{x}'),\bm{y}\bigr)g(\bm{y})\, d\bm{y}d\lambda\Bigr{\vert} \\
=:\, C\vert(\bm{x}-\bm{x}')_1 \vert\sum_{j=1}^n (T-t)^{j-1} I^d_j.
\end{multline}
Similarly to the off-diagonal regime, we are going to treat separately the cases $j=1$ and $j>1$ for the \emph{diagonal} contributions
$\bigl(I^{d}_j\bigr)_{j\in \llbracket 1,n\rrbracket}$. In the first case, we can apply
integration by parts formula to show that
\[I^d_1 \, = \, \Bigl{\vert}\int_{0}^{1} \int_{\R^{nd}}
D_{\bm{x}_1}\tilde{p}^{\tau,\bm{\xi}}\bigl(t,T,\bm{x}'+\lambda(\bm{x}-\bm{x}'),\bm{y}\bigr)\otimes D_{\bm{y}_1}g(\bm{y}) \,
d\bm{y}d\lambda \Bigr{\vert}.\]
A cancellation argument with respect to $D_{\bm{x}_1}\tilde{p}^{\tau,\bm{\xi}}$ then leads to
\begin{multline*}
I^d_1 \, = \, \Bigl{\vert}\int_{0}^{1} \int_{\R^{nd}}D_{\bm{x}_1}\tilde{p}^{\tau,\bm{\xi}}(t,T,\bm{x}'+\lambda(\bm{x}-\bm{x}'),\bm{y}) \otimes\bigl[D_{\bm{y}_1}g(\bm{y})-D_{\bm{y}_1} g(\tilde{\bm{m}}^{\tau,\bm{\xi}}_{t,T}
(\bm{x}'+\lambda(\bm{x}-\bm{x}')))\bigr] \, d\bm{y}d\lambda\Bigr{\vert}\\
\le \, C\Vert g \Vert_{C^{\alpha+\beta}_{b,d}}\int_{0}^{1} \int_{\R^{nd}} \vert D_{\bm{x}_1}\tilde{p}^{\tau,\bm{\xi}}(t,T,\bm{x}'+\lambda(\bm{x}-\bm{x}'),\bm{y}) \vert d^{\alpha+\beta-1}\bigl(\bm{y},\tilde{\bm{m}}^{\tau,\bm{\xi}}_{t,T}
(\bm{x}'+\lambda(\bm{x}-\bm{x}'))\bigr) \, d\bm{y}d\lambda.
\end{multline*}
Since $\alpha + \beta -1 < \alpha$ by hypothesis (\textbf{P}), we can conclude using the smoothing effect
of $\tilde{p}^{\tau,\bm{\xi}}$ (Lemma \ref{lemma:Smoothing_effect_frozen}) to show that
\begin{equation}\label{Proof:H\"older_Frozen_Semigroup_Id1}
I^d_1 \, \le \, C\Vert g \Vert_{C^{\alpha+\beta}_{b,d}}(T-t)^{\frac{\alpha+\beta-2}{\alpha}}.
\end{equation}
For the case $j>1$, we use instead the duality in Besov spaces \eqref{Besov:duality_in_Besov} and the identification
\eqref{Besov:ident_Holder_Besov} to write
\begin{multline}\label{Proof:H\"older_Frozen_Semigroup_Idj}
I^d_j \, \le \, \int_{0}^{1} \int_{\R^{(n-1)d}} \Vert
D_{\bm{y}_j}D_{\bm{x}_1}\tilde{p}^{\bm{\xi}}(t,T,\bm{x}'+\lambda(\bm{x}-\bm{x}'),\bm{y}_{\smallsetminus j},\cdot)
\Vert_{B^{-(\alpha_j+\beta_j)}_{1,1}} \, d\bm{y}_{\smallsetminus j}d\lambda \\ \le \, C\Vert g \Vert_{C^{\alpha+\beta}_{b,d}}
(T-t)^{\frac{\alpha+\beta}{\alpha}-\frac{1}{\alpha_j}-\frac{1}{\alpha}}
\end{multline}
where in the last passage we applied the First Besov control (Lemma \ref{lemma:First_Besov_COntrols}). From equations
\eqref{Proof:H\"older_Frozen_Semigroup_Decomp2}, \eqref{Proof:H\"older_Frozen_Semigroup_Id1} and
\eqref{Proof:H\"older_Frozen_Semigroup_Idj}, it is possible to conclude that
\begin{multline*}
\bigl{\vert}D_{\bm{x}_1}\tilde{P}^{\bm{\xi}}_{t,T}g(\bm{x}) - D_{\bm{x}_1}\tilde{P}^{\bm{\xi}}_{t,T}g(\bm{x}') \bigr{\vert}
\, \le \, C\Vert g\Vert_{ C^{\alpha+\beta}_{b,d}}\vert (\bm{x}-\bm{x}')_1 \vert\sum_{j=1}^n (T-t)^{j-1}(T-t)^{\frac{\alpha+\beta
}{\alpha}- \frac{1}{\alpha_j}- \frac{1}{\alpha}}\\
\le \, C\Vert g \Vert_{C^{\alpha+\beta}_{b,d}}\vert (\bm{x}-\bm{x}')_1 \vert(T-t)^{\frac{\alpha+\beta-2}{\alpha}} \, \le \,
Cc^{\frac{\alpha+\beta-2}{\alpha}}_0\Vert g\Vert_{C^{\alpha+\beta}_{b,d}}d^{\alpha + \beta-1}(\bm{x},\bm{x}')
\end{multline*}
where in the last passage we used that $\vert(\bm{x}-\bm{x}')_1\vert=d(\bm{x},\bm{x}')$ and, since
$\frac{\alpha+\beta-2}{\alpha}<0$, that
\[\vert(\bm{x}-\bm{x}')_1\vert (T-t)^{\frac{\alpha+\beta-2}{\alpha}} \, \le \, c^{\frac{\alpha+\beta-2}{\alpha}}_0d^{\alpha +
\beta-1}(\bm{x},\bm{x}').\]
Remembering that $c_0$ is considered fixed and bigger then zero, the searched control follows immediately.
%%%%%%%%%%%%%%%%%%%%%%%%%%%%%%%%%%%%%%%%%%%%%%%%%%%%%%%%%%%%%%%%%%%%%%%%%%%%%%%%%%%%%%%%%%%%%%%%

\emph{Controls on frozen Green kernel.} In order to preserve the previous terminology of off-diagonal/diagonal regime for the frozen
semigroup, we have introduced the transition time $t_0$, defined in \eqref{eq:def_t0}. Then, while integrating in $s$ from $t$ to $T$, we
will say that the "local" off-diagonal regime holds for $\tilde{G}^{\tau,\bm{\xi}}$ if $s$ is in $[t,t_0]$ and that the "local" diagonal
regime holds if $s$ is in $[t_0,T]$. With the notations of \eqref{eq:def_Green_Kernel}, it seems quite natural now to decompose the
derivative of the frozen Green kernel with respect to $t_0$, i.e.
\[D_{\bm{x}_1}\tilde{G}^{\tau,\bm{\xi}}_{t,T}f(t,\bm{x}) \, = \, D_{\bm{x}_1}\tilde{G}^{\tau,\bm{\xi}}_{t,t_0}f(t,\bm{x})
+D_{\bm{x}_1}\tilde{G}^{\tau,\bm{\xi}}_{t_0,T}f(t,\bm{x}).\]
We remark however that the globally off-diagonal regime is considered in the above decomposition, too. Indeed, when $T-t\le
c_0d^\alpha(\bm{x},\bm{x}')$, $t_0$ coincides with $T$ and the second term on the right-hand side vanishes.\newline
We start considering the off-diagonal regime represented by $\bigl{\vert} D_{\bm{x}_1} \tilde{G}^{\tau,\bm{\xi}}_{t,t_0} f(t,\bm{x}) -
D_{\bm{x}_1}\tilde{G}^{\tau,\bm{\xi}}_{t,t_0}f(t,\bm{x}') \bigr{\vert}$. \newline
It holds that
\[\bigl{\vert} D_{\bm{x}_1}\tilde{G}^{\tau,\bm{\xi}}_{t,t_0}f(t,\bm{x})-D_{\bm{x}_1}\tilde{G}^{\tau,\bm{\xi}}_{t,t_0}
f(t,\bm{x}')\bigr{\vert} \, \le \, \int_{t}^{t_0}\Bigl[\bigl{\vert} D_{\bm{x}_1}\tilde{P}^{\tau,\bm{\xi}}_{t,s}f(s,\bm{x})\bigr{\vert}
+ \bigl{\vert} D_{\bm{x}_1} \tilde{P}^{\tau,\bm{\xi}}_{t,s}f(s,\bm{x}') \bigr{\vert}\Bigr] \, ds\]
We then use the control on the frozen semigroup (Equation \eqref{eq:Control_of_semigroup}) to find that
\[\bigl{\vert} D_{\bm{x}_1}\tilde{G}^{\tau,\bm{\xi}}_{t,t_0}f(t,\bm{x})-D_{\bm{x}_1}\tilde{G}^{\tau,\bm{\xi}}_{t,t_0}
f(t,\bm{x}') \bigr{\vert} \, \le \, C\Vert f \Vert_{L^{\infty}(C^\beta_{b,d})}\int_{t}^{t_0}(s-t)^{\frac{\beta -1}{\alpha}} \, ds  \,
\le \, C\Vert f \Vert_{L^{\infty}(C^\beta_{b,d})}(t_0-t)^{\frac{\beta + \alpha -1}{\alpha}}.\]
Our choice of $t_0$ (cf. Equation \eqref{eq:def_t0}) allows then to conclude that
\[\bigl{\vert}
D_{\bm{x}_1}\tilde{G}^{\tau,\bm{\xi}}_{t,t_0}f(t,\bm{x})-D_{\bm{x}_1}\tilde{G}^{\tau,\bm{\xi}}_{t,t_0}f(t,\bm{x}')
\bigr{\vert} \, \le \, C\Vert f \Vert_{L^{\infty}(C^\beta_{b,d})}d^{\beta + \alpha -1}(\bm{x},\bm{x}') \]
remembering that, by assumption, $c_0\le 1$. \newline
We can focus now on the diagonal regime represented by $\bigl{\vert} D_{\bm{x}_1} \tilde{G}^{\tau,\bm{\xi}}_{t_0,T}f(t,\bm{x}) -
D_{\bm{x}_1}\tilde{G}^{\tau,\tilde{\bm{\xi}}}_{t_0,T}f(t,\bm{x}')\bigr{\vert}$.\newline
We start applying a Taylor expansion on the derivative of the semigroup $\tilde{P}^{\tau,\bm{\xi}}f(t,\bm{x})$ so that
\begin{multline*}
\bigl{\vert}
D_{\bm{x}_1}\tilde{G}^{\tau,\bm{\xi}}_{t_0,T}f(t,\bm{x})-D_{\bm{x}_1}\tilde{G}^{\tau,\bm{\xi}}_{t_0,T}f(t,\bm{x}')
\bigr{\vert}\, = \, \Bigl{\vert}\int_{t_0}^{T} \Bigl[ D_{\bm{x}_1}\tilde{P}^{\tau,\bm{\xi}}_{t,s}f(s,\bm{x})
-D_{\bm{x}_1}\tilde{P}^{\tau, \bm{\xi}}_{t,s}f(s,\bm{x}')\Bigr] \,  ds \Bigr{\vert} \\
= \, \Bigl{\vert} \int_{t_0}^{T} \int_{0}^{1}D^2_{\bm{x}_1}\tilde{P}^{\tau,\bm{\xi}}_{t,s}f(s,\bm{x}+\lambda(\bm{x}'-\bm{x}))
(\bm{x}'-\bm{x})_1 \,d\lambda ds \Bigr{\vert}.
\end{multline*}
Then, the Fubini theorem and the control on the frozen semigroup (Equation \eqref{eq:Control_of_semigroup}) allow us to write that
\begin{multline*}
\bigl{\vert} D_{\bm{x}_1}\tilde{G}^{\tau,\bm{\xi}}_{t_0,T}f(t,\bm{x})-D_{\bm{x}_1}\tilde{G}^{\tau,\bm{\xi}}_{t_0,T}
f(t,\bm{x}') \bigr{\vert} \, \le \, C\Vert f \Vert_{L^{\infty}(C^\beta_{b,d})}\vert (\bm{x}-\bm{x}')_1 \vert
\int_{t_0}^{T}(s-t)^{\frac{\beta -2}{\alpha}} \, ds  \\
\le \, C\Vert f\Vert_{L^{\infty}(C^\beta_{b,d})}\vert (\bm{x}-\bm{x}')_1\vert\bigl[(s-t)^{\frac{\alpha+\beta-2}{\alpha}}\bigr]_{
t_0}^T.
\end{multline*}
Since by hypothesis (\textbf{P}) in assumption (\textbf{A}), it holds that $\alpha+\beta -2<0$, it follows that
\[\bigl{\vert}D_{\bm{x}_1}\tilde{G}^{\tau,\bm{\xi}}_{t_0,T}f(t,\bm{x})-D_{\bm{x}_1}\tilde{G}^{\tau,\bm{\xi}}_{t_0,T}
f(t,\bm{x}')\bigr{\vert} \, \le \, C\Vert f\Vert_{L^{\infty}(C^\beta_{b,d})}\vert (\bm{x}-\bm{x}')_1\vert(t_0-t)^{\frac{
\alpha +\beta -2}{\alpha}}.\]
Using that $\vert (\bm{x}-\bm{x}')_1\vert =d(\bm{x},\bm{x}')$ and remembering our choice of $t_0$ in \eqref{eq:def_t0}, we can then conclude
that
\[\bigl{\vert} D_{\bm{x}_1}\tilde{G}^{\tau,\bm{\xi}}_{t_0,T}f(t,\bm{x})-D_{\bm{x}_1} \tilde{G}^{\tau,\bm{\xi}}_{t_0,T}
f(t,\bm{x}') \bigr{\vert} \, \le \, Cc_0^{\frac{\alpha+\beta-2}{\alpha}}\Vert f\Vert_{L^{\infty}(C^\beta_{b,d})}d^{\alpha+\beta-1}
(\bm{x},\bm{x}').\]

\paragraph{Proof of Lemma \ref{lemma:Holder_modulus_proxy_Deg}}
\emph{Controls on frozen semigroup.} Using the change of variables $\bm{z} =
\tilde{\bm{m}}^{\tau,\bm{\xi}}_{t,T}(\bm{x})-\bm{y}$, we can rewrite $\tilde{P}^{\tau,\bm{\xi}}_{t,T} g(\bm{x})$ as
\begin{multline*}
\tilde{P}^{\tau,\bm{\xi}}_{t,T}g(\bm{x}) \, = \, \int_{\R^{nd}}\tilde{p}^{\tau,\bm{\xi}}(t,T,\bm{x},\bm{y})g(\bm{y}) \,
d\bm{y} \, = \, \int_{\R^{nd}}\frac{1}{\det\bigl(\mathbb{M}_{T-t}\bigr)}p_S(T-t,\mathbb{M}^{-1}_{T-t}\bigl(
\tilde{\bm{m}}^{\tau,\bm{\xi}}_{t,T}(\bm{x})-\bm{y}\bigr)g(\bm{y})\,d\bm{y} \\
=\,\int_{\R^{nd}}\frac{1}{\det\bigl(\mathbb{M}_{T-t}\bigr)}p_S(T-t,\mathbb{M}^{-1}_{T-t}\bm{z}\bigr)g(\tilde{\bm{m}}^{\tau,\bm{\xi}}_{t,T}
(\bm{x})-\bm{z}) \, d\bm{z}.
\end{multline*}
It then follows that
\[\bigl{\vert}\tilde{P}^{\tau,\bm{\xi}}_{t,T}g(\bm{x}) - \tilde{P}^{\tau,\bm{\xi}}_{t,T}g(\bm{x}')\bigr{\vert} \, =\,
\Bigl{\vert}\int_{\R^{nd}}\frac{1}{\det\bigl(\mathbb{M}_{T-t}\bigr)}p_S\bigl(T-t,\mathbb{M}^{-1}_{T-t}\bm{z}\bigr)
\bigl[g(\tilde{\bm{m}}^{\tau,\bm{\xi}}_{t,T}(\bm{x})-\bm{z})-g(\tilde{\bm{m}}^{\tau,\bm{\xi}}_{t,T}(\bm{x}')-\bm{z})\bigr]
\,dz\Bigr{\vert}.\]
We observe now that the function $\bm{x}\to \tilde{\bm{m}}^{\tau,\bm{\xi}}_{t,T}(\bm{x})$ is affine (cf. Equation \eqref{eq:def_tilde_m})
and thus, that
\[\bigl(\tilde{\bm{m}}^{\tau,\bm{\xi}}_{t,T}(\bm{x})-\bm{z}\bigr)_1 \, = \,
\bigl(\tilde{\bm{m}}^{\tau,\bm{\xi}}_{t,T}(\bm{x}')-\bm{z}\bigr)_1\]
since $\bm{x}_1=\bm{x}'_1$. It then holds that
\[\bigl{\vert} g(\tilde{\bm{m}}^{\tau,\bm{\xi}}_{t,T}(\bm{x})-\bm{z})-g(\tilde{\bm{m}}^{\tau,\bm{\xi}}_{t,T}(\bm{x}')-\bm{z})\bigr{\vert} \,
\le \, C\Vert g \Vert_{C^{\alpha+\beta}_{b,d}}d^{\alpha+\beta}\bigl(\tilde{\bm{m}}^{\tau,\bm{\xi}}_{t,T}(\bm{x}),
\tilde{\bm{m}}^{\tau,\bm{\xi}}_{t,T}(\bm{x}')\bigr) \, \le\, C\Vert g \Vert_{C^{\alpha+\beta}_{b,d}}d^{\alpha+\beta}(\bm{x}, \bm{x}').\]
Hence, we can conclude using it to write
\begin{multline*}
\bigl{\vert}\tilde{P}^{\tau,\bm{\xi}}_{t,T}g(\bm{x}) - \tilde{P}^{\tau,\bm{\xi}}_{t,T}g(\bm{x}')\bigr{\vert} \, \le \, C\Vert g
\Vert_{C^{\alpha+\beta}_{b,d}}d^{\alpha+\beta}(\bm{x}, \bm{x}')\int_{\R^{nd}} \frac{1}{\det\bigl(\mathbb{M}_{T-t}\bigr)}
p_S\bigl(T-t,\mathbb{M}^{-1}_{T-t} z\bigr) \,dz\\
\le \, C\Vert g \Vert_{C^{\alpha+\beta}_{b,d}}d^{\alpha+\beta}(\bm{x}, \bm{x}').
\end{multline*}
%%%%%%%%%%%%%%%%%%%%%%%%%%%%%%%%%%%%%%%%%%%%%%%%%%%%%%%%%%%%%%%%%%%%%%%%%%%%%%%%%%%%%%%%%%%%

\emph{Controls on frozen Green kernel.} We will assume the same notations appeared in the previous lemma for the frozen Green Kernel. In
particular, we decompose the frozen Green Kernel as
\[\tilde{G}^{\tau,\bm{\xi}}_{t,T}f(t,\bm{x}) \, = \, \tilde{G}^{\tau,\bm{\xi}}_{t,t_0}f(t,\bm{x})+
\tilde{G}^{\tau,\bm{\xi}}_{t_0,T}f(t,\bm{x})\]
with $t_0$ defined in \eqref{eq:def_t0}. \newline
We start rewriting the off-diagonal regime contribution as
\begin{multline*}
\bigl{\vert} \tilde{G}^{\tau,\bm{\xi}}_{t,t_0}f(t,\bm{x})-\tilde{G}^{\tau,\bm{\xi}}_{t,t_0}f(t,\bm{x}') \bigr{\vert} \\
= \,\Bigl{\vert}\int_{t}^{t_0}\int_{\R^{nd}}\tilde{p}^{\tau,\bm{\xi}}(t,s,\bm{x},\bm{y})\bigl[f(s,\bm{y})\pm
f(s,\tilde{\bm{m}}^{\bm{\xi}}_{t,s}(\bm{x}))\bigr] -
\tilde{p}^{\tau,\bm{\xi}}(t,s,\bm{x}',\bm{y}) \bigl[f(s,\bm{y})\pm f(s,\tilde{\bm{m}}^{\tau,\bm{\xi}}_{t,s}(\bm{x}'))\bigr]\,
d\bm{y}ds\Bigr{\vert} \\
\le\,\Bigl{\vert}\int_{t}^{t_0}\int_{\R^{nd}}\tilde{p}^{\tau,\bm{\xi}}(t,s,\bm{x},\bm{y})\bigl[f(s,\bm{y})-
f(s,\tilde{\bm{m}}^{\tau,\bm{\xi}}_{t,s}(\bm{x}))\bigr] \,d\bm{y}ds
-\tilde{p}^{\tau,\bm{\xi}}(t,s,\bm{x}',\bm{y})\bigl[f(s,\bm{y})- f(s,\tilde{\bm{m}}^{\tau,\bm{\xi}}_{t,s}(\bm{x}'))\bigr]\,
d\bm{y}ds\Bigr{\vert} \\
+ \Bigl{\vert}\int_{t}^{t_0} f(s,\tilde{\bm{m}}^{\tau,\bm{\xi}}_{t,s}(\bm{x}))-f(s,\tilde{\bm{m}}^{\tau,\bm{\xi}}_{t,s}(\bm{x}')) \, ds
\Bigr{\vert}.
\end{multline*}
We can then use the smoothing effect for $\tilde{p}^{\tau,\bm{\xi}}$ (Equation \eqref{eq:Smoothing_effects_of_tilde_p} in
Lemma \ref{lemma:Smoothing_effect_frozen}) to show that
\begin{equation}\label{zz2}
\bigl{\vert} \tilde{G}^{\tau,\bm{\xi}}_{t,t_0}f(t,\bm{x})-\tilde{G}^{\tau,\bm{\xi}}_{t,t_0}f(t,\bm{x}') \bigr{\vert} \, \le \,
C\Vert f \Vert_{L^{\infty}(C^\beta_{b,d})}\int_{t}^{t_0}\bigl[(s-t)^{\beta/\alpha} +d^\beta(\tilde{\bm{m}}^{\tau,\bm{\xi}}_{t,s}
(\bm{x}),\tilde{\bm{m}}^{\tau,\bm{\xi}}_{t,s}(\bm{x}'))\bigr] \, ds.
\end{equation}
Recalling from Equation \eqref{eq:def_tilde_m} that $\bm{x}\to \tilde{\bm{m}}^{\tau,\bm{\xi}}_{t,s}(\bm{x})$ is affine, it follows that
\begin{multline*}
\bigl{\vert} \tilde{G}^{\tau,\bm{\xi}}_{t,t_0}f(t,\bm{x})-\tilde{G}^{\tau,\bm{\xi}}_{t,t_0}f(t,\bm{x}') \bigr{\vert} \, \le \,
C\Vert f\Vert_{L^{\infty}(C^\beta_{b,d})}\int_{t}^{t_0}\bigl[(s-t)^{\beta/\alpha} +d^\beta(\bm{x},\bm{x}')\bigr] \, ds \\
\le \, C\Vert f \Vert_{L^{\infty}(C^\beta_{b,d})}\bigl[(t_0-t)d^\beta(\bm{x},\bm{x}') + (t_0-t)^{\frac{\beta +\alpha}{\alpha}}\bigr].
\end{multline*}
Using that $t_0-t\le c_0d^\alpha(\bm{x},\bm{x}')$ for some $c_0\le 1$, we can finally conclude that
\[\bigl{\vert} \tilde{G}^{\tau,\bm{\xi}}_{t,t_0}f(t,\bm{x})-\tilde{G}^{\tau,\bm{\xi}}_{t,t_0}f(t,\bm{x}') \bigr{\vert} \, \le \,
C\Vert f\Vert_{L^{\infty}(C^\beta_{b,d})}d^{\beta + \alpha}(\bm{x},\bm{x}').\]
Now, we can focus our analysis to the diagonal regime contribution, i.e. $\bigl{\vert} \tilde{G}^{\tau,\bm{\xi}}_{t_0,T} f(t,\bm{x}) -
\tilde{G}^{\tau,\bm{\xi}}_{t_0,T}f(t,\bm{x}')\bigr{\vert}$.\newline
We start applying a Taylor expansion on the frozen semigroup $\tilde{P}^{\tau,\bm{\xi}}_{t,s}f$ with respect to the $i$-th variable
$\bm{x}_i$, which is, by hypothesis, the only one for which the entries of $\bm{x}$ and $\bm{x}'$ differ.
Namely,
\begin{multline*}
\bigl{\vert}\tilde{G}^{\tau,\bm{\xi}}_{t_0,T}f(t,\bm{x})-\tilde{G}^{\tau,\bm{\xi}}_{t_0,T}f(t,\bm{x}') \bigr{\vert} \, = \,
\bigl{\vert} \int_{t_0}^{T} \tilde{P}^{\tau,\bm{\xi}}_{t,s}f(s,\bm{x}) - \tilde{P}^{\tau,\bm{\xi}}_{t,s}f(s,\bm{x}') \,  ds
\bigr{\vert} \\
= \, \bigl{\vert} \int_{t_0}^{T} \int_{0}^{1}D_{\bm{x}_i}\tilde{P}^{\tau,\bm{\xi}}_{t,s}f(s,\bm{x}+\lambda(\bm{x}'-\bm{x}))
\cdot(\bm{x}'-\bm{x})_i \, d\lambda ds \bigr{\vert}.
\end{multline*}
The control on the frozen semigroup (Equation \eqref{eq:Control_of_semigroup}) then implies that
\begin{equation}\label{a3}
\bigl{\vert}\tilde{G}^{\tau,\bm{\xi}}_{t_0,T}f(t,\bm{x})-\tilde{G}^{\tau,\bm{\xi}}_{t_0,T}f(t,\bm{x}') \bigr{\vert} \, \le \,
C\Vert f\Vert_{L^{\infty}(C^\beta_{b,d})}\vert (\bm{x}-\bm{x}')_i\vert\int_{t_0}^{T}(s-t)^{\frac{\beta}{\alpha}-\frac{1}{\alpha_i}}
\, ds.
\end{equation}
Noticing from assumption (\textbf{P}) that $\beta+\alpha-1-\alpha(i-1) <0$ for $i\ge 2$, it holds that
\[\int_{t_0}^{T}(s-t)^{\frac{\beta}{\alpha}-\frac{1}{\alpha_i}}\, ds \, = \, \int_{t_0}^{T}(s-t)^{\frac{\beta-[1+\alpha(i-1)]}{\alpha}}\, ds
\, \le \, C\Bigl[-(s-t)^{\frac{\beta+\alpha-1-\alpha(i-1)}{\alpha}}\Bigr]_{t_0}^T\, \le \,C(t_0-t)^{\frac{\beta-1-\alpha(i-2)}{\alpha}}.\]
Using that $\vert (\bm{x}-\bm{x}')_i\vert=d^{1+\alpha(i-1)}(\bm{x},\bm{x}')$ and our choice of $t_0$ (cf. Equation \eqref{eq:def_t0}), we
can then conclude from \eqref{a3} that
\[\bigl{\vert} \tilde{G}^{\tau,\bm{\xi}}_{t_0,T}f(t,\bm{x})-\tilde{G}^{\tau,\bm{\xi}}_{t_0,T}f(t,\bm{x}') \bigr{\vert} \, \le  \,
Cc^{\frac{\beta-1-\alpha(i-2)}{\alpha}}_0\Vert f \Vert_{L^{\infty}(C^\beta_{b,d})}d^{\alpha+\beta}(\bm{x},\bm{x}') \, \le \, Cc^{\frac{\beta-\gamma_i}{\alpha}}_0\Vert f \Vert_{L^{\infty}(C^\beta_{b,d})}d^{\alpha+\beta}(\bm{x},\bm{x}'),\]
remembering the definition of $\gamma_i$ in \eqref{Drift_assumptions}.

\section{A Priori Estimates}

Since the aim of this section is to prove Proposition \ref{prop:A_Priori_Estimates}, we will assume tacitly from this point further
that assumption (\textbf{A'}) holds. \textcolor{black}{Moreover, we recall here that we are throughout this section considering the regularized framework of Section $3.2$.}

\textbf{WARNING:} For notational simplicity, we drop here the sub-scripts and the superscripts in $m$ associated with the
regularization. For any fixed $(\tau,\bm{\xi})$ in $[0,T]\times\R^{nd}$, we rewrite, with some abuse in notations, the Duhamel expansion
(Equation \eqref{eq:Expansion_along_proxy}) as:
\begin{equation}\label{align:Representation2}
u(t,\bm{x}) \, = \, \tilde{u}^{\tau,\bm{\xi}}(t,\bm{x}) + \int_{t}^{T}\tilde{P}^{\tau,\bm{\xi}}_{t,s}R^{\tau,\bm{\xi}}(s,\bm{x})\, ds,
\end{equation}
where $\tilde{u}^{\tau,\bm{\xi}}$ is defined through the Duhamel representation \eqref{Duhamel_representation_of_proxy} and
\[R^{\tau,\bm{\xi}}(t,\bm{x}) \, = \, \bigl{\langle}\bm{F}(t,\bm{x})-\bm{F}(t,\bm{\theta}_{\tau,t}(\bm{\xi})), D_{\bm{x}}u(t,\bm{x})
\bigr{\rangle}, \quad (t,\bm{x}) \, \in \, (0,T)\times\R^{nd}.\]
It is however important to keep in mind that $f$, $g$, $\bm{F}$ are now smooth and bounded functions so that all the terms above are clearly
defined. \textcolor{black}{ We recall however that we aim at obtaining controls in the $L^\infty(C^{\alpha+\beta}_{b,d})$-norm, uniformly with respect to the regularization parameter.} \newline
From the expansion above, we know moreover that for any $(t,\bm{\xi})$ in $[0,T]\times \R^{nd}$, it holds  that
\begin{equation}\label{align:Representation2_deriv}
D_{\bm{x}_1}u(t,\bm{x}) \, =\,
D_{\bm{x}_1}\tilde{u}^{\tau,\bm{\xi}}(t,\bm{x})+\int_{t}^{T}D_{\bm{x}_1}\tilde{P}^{\tau,\bm{\xi}}_{t,s}R^{\tau,\bm{\xi}}(s,\bm{x})\, ds.
\end{equation}
As seen in the previous section, these decompositions will allow us to control $u$ in norm $L^\infty(0,T;C^{\alpha+\beta}_{b,d}(\R^{nd}))$
analyzing separately the contributions from the Duhamel representation $\tilde{u}^{\tau,\bm{\xi}}$ and those from the expansion error $R^{\tau,\bm{\xi}}(t,\bm{x})$ for suitable choices of freezing parameters $(\tau,\bm{\xi})$.

\subsection{Second Besov Control}
This sub-section focuses on the contribution associated with the remainder term $R^{m,\tau,\bm{\xi}}$ appearing in the
Duhamel-type expansion \eqref{align:Representation2}. We recall that we aim at controlling it with the $L^\infty(C^{\alpha+\beta}_{b,d})$-norm of the coefficients, uniformly in the regularization parameter.  Let us start decomposing it through
\[\Bigl{\vert}\int_{t}^{T}  \tilde{P}^{\tau,\bm{\xi}}_{t,s}R^{\tau,\bm{\xi}}(s,\bm{x})\, ds\Bigr{\vert}\, = \, \Bigl{\vert}
\sum_{j=1}^{n}\int_{t}^{T}\int_{\R^{nd}}\tilde{p}^{\tau,\bm{\xi}}(t,s,\bm{x},\bm{y})\bigl[\bm{F}_j(s,\bm{y})-\bm{F}_j(s,\bm{\theta}_{\tau,s}
(\bm{\xi}))\bigr]\cdot D_{\bm{y}_j}u(s,\bm{y}) \, d\bm{y}ds\Bigl{\vert}.\]
We then notice that the non-degenerate contribution in the sum (corresponding to the index $j=1$) can be treated easily remembering that
$u$ is differentiable with respect to the first component with a bounded derivative.
Indeed, using the smoothing effect for the frozen density $\tilde{p}^{\tau,\bm{\xi}}$ (Equation \eqref{eq:Smoothing_effects_of_tilde_p}), it
holds that
\begin{multline*}
\Bigl{\vert}\int_{t}^{T}\int_{\R^{nd}}\tilde{p}^{\tau,\bm{\xi}}(t,s,\bm{x},\bm{y})\bigl[\bm{F}_1(s,\bm{y})-\bm{F}_1(s,\bm{\theta}_{\tau,s}
(\bm{\xi}))\bigr]\cdot D_{\bm{y}_1}u(s,\bm{y}) \, d\bm{y}ds\Bigl{\vert} \\
\le \, C\Vert D_{\bm{y}_1}u(s,\bm{y})\Vert_{l^\infty(L^\infty)}\Vert \bm{F}\Vert_H \int_{t}^{T}\int_{\R^{nd}}\tilde{p}^{\tau,\bm{\xi}}
(t,s,\bm{x},\bm{y})d^{\alpha+\beta}\bigl(\bm{y},\bm{\theta}_{\tau,s}(\bm{\xi})\bigr) \, d\bm{y}ds \\
\le \, C\Vert D_{\bm{y}_1}u(s,\bm{y})\Vert_{l^\infty(L^\infty)}\Vert \bm{F}\Vert_H \int_{t}^{T} (s-t)^{\frac{\beta}{\alpha}} \, ds \, \le \,
C\Vert D_{\bm{y}_1}u(s,\bm{y})\Vert_{l^\infty(L^\infty)}\Vert \bm{F}\Vert_H(T-t)^{\frac{\alpha+\beta}{\alpha}}.
\end{multline*}
In order to deal with the degenerate indexes, we will use, similarly to the previous subsection, a reasoning in Besov spaces. Since $u$ is
not differentiable with respect to $\bm{y}_j$ if $j>1$, we move the
derivative to the other terms using integration by parts formula:
\[\Bigl{\vert}\int_{t}^{T}\int_{\R^{nd}}D_{\bm{y}_j}\cdot\Bigl{\{}\tilde{p}^{\tau,\bm{\xi}}(t,s,\bm{x},\bm{y})\bigl[\bm{F}_j(s,\bm{y})-\bm{F}_j
(s,\bm{\theta}_{\tau,s}(\bm{\xi}))\bigr]\Bigr{\}} u(s,\bm{y}) \, d\bm{y}ds\Bigl{\vert}.\]
In order to rely again on the duality in Besov spaces \eqref{Besov:duality_in_Besov}, we rewrite the above expression as
\begin{multline*}
  \Bigl{\vert}\int_{t}^{T}\int_{\R^{nd}}D_{\bm{y}_j}\cdot\Bigl{\{}\tilde{p}^{\tau,\bm{\xi}}(t,s,\bm{x},\bm{y})\bigl[\bm{F}_j(s,\bm{y})-\bm{F}_j
(s,\bm{\theta}_{\tau,s}(\bm{\xi}))\bigr]\Bigr{\}} u(s,\bm{y}) \, d\bm{y}ds\Bigl{\vert} \,\le \\
\int_{t}^{T}\int_{\R^{(n-1)d}}\Bigl{\Vert}D_{\bm{y}_j}\cdot\Bigl{\{}\tilde{p}^{\tau,\bm{\xi}}(t,s,\bm{x},\bm{y}_{\smallsetminus j},
\cdot)\bigl[\bm{F}_j (s,\bm{y}_{\smallsetminus j},\cdot)-\bm{F}_j(s,\bm{\theta}_{\tau,s}(\bm{\xi})) \bigr]\Bigr{\}} \Bigr{\Vert
}_{B^{-(\alpha_j+\beta_j)}_{1,1}}\Vert u(s,\bm{y}_{\smallsetminus j},\cdot) \Vert_{B^{\alpha_j+\beta_j}_{\infty,\infty}} \,
d\bm{y}_{\smallsetminus j}ds.
\end{multline*}
Remembering identification \eqref{Besov:ident_Holder_Besov}, it holds now that
\begin{multline*}
\Bigl{\vert}\int_{t}^{T}\int_{\R^{nd}}D_{\bm{y}_j}\cdot\Bigl{\{}\tilde{p}^{\tau,\bm{\xi}}(t,s,\bm{x},\bm{y})\bigl[\bm{F}_j(s,\bm{y})-\bm{F}_j
(s,\bm{\theta}_{\tau,s}(\bm{\xi}))\bigr]\Bigr{\}} u(s,\bm{y}) \, d\bm{y}ds\Bigl{\vert} \\
\le \,\Vert u\Vert_{L^\infty(C^{\alpha+\beta}_{b,d})} \int_{t}^{T}\int_{\R^{(n-1)d}}\Bigl{\Vert}D_{\bm{y}_j} \cdot\Bigl{\{}
\tilde{p}^{\tau,\bm{\xi}}(t,s,\bm{x}, \bm{y}_{\smallsetminus j}, \cdot)\bigl[\bm{F}_j (s,\bm{y}_{\smallsetminus
j},\cdot)-\bm{F}_j(s,\bm{\theta}_{ \tau,s} (\bm{\xi})) \bigr]\Bigr{\}} \Bigr{\Vert}_{B^{-(\alpha_j+\beta_j)}_{1,1}} \,
d\bm{y}_{\smallsetminus j}ds.
\end{multline*}
It then remains to control the integral of the Besov norm above. To do that, we will need a refinement of the smoothing effect
\eqref{eq:Smoothing_effects_of_tilde_p} that involves only partial differences of variables. For a fixed $i$ in $\llbracket2,n\rrbracket$,
we start denoting  by $d_{i:n}(\cdot,\cdot)$ the part of the anisotropic distance considering only the last $n-(i-1)$ variables. Namely,
\[d_{i:n}( \bm{x},\bm{x}') \, := \, \sum_{j=i}^{n} \vert(\bm{x}-\bm{x}')_j\vert^{\frac{1}{1+\alpha(j-1)}}.\]

\begin{lemma}[Partial Smoothing Effect]
\label{lemma:Partial_Smoothing_Effect}
Let $i$ be in $\llbracket2,n\rrbracket$, $\gamma$ in $(0,1\wedge \alpha(1+\alpha(i-1)))$ and $\vartheta$, $\varrho$ two $n$-multi-indexes
such that $\vert \vartheta +\varrho \vert\le 3$. Then, there exists a constant $C:=C(\vartheta,\varrho,\gamma)$ such that for any $t<s$ in
$[0,T]$, any $\bm{x}$ in $\R^{nd}$,
\begin{equation}\label{eq:Partial_Smoothing_Effect}
\int_{\R^{nd}} \vert D^\varrho_{\bm{y}}D^\vartheta_{\bm{x}} \tilde{p}^{ \tau,\bm{\xi}} (t,s,\bm{x},\bm{y}) \vert d^\gamma_{i:n}
\bigl(\bm{y},\bm{\theta}_{\tau,s}(\bm{\xi})\bigr) \, d\bm{y} \, \le \, C (s-t)^{\frac{\gamma}{\alpha}-\sum_{i=k}^{n}
\frac{\vartheta_k+\varrho_k}{\alpha_k}}
\end{equation}
taking $(\tau,\bm{\xi})=(t,x)$.
\end{lemma}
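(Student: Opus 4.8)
The plan is to follow the proof of the (full) smoothing effect of the frozen density, Lemma \ref{lemma:Smoothing_effect_frozen}, essentially verbatim; the only — but decisive — gain is that controlling the \emph{partial} pseudo-distance $d_{i:n}$ forces us to dilate only the coordinates $j\ge i$, which relaxes the integrability threshold on $\gamma$. First I would reduce to the prescribed freezing couple $(\tau,\bm{\xi})=(t,\bm{x})$, for which the identification Lemma \ref{lemma:identification_theta_m} gives $\tilde{\bm{m}}^{t,\bm{x}}_{t,s}(\bm{x})=\bm{\theta}_{t,s}(\bm{x})$; hence the weight $d^\gamma_{i:n}\bigl(\bm{y},\bm{\theta}_{\tau,s}(\bm{\xi})\bigr)$ in \eqref{eq:Partial_Smoothing_Effect} is precisely $d^\gamma_{i:n}\bigl(\bm{y},\tilde{\bm{m}}^{t,\bm{x}}_{t,s}(\bm{x})\bigr)$, i.e.\ it is centred at the natural center of $\tilde{p}^{t,\bm{x}}$ in representation \eqref{eq:definition_tilde_p}.

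Then I would invoke the pointwise bound \eqref{eq:derivative_frozen_density} already obtained in the proof of Lemma \ref{lemma:Smoothing_effect_frozen} — valid for $\vert\varrho+\vartheta\vert\le 3$ by iterating the Scaling Lemma \ref{lemma:Scaling_Lemma} together with the $\alpha$-scaling \eqref{eq:alpha_scaling_p_S} — and perform the change of variables $\bm{z}=\mathbb{T}^{-1}_{s-t}\bigl(\tilde{\bm{m}}^{t,\bm{x}}_{t,s}(\bm{x})-\bm{y}\bigr)$, with $\mathbb{T}_{s-t}=(s-t)^{1/\alpha}\mathbb{M}_{s-t}$, which cancels the Jacobian $1/\det(\mathbb{T}_{s-t})$ and reduces the left-hand side of \eqref{eq:Partial_Smoothing_Effect} to
\[
C(s-t)^{-\sum_{k=1}^n\frac{\varrho_k+\vartheta_k}{\alpha_k}}\int_{\R^{nd}}\vert D^{\vert\varrho+\vartheta\vert}_{\bm{z}}p_S(1,\bm{z})\vert\; d^\gamma_{i:n}\bigl(\mathbb{T}_{s-t}\bm{z}+\tilde{\bm{m}}^{t,\bm{x}}_{t,s}(\bm{x}),\,\tilde{\bm{m}}^{t,\bm{x}}_{t,s}(\bm{x})\bigr)\,d\bm{z}.
\]

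The key elementary estimate is then that, by the diagonal structure \eqref{eq:def_of_Mt} of $\mathbb{M}_t$, one has $(\mathbb{T}_{s-t}\bm{z})_j=(s-t)^{\frac{1+\alpha(j-1)}{\alpha}}\bm{z}_j$, so that, using the sub-additivity of $r\mapsto r^\gamma$ on $[0,\infty)$ (here $\gamma\le 1$),
\[
d^\gamma_{i:n}\bigl(\mathbb{T}_{s-t}\bm{z}+\tilde{\bm{m}}^{t,\bm{x}}_{t,s}(\bm{x}),\,\tilde{\bm{m}}^{t,\bm{x}}_{t,s}(\bm{x})\bigr)=\Bigl(\sum_{j=i}^n\vert(\mathbb{T}_{s-t}\bm{z})_j\vert^{\frac{1}{1+\alpha(j-1)}}\Bigr)^{\!\gamma}\le (s-t)^{\frac{\gamma}{\alpha}}\sum_{j=i}^n\vert\bm{z}_j\vert^{\frac{\gamma}{1+\alpha(j-1)}}.
\]
Inserting this bound, it only remains to check that $\int_{\R^{nd}}\vert\bm{z}_j\vert^{\frac{\gamma}{1+\alpha(j-1)}}\vert D^{\vert\varrho+\vartheta\vert}_{\bm{z}}p_S(1,\bm{z})\vert\,d\bm{z}<\infty$ for every $j\in\llbracket i,n\rrbracket$; since $\vert\bm{z}_j\vert\le\vert\bm{z}\vert$ and $\vert\varrho+\vartheta\vert\le3$, this is exactly the smoothing effect \eqref{Smoothing_effect_of_S} for $p_S$ (with $t=1$), whose integrability requirement $\frac{\gamma}{1+\alpha(j-1)}<\alpha$ holds for all $j\ge i$ by the monotonicity $\frac{\gamma}{1+\alpha(j-1)}\le\frac{\gamma}{1+\alpha(i-1)}$ and the standing assumption $\gamma<\alpha(1+\alpha(i-1))$. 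Collecting the powers of $(s-t)$ — namely $\gamma/\alpha$ from the weight and $-\sum_k(\varrho_k+\vartheta_k)/\alpha_k$ from the derivatives — yields the announced exponent.

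I do not expect a genuine obstacle here: the delicate point is merely the bookkeeping of the dilation exponents. One must see clearly that, since the weight involves only the last $n-(i-1)$ blocks, the slowest-contracting relevant direction is $j=i$, and this is what pushes the admissible range for $\gamma$ up from $\gamma<\alpha$ (as in Lemma \ref{lemma:Smoothing_effect_frozen}) to $\gamma<\alpha(1+\alpha(i-1))$, the truncation at $1$ being exactly what the sub-additivity step $\bigl(\sum_j r_j\bigr)^\gamma\le\sum_j r_j^\gamma$ requires. The cases $\vert\vartheta\vert,\vert\varrho\vert\le1$ would be written out in full, precisely as in the proof of Lemma \ref{lemma:Smoothing_effect_frozen}, the remaining ones with $\vert\varrho+\vartheta\vert\le3$ following by the same iteration of the Scaling Lemma.
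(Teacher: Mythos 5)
Your proof is correct and follows exactly the route the paper intends: the paper itself only remarks that the result is obtained by "mimicking with slight modifications" the proof of Lemma \ref{lemma:Smoothing_effect_frozen}, and your argument is precisely that adaptation — reduce to $(\tau,\bm{\xi})=(t,\bm{x})$ via Lemma \ref{lemma:identification_theta_m} so the weight is centred at $\tilde{\bm{m}}^{t,\bm{x}}_{t,s}(\bm{x})$, use the pointwise bound \eqref{eq:derivative_frozen_density} and the change of variables $\bm{z}=\mathbb{T}^{-1}_{s-t}(\tilde{\bm{m}}^{t,\bm{x}}_{t,s}(\bm{x})-\bm{y})$, and observe that only the blocks $j\ge i$ appear in $d_{i:n}$, so the worst exponent is $\gamma/(1+\alpha(i-1))<\alpha$. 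You also correctly identify where the relaxed threshold $\gamma<\alpha(1+\alpha(i-1))$ enters, so nothing is missing.
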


The above assumption on $\gamma$ should not appear to much strange. Indeed, in the partial distance $d^{\gamma}_{i:n}( \bm{x},\bm{x}')$, the
stronger term to be integrated  is at level $i$ with intensity of order $\gamma/(1+\alpha(i-1))$. Since by the smoothing effect (Equation
\eqref{eq:Smoothing_effects_of_tilde_p}) of the frozen density, we know we can integrate against $\tilde{p}^{\tau,\bm{\xi}}$ contributions
of order up to $\alpha$, so it appears the condition $\gamma<\alpha(1+\alpha(i-1))$.\newline
A proof of this result can be obtained mimicking with slightly modifications the proof in Lemma \ref{lemma:Smoothing_effect_frozen}.

As done above for the first Besov control, we will however state the result considering a possibly additional derivative with respect to
$\bm{x}_1$. Namely, we would like to control the following:
\[D_{\bm{y}_j}\cdot\Bigl{\{}D^{\vartheta}_{\bm{x}}\tilde{p}^{\tau,\bm{\xi}}(t,s,\bm{x},
\bm{y}_{\smallsetminus j},\cdot) \otimes \bigl[\bm{F}_j (s,\bm{y}_{\smallsetminus j},\cdot)-\bm{F}_j(s,\bm{\theta}_{\tau,s}(\bm{\xi}))
\bigr]\Bigr{\}}\]
where we have denoted as in \eqref{eq:notation_smallsetminus}, $\bm{F}_j (s,\bm{y}_{\smallsetminus j},\cdot) := \bm{F}_j
(s,\bm{y}_{1},\dots,\bm{y}_{j-1},\cdot,\bm{y}_{j+1},\dots,\bm{y}_n)$ and, with a slightly abuse of notation, by $D_{\bm{y}_j}\cdot$ an
extended form of the divergence over the $j$-th variable. In other words, this "enhanced" divergence form decreases by one the order of the
input tensor.

\begin{lemma}[Second Besov Control]
\label{lemma:Second_Besov_COntrols}
Let $j$ be in $\llbracket 2,n\rrbracket$ and $\vartheta$ a multi-index in $\N^n$ such that $\vert \vartheta \vert\le 2$. Under
(\textbf{A}'), there exists a constant $C:=C(j,\vartheta)$ such that for any $\bm{x}$ in $\R^{nd}$ and any $t<s$ in $[0,T]$
\begin{multline*}
\int_{\R^{(n-1)d}}\Bigl{\Vert}D_{\bm{y}_j}\cdot\Bigl{\{}D^{\vartheta}_{\bm{x}}\tilde{p}^{\tau,\bm{\xi}}(t,s,\bm{x},
\bm{y}_{\smallsetminus j},\cdot) \otimes \bigl[\bm{F}_j (s,\bm{y}_{\smallsetminus j},\cdot)-\bm{F}_j(s,\bm{\theta}_{\tau,s}(\bm{\xi}))
\bigr]\Bigr{\}} \Bigr{\Vert}_{B^{-(\alpha_j+\beta_j)}_{1,1}} \, d\bm{y}_{\smallsetminus j}\\
\le \, C\Vert \bm{F}
\Vert_H(s-t)^{\frac{\beta}{\alpha} -\sum_{k=1}^{n}\frac{\vartheta_k}{\alpha_k}}
\end{multline*}
taking $(\tau,\bm{\xi})=(t,\bm{x})$.
\end{lemma}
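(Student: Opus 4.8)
The plan is to mimic the structure of the proof of the First Besov Control (Lemma~\ref{lemma:First_Besov_COntrols}), using the stable thermic characterization \eqref{alpha-thermic_Characterization} of the norm $\Vert\cdot\Vert_{B^{-(\alpha_j+\beta_j)}_{1,1}}$ with $\tilde\gamma=-(\alpha_j+\beta_j)$, but now carrying along the extra factor $\bm{F}_j(s,\bm{y}_{\smallsetminus j},\cdot)-\bm{F}_j(s,\bm{\theta}_{\tau,s}(\bm{\xi}))$. First I would expand the "enhanced divergence" by the Leibniz rule: $D_{\bm{y}_j}\cdot\{D^\vartheta_{\bm{x}}\tilde p\otimes[\bm{F}_j-\bm{F}_j(s,\bm{\theta})]\}$ splits into a term where the derivative hits $\tilde p$ (giving $D_{\bm{y}_j}D^\vartheta_{\bm{x}}\tilde p$ times the Hölder increment of $\bm{F}_j$) and a term where it hits $\bm{F}_j$ (giving $D^\vartheta_{\bm{x}}\tilde p$ times $D_{\bm{y}_j}\bm{F}_j$). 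For the first term, the Hölder regularity of $\bm{F}_j$ of index $\gamma_j+\beta$ in its $j$-th variable — combined with the fact that $\bm{F}_j$ depends only on $\bm{x}_j,\dots,\bm{x}_n$ and the identification $\bm{\theta}_{\tau,s}(\bm{\xi})=\tilde{\bm m}^{t,\bm{x}}_{t,s}(\bm{x})$ from Lemma~\ref{lemma:identification_theta_m} (this is where the freezing choice $(\tau,\bm{\xi})=(t,\bm{x})$ enters) — lets me bound $|\bm{F}_j(s,\bm{y}_{\smallsetminus j},\cdot)-\bm{F}_j(s,\bm{\theta}_{\tau,s}(\bm{\xi}))| \le \Vert\bm{F}\Vert_H\, d_{j:n}^{\gamma_j+\beta}(\bm{y},\bm{\theta}_{\tau,s}(\bm{\xi}))$, since the coordinates $1,\dots,j-1$ agree after restriction and $\bm{F}_j$ does not see them anyway.

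Next, exactly as in Lemma~\ref{lemma:First_Besov_COntrols}, I would split the $v$-integral in the thermic norm at the threshold $v=(s-t)^{\delta_j}$ with $\delta_j=1+\alpha(j-1)$. On $[(s-t)^{\delta_j},1]$ there is no singularity: integrate by parts in $\bm{y}_j$ to move $D_{\bm{y}_j}$ onto $p_h$, use the smoothing effect of the heat kernel (Lemma~\ref{lemma:Smoothing_effect_of_Heat_Kern}) for $\int|D_z\partial_v p_h(v,z)|\,dz \le Cv^{-1/\alpha-1}$, and then apply the Partial Smoothing Effect (Lemma~\ref{lemma:Partial_Smoothing_Effect}) to integrate $D^\vartheta_{\bm{x}}\tilde p$ or $D_{\bm{y}_j}D^\vartheta_{\bm{x}}\tilde p$ against $d_{j:n}^{\gamma_j+\beta}$; note $\gamma_j+\beta<\alpha(1+\alpha(j-1))$ must be checked, which is precisely what assumption (\textbf{P}) guarantees in the super-critical case (in the sub-critical case it is automatic). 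On $[0,(s-t)^{\delta_j}]$ I would use the cancellation trick: subtract from $D^{(\cdot)}\tilde p(\dots,\bm{y})$ its value at $(\bm{y}_{\smallsetminus j},z)$, Taylor-expand in $\bm{y}_j$ along the segment to $z$, pick up an extra factor $|z-\bm{y}_j|$ absorbed by $\int|D_z\partial_v p_h(v,\tilde z)||\tilde z|\,d\tilde z\le Cv^{1/\alpha-1}\cdot v^{1/\alpha}$... more precisely $Cv^{(1-l)/\alpha-1}$ with the relevant $l$, then again invoke Lemma~\ref{lemma:Partial_Smoothing_Effect} on the resulting $\tilde p$-derivative. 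Collecting the $v$-powers and imposing that the two exponents of $(s-t)$ coming from $I_1$ and $I_2$ coincide forces the choice $\delta_j=1+\alpha(j-1)=(\alpha+\beta)/(\alpha_j+\beta_j)$-type relation, adapted here so that the net exponent is $\frac{\beta}{\alpha}-\sum_k \vartheta_k/\alpha_k$. The low-frequency term $\Vert(\phi_0\widehat{(\cdot)})^\vee\Vert_{L^1}$ is handled as at the end of Lemma~\ref{lemma:First_Besov_COntrols}, moving $D_{\bm{y}_j}$ onto $\widehat{\phi_0}$ and bounding by $\int|D^\vartheta_{\bm{x}}\tilde p|$ plus the $\bm{F}_j$-increment term, both controlled by the smoothing effects.

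The Leibniz term where $D_{\bm{y}_j}$ hits $\bm{F}_j$ is in fact the delicate one and I expect it to be the main obstacle: here we gain no cancellation from $\bm{F}$ being an increment, and $D_{\bm{y}_j}\bm{F}_j$ is only Hölder of index $\gamma_j+\beta-1$ (which is $\alpha(j-2)+\beta$, still nonnegative but possibly less than $1$). One must again write $D_{\bm{y}_j}\bm{F}_j(s,\bm{y})=D_{\bm{y}_j}\bm{F}_j(s,\bm{y})-D_{\bm{y}_j}\bm{F}_j(s,\bm{\theta}_{\tau,s}(\bm{\xi}))+D_{\bm{y}_j}\bm{F}_j(s,\bm{\theta}_{\tau,s}(\bm{\xi}))$, using the constant part together with the cancellation property of the density $\int D^\vartheta_{\bm{x}}\tilde p\,(\text{const in }\bm{y}_j)$ against the oscillation of $p_h$, and the Hölder part against $d_{j:n}^{\gamma_j+\beta-1}$ via Lemma~\ref{lemma:Partial_Smoothing_Effect} — this is exactly the place flagged in the Remark (Equation~\eqref{Proof:Second_Besov_Control_I_2}) where the threshold $\gamma_j=1+\alpha(j-2)$ is seen to be minimal, because we need $\gamma_j+\beta-1\ge 0$ and the resulting time exponent to stay integrable. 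Once both Leibniz terms yield the bound $C\Vert\bm{F}\Vert_H (s-t)^{\frac{\beta}{\alpha}-\sum_k\vartheta_k/\alpha_k}$, the proof is complete; the bookkeeping of the $(s-t)$-powers for general $\vartheta$ with $|\vartheta|\le 2$ follows verbatim the scheme of Lemma~\ref{lemma:First_Besov_COntrols}, each derivative $D_{\bm{x}_k}$ costing a factor $(s-t)^{-1/\alpha_k}$ by the Scaling Lemma~\ref{lemma:Scaling_Lemma}.
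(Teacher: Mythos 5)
There is a genuine gap, and it is precisely at the point you flag as "the delicate one". Your opening move — expanding $D_{\bm{y}_j}\cdot\{D^{\vartheta}_{\bm{x}}\tilde{p}\otimes[\bm{F}_j-\bm{F}_j(s,\bm{\theta}_{\tau,s}(\bm{\xi}))]\}$ by the Leibniz rule — produces a term $D^{\vartheta}_{\bm{x}}\tilde{p}\,D_{\bm{y}_j}\bm{F}_j$ that is not at your disposal. Since $\bm{F}_j\in L^\infty(0,T;C^{\gamma_j+\beta}_{d})$ is an \emph{anisotropic} space, its regularity in the $j$-th variable is $\frac{\gamma_j+\beta}{1+\alpha(j-1)}=\frac{1+\alpha(j-2)+\beta}{1+\alpha(j-1)}$, which is strictly below $1$ under (\textbf{P}) because $\beta<\alpha$; so $D_{\bm{y}_j}\bm{F}_j$ does not exist pointwise, and in the regularized framework of Section $5$ (where this lemma is applied) its supremum cannot be bounded by $\Vert\bm{F}\Vert_H$ uniformly in the mollification parameter, which is the whole point of the estimate. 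Your claim that $D_{\bm{y}_j}\bm{F}_j$ is "Hölder of index $\gamma_j+\beta-1$" mistakes the anisotropic index for the plain Hölder index in the $j$-th direction, and the sketched fix (splitting off the value at $\bm{\theta}_{\tau,s}(\bm{\xi})$ and invoking a cancellation of the density against $p_h$) does not rescue it: the frozen density still depends on $\bm{y}_j$, so no free cancellation occurs, and any Taylor argument there would again need a genuine derivative of $\bm{F}_j$.

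The paper's proof avoids the Leibniz rule altogether: inside the thermic characterization one tests the whole divergence against $\partial_v p_h(v,z-\cdot)$ and immediately integrates by parts in $\bm{y}_j$, so that $D_{\bm{y}_j}$ lands on $p_h$ (via $D_{\bm{y}_j}p_h(v,z-\bm{y}_j)=-D_zp_h(v,z-\bm{y}_j)$) and never on $\bm{F}_j$ or on $\tilde{p}$. The small-$v$ part is then handled by a cancellation at the point $(\bm{y}_{\smallsetminus j},z)$ applied to the \emph{product}, which splits into a term $J_1$ carrying the increment $\vert\bm{F}_j(s,\bm{y})-\bm{F}_j(s,\bm{y}_{\smallsetminus j},z)\vert\le C\Vert\bm{F}\Vert_H\vert z-\bm{y}_j\vert^{\frac{1+\alpha(j-2)+\beta}{1+\alpha(j-1)}}$ and a term $J_2$ where $\tilde{p}$ is Taylor-expanded in $\bm{y}_j$ and the $\bm{F}_j$-increment is bounded by $d^{1+\alpha(j-2)+\beta}_{j:n}(\cdot,\bm{\theta}_{\tau,s}(\bm{\xi}))$; both are closed with the heat-kernel smoothing (Lemma \ref{lemma:Smoothing_effect_of_Heat_Kern}) and the partial smoothing effect (Lemma \ref{lemma:Partial_Smoothing_Effect}) with $(\tau,\bm{\xi})=(t,\bm{x})$, exactly as in the parts of your sketch where the derivative hits $\tilde{p}$ — that portion of your plan, including the use of (\textbf{P}) to guarantee $1+\alpha(j-2)+\beta<\alpha(1+\alpha(j-1))$, is sound. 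Two smaller corrections: the threshold is not forced to be $\delta_j=1+\alpha(j-1)$; the paper takes $\delta_j=\frac{[1+\alpha(j-2)][1+\alpha(j-1)]}{1+\alpha(j-2)-\beta}$ and only requires all four exponents to exceed $\beta/\alpha-\sum_k\vartheta_k/\alpha_k$, absorbing the excess via $s-t\le 1$ — this is why the statement needs (\textbf{A'}), i.e.\ the small-time assumption (\textbf{ST}).
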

\begin{proof}
To control the Besov norm in $B^{-(\alpha_j+\beta_j)}_{1,1}(\R^d)$, we are going to use the stable thermic characterization
\eqref{alpha-thermic_Characterization} with $\tilde{\gamma}=-(\alpha_j+\beta_j)$. Since the first term can be controlled as in the First
Besov Control (Lemma \ref{lemma:First_Besov_COntrols}), we will
focus on the second one, i.e.
\[\int_{0}^{1}v^{\frac{\alpha_j+\beta_j}{\alpha}}\int_{\R^d}\Bigl{\vert}\int_{\R^d}\partial_vp_h(v,z-\bm{y}_j)D_{\bm{y}_j}\cdot
\Bigl{\{}D^\vartheta_{\bm{x}}\tilde{p}^{\tau,\bm{\xi}}(t,s,\bm{x},\bm{y})\otimes\bigl[\bm{F}_j(s,\bm{y})-\bm{F}_j(s,\bm{\theta}_{\tau,s}(\bm{
\xi}))\bigr]\Bigr{\}}
\,d\bm{y}_j \Bigr{\vert} \, dzdv.\]
We start applying integration by parts formula noticing that $D_{\bm{y}_j}p_h(v,z-\bm{y}_j)=-D_z p_h(v,z-\bm{y}_j)$, to write that
\[\int_{0}^{1}v^{\frac{\alpha_j+\beta_j}{\alpha}}\int_{\R^d}\Bigl{\vert}\int_{\R^d}D_z\partial_vp_h(v,z-\bm{y}_j)\cdot \Bigl{\{}D^\vartheta_{
\bm{x}} \tilde{p}^{\tau,\bm{\xi}}(t,s,\bm{x},\bm{y})\otimes\bigl[\bm{F}_j(s,\bm{y})-\bm{F}_j(s,\bm{\theta}_{\tau,s}(\bm{\xi}))\bigr]\Bigr{\}}
 \,d\bm{y}_j \Bigr{\vert} \,dzdv.\]
Fixed a constant $\delta_j\ge1$ to be chosen later, we then split the above integral with respect to $v$ into two components
\begin{multline*}
\int_{0}^{(s-t)^{\delta_j}}v^{\frac{\alpha_j+\beta_j}{\alpha}}\int_{\R^d}\Bigl{\vert}\int_{\R^d}D_{z}\partial_vp_h
(v,z-\bm{y}_j)\cdot\Bigl{\{}D^\vartheta_{\bm{x}}\tilde{p}^{\tau,\bm{\xi}}(t,s,\bm{x},\bm{y})\bigl[\bm{F}_j(s,\bm{y})-\bm{F}_j(s,\bm{
\theta}_{\tau,s}(\bm{\xi}))\bigr] \Bigr{\}}\,d\bm{y}_j  \Bigr{\vert} \, dzdv \\
+\int_{(s-t)^{\delta_j}}^{1}v^{\frac{\alpha_j+\beta_j}{\alpha}}\int_{\R^d}\Bigl{\vert}\int_{\R^d}D_z\partial_vp_h(v,z-\bm{y}_j)\cdot\Bigl{\{}
D^\vartheta_{\bm{x}} \tilde{p}^{\tau,\bm{\xi}}(t,s,\bm{x},\bm{y})\bigl[\bm{F}_j(s,\bm{y})-\bm{F}_j(s,\bm{\theta}_{\tau,s}(\bm{\xi}))\bigr]
\Bigl{\}} \,d\bm{y}_j \Bigr{\vert} \, dzdv \\
=: \, \bigl(I_1+I_2\bigr)(\bm{y}_{\smallsetminus j}).
\end{multline*}
The second component $I_2$ has no time-singularity and it can be easily controlled using Fubini theorem
\begin{multline*}
I_2(\bm{y}_{\smallsetminus j}) \, \le \\
C\Vert \bm{F}\Vert_{H}\int_{(s-t)^{\delta_j}}^1 v^{\frac{\alpha_j+\beta_j}{\alpha}}\int_{\R^d}\Bigl(\int_{\R^d}\vert
D_z\partial_vp_h(v,z-\bm{y}_j) \vert \, dz\Bigr) \vert D^\vartheta_{\bm{x}} \tilde{p}^{\tau,\bm{\xi}}(t,s,\bm{x},\bm{y})\vert
d^{1+\alpha(j-2)+\beta}_{j:n}(\bm{y}, \bm{\theta}_{\tau,s}(\bm{\xi})) \,d\bm{y}_jdv,
\end{multline*}
remembering that $\bm{F}_j(t,\cdot)$ depends only on the last $(n-j)$ variables and it is in $C^{1+\alpha(j-2)+\beta}_{b,d}(\R^{nd})$ by
assumption (\textbf{R}). We can then use the smoothing effect of the heat-kernel $p_h$ (Equation \eqref{Smoothing_effect_of_Heat_Kern}) and
the Fubini theorem again to write that
\begin{multline*}
I_2(\bm{y}_{\smallsetminus j}) \, \le \, C\Vert \bm{F}\Vert_{H}\int_{(s-t)^{\delta_j}}^1
\frac{v^{\frac{\alpha_j+\beta_j-1}{\alpha}}}{v}\int_{\R^d} \vert D^\vartheta_{\bm{x}} \tilde{p}^{\tau,\bm{\xi}}(t,s,\bm{x},\bm{y})\vert
d^{1+\alpha(j-2)+\beta}_{j:n}(\bm{y}, \bm{\theta}_{\tau,s}(\bm{\xi})) \,d\bm{y}_jdv \\
\le \, C\Vert \bm{F}\Vert_{H}\Bigl(\int_{(s-t)^{\delta_j}}^1\frac{v^{\frac{\alpha_j+\beta_j-1}{\alpha}}}{v} \, dv\Bigr)\Bigl(\int_{\R^d}
\vert D^\vartheta_{\bm{x}} \tilde{p}^{\tau,\bm{\xi}}(t,s,\bm{x},\bm{y})\vert d^{1+\alpha(j-2)+\beta}_{j:n}(\bm{y},
\bm{\theta}_{\tau,s}(\bm{\xi})) \,d\bm{y}_j\Bigr).
\end{multline*}
Noticing from \eqref{eq:def_alpha_i_and_beta_i} that $\alpha_j+\beta_j-1<0$, it holds now that
\[I_2(\bm{y}_{\smallsetminus j})\,\le\,C \Vert \bm{F}\Vert_{H}(s-t)^{\delta_j\frac{\alpha_j+\beta_j-1}{\alpha}}\int_{\R^d} \vert
D^\vartheta_{\bm{x}} \tilde{p}^{\tau,\bm{\xi}}(t,s,\bm{x},\bm{y})\vert d^{1+\alpha(j-2)+\beta}_{j:n}(\bm{y}, \bm{\theta}_{\tau,s}(\bm{\xi}))
\,d\bm{y}_j.\]
We can finally add the integral with respect to the other components $\bm{y}_{\smallsetminus j}$. In order to use now the partial smoothing
effect (Equation \eqref{eq:Partial_Smoothing_Effect}), we take $\tau=t$ and $\bm{\xi}=x$ and notice that
by assumption (\textbf{P}),
\begin{equation}\label{a1}
1+\alpha(j-2)+\beta \, = \, 1+\alpha(j-1)-(\alpha-\beta) \, < \, 1+\alpha(j-1)-(1-\alpha)\bigl(1+\alpha(j-1)\bigr) \, =\,
\alpha(1+\alpha(j-1)).
\end{equation}
It then holds that
\begin{multline}\label{Proof:Second_Besov_Control_I_2}
\int_{\R^{(n-1)d}}I_2(\bm{y}_{\smallsetminus j}) \, d\bm{y}_{\smallsetminus j}\,\le\, C \Vert \bm{F}
\Vert_{H}(s-t)^{\delta_j\frac{\alpha_j+\beta_j-1}{\alpha}} \int_{\R^{nd}} \vert
D^\vartheta_{\bm{x}} \tilde{p}^{\tau,\bm{\xi}}(t,s,\bm{x},\bm{y})\vert d^{1+\alpha(j-2)+\beta}_{j:n}(\bm{y}, \bm{\theta}_{\tau,s}(\bm{\xi}))
\,d\bm{y} \\
\le\, C\Vert \bm{F} \Vert_H(s-t)^{\delta_j \frac{\alpha_j+\beta_j-1}{\alpha}+\frac{1+\alpha(j-2)+\beta}{\alpha}-
\sum_{k=1}^{n}\frac{\vartheta_k}{\alpha_k}}.
\end{multline}
To control the other term $I_1$, we focus at first on the inner integral with respect to $\bm{y}_j$:
\[\int_{\R^d}D_z \partial_vp_h (v,z-\bm{y}_j)\cdot \Bigl{\{}D^\vartheta_{\bm{x}}\tilde{p}^{\tau,\bm{\xi}}(t,s,\bm{x},\bm{y})\otimes\bigl[
\bm{F}_j(s,\bm{y})-\bm{F}_j(s,\bm{\theta}_{\tau,s} (\bm{\xi}))\bigr]\Bigr{\}}\,d\bm{y}_j.\]
We start using a cancellation argument with respect to the density $p_h$ to divide it in
\begin{multline*}
\int_{\R^d}D_z \partial_vp_h (v,z-\bm{y}_j)\cdot\Bigl{\{}D^\vartheta_{\bm{x}}\tilde{p}^{\tau,\bm{\xi}}(t,s,\bm{x},\bm{y})\otimes
\bigl[\bm{F}_j(s,\bm{y}) -\bm{F}_j(s,\bm{\theta}_{\tau,s}(\bm{\xi}))\Bigr{\}} \\
- D^\vartheta_{\bm{x}}\tilde{p}^{\tau,\bm{\xi}}(t,s,\bm{x},\bm{y}_{\smallsetminus j},z)\otimes\bigl[\bm{F}_j(s,\bm{y}_{\smallsetminus
j},z)-\bm{F}_j(s,\bm{\theta}_{\tau,s}(\bm{\xi}))\bigr]\Bigr{\}}\,d\bm{y}_j  \\
= \, \int_{\R^d}D_z \partial_vp_h(v,z-\bm{y}_j)\cdot \Bigl{\{}D^\vartheta_{\bm{x}}\tilde{p}^{\tau,\bm{\xi}}(t,s,\bm{x},\bm{y})\otimes\bigl[
\bm{F}_j(s,\bm{y})-\bm{F}_j(s,\bm{y}_{\smallsetminus j},z)\bigr]\Bigr{\}} \,d\bm{y}_j \\
+\int_{\R^d}D_z \partial_vp_h(v,z-\bm{y}_j)\cdot\Bigl{\{}\bigl[D^\vartheta_{\bm{x}}\tilde{p}^{\tau,\bm{\xi}}(t,s,\bm{x},\bm{y})-
D^\vartheta_{\bm{x}} \tilde{p}^{\tau,\bm{\xi}}(t,s,\bm{x},\bm{y}_{\smallsetminus j},z)\bigr]\otimes\bigl[\bm{F}_j(s,\bm{y}_{\smallsetminus
j},z)-\bm{F}_j(s,\bm{\theta}_{\tau,s}(\bm{\xi})) \bigr]\Bigl{\}} \,d\bm{y}_j \\
 =:\, \bigl(J_1+J_2\bigr)(v,\bm{y}_{\smallsetminus j},z).
\end{multline*}
Remembering  notation \eqref{eq:notation_smallsetminus} for $\bm{F}_j(s,\bm{y}_{\smallsetminus j},z)$ and that $\bm{F}_j$ is
$\frac{1+\alpha(j-2)+\beta}{1+\alpha(j-1)}$-H\"older continuous with respect to its $j$-th variable by assumption (\textbf{R}), the first
component $J_1$ can be easily controlled using the Fubini theorem by
\begin{multline*}
\int_{\R^d}\Bigl{\vert}J_1(v,\bm{y}_{\smallsetminus j},z)\Bigr{\vert}\,dz\\
\le \,C\Vert\bm{F} \Vert_H \int_{\R^d}\Bigl(\int_{\R^d}\vert z-\bm{y}_j\vert^{\frac{1+\alpha(j-2)+\beta}{1+\alpha(j-1)}}\vert D_{z}
\partial_vp_h(v,z-\bm{y}_j)\vert \, dz\Bigr)\vert D^\vartheta_{\bm{x}}\tilde{p}^{\tau,\bm{\xi}}(t,s,\bm{x},\bm{y}) \vert \,d\bm{y}_j\\
\le \,C\Vert\bm{F} \Vert_H v^{\frac{1}{\alpha}\frac{1+\alpha(j-2)+\beta}{1+\alpha(j-1)}-\frac{1}{\alpha}-1} \int_{\R^d}
\vert D^\vartheta_{\bm{x}}\tilde{p}^{\tau,\bm{\xi}}(t,s,\bm{x},\bm{y}) \vert \,d\bm{y}_j
\end{multline*}
where in the last passage we used the smoothing effect of the heat-kernel $p_h$ (Equation \eqref{Smoothing_effect_of_Heat_Kern}), noticing
that
\[\frac{1+\alpha(j-2)+\beta}{1+\alpha(j-1)} \, = \, 1+\frac{\beta-\alpha}{1+\alpha(j-1)}\, < \, 1+\alpha,\]
since $\alpha>\beta$ by assumption (\textbf{P}).
Using now the identity
\begin{equation}\label{Proof:Second_Besov_Control}
\frac{\alpha_j+\beta_j}{\alpha}+\frac{1}{\alpha}\Bigl(\frac{1+\alpha(j-2)+\beta}{1+\alpha(j-1)}-1\Bigr) \, = \, \frac{2\beta_j}{\alpha},
\end{equation}
we add the integral with respect to $v$ and write that
\begin{multline*}
\int_{0}^{(s-t)^{\delta_j}}v^{\frac{\alpha_j+\beta_j}{\alpha}}\int_{\R^d}\Bigl{\vert}J_1(v,\bm{y}_{\smallsetminus j},z)\Bigr{\vert}\,dzdv\,
\le \,C\Vert\bm{F} \Vert_H \int_{0}^{(s-t)^{\delta_j}} \frac{v^{\frac{2\beta_j}{\alpha}}}{v}\int_{\R^d}
\vert D^\vartheta_{\bm{x}}\tilde{p}^{\tau,\bm{\xi}}(t,s,\bm{x},\bm{y}) \vert \,d\bm{y}_j dv \\
\le \, C\Vert\bm{F} \Vert_H (s-t)^{\delta_j\frac{2\beta_j}{\alpha}} \int_{\R^d} \vert
D^\vartheta_{\bm{x}}\tilde{p}^{\tau,\bm{\xi}}(t,s,\bm{x},\bm{y}) \vert \,d\bm{y}_j.
\end{multline*}
Adding the integral with respect to the other components $\bm{y}_{\smallsetminus j}$, we can finally conclude that
\begin{multline}\label{Proof:Second_Besov_Control_J1}
\int_{\R^{(n-1)d}}\int_{0}^{(s-t)^{\delta_j}}v^{\frac{\alpha_j+\beta_j}{\alpha}}\int_{\R^d}\Bigl{\vert}J_1(v,\bm{y}_{\smallsetminus j},z)
\Bigr{\vert} \, dzdv \,d\bm{y}_{\smallsetminus j}\, \le \,  C\Vert \bm{F}\Vert_H(s-t)^{\delta_j\frac{2\beta_j}{\alpha}} \int_{\R^{nd}}
\vert D^\vartheta_{\bm{x}}\tilde{p}^{\tau,\bm{\xi}}(t,s,\bm{x},\bm{y}) \vert\,d\bm{y}\\
\le \, C\Vert \bm{F} \Vert_H (s-t)^{\delta_j\frac{2\beta_j}{\alpha}-\sum_{k=1}^{n}\frac{\vartheta_k}{\alpha_k}}
\end{multline}
To control the second component $J_2$, we start applying a Taylor expansion on $\tilde{p}^{\tau,\bm{\xi}}$ with respect to $\bm{y}_j$:
\begin{multline}\label{eq:J_2_component_for_extension}
J_2(v,\bm{y}_{\smallsetminus j},z) \,= \, \int_{\R^d}D_z \partial_vp_h(v,z-\bm{y}_j)\cdot\Bigl{\{}\bigl[\bm{F}_j(s,\bm{y}_{\smallsetminus
j},z) -\bm{F}_j(s,\bm{\theta}_{\tau,s}(\bm{\xi}))\bigr]\\
\otimes\int_{0}^{1}D_{\bm{y}_j}D^\vartheta_{\bm{x}} \tilde{p}^{\tau,\bm{\xi}}(t,s,\bm{x},\bm{y}_{\smallsetminus j},\bm{y}_j+
\lambda(z-\bm{y}_j))\cdot(z)\Bigr{\}} \,d\lambda d\bm{y}_j.
\end{multline}
We then notice that for any fixed $\lambda$ in $[0,1]$, it holds that
\begin{multline*}
\vert \bm{F}_j(s,\bm{y}_{\smallsetminus j}, z)-\bm{F}_j(s,\bm{\theta}_{\tau,s}(\bm{\xi})) \vert \, \le \, C\Vert \bm{F}
\Vert_H\Bigl{\{}\bigl{\vert}\bigl(z-\bm{\theta}_{\tau,s}(\bm{\xi})\bigr)_j\bigr{\vert}^{\frac{1+\alpha(j-2)+\beta}{1
+\alpha(j-1)}}+\sum_{k=j+1}^{n}\bigl{\vert}\bigl(\bm{y}-\bm{\theta}_{\tau,s}(\bm{\xi})\bigr)_k
\bigr{\vert}^{\frac{1+\alpha(j-2)+\beta}{1+\alpha(k-1)}}\Bigr{\}} \\
\le \, C\Vert \bm{F} \Vert_H\Bigl{\{}\bigl{\vert} \lambda(\bm{y}_j-z) \bigr{\vert}^{\frac{1+\alpha(j-2)+\beta}{1 +\alpha(j-1)}}+\bigl{\vert}
\bigl(\bm{y}_j+\lambda(z-\bm{y}_j)-\bm{\theta}_{\tau,s}(\bm{\xi})\bigr)_j\bigr{\vert}^{\frac{1+ \alpha(j-2) +\beta}{1
+\alpha(j-1)}}+\sum_{k=j+1}^{n}\bigl{\vert}\bigl(\bm{y}-\bm{\theta}_{\tau,s}(\bm{\xi})\bigr)_k\bigr{\vert}^{\frac{1+\alpha(j-2)+\beta}{1+
\alpha(k-1)}}\Bigr{\}}\\
\le \, C\Vert \bm{F} \Vert_H\Bigl{\{}\bigl{\vert} z-\bm{y}_j \bigr{\vert}^{\frac{1+\alpha(j-2)+\beta}{1 +\alpha(j-1)}} +
d^{1+\alpha(j-2)+\beta}_{j:n}\bigl((\bm{y}_{\smallsetminus j},\bm{y}_j+\lambda(z-\bm{y}_j))\bigr),\bm{\theta}_{\tau,s}(\bm{\xi})
\bigr)\Bigr{\}}
\end{multline*}
where as in \eqref{eq:notation_smallsetminus}, we denoted $(\bm{y}_{\smallsetminus j},\bm{y}_j+\lambda(z-\bm{y}_j)):=(\bm{y}_1,\dots,
\bm{y}_{j-1},\bm{y}_1,\dots,\bm{y}_{j-1},\bm{y}_j+\lambda(z-\bm{y}_j),\bm{y}_{j+1}, \dots,\bm{y}_n)$.
We can thus split $J_2$ as
\begin{multline}\label{Proof:Second_Besov_Control_J'0}
\vert J_2(v,\bm{y}_{\smallsetminus j},z) \vert \, \le \\
C\Vert \bm{F} \Vert_H\int_{0}^{1}\Bigl{\{}\int_{\R^d}\vert z-\bm{y}_j \vert^{\frac{1+\alpha(j-2)+\beta}{1
+\alpha(j-1)}+1}\vert D_z \partial_vp_h(v,z-\bm{y}_j)\vert\, \vert D_{\bm{y}_j}D^\vartheta_{\bm{x}}
\tilde{p}^{\tau,\bm{\xi}}(t,s,\bm{x},\bm{y}_{\smallsetminus j},\bm{y}_j+\lambda(z-\bm{y}_j))\vert \,d\bm{y}_j \\
+ \int_{\R^d}\vert z-\bm{y}_j \vert\, \vert D_z \partial_vp_h(v,z-\bm{y}_j)\vert\, \vert D_{\bm{y}_j}D^\vartheta_{\bm{x}}
\tilde{p}^{\tau,\bm{\xi}}(t,s,\bm{x},\bm{y}_{\smallsetminus j},\bm{y}_j+\lambda(z-\bm{y}_j)) \vert\\
\times d^{1+\alpha(j-2)+\beta}_{j:n} ((\bm{y}_{\smallsetminus j},\bm{y}_j+\lambda(z-\bm{y}_j)),\bm{\theta}_{\tau,s}(\bm{\xi}))
d\bm{y}_j\Bigr{\}}\, d\lambda \, =: \,
C\Vert \bm{F} \Vert_H\int_{0}^{1}\bigl(J_{2,1}+J_{2,2}\bigr)(v,\bm{y}_{\smallsetminus j},z,\lambda)\, d\lambda
\end{multline}
Adding now the integral with respect to $z$, the first term $J_{2,1}$ can be rewritten as
\begin{multline*}
\int_{\R^d} J_{2,1}(v,\bm{y}_{\smallsetminus j},z,\lambda) \, dz \, \le \\
\int_{\R^d}\int_{\R^d}\vert z-\bm{y}_j \vert^{\frac{1+\alpha(j-2)+\beta}{1+\alpha(j-1)}+1}\vert D_z \partial_vp_h(v,z-\bm{y}_j)\vert\, \vert
D_{\bm{y}_j}D^\vartheta_{\bm{x}} \tilde{p}^{\tau,\bm{\xi}}(t,s,\bm{x},\bm{y}_{\smallsetminus j},\bm{y}_j+\lambda(z-\bm{y}_j))\vert \,
d\bm{y}_j dz.
\end{multline*}
The Fubini theorem and the change of variables $\tilde{z}=z-\bm{y}_j$ and $\tilde{\bm{y}}_j=\bm{y}_j+\lambda\tilde{z}$ allow then to
divide the integrals:
\begin{multline*}
\int_{\R^d} J_{2,1}(v,\bm{y}_{\smallsetminus j},z,\lambda) \, dz \, \le \, \Bigl(\int_{\R^d} \vert \tilde{z}
\vert^{\frac{1+\alpha(j-2)+\beta}{1+\alpha(j-1)}+1}\vert D_{\tilde{z}}\partial_vp_h(v,\tilde{z}) \vert \,
d\tilde{z}\Bigr)\Bigl(\int_{\R^d}\vert D_{\tilde{\bm{y}}_j} D^\vartheta_{\bm{x}} \tilde{p}^{\tau,\bm{\xi}}(t,s,\bm{x},
\bm{y}_{\smallsetminus j},\tilde{\bm{y}}_j)\vert \, d\tilde{\bm{y}}_j \Bigr)
\end{multline*}
Noticing now from assumption (\textbf{P}) that
\[\frac{1+\alpha(j-2)+\beta}{1+\alpha(j-1)}+1 \, =\, 1-\frac{\beta-\alpha}{1+\alpha(j-1)}+1 \,< \, 2-(1-\alpha) \, = \, 1+\alpha, \]
we can use the smoothing effect of the heat-kernel $p_h$ (Equation \eqref{Smoothing_effect_of_Heat_Kern}) to show that
\[\int_{\R^d} J_{2,1}(v,\bm{y}_{\smallsetminus j},z,\lambda) \, dz \,\le \,\frac{v^{\frac{1+\alpha(j-2)+\beta}{\alpha(1+\alpha(j-1))}}}{v}
\int_{\R^d}\vert D_{\tilde{\bm{y}}_j} D^\vartheta_{\bm{x}}\tilde{p}^{\tau,\bm{\xi}}(t,s,\bm{x},\bm{y}_{\smallsetminus j},
\tilde{\bm{y}}_j) \vert \, d\tilde{\bm{y}}_j.\]
Remembering equation \eqref{Proof:Second_Besov_Control}, we can add the in integral with respect to $v$ and show that
\[\int_{0}^{(s-t)^{\delta_j}}v^{\frac{\alpha_j+\beta_j}{\alpha}}\int_{\R^d} J_{2,1}(v,\bm{y}_{\smallsetminus j},z,\lambda) \, dz\, dv \, \le
\, (s-t)^{\delta_j\frac{2\beta_j+1}{\alpha}}\int_{\R^d}\vert D_{\tilde{\bm{y}}_j} D^\vartheta_{\bm{x}}
\tilde{p}^{\tau,\bm{\xi}}(t,s,\bm{x}, \bm{y}_{\smallsetminus j},\tilde{\bm{y}}_j)\vert \, d\tilde{\bm{y}}_j.\]
Adding the integral with respect to $\bm{y}_{\smallsetminus j}$, we can conclude with $J_{2,1}$ that
\begin{multline}\label{Proof:Second_Besov_Control_J_{2,1}}
\int_{\R^{(n-1)d}}\int_{0}^{(s-t)^{\delta_j}}v^{\frac{\alpha_j+\beta_j}{\alpha}}\int_{\R^d} J_{2,1}(v,\bm{y}_{\smallsetminus j},z,\lambda) \,
dz\, dv \, d\bm{y}_{\smallsetminus j} \\
\le \, C (s-t)^{\delta_j\frac{2\beta_j+1}{\alpha}}\int_{\R^{nd}}\vert D_{\bm{y}_j}D^\vartheta_{\bm{x}}\tilde{p}^{\tau,\bm{\xi}}(t,s,\bm{x},\bm{y})
\vert \, dy \, \le \, C (s-t)^{\delta_j\frac{2\beta_j+1}{\alpha}-\frac{1}{\alpha_j}-\sum_{k=1}^{n}\frac{\vartheta_k}{\alpha_k}}
\end{multline}
where, for simplicity, we have changed back the variable $\tilde{\bm{y}}_j$ with $\bm{y}_j$. \newline
To control instead the term $J_{2,2}$ (cf. Equation \eqref{Proof:Second_Besov_Control_J'0}), we can use again the Fubini theorem and the
changes of variables $\tilde{z}=z-\bm{y}_j$, $\tilde{\bm{y}}_j=\bm{y}_j+\lambda\tilde{z}$ to
divide the integrals and show that
\begin{multline*}
\int_{\R^d} J_{2,2}(v,\bm{y}_{\smallsetminus j},z,\lambda) \, dz  \\
\le \, \Bigl(\int_{\R^d}\vert\tilde{z}\vert \, \vert D_{\tilde{z}} \partial_v
p_h(v,\tilde{z})\vert\, d\tilde{z}\Bigr)\Bigl(\int_{\R^d}\vert D_{\tilde{\bm{y}}_j}D^\vartheta_{\bm{x}} \tilde{p}^{\tau,\bm{\xi}}
(t,s,\bm{x},\bm{y}_{\smallsetminus j},\tilde{\bm{y}}_j) \vert d^{1+\alpha(j-2)+\beta}_{j:n} ((\bm{y}_{\smallsetminus j},\tilde{\bm{y}}_j),
\bm{\theta}_{\tau,s}(\bm{\xi})) d\tilde{\bm{y}}_j\Bigr)\\
\le \, \frac{1}{v}\int_{\R^d}\vert D_{\bm{y}_j}D^\vartheta_{\bm{x}} \tilde{p}^{\tau,\bm{\xi}} (t,s,\bm{x},\bm{y}) \vert
d^{1+\alpha(j-2)+\beta}_{j:n} (\bm{y}, \bm{\theta}_{\tau,s}(\bm{\xi})) d\bm{y}_j dv
\end{multline*}
where in the second inequality we used the smoothing effect of the heat-kernel $p_h$ (Equation \eqref{Smoothing_effect_of_Heat_Kern}) and
changed back the variable $\tilde{\bm{y}}_j$ with $\bm{y}_j$ for simplicity. It then follows that
\begin{multline*}
\int_{0}^{(s-t)^{\delta_j}}v^{\frac{\alpha_j+\beta_j}{\alpha}}\int_{\R^d} J_{2,2}(v,z,\bm{y}_{\smallsetminus j}) \, dz dv \\
\le \, (s-t)^{\delta_j\frac{\alpha_j+\beta_j}{\alpha}}\int_{\R^d}\vert D_{\bm{y}_j}D^\vartheta_{\bm{x}} \tilde{p}^{\tau,\bm{\xi}}
(t,s,\bm{x},\bm{y}) \vert d^{1+\alpha(j-2)+\beta}_{j:n} (y, \bm{\theta}_{\tau,s}(\bm{\xi})) d\bm{y}_j.
\end{multline*}
Taking now $\tau=t$ and $\bm{\xi}=\bm{x}$, we conclude with $J_{2,2}$ applying the partial smoothing effect
\eqref{eq:Partial_Smoothing_Effect} of
$\tilde{p}^{\tau,\bm{\xi}}$ under the hypothesis $1+\alpha(j-2)+\beta \le  \alpha(1+\alpha(j-1))$ (see Equation \eqref{a1}) to write that
\begin{multline}\label{Proof:Second_Besov_Control_J'2}
\int_{\R^{(n-1)d}}\int_{0}^{(s-t)^{\delta_j}}v^{\frac{\alpha_j+\beta_j}{\alpha}}\int_{\R^d} J_{2,2}(v,z,\bm{y}_{\smallsetminus j}) \, dz \,
dv d\bm{y}_{\smallsetminus j} \\
\le \, (s-t)^{\delta_j\frac{\alpha_j+\beta_j}{\alpha}}\int_{\R^{nd}}\vert D_{\bm{y}_j}D^\vartheta_{\bm{x}} \tilde{p}^{\tau,\bm{\xi}}
(t,s,\bm{x},\bm{y}) \vert d^{1+\alpha(j-2)+\beta}_{j:n} (y, \bm{\theta}_{\tau,s}(\bm{\xi})) d\bm{y} \\
\le \, C(s-t)^{\delta_j\frac{\alpha_j+\beta_j}{\alpha}+\frac{1+\alpha(j-2)+\beta}{\alpha}- \frac{1}{\alpha_j}-\sum_{k=1}^{n}
\frac{\vartheta_k}{\alpha_k}}.
\end{multline}
Looking back to Equations \eqref{Proof:Second_Besov_Control_I_2}, \eqref{Proof:Second_Besov_Control_J1},
\eqref{Proof:Second_Besov_Control_J_{2,1}} and \eqref{Proof:Second_Besov_Control_J'2}, we can finally choose the right $\delta_j$. Since
$s-t\le T-t<1$ by hypothesis (\textbf{ST}), it is enough to take $\delta_j$ such that the following quantities
\[\delta_j\frac{\alpha_j+\beta_j-1}{\alpha}+\frac{1+\alpha(j-2)+\beta}{\alpha}, \quad  \delta_j\frac{2\beta_j}{\alpha
}, \quad \delta_j\frac{ 2\beta_j+1}{\alpha}-\frac{1}{\alpha_j} \,\, \text{ and
}\,\, \delta_j\frac{\alpha_j+\beta_j}{\alpha}+\frac{1+\alpha(j-2)+\beta}{\alpha}-\frac{1}{\alpha_j}\]
are bigger than $\beta/\alpha$. This is true if for example we choose
\[\delta_j \, = \, \frac{[1+\alpha(j-2)][1+\alpha(j-1)]}{1+\alpha(j-2)-\beta}.\qedhere\]
\end{proof}

\subsection{Proof of Proposition \ref{prop:A_Priori_Estimates}}
We have now all the tools necessary to prove the A Priori estimates (Proposition \ref{prop:A_Priori_Estimates}). In Lemma \ref{lemma:supremum_norm} below, we will state the estimates for the supremum norms of the solution and its non-degenerate gradient while the controls of the H\"older moduli of the solution and its gradient with respect to the non-degenerate variable are given in Lemmas \ref{lemma:Holder_modulus_Non-Deg} and \ref{lemma:Holder_modulus_Deg}, respectively.\newline
The Schauder estimates (Theorem \ref{theorem:Schauder_Estimates}) for a solution $u$ in $L^\infty(0,T;C^{\alpha+\beta}_{b,d}(\R^{nd}))$ of
equation \eqref{Degenerate_Stable_PDE} will then follows immediately.

\begin{lemma}[Supremum Estimates]
\label{lemma:supremum_norm}
Let $u$ be as in Equation \eqref{align:Representation2}. Then, there exists a constant $C\ge1$ such that for any $t$ in $[0,T]$ and any $\bm{x}$ in $\R^{nd}$,
\[\vert u(t,\bm{x}) \vert + \vert D_{\bm{x}_1}u(t,\bm{x})\vert \, \le \, C\Bigl[\Vert g \Vert_{C^{\alpha+\beta}_{b,d}} + \Vert f
\Vert_{L^\infty(C^\beta_{b,d})} + \Vert \bm{F}\Vert_H\Vert u \Vert_{L^\infty(C^{\alpha+\beta}_{b,d})}\Bigr].\]
\end{lemma}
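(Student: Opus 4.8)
The plan is to exploit the Duhamel expansion \eqref{align:Representation2} and its differentiated counterpart \eqref{align:Representation2_deriv}, choosing for the base point $(t,\bm{x})$ the freezing couple $(\tau,\bm{\xi})=(t,\bm{x})$. By Lemma \ref{lemma:identification_theta_m} this makes the frozen shift $\tilde{\bm{m}}^{t,\bm{x}}_{t,s}(\bm{x})$ equal to $\bm{\theta}_{t,s}(\bm{x})$, i.e.\ precisely the point at which each difference $\bm{F}_j(s,\cdot)-\bm{F}_j(s,\bm{\theta}_{t,s}(\bm{x}))$ entering the remainder $R^{t,\bm{x}}$ vanishes; this is what turns the remainder into a small, H\"older\nobreakdash-gaining contribution. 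We work throughout in the regularized framework of Section $3.2$, so that $u$ is a genuine classical solution and all integrations by parts below are licit, the constants being uniform in the regularization parameter. For the diagonal part the estimate $\vert\tilde u^{t,\bm{x}}(t,\bm{x})\vert+\vert D_{\bm{x}_1}\tilde u^{t,\bm{x}}(t,\bm{x})\vert\le C[\Vert g\Vert_{C^{\alpha+\beta}_{b,d}}+\Vert f\Vert_{L^\infty(C^\beta_{b,d})}]$ is exactly Lemma \ref{lemma:supremum_norm_proxy}. It thus only remains to bound the two remainder integrals $\int_t^T\tilde P^{t,\bm{x}}_{t,s}R^{t,\bm{x}}(s,\cdot)(\bm{x})\,ds$ and $\int_t^T D_{\bm{x}_1}\tilde P^{\tau,\bm{\xi}}_{t,s}R^{\tau,\bm{\xi}}(s,\cdot)(\bm{x})\,ds$ evaluated at $(\tau,\bm{\xi})=(t,\bm{x})$, and in each we split the inner product defining $R$ into the non-degenerate index $j=1$ and the degenerate ones $j\ge 2$.

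For $j=1$, $u$ is differentiable in $\bm{y}_1$ with $\Vert D_{\bm{y}_1}u(s,\cdot)\Vert_{L^\infty}\le\Vert u\Vert_{L^\infty(C^{\alpha+\beta}_{b,d})}$, and $\vert\bm{F}_1(s,\bm{y})-\bm{F}_1(s,\bm{\theta}_{t,s}(\bm{x}))\vert\le\Vert\bm{F}\Vert_H\,d^{\beta}(\bm{y},\bm{\theta}_{t,s}(\bm{x}))$. Since $\beta<\alpha$ under (\textbf{P}), the smoothing effect of the frozen density (Lemma \ref{lemma:Smoothing_effect_frozen}, Equation \eqref{eq:Smoothing_effects_of_tilde_p}) gives $\int\tilde p^{t,\bm{x}}(t,s,\bm{x},\bm{y})\,d^{\beta}(\bm{y},\bm{\theta}_{t,s}(\bm{x}))\,d\bm{y}\le C(s-t)^{\beta/\alpha}$, and, with $\vartheta=e_1$, $\int\vert D_{\bm{x}_1}\tilde p^{t,\bm{x}}(t,s,\bm{x},\bm{y})\vert\,d^{\beta}(\bm{y},\bm{\theta}_{t,s}(\bm{x}))\,d\bm{y}\le C(s-t)^{\beta/\alpha-1/\alpha}$ (here one uses $\int D_{\bm{x}_1}\tilde p^{t,\bm{x}}\,d\bm{y}=0$, so that the pointwise bound $\vert R_1(s,\bm{y})\vert\le\Vert\bm{F}\Vert_H\Vert D_{\bm{y}_1}u\Vert_{L^\infty}d^{\beta}(\bm{y},\bm{\theta}_{t,s}(\bm{x}))$ already carries the needed decay). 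Integrating in $s$ and invoking $\alpha+\beta>1$ from (\textbf{P}) — so that $(\beta-1)/\alpha>-1$ — together with $T\le 1$ from (\textbf{ST}), both $j=1$ contributions are $\le C\Vert\bm{F}\Vert_H\Vert u\Vert_{L^\infty(C^{\alpha+\beta}_{b,d})}$.

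For $j\ge2$, since $u$ is not differentiable in $\bm{y}_j$, we transfer $D_{\bm{y}_j}$ onto the product $\tilde p^{\tau,\bm{\xi}}(t,s,\bm{x},\cdot)\,[\bm{F}_j(s,\cdot)-\bm{F}_j(s,\bm{\theta}_{\tau,s}(\bm{\xi}))]$ by integration by parts, producing the enhanced divergence form of Lemma \ref{lemma:Second_Besov_COntrols}; freezing all variables but $\bm{y}_j$, the duality \eqref{Besov:duality_in_Besov} and the identification \eqref{Besov:ident_Holder_Besov} bound the $j$-th term by $\Vert u\Vert_{L^\infty(C^{\alpha+\beta}_{b,d})}$ times $\int_t^T\int_{\R^{(n-1)d}}\Vert D_{\bm{y}_j}\cdot\{D^{\vartheta}_{\bm{x}}\tilde p^{\tau,\bm{\xi}}(t,s,\bm{x},\bm{y}_{\smallsetminus j},\cdot)\otimes[\bm{F}_j(s,\bm{y}_{\smallsetminus j},\cdot)-\bm{F}_j(s,\bm{\theta}_{\tau,s}(\bm{\xi}))]\}\Vert_{B^{-(\alpha_j+\beta_j)}_{1,1}}\,d\bm{y}_{\smallsetminus j}\,ds$ with $\vartheta=0$ (for $u$) and $\vartheta=e_1$ (for $D_{\bm{x}_1}u$). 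Taking $(\tau,\bm{\xi})=(t,\bm{x})$, Lemma \ref{lemma:Second_Besov_COntrols} controls the integrand by $C\Vert\bm{F}\Vert_H(s-t)^{\beta/\alpha}$, respectively $C\Vert\bm{F}\Vert_H(s-t)^{(\beta-1)/\alpha}$, which after integration in $s$ (again using $\alpha+\beta>1$ and $T\le1$) is $\le C\Vert\bm{F}\Vert_H\Vert u\Vert_{L^\infty(C^{\alpha+\beta}_{b,d})}$. Collecting the $j=1$ and $j\ge2$ pieces with the proxy estimate and taking the supremum over $(t,\bm{x})$ yields the claim. The main obstacle is this degenerate block: obtaining an integrable time singularity requires both the precise choice $(\tau,\bm{\xi})=(t,\bm{x})$ — so that $\bm{F}_j$ is evaluated at the center of the frozen density and the $d^{\beta}$-gain is genuinely available — and the full force of Lemma \ref{lemma:Second_Besov_COntrols}, whose proof (via the anisotropic thermic characterization of Besov norms and the partial smoothing effect) carries the real analytic weight, the constraint $\alpha+\beta>1$ being exactly what makes the $s$-integrals converge.
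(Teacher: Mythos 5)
Your proposal is correct and follows essentially the same route as the paper's proof: you split off the proxy contribution via Lemma \ref{lemma:supremum_norm_proxy}, treat the $j=1$ part of the remainder with the pointwise H\"older bound on $\bm{F}_1$ and the smoothing effect \eqref{eq:Smoothing_effects_of_tilde_p} after fixing $(\tau,\bm{\xi})=(t,\bm{x})$, and handle $j\ge 2$ by integration by parts, Besov duality \eqref{Besov:duality_in_Besov}--\eqref{Besov:ident_Holder_Besov} and the Second Besov Control (Lemma \ref{lemma:Second_Besov_COntrols}), concluding with $\alpha+\beta>1$ and $T\le 1$ exactly as in the paper. The only cosmetic difference is your parenthetical appeal to $\int D_{\bm{x}_1}\tilde p\,d\bm{y}=0$, which is not needed since the vanishing of $\bm{F}_1(s,\cdot)-\bm{F}_1(s,\bm{\theta}_{t,s}(\bm{x}))$ at the frozen point already supplies the decay.
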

\begin{proof}
As indicated above, we can control the supremum norm of $u$ and its gradient with respect to $\bm{x}_1$ analyzing separately the
contributions from the proxy $\tilde{u}^{\tau,\bm{\xi}}$, that have already been handled in Lemma
\ref{lemma:supremum_norm_proxy}, and those from the perturbative term $R^{\tau,\bm{\xi}}(s,\bm{x})$.
To control the contribution $\int_{t}^{T} D_{\bm{x}_1}\tilde{P}^{\tau,\bm{\xi}}_{t,s}R^{\tau,\bm{\xi}}(s,\bm{x})\, ds$, we start splitting
it up in the following way
\begin{multline}\label{Proof:Supremum_Schauder0}
\int_{t}^{T} D_{\bm{x}_1}\tilde{P}^{\tau,\bm{\xi}}_{t,s}R^{\tau,\bm{\xi}}(s,\bm{x})\, ds \, = \, \sum_{j=1}^{n}\int_{t}^{T} \int_{\R^{nd}}
D_{\bm{x}_1}\tilde{p}^{\tau,\bm{\xi}}(t,s,\bm{x},\bm{y})\bigl[\bm{F}_j(s,\bm{y}) -\bm{F}_j(s,\bm{\theta}_{\tau,s}
(\bm{\xi})) \bigr]\cdot D_{\bm{y}_j}u(s,\bm{y}) \, d\bm{y}ds \\
=:\sum_{j=1}^{n}I_j(t,\bm{x}).
\end{multline}
Since by hypothesis $u$ has a proper derivative with respect to the first variable $\bm{x}_1$, it is possible to bound $I_1$ through
\[\vert I_1(t,\bm{x})\vert \, \le \, C\Vert \bm{F}\Vert_H \Vert u\Vert_{L^{\infty}(C^{\alpha+\beta}_{b,d})}\int_{t}^{T}\int_{\R^{nd}}
\bigl{\vert}D_{\bm{x}_1}\tilde{p}^{\tau,\bm{\xi}}(t,s,\bm{x},\bm{y})\bigr{\vert} d^\beta(\bm{y},\bm{\theta}_{\tau,s}(\bm{\xi})) \, d\bm{y}
ds.\]
We take now $(\tau,\bm{\xi})=(t,\bm{x})$ so that $\bm{\theta}_{\tau,s}(\bm{\xi})=\tilde{\bm{m}}^{\tau,\bm{\xi}}_{t,s}$ (cf. Equation
\eqref{eq:identification_theta_m} in Lemma \ref{lemma:identification_theta_m}) and we then use the smoothing effect for the frozen density
$\tilde{p}^{\tau,\bm{\xi}}$ (Equation \eqref{eq:Smoothing_effects_of_tilde_p}) to show that
\begin{equation}\label{Proof:Supremum_Schauder1}
\vert I_1(t,\bm{x})\vert \, \le \,C\Vert \bm{F}\Vert_H \Vert u\Vert_{L^{\infty}( C^{\alpha+\beta}_{b,d})}
(T-t)^{\frac{\beta+\alpha-1}{\alpha}}.
\end{equation}
Hence, it holds that $\vert I_1(t,\bm{x})\vert \le  C\Vert \bm{F}\Vert_H \Vert u \Vert_{L^{\infty}(C^{\alpha+\beta}_{b,d})}$, since
$T\le1$ and $\alpha+\beta>1$ by assumptions (\textbf{ST}) and (\textbf{P}).\newline
The control for the terms $I_j$ with $j>1$ can be obtained easily from the second Besov control (Lemma \ref{lemma:Second_Besov_COntrols}).
For this reason, we start applying integration by parts formula to show that
\[\vert I_j(t,\bm{x}) \vert \, = \,
\Bigl{\vert}\int_{t}^{T}\int_{\R^{nd}}D_{\bm{y}_j}\cdot\Bigl{\{}D_{\bm{x}_1}\tilde{p}^{\tau,\bm{\xi}}(t,s,\bm{x},\bm{y})
\bigl[\bm{F}_j(s,\bm{y})-\bm{F}_j(s,\bm{\theta}_{t,s}(\bm{\xi}))\bigr]\Bigr{\}} u(s,\bm{y}) \, d\bm{y}ds\Bigr{\vert}\]
We can then use identification \eqref{Besov:ident_Holder_Besov} and duality in Besov spaces \eqref{Besov:duality_in_Besov} to write
that
\begin{multline*}
\vert I_j(t,\bm{x}) \vert \\
\le \, \Vert u \Vert_{L^\infty(C^{\alpha+\beta}_{b,d})}
\int_{\R^{(n-1)d}}\Bigl{\Vert}D_{\bm{y}_j}\cdot\Bigl{\{}D_{\bm{x}_1}\tilde{p}^{\tau,\bm{\xi}}(t,s,\bm{x},\bm{y}_{\smallsetminus j},\cdot)
\bigl[\bm{F}_j (s, \bm{y}_{\smallsetminus j},\cdot)-\bm{F}_j(s,\bm{\theta}_{\tau,s} (\bm{\xi}))\bigr]\Bigl{\}}
\Bigr{\Vert}_{B^{-(\alpha_j+\beta_j)}_{1,1}} \, d\bm{y}_{\smallsetminus j}.
\end{multline*}
Taking now $(\tau,\bm{\xi})=(t,\bm{x})$, the second Besov control (Lemma \ref{lemma:Second_Besov_COntrols}) can be applied to show that
\begin{equation}\label{Proof:Supremum_Schauder2}
\vert I_j(t,\bm{x})\vert \, \le \, C\Vert \bm{F}\Vert_H\Vert u \Vert_{L^\infty(C^{\alpha+\beta}_{b,d})}\int_{t}^{T}
(s-t)^{\frac{\beta-1}{\alpha}}\,ds  \,\le \, C\Vert \bm{F}\Vert_H\Vert u \Vert_{L^\infty(C^{\alpha+\beta}_{b,d})}(T-t)^{
\frac{\beta+\alpha-1}{\alpha}}.
\end{equation}
Since by assumption (\textbf{ST}) $T\le 1$, we can conclude that $\vert I_j(t,\bm{x})\vert \le C\Vert\bm{F}\Vert_H\Vert u
\Vert_{L^\infty(C^{\alpha+\beta}_{b,d})}$. \newline
The control on the pertubative term
\[\int_{t}^{T}  \tilde{P}^{\tau,\bm{\xi}}_{t,s}R^{\tau,\bm{\xi}}(s,\bm{x})\, ds\]
can be obtained in a similar way. Namely, the inequalities \eqref{Proof:Supremum_Schauder1} and \eqref{Proof:Supremum_Schauder2} hold again
with $(T-t)^{\frac{\beta+\alpha-1}{\alpha}}$ replaced by $(T-t)^{\frac{\beta+\alpha}{\alpha}}$.
\end{proof}

As already specified in the previous sub-section, there is a big difference between the non-degenerate case $i=1$ where
$\alpha+\beta$ is in $(1,2)$ and we have to deal with a proper derivative and the other degenerate situations ($i>1$) where instead
$(\alpha+\beta)/(1+\alpha(i-1))<1$ and the norm is calculated directly on the function. Again, we are going to analyze the two
cases separately. Lemma \ref{lemma:Holder_modulus_proxy_Non-Deg} will work on the non-degenerate setting ($i=1$) while lemma
\ref{lemma:Holder_modulus_proxy_Deg} will concerns the degenerate one ($i>1$).

Moreover, we will need to divide the proofs in two cases, depending on which regime we are considering. Since the global off-diagonal
regime, i.e.\ when $T-t\le c_0d^\alpha(\bm{x},\bm{x}')$, will work essentially as the already shown Schauder estimates
(Proposition \ref{prop:Schauder_Estimates_for_proxy}) for the proxy, the proof will be quite shorter.\newline
Instead, in the global diagonal case, such that $T-t \ge c_0d^\alpha(\bm{x},\bm{x}')$, when a time integration is involved (for example in
the control of the frozen Green kernel or the perturbative term), two different situations appear. There are again a local off-diagonal
regime if $s-t \le c_0d^\alpha(\bm{x},\bm{x}')$ and a local diagonal regime when $s-t \ge c_0d^\alpha(\bm{x},\bm{x}')$. In order to handle
these terms properly, the key tool is to be able to change the freezing points depending on which regime we are. It seems reasonable that,
when the spatial points are in a local diagonal regime, the auxiliary frozen
densities are considered for the same freezing parameter and conversely that in the local off-diagonal regime, the densities are frozen
along their own spatial argument. For this reason, we have postponed the relative proofs in two specific sub-sections.

Before presenting the main results of this section, we are going to state three auxiliary estimates associated with our proxy we will need
below. We refer to the Section A.$2$ for a precise proof of these results. \newline
The first one concerns the sensitivity of the H\"older flow $\bm{\theta}_{t,s}$ with respect to the initial point. Indeed,
\begin{lemma}[Controls on the Flows]
\label{lemma:Controls_on_Flow1}
Let $t<s$ be two points in $[0,T]$ and $\bm{x},\bm{x}'$ two points in $\R^{nd}$. Then, there exists a constant $C\ge 1$ such that
\[d(\bm{\theta}_{t,s}(\bm{x}), \bm{\theta}_{t,s}(\bm{x}')) \, \le \, C\Vert \bm{F}\Vert_H \bigl[d(\bm{x},\bm{x}')+
(s-t)^{1/\alpha}\bigr].\]
\end{lemma}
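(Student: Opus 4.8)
The plan is to derive a Gr\"onwall-type estimate for the difference of the two flows, working componentwise and exploiting the sub-diagonal structure of $A$ together with the anisotropic H\"older regularity of $\bm{F}$. Fix $t<s$ in $[0,T]$ and $\bm{x},\bm{x}'$ in $\R^{nd}$. Write $\bm{\Delta}_v := \bm{\theta}_{t,v}(\bm{x}) - \bm{\theta}_{t,v}(\bm{x}')$, so that from the defining equation \eqref{Flow},
\[
\bm{\Delta}_v \, = \, (\bm{x}-\bm{x}') + \int_t^v \Bigl[ A\bm{\Delta}_w + \bm{F}(w,\bm{\theta}_{t,w}(\bm{x})) - \bm{F}(w,\bm{\theta}_{t,w}(\bm{x}')) \Bigr] \, dw.
\]
The quantity we want to control is $d(\bm{\theta}_{t,s}(\bm{x}),\bm{\theta}_{t,s}(\bm{x}')) = \sum_{j=1}^n |(\bm{\Delta}_s)_j|^{1/(1+\alpha(j-1))}$, so it is natural to track each component $(\bm{\Delta}_v)_j$ separately and then reassemble.

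**Componentwise estimate and the role of the structure.** First I would look at level $i=1$: since $A$ has zero first row and $\bm{F}_1$ is $C^{\gamma_1+\beta}_d = C^{\beta}_d$ in its variables (with $\gamma_1=0$), the first component satisfies
\[
|(\bm{\Delta}_v)_1| \, \le \, |(\bm{x}-\bm{x}')_1| + C\Vert\bm{F}\Vert_H \int_t^v \Bigl( \sum_{k\ge 1} |(\bm{\Delta}_w)_k|^{\beta/(1+\alpha(k-1))} \Bigr) \, dw,
\]
using that $\bm{F}_1(w,\cdot)$ depends on all components $\bm{x}_1,\dots,\bm{x}_n$ with the prescribed anisotropic H\"older moduli, i.e. its increment is bounded by $C\Vert\bm{F}\Vert_H\, d^\beta(\cdot,\cdot)$. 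For $i>1$, the term $A_{i,i-1}(\bm{\Delta}_w)_{i-1}$ enters linearly, and $\bm{F}_i(w,\cdot)$ depends only on $\bm{x}_i,\dots,\bm{x}_n$ and is $C^{\gamma_i+\beta}_d$ with $\gamma_i = 1+\alpha(i-2)$, so its increment along the $j$-th variable ($j\ge i$) is controlled by $C\Vert\bm{F}\Vert_H |(\bm{\Delta}_w)_j|^{(1+\alpha(i-2)+\beta)/(1+\alpha(j-1))}$. The natural move is to introduce the rescaled increments $\delta_j(v) := |(\bm{\Delta}_v)_j|^{1/(1+\alpha(j-1))}$ and show by induction on $i$ (bottom-up, i.e. from $i=n$ down, or top-down) that each $\delta_i(s)$ is bounded by $C\Vert\bm{F}\Vert_H[d(\bm{x},\bm{x}')+(s-t)^{1/\alpha}]$. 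The $(s-t)^{1/\alpha}$ term appears because, when $d(\bm{x},\bm{x}')$ is tiny compared to the elapsed time, the time-integral of the H\"older modulus of $\bm{F}$ along the \emph{same} trajectory still produces a contribution scaling like the intrinsic time scale; concretely, the bound $\int_t^s (w-t)^{\theta/\alpha}\,dw$-type estimates feed powers of $(s-t)$ back in, and the exponents are arranged (this is exactly where assumption (\textbf{R}) on the $\gamma_i$ is used) so that everything closes at order $(s-t)^{1/\alpha}$ after taking the $1/(1+\alpha(i-1))$-th root.

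**Closing the argument.** Having set up the coupled system of integral inequalities for the $\delta_j$, I would sum them to get a single scalar inequality of the form $D(v) \le C d(\bm{x},\bm{x}') + C(s-t)^{1/\alpha} + C\Vert\bm{F}\Vert_H \int_t^v \Phi(D(w))\,dw$ for $D(v) := \sum_j \delta_j(v)$, where $\Phi$ collects the (sub-linear, since the exponents $(1+\alpha(i-2)+\beta)/(1+\alpha(j-1))$ are all $\le 1$) dependence; on a bounded time interval $[0,T]$ with $D$ a priori finite, a Gr\"onwall / Bihari-type lemma then yields $D(s) \le C\Vert\bm{F}\Vert_H[d(\bm{x},\bm{x}')+(s-t)^{1/\alpha}]$ with $C$ depending only on $T$ and the data in (\textbf{A}). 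A technical point worth care: one must first know $D(v)$ is finite and, say, continuous — this follows because $\bm{F}$ is H\"older continuous so the flow $\bm{\theta}_{t,\cdot}$ is well-defined and locally bounded, and the linear part $e^{A(\cdot)}$ is explicit. The main obstacle I anticipate is bookkeeping the anisotropic exponents: verifying that at every level $i$ the combination of the linear drift $A_{i,i-1}$, the H\"older exponent $(1+\alpha(i-2)+\beta)/(1+\alpha(i-1))$ of $\bm{F}_i$ along its own variable, and the time-integration all balance to give precisely the homogeneity $(s-t)^{1/\alpha}$ after rescaling — this is the same balancing encoded in the choice of $\delta_j = 1+\alpha(j-1)$ elsewhere in the paper, and it is where assumption (\textbf{R}) is genuinely needed. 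The linear $A$-contribution is handled by noting $[e^{A(s-t)}]_{i,j} = C_{i,j}(s-t)^{j-i}$ for $j\ge i$ (Equation \eqref{Proof:Scaling_Lemma1}), which is harmless on $[0,T]$.
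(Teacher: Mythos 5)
Your setup (componentwise integral inequalities from \eqref{Flow}, using the sub-diagonal structure of $A$ and the anisotropic H\"older bounds on $\bm{F}$, then a Gr\"onwall-type closure with Young-type balancing) is the same skeleton as the paper's proof, but the step where you ``close the argument'' hides the actual difficulty, and as stated it does not go through. The inequality for the $i$-th component contains the H\"older increment of $\bm{F}_i$ in its \emph{own} $i$-th variable, i.e.\ a term $\vert(\bm{\Delta}_w)_i\vert^{\frac{\gamma_i+\beta}{1+\alpha(i-1)}}$ with exponent strictly less than $1$: this self-referencing sublinear term is precisely why the plain Gr\"onwall lemma is not applicable componentwise. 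The paper's device is to mollify $\bm{F}_i$ only in its $i$-th variable at the time-scale-matched threshold $\delta_i=(s-t)^{\frac{\gamma_i}{\alpha}\frac{1+\alpha(i-1)}{\gamma_i+\beta}}$, which replaces the sublinear diagonal dependence by a Lipschitz one with an explosive but integrable constant, and then to run a cascade: Gr\"onwall at level $n$, substitution into level $n-1$ together with Jensen and a Young splitting (with a time-weighted balance parameter $B$), and so on down to level $1$, where a supremum argument and $\alpha+\beta>1$ close the loop, followed by back-substitution into the levels $i\ge 2$. Your proposal acknowledges the exponent bookkeeping but offers no mechanism playing the role of this linearization.

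Your proposed shortcut --- summing the rescaled increments $\delta_j(v)=\vert(\bm{\Delta}_v)_j\vert^{1/(1+\alpha(j-1))}$ into a single scalar inequality $D(v)\le a+C\int_t^v\Phi(D(w))\,dw$ with $\Phi$ sublinear and invoking Bihari --- is not justified. The exponents $\frac{\gamma_i+\beta}{1+\alpha(j-1)}\le 1$ are sublinear only in the \emph{unrescaled} components; once everything is expressed through $D$, the cross-scale terms become superlinear: the linear coupling $A_{j,j-1}(\bm{\Delta}_w)_{j-1}$ contributes $\delta_{j-1}(w)^{1+\alpha(j-2)}\le D(w)^{1+\alpha(j-2)}$ and the H\"older terms contribute $D(w)^{\gamma_j+\beta}$ with $\gamma_j+\beta>1$ for $j\ge2$. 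Moreover, passing from the integral inequality for $\vert(\bm{\Delta}_v)_j\vert$ to one for $\delta_j(v)$ forces you to take a fractional power of a time-integral, and $\bigl(\int_t^v\cdots\,dw\bigr)^{1/(1+\alpha(j-1))}$ is not of the integral form required by a Gr\"onwall/Bihari lemma. So the scalar reduction you rely on does not exist in the claimed form; the exponent cascade has to be carried out level by level (as in the paper), and it is exactly there that assumption (\textbf{R}) and the mollification (or an equivalent Young-inequality linearization at each level) are genuinely used.
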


The second result is the following:

\begin{lemma}%controls on means1
\label{lemma:Controls_on_means1}
Let $t<s$ be two points in $[0,T]$ and $\bm{x},\bm{x}'$ two points in $\R^{nd}$ and $\bm{y},\bm{y}'$ two points in $\R^{nd}$ such that
$\bm{y}_1=\bm{y}'_1$. Then, there
exists a constant $C\ge1$ such that
\[\bigl{\vert}(\tilde{\bm{m}}^{t,\bm{x}}_{t,s}(\bm{y})-\tilde{\bm{m}}^{t,\bm{x}'}_{t,s}(\bm{y}'))_1 \bigr{\vert} \, \le
\, C\Vert \bm{F}\Vert_H\Bigl[(s-t)d^\beta(\bm{x},\bm{x}')+(s-t)^{\frac{\alpha+\beta}{\alpha}}\Bigr].\]
\end{lemma}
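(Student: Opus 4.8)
The plan is to reduce the first $d$-dimensional block of $\tilde{\bm{m}}^{t,\bm{x}}_{t,s}(\bm{y})-\tilde{\bm{m}}^{t,\bm{x}'}_{t,s}(\bm{y}')$ to a single time integral of a difference of the \emph{non-degenerate} drift component $\bm{F}_1$, and then estimate that integral by combining the H\"older regularity of $\bm{F}_1$ with the flow sensitivity estimate of Lemma \ref{lemma:Controls_on_Flow1}.

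First I would expand the two shifts through \eqref{eq:def_tilde_m}, obtaining $\tilde{\bm{m}}^{t,\bm{x}}_{t,s}(\bm{y})-\tilde{\bm{m}}^{t,\bm{x}'}_{t,s}(\bm{y}') = e^{A(s-t)}(\bm{y}-\bm{y}')+\int_{t}^{s}e^{A(s-v)}\bigl[\bm{F}(v,\bm{\theta}_{t,v}(\bm{x}))-\bm{F}(v,\bm{\theta}_{t,v}(\bm{x}'))\bigr]\,dv$, and then project onto the first block. The structural point is that, since the first row of $A$ is zero (cf. \eqref{eq:def_matrix_A}), the first row of $e^{At}$ equals $(I_{d\times d},0_{d\times d},\dots,0_{d\times d})$, so $\bigl[e^{At}\bm{w}\bigr]_1=\bm{w}_1$ for every $\bm{w}$ in $\R^{nd}$; this is also visible from \eqref{Proof:Scaling_Lemma2}. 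Consequently the first block of $e^{A(s-t)}(\bm{y}-\bm{y}')$ is $(\bm{y}-\bm{y}')_1=0$ by the hypothesis $\bm{y}_1=\bm{y}'_1$, and the whole difference reduces on the first block to $\int_{t}^{s}\bigl[\bm{F}_1(v,\bm{\theta}_{t,v}(\bm{x}))-\bm{F}_1(v,\bm{\theta}_{t,v}(\bm{x}'))\bigr]\,dv$.

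Next I would bound the integrand. Since $\gamma_1=0$ in \eqref{Drift_assumptions}, assumption (\textbf{R}) gives $\bm{F}_1\in L^\infty(0,T;C^{\beta}_{d}(\R^{nd}))$ with seminorm at most $\Vert\bm{F}\Vert_H$; a telescoping argument over the $n$ blocks (changing one coordinate at a time), together with the elementary comparison $\sum_{j}\vert(\bm{z}-\bm{z}')_j\vert^{\beta/(1+\alpha(j-1))}\le C\,d^{\beta}(\bm{z},\bm{z}')$ that follows from the concavity of $x\mapsto x^{\beta}$ and the definition \eqref{Definition_distance} of $d$, yields $\vert\bm{F}_1(v,\bm{z})-\bm{F}_1(v,\bm{z}')\vert\le C\Vert\bm{F}\Vert_H\,d^{\beta}(\bm{z},\bm{z}')$. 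Applying this with $\bm{z}=\bm{\theta}_{t,v}(\bm{x})$ and $\bm{z}'=\bm{\theta}_{t,v}(\bm{x}')$ reduces matters to estimating $\Vert\bm{F}\Vert_H\int_{t}^{s}d^{\beta}\bigl(\bm{\theta}_{t,v}(\bm{x}),\bm{\theta}_{t,v}(\bm{x}')\bigr)\,dv$.

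Finally I would invoke Lemma \ref{lemma:Controls_on_Flow1} to control the flow increment by $d\bigl(\bm{\theta}_{t,v}(\bm{x}),\bm{\theta}_{t,v}(\bm{x}')\bigr)\le C\bigl[d(\bm{x},\bm{x}')+(v-t)^{1/\alpha}\bigr]$, raise it to the power $\beta\in(0,1)$ using the subadditivity $(a+b)^{\beta}\le a^{\beta}+b^{\beta}$, and integrate in $v$ over $[t,s]$; the second term contributes $(s-t)^{1+\beta/\alpha}=(s-t)^{(\alpha+\beta)/\alpha}$, which gives exactly the asserted bound. I do not expect a real obstacle: once the first-block reduction is performed the computation is short, and the only input with genuine content, namely the sensitivity of the H\"older flow, is Lemma \ref{lemma:Controls_on_Flow1}, which is established elsewhere. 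The only step deserving a line of care is the passage from the anisotropic H\"older norm of $\bm{F}_1$ to a bound written in terms of the pseudo-distance $d$.
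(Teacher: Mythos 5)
Your argument is exactly the paper's proof: reduce the first block, via \eqref{eq:def_tilde_m} and the lower-triangular structure of $e^{At}$, to $\int_t^s\bigl[\bm{F}_1(v,\bm{\theta}_{t,v}(\bm{x}))-\bm{F}_1(v,\bm{\theta}_{t,v}(\bm{x}'))\bigr]dv$, bound the integrand by $C\Vert\bm{F}\Vert_H d^\beta(\bm{\theta}_{t,v}(\bm{x}),\bm{\theta}_{t,v}(\bm{x}'))$ using the anisotropic H\"older regularity of $\bm{F}_1$, and conclude with Lemma \ref{lemma:Controls_on_Flow1} and integration in time. You merely make explicit the first-row structure of $e^{At}$ and the passage from the $C^\beta_{b,d}$ norm to the pseudo-distance $d$, which the paper leaves implicit; the proof is correct.
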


Finally, the impact of the freezing point in the linearization procedure is the argument of this last Lemma. Namely,

\begin{lemma}%controls on means
\label{lemma:Controls_on_means}
Let $t$ be in $[0,T]$ and $\bm{x},\bm{x}'$ two points in $\R^{nd}$. Then, there exists a constant $C\ge 1$
such that
\[d(\tilde{\bm{m}}^{t,\bm{x}}_{t,t_0}(\bm{x}'),\tilde{\bm{m}}^{t,\bm{x}'}_{t,t_0} (\bm{x}')) \, \le \, Cc_0^{\frac{1}{1+\alpha(n-1)}}\Vert
\bm{F}\Vert_Hd(\bm{x},\bm{x}')\]
where $t_0$ is the change of regime time defined in \eqref{eq:def_t0}.
\end{lemma}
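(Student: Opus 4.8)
The plan is to reduce the statement to a sensitivity estimate for the coefficient $\bm{F}$, evaluated along the flow, with respect to the flow's initial point. By Lemma \ref{lemma:identification_theta_m} we have $\tilde{\bm{m}}^{t,\bm{x}'}_{t,t_0}(\bm{x}') = \bm{\theta}_{t,t_0}(\bm{x}')$, and since the two frozen shifts in \eqref{eq:def_tilde_m} share the same affine part $e^{A(t_0-t)}\bm{x}'$, their difference collapses to
\[
\tilde{\bm{m}}^{t,\bm{x}}_{t,t_0}(\bm{x}') - \tilde{\bm{m}}^{t,\bm{x}'}_{t,t_0}(\bm{x}') \, = \, \int_{t}^{t_0} e^{A(t_0-v)}\bigl[\bm{F}(v,\bm{\theta}_{t,v}(\bm{x})) - \bm{F}(v,\bm{\theta}_{t,v}(\bm{x}'))\bigr]\,dv .
\]
I would then evaluate the pseudo-distance $d$ of the two shifts block by block. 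Using the sub-diagonal structure \eqref{Proof:Scaling_Lemma1} of the resolvent, the $i$-th $d$-dimensional block of the right-hand side is a finite combination of terms $C_{i,k}\int_{t}^{t_0}(t_0-v)^{k-i}\bigl[\bm{F}_k(v,\bm{\theta}_{t,v}(\bm{x})) - \bm{F}_k(v,\bm{\theta}_{t,v}(\bm{x}'))\bigr]\,dv$ with $i\le k\le n$.

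Each increment $\bm{F}_k(v,\bm{\theta}_{t,v}(\bm{x})) - \bm{F}_k(v,\bm{\theta}_{t,v}(\bm{x}'))$ is controlled via assumption (\textbf{R}): $\bm{F}_k$ depends only on the blocks of index $\ge k$, belongs to $C^{\gamma_k+\beta}_d$ in space, and, because $\beta<\alpha$ by (\textbf{P}), all the induced per-block H\"older exponents $(\gamma_k+\beta)/(1+\alpha(j-1))$, $j\ge k$, are strictly below one; hence a variable-by-variable telescoping gives $\vert\bm{F}_k(v,\bm{z}) - \bm{F}_k(v,\bm{z}')\vert \le C\Vert\bm{F}\Vert_H\, d_{k:n}(\bm{z},\bm{z}')^{\gamma_k+\beta}$. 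I would plug in $\bm{z}=\bm{\theta}_{t,v}(\bm{x})$, $\bm{z}'=\bm{\theta}_{t,v}(\bm{x}')$ and use that, by a Gr\"onwall argument (or Lemma \ref{lemma:Controls_on_Flow1}), $\bm{\theta}_{t,v}(\bm{x})-\bm{\theta}_{t,v}(\bm{x}')$ equals $e^{A(v-t)}(\bm{x}-\bm{x}')$ up to an $O(\Vert\bm{F}\Vert_H)$ correction, so that its $j$-th block has size $\lesssim\sum_{l\le j}(v-t)^{j-l}\vert(\bm{x}-\bm{x}')_l\vert$. Since by \eqref{eq:def_t0} the relevant regime is $t_0-t = c_0 d^\alpha(\bm{x},\bm{x}')$ (with $T\le 1$ by (\textbf{ST})), one has $v-t\le c_0 d^\alpha(\bm{x},\bm{x}')\le d^\alpha(\bm{x},\bm{x}')$, which makes the $l=j$ term dominant and yields $\vert(\bm{\theta}_{t,v}(\bm{x})-\bm{\theta}_{t,v}(\bm{x}'))_j\vert\lesssim d^{1+\alpha(j-1)}(\bm{x},\bm{x}')$. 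Carrying out the $v$-integral then produces, for each term, a product of powers of $\Vert\bm{F}\Vert_H$, of $c_0$ (from $t_0-t = c_0 d^\alpha(\bm{x},\bm{x}')$) and of $d(\bm{x},\bm{x}')$.

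It remains to raise the $i$-th block to the power $1/(1+\alpha(i-1))$, sum over $i$ (and the finitely many indices $k$), and match the claimed bound $Cc_0^{1/(1+\alpha(n-1))}\Vert\bm{F}\Vert_H d(\bm{x},\bm{x}')$. Here one exploits the diagonal identity $d^\alpha(\bm{x},\bm{x}') = (t_0-t)/c_0\le 1/c_0$ (again $T\le 1$), which bounds $d(\bm{x},\bm{x}')$ a priori and allows trading the surplus powers of $d(\bm{x},\bm{x}')$ beyond the first against negative powers of $c_0$; the remaining bookkeeping amounts to checking that the net exponent of $c_0$ in every term stays non-negative, an inequality that boils down to $1+\alpha(i-1)\ge 0$. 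This is the technical heart of the argument and the one genuine use of assumption (\textbf{P}) (through $\beta<\alpha$ and the super-critical bound on $1-\alpha$): it is the precise mechanism by which the "price-to-pay" growth of the regularity index $\gamma_k$ imposed in (\textbf{R}) compensates both the polynomial weights $(t_0-v)^{k-i}$ coming from $e^{A(t_0-v)}$ and the worsening time singularities along the chain, while the leftover factor is exactly $d(\bm{x},\bm{x}')$. As a shortcut, the non-degenerate block $i=1$ can instead be read directly off Lemma \ref{lemma:Controls_on_means1}, taking $\bm{y}=\bm{y}'=\bm{x}'$ there.
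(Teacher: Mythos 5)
Your opening step is correct and in fact cleaner than the paper's: since $\tilde{\bm{m}}^{t,\bm{x}}_{t,t_0}(\bm{x}')$ and $\tilde{\bm{m}}^{t,\bm{x}'}_{t,t_0}(\bm{x}')$ share the affine part $e^{A(t_0-t)}\bm{x}'$, their difference is exactly $\int_t^{t_0}e^{A(t_0-v)}\bigl[\bm{F}(v,\bm{\theta}_{t,v}(\bm{x}))-\bm{F}(v,\bm{\theta}_{t,v}(\bm{x}'))\bigr]dv$, which encodes the same cancellation the paper obtains by adding and subtracting $\bm{\theta}_{t,t_0}(\bm{x})$ and iterating the sub-diagonal structure of $A$ (there the paper also uses Lemma \ref{lemma:identification_theta_m}). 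Your use of the anisotropic H\"older bound for $\bm{F}_k$ and of Lemma \ref{lemma:Controls_on_Flow1} together with $t_0-t\le c_0d^\alpha(\bm{x},\bm{x}')$ to get $\vert(\bm{\theta}_{t,v}(\bm{x})-\bm{\theta}_{t,v}(\bm{x}'))_j\vert\lesssim d^{1+\alpha(j-1)}(\bm{x},\bm{x}')$ is also sound (your intermediate description of the flow difference as $e^{A(v-t)}(\bm{x}-\bm{x}')$ plus an $O(\Vert\bm{F}\Vert_H)$ correction omits the $(v-t)^{\frac{1+\alpha(j-1)}{\alpha}}$ term produced by the H\"older drift, but this is harmless since that term is also $\lesssim d^{1+\alpha(j-1)}$ in the regime $v-t\le c_0d^\alpha$).

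The genuine gap is in the step you call the technical heart. Expanding $e^{A(t_0-v)}$ makes the $i$-th block collect, for every $k\ge i$, a term of size $\Vert\bm{F}\Vert_H(t_0-t)^{k-i+1}d^{\gamma_k+\beta}=\Vert\bm{F}\Vert_H\,c_0^{k-i+1}d^{1+\alpha(i-1)}\,d^{\,2\alpha(k-i)+\beta}$. Your proposed mechanism, trading the surplus power of $d$ against negative powers of $c_0$ via $d^\alpha\le (t_0-t)/c_0\le 1/c_0$, yields for that term a net $c_0$-exponent equal to $1-(k-i)-\beta/\alpha$, which is strictly negative for every $k>i$; the criterion you state ("$1+\alpha(i-1)\ge0$") is not the relevant inequality, and as described the estimate does not close. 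Indeed, without a bound on $d(\bm{x},\bm{x}')$ the stated inequality is false when $d(\bm{x},\bm{x}')$ is of order $c_0^{-1/\alpha}$. The paper closes the argument by (implicitly) working under the normalization $d(\bm{x},\bm{x}')\le 1$ -- see the explicit use of "$d(\bm{x},\bm{x}')\le1$ by hypothesis" in the proofs of Lemmas \ref{lemma:Controls_on_Flow1} and \ref{lemma:Controls_on_means} -- which is legitimate in the application since for $d(\bm{x},\bm{x}')\ge1$ the H\"older controls follow from the supremum estimates. Under that normalization your surplus powers of $d$ are simply dropped, $c_0^{k-i+1}\le c_0$, the block $i=k=1$ uses $\alpha+\beta\ge1$, and one gets block $i\lesssim \Vert\bm{F}\Vert_H c_0\,d^{1+\alpha(i-1)}$, whence the claim after taking the $1/(1+\alpha(i-1))$-th roots and using $c_0\le1$ to reach the exponent $1/(1+\alpha(n-1))$. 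With that correction your argument is a valid, slightly streamlined variant of the paper's proof.
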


Thanks to the above controls, we will eventually prove the following results:

\begin{lemma}[Controls on H\"older Moduli: Non-Degenerate]
\label{lemma:Holder_modulus_Non-Deg}
Let $\bm{x},\bm{x}'$ be in $\R^{nd}$ such that $\bm{x}_j=\bm{x}'_j$ for any $j\neq 1$ and $u$ as in
 Equation \eqref{align:Representation2}. Then, there exists a constant $C\ge 1$ such that for any $t$ in $[0,T]$,
\begin{multline*}
\bigl{\vert} D_{\bm{x}_1}u(t,\bm{x})- D_{\bm{x}_1}u(t,\bm{x}')\vert\\
\le \, C\Bigl{\{}c_0^{\frac{\alpha+\beta-2}{\alpha}}\bigl(\Vert g\Vert_{C^{\alpha+\beta}} + \Vert f \Vert_{L^\infty(C^\beta)}\bigr) +
\bigl(c_0^{\frac{\alpha+\beta-1}{1+\alpha(n-1)}} +c_0^{
\frac{\alpha+\beta-2 }{\alpha}}\Vert \bm{F}\Vert_{H}\bigr)\Vert u \Vert_{L^\infty(C^{\alpha+\beta}_{b,d})}\Bigr{\}} d^{\alpha+\beta-1}
(\bm{x},\bm{x}').
\end{multline*}
\end{lemma}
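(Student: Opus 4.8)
The plan is to control separately the two contributions in the representation \eqref{align:Representation2_deriv}, namely the proxy term $D_{\bm x_1}\tilde u^{\tau,\bm\xi}(t,\cdot)$ and the remainder $\int_t^T D_{\bm x_1}\tilde P^{\tau,\bm\xi}_{t,s}R^{\tau,\bm\xi}(s,\cdot)\,ds$. Throughout one fixes the freezing couple $(\tau,\bm\xi)=(t,\bm x)$, so that the proxy increment $D_{\bm x_1}\tilde u^{t,\bm x}(t,\bm x)-D_{\bm x_1}\tilde u^{t,\bm x}(t,\bm x')$ is handled verbatim by the already established Lemma \ref{lemma:Holder_modulus_proxy_Non-Deg} (admissible since $\bm x,\bm x'$ differ only in the first block), which yields the $Cc_0^{\frac{\alpha+\beta-2}{\alpha}}(\Vert g\Vert_{C^{\alpha+\beta}}+\Vert f\Vert_{L^\infty(C^\beta)})d^{\alpha+\beta-1}(\bm x,\bm x')$ part of the bound. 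Everything then reduces to the remainder, for which I would distinguish the global off-diagonal regime $T-t\le c_0 d^\alpha(\bm x,\bm x')$ from the global diagonal one $T-t\ge c_0 d^\alpha(\bm x,\bm x')$, splitting in the latter the time integral $\int_t^T=\int_t^{t_0}+\int_{t_0}^T$ at the transition time $t_0$ of \eqref{eq:def_t0}.

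On the local diagonal part $[t_0,T]$ I would Taylor expand $D_{\bm x_1}\tilde P^{t,\bm x}_{t,s}R^{t,\bm x}(s,\cdot)$ in the first block between $\bm x'$ and $\bm x$, reducing matters to a pointwise bound for $D^2_{\bm x_1}\tilde P^{t,\bm x}_{t,s}R^{t,\bm x}(s,\cdot)$. For the index $j=1$ in $R^{t,\bm x}=\sum_j[\bm F_j(s,\cdot)-\bm F_j(s,\bm\theta_{t,s}(\bm x))]\cdot D_{\bm y_j}u$ the derivative $D_{\bm y_1}u$ is bounded, and Lemma \ref{lemma:identification_theta_m} together with the smoothing effect \eqref{eq:Smoothing_effects_of_tilde_p} (with two $\bm x_1$-derivatives) gives a factor $(s-t)^{\frac{\beta-2}{\alpha}}$; for $j>1$, $D_{\bm y_j}$ is moved off $u$ by integration by parts, the Besov duality \eqref{Besov:duality_in_Besov}--\eqref{Besov:ident_Holder_Besov} is applied, and the Second Besov Control (Lemma \ref{lemma:Second_Besov_COntrols}) with the multi-index $\vartheta$ carrying two $\bm x_1$-derivatives, taken at $(\tau,\bm\xi)=(t,\bm x)$, again produces $(s-t)^{\frac{\beta-2}{\alpha}}$. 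Since $\alpha+\beta<2$ by (\textbf{P}), one has $\int_{t_0}^T(s-t)^{\frac{\beta-2}{\alpha}}\,ds\le C(t_0-t)^{\frac{\alpha+\beta-2}{\alpha}}$; using $t_0-t=c_0d^\alpha(\bm x,\bm x')$ and $|(\bm x-\bm x')_1|=d(\bm x,\bm x')$ this contributes $Cc_0^{\frac{\alpha+\beta-2}{\alpha}}\Vert\bm F\Vert_H\Vert u\Vert_{L^\infty(C^{\alpha+\beta}_{b,d})}d^{\alpha+\beta-1}(\bm x,\bm x')$.

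On the local off-diagonal part $[t,t_0]$ (which is all of $[t,T]$ in the global off-diagonal regime, where $t_0=T$) I would estimate the increment by the triangle inequality into one term evaluated at $\bm x$ and frozen at $(t,\bm x)$, and one evaluated at $\bm x'$, which after a re-freezing step is written as $D_{\bm x_1}\tilde P^{t,\bm x'}_{t,s}R^{t,\bm x'}(s,\bm x')$ plus a re-freezing error. Each of the two "diagonal" pieces is then bounded exactly as in the Supremum Estimates (Lemma \ref{lemma:supremum_norm}) -- $j=1$ by integration by parts and the smoothing effect, $j>1$ by Besov duality and Lemma \ref{lemma:Second_Besov_COntrols} -- by $C\Vert\bm F\Vert_H\Vert u\Vert_{L^\infty(C^{\alpha+\beta}_{b,d})}(s-t)^{\frac{\beta-1}{\alpha}}$, so that $\int_t^{t_0}\le C\Vert\bm F\Vert_H\Vert u\Vert_{L^\infty(C^{\alpha+\beta}_{b,d})}c_0^{\frac{\alpha+\beta-1}{\alpha}}d^{\alpha+\beta-1}(\bm x,\bm x')\le Cc_0^{\frac{\alpha+\beta-2}{\alpha}}\Vert\bm F\Vert_H\Vert u\Vert_{L^\infty(C^{\alpha+\beta}_{b,d})}d^{\alpha+\beta-1}(\bm x,\bm x')$. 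The re-freezing error, which compares the two auxiliary densities through the shifts $\tilde{\bm m}^{t,\bm x}_{t,s}$, $\tilde{\bm m}^{t,\bm x'}_{t,s}$ and the flows $\bm\theta_{t,s}(\bm x)$, $\bm\theta_{t,s}(\bm x')$, is controlled by the flow-sensitivity Lemmas \ref{lemma:Controls_on_Flow1}, \ref{lemma:Controls_on_means1} and \ref{lemma:Controls_on_means}; the gain $c_0^{1/(1+\alpha(n-1))}$ in Lemma \ref{lemma:Controls_on_means} (driven by the lowest, $n$-th, level of the chain) is precisely what produces the remaining $Cc_0^{\frac{\alpha+\beta-1}{1+\alpha(n-1)}}\Vert u\Vert_{L^\infty(C^{\alpha+\beta}_{b,d})}d^{\alpha+\beta-1}(\bm x,\bm x')$ term after integration in $s$. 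Collecting the three regimes gives the claim.

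The main obstacle I expect is precisely this re-freezing bookkeeping in the local off-diagonal regime: a naive choice of freezing point makes the $\bm F$-increment pick up a factor $d^\beta(\bm x,\bm x')$ together with the weight $(s-t)^{-1/\alpha}$, which is non-integrable exactly in the super-critical range $\alpha<1$, so one is forced to re-center each density and each occurrence of $R$ at the spatial point at which the Second Besov Control is available, and to keep simultaneous track of the discrepancies between the two inhomogeneous shifts and between the two flows. Making the power of $c_0$ in the re-freezing error come out as $\frac{\alpha+\beta-1}{1+\alpha(n-1)}$, and separating cleanly the part of the error that multiplies $\Vert u\Vert_{L^\infty(C^{\alpha+\beta}_{b,d})}$, is the delicate accounting point.
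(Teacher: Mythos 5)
Your overall architecture (off-diagonal/diagonal split at $t_0$, Taylor expansion plus the Second Besov Control in the local diagonal regime, supremum-type estimates in the off-diagonal regime) is the paper's, and your treatment of the perturbative term on $[t_0,T]$ is essentially correct (you would additionally need \eqref{eq:translation_inv_for_density} to replace the intermediate point $\bm{x}+\lambda(\bm{x}'-\bm{x})$ by $\bm{x}$ before invoking the smoothing effect and Lemma \ref{lemma:Second_Besov_COntrols}). The genuine gap is your handling of the remainder in the (local) off-diagonal regime through a single freezing couple $(t,\bm{x})$ followed by a ``re-freezing step''. The paper never does this: since the representation \eqref{align:Representation2_deriv} is valid for \emph{every} freezing couple, $u(t,\bm{x}')$ is represented with its own freezing $(\tau',\bm{\xi}')=(t,\bm{x}')$ from the outset, and on $[t,t_0]$ the two remainder contributions are simply bounded separately as in Lemma \ref{lemma:supremum_norm}; no re-freezing error ever appears. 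In your scheme the quantity you must control is the integrated difference of two frozen evolutions with different centerings applied to two different remainders, $\int_t^{t_0}\bigl[D_{\bm{x}_1}\tilde{P}^{t,\bm{x}}_{t,s}R^{t,\bm{x}}(s,\bm{x}')-D_{\bm{x}_1}\tilde{P}^{t,\bm{x}'}_{t,s}R^{t,\bm{x}'}(s,\bm{x}')\bigr]ds$, and none of Lemmas \ref{lemma:Controls_on_Flow1}, \ref{lemma:Controls_on_means1}, \ref{lemma:Controls_on_means} controls it: in the off-diagonal regime $s-t\le c_0d^{\alpha}(\bm{x},\bm{x}')$ the normalization $\mathbb{M}_{s-t}^{-1}$ amplifies the discrepancy between $\tilde{\bm{m}}^{t,\bm{x}}_{t,s}(\bm{x}')$ and $\tilde{\bm{m}}^{t,\bm{x}'}_{t,s}(\bm{x}')$ (and between the two flows) beyond the time scale, a mean-value argument in the shift degenerates as $s\downarrow t$, and bounding the two pieces separately puts you back in the situation where the freezing point differs from the spatial argument, so the Second Besov Control (whose proof hinges on the partial smoothing effect with $\bm{\theta}_{\tau,s}(\bm{\xi})=\tilde{\bm{m}}^{\tau,\bm{\xi}}_{t,s}(\bm{x})$, i.e. $(\tau,\bm{\xi})=(t,\bm{x})$) is unavailable. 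This is exactly the non-integrable $(s-t)^{-1/\alpha}d^{\beta}(\bm{x},\bm{x}')$ obstruction you flag yourself; re-freezing inside the time integral does not remove it, it only relocates it.

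The missing idea is the paper's change-of-freezing-point formula \eqref{eq:Change_of_Freez_point_HOlder}: in the diagonal regime, $u(t,\bm{x})$ is represented with freezing $\bm{x}$ on $[t,t_0]$ (where separate bounds are used) and freezing $\bm{x}'$ on $[t_0,T]$ (where the cancellation via Taylor expansion requires the \emph{same} freezing as for $u(t,\bm{x}')$), and the entire cost of switching is the single boundary term $D_{\bm{x}_1}\tilde{P}^{t,\bm{x}}_{t,t_0}u(t_0,\bm{x})-D_{\bm{x}_1}\tilde{P}^{t,\bm{x}'}_{t,t_0}u(t_0,\bm{x})$, which involves $u(t_0,\cdot)$ (having the spatial regularity of $g$) rather than $R$, and is estimated like the frozen-semigroup term with Lemma \ref{lemma:Controls_on_means} controlling $d\bigl(\tilde{\bm{m}}^{t,\bm{x}}_{t,t_0}(\bm{x}),\tilde{\bm{m}}^{t,\bm{x}'}_{t,t_0}(\bm{x})\bigr)$. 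This discontinuity term, not a re-freezing error in the remainder, is what produces the $c_0^{\frac{\alpha+\beta-1}{1+\alpha(n-1)}}\Vert u\Vert_{L^\infty(C^{\alpha+\beta}_{b,d})}$ contribution in the statement; your attribution of that term is therefore also misplaced. (A further minor point: once two freezing couples are used, the frozen semigroup and Green kernel increments can no longer rely on the affinity of $\bm{x}\mapsto\tilde{\bm{m}}^{\tau,\bm{\xi}}_{t,T}(\bm{x})$ alone and need the sensitivity controls \eqref{eq:Sensitivity_on_flow}--\eqref{eq:Sensitivity_on_flow1}, which is where $\Vert\bm{F}\Vert_H$ enters those pieces.) As proposed, with the re-freezing error unestimated and no substitute for \eqref{eq:Change_of_Freez_point_HOlder}, the argument does not close.
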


We can point out now the analogous result in the degenerate setting.

\begin{lemma}[Controls on H\"older Moduli: Degenerate]
\label{lemma:Holder_modulus_Deg}
Let $i$ be in $\llbracket 1,n\rrbracket$ and $\bm{x},\bm{x}'$ in $\R^{nd}$ such that $x_j=x'_j$ for any $j\neq i$ and  $u$ as in
Equation \eqref{align:Representation2}. Then, there exists a constant $C\ge1$ such that
for any $t$ in $[0,T]$,
\[\bigl{\vert} u(t,\bm{x})- u(t,\bm{x}') \vert\, \le \, C\Bigl{\{}c_0^{\frac{\beta-\gamma_i}{\alpha}}\bigl(\Vert g
\Vert_{C^{\alpha+\beta}} + \Vert f \Vert_{L^\infty(C^\beta)}\bigr) + \bigl(c_0^{\frac{\alpha+\beta}{1+\alpha(n-1)}}
+c_0^{\frac{\beta-\gamma_i }{\alpha}}\Vert \bm{F}\Vert_{H}\bigr)\Vert u \Vert_{L^\infty(C^{\alpha+\beta}_{b,d})}\Bigr{\}}d^{\alpha+\beta}
(\bm{x},\bm{x}').\]
\end{lemma}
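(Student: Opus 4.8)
The plan is to feed the Duhamel expansion \eqref{align:Representation2} into the problem and reduce the estimate to two ingredients: the H\"older control for the proxy already obtained in Lemma \ref{lemma:Holder_modulus_proxy_Deg}, and a control of the remainder integral $\int_t^T\tilde P^{\tau,\bm\xi}_{t,s}R^{\tau,\bm\xi}(s,\cdot)\,ds$, with $R^{\tau,\bm\xi}(s,\bm y)=\langle\bm F(s,\bm y)-\bm F(s,\bm\theta_{\tau,s}(\bm\xi)),D_{\bm y}u(s,\bm y)\rangle$. Since along $\bm x_1$ the $C^{\alpha+\beta}_d$-regularity is carried by $D_{\bm x_1}u$, the $i=1$ component of the norm is controlled by Lemmas \ref{lemma:supremum_norm} and \ref{lemma:Holder_modulus_Non-Deg}, so it suffices to treat $i\in\llbracket 2,n\rrbracket$; fix $t$ and $\bm x,\bm x'$ differing only in the $i$-th block, and set $t_0$ as in \eqref{eq:def_t0}. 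The proxy increment is handled by Lemma \ref{lemma:Holder_modulus_proxy_Deg} (using the same freezing couple for both points, after isolating the freezing-point sensitivity, controlled via Lemmas \ref{lemma:Controls_on_means1}--\ref{lemma:Controls_on_means}); it contributes the announced $c_0^{(\beta-\gamma_i)/\alpha}\bigl(\Vert g\Vert_{C^{\alpha+\beta}}+\Vert f\Vert_{L^\infty(C^\beta)}\bigr)d^{\alpha+\beta}(\bm x,\bm x')$. Everything else is the remainder.

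In the global off-diagonal regime $T-t\le c_0 d^\alpha(\bm x,\bm x')$ (where $t_0=T$) one expands $u(t,\bm x)$ and $u(t,\bm x')$ along their own proxies frozen at $(t,\bm x)$ and $(t,\bm x')$, so that by Lemma \ref{lemma:identification_theta_m} each frozen density is centred on the corresponding flow and the remainder is small, and one bounds each remainder integral level by level: the $\bm x_1$-level uses that $D_{\bm x_1}u$ is bounded (Lemma \ref{lemma:supremum_norm}) together with the smoothing effect \eqref{eq:Smoothing_effects_of_tilde_p}, while the levels $j\ge 2$ are treated by moving $D_{\bm y_j}$ by parts onto $\tilde p^{\tau,\bm\xi}\otimes[\bm F_j(s,\cdot)-\bm F_j(s,\bm\theta)]$, applying the duality \eqref{Besov:duality_in_Besov}--\eqref{Besov:ident_Holder_Besov} and the Second Besov Control (Lemma \ref{lemma:Second_Besov_COntrols}) with $\vartheta=0$. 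Every power of $(s-t)$ produced is $\beta/\alpha$, so integration over $[t,T]$ yields $(T-t)^{(\alpha+\beta)/\alpha}\le c_0^{(\alpha+\beta)/\alpha}d^{\alpha+\beta}(\bm x,\bm x')$, which fits under the $c_0^{(\beta-\gamma_i)/\alpha}$ coefficient since $(\beta-\gamma_i)/\alpha<0<(\alpha+\beta)/\alpha$.

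In the global diagonal regime $T-t> c_0 d^\alpha(\bm x,\bm x')$ I would split the remainder integral as $\int_t^{t_0}+\int_{t_0}^T$. On $[t,t_0]$ one argues as in the off-diagonal regime (two separate terms, own freezing points). On $[t_0,T]$, where the points are closer than the running time scale, I freeze both terms at a common couple and Taylor-expand $\tilde P^{\tau,\bm\xi}_{t,s}R^{\tau,\bm\xi}(s,\cdot)$ in the $i$-th variable, gaining the factor $|(\bm x-\bm x')_i|=d^{1+\alpha(i-1)}(\bm x,\bm x')$; the object $D_{\bm x_i}\tilde P^{\tau,\bm\xi}_{t,s}R^{\tau,\bm\xi}(s,\cdot)$ is estimated by the same integration-by-parts/Besov-duality scheme, now invoking Lemma \ref{lemma:Second_Besov_COntrols} with $\vartheta=e_i$ (and \eqref{eq:Smoothing_effects_of_tilde_p} with $\vartheta=e_i$ at the $\bm x_1$-level), which gives $\Vert\bm F\Vert_H\Vert u\Vert_{L^\infty(C^{\alpha+\beta}_{b,d})}(s-t)^{\beta/\alpha-1/\alpha_i}$. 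Since $\beta/\alpha-1/\alpha_i=(\beta-\gamma_i)/\alpha-1<-1$ (because $\beta<1\le\gamma_i$), the $s$-integral over $[t_0,T]$ is dominated by its lower endpoint and equals $Cc_0^{(\beta-\gamma_i)/\alpha}d^{\beta-\gamma_i}(\bm x,\bm x')$; multiplying by $d^{1+\alpha(i-1)}(\bm x,\bm x')$ restores $d^{\alpha+\beta}(\bm x,\bm x')$ and the coefficient $c_0^{(\beta-\gamma_i)/\alpha}\Vert\bm F\Vert_H\Vert u\Vert_{L^\infty(C^{\alpha+\beta}_{b,d})}$. The various re-freezing and re-centring errors created by using a common couple over $[t_0,T]$ and by matching the two sub-intervals are paid for with Lemmas \ref{lemma:Controls_on_Flow1}--\ref{lemma:Controls_on_means}, and they account for the remaining coefficient, in particular the \emph{bare} $c_0^{(\alpha+\beta)/(1+\alpha(n-1))}\Vert u\Vert_{L^\infty(C^{\alpha+\beta}_{b,d})}$ term, whose exponent reflects the worst (most degenerate) scaling in the anisotropic distance $d$. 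Summing the three contributions gives the claim.

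I expect the main obstacle to be the simultaneous bookkeeping of the two small parameters: one must arrange that every term carrying $\Vert u\Vert_{L^\infty(C^{\alpha+\beta}_{b,d})}$ is accompanied either by a positive power of $c_0$ or by $\Vert\bm F\Vert_H$ with a controlled power of $c_0$, so that after the scaling procedure of Section $3.3.1$ these terms can be reabsorbed into the left-hand side. The truly delicate step is the local-diagonal time integral, which is only borderline convergent; this is what pins down the exact exponents through $\gamma_i$ and makes assumption (\textbf{P}) — via the partial smoothing effect underlying Lemma \ref{lemma:Second_Besov_COntrols} — indispensable rather than merely convenient.
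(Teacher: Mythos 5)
Your proposal follows essentially the same route as the paper's proof: the Duhamel expansion along proxies frozen at $(t,\bm{x})$ and $(t,\bm{x}')$, crude separate bounds for the remainder in the global off-diagonal regime, and in the diagonal regime a split of the time integral at $t_0$ with a common freezing point on $[t_0,T]$, a Taylor expansion in the $i$-th variable combined with the Second Besov Control (Lemma \ref{lemma:Second_Besov_COntrols}) giving the borderline exponent $(s-t)^{\beta/\alpha-1/\alpha_i}$, and the sensitivity Lemmas \ref{lemma:Controls_on_Flow1}--\ref{lemma:Controls_on_means} for the re-freezing mismatch. The one step you leave implicit -- how the freezing point can legitimately be changed at $t_0$ -- is exactly the paper's Change of Frozen Point identity \eqref{eq:Change_of_Freez_point_HOlder}, whose explicit ``discontinuity term'' $\tilde{P}^{\tau,\bm{\xi}}_{t,t_0}u(t_0,\cdot)-\tilde{P}^{\tau,\bm{\xi}'}_{t,t_0}u(t_0,\cdot)$ is then bounded using the regularity of $u(t_0,\cdot)$ itself together with Lemma \ref{lemma:Controls_on_means} and \eqref{eq:Sensitivity_on_mean1}, and is precisely the source of the bare $c_0^{\frac{\alpha+\beta}{1+\alpha(n-1)}}\Vert u\Vert_{L^\infty(C^{\alpha+\beta}_{b,d})}$ contribution you predicted.
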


\subsubsection{Off-Diagonal Regime}
We focus here on the proof of the Controls on the H\"older Moduli either in the non-degenerate setting (Proposition
\ref{lemma:Holder_modulus_Non-Deg}) and in the degenerate one (Proposition \ref{lemma:Holder_modulus_Deg}), when a
off-diagonal regime is assumed. For this reason, all the statements presented in this sub-section will tacitly assume that $T-t \le
c_0d^\alpha(\bm{x},\bm{x}')$ for some given $(t,\bm{x},\bm{x}')$ in $[0,T]\times \R^{2nd}$. \newline
To show these two controls, we will need to adapt the auxiliary estimates above to the off-diagonal regime case we consider here. Namely,
\begin{gather}
\label{eq:Sensitivity_on_flow}
d(\tilde{\bm{m}}^{t,\bm{x}}_{t,T} (\bm{x}),\tilde{\bm{m}}^{t,\bm{x}'}_{t,T}(\bm{x}')) \, = \, d(\bm{\theta}_{t,T}(\bm{x}),
\bm{\theta}_{t,T}(\bm{x}')) \, \le \, C\Vert \bm{F}\Vert_H d(\bm{x},\bm{x}'); \\
\label{eq:Sensitivity_on_flow1}
\text{if }\bm{x}_1 \, = \, \bm{x}_1', \quad \bigl{\vert} \bigl(\tilde{\bm{m}}^{t,\bm{x}}_{t,T} (\bm{x})-\tilde{\bm{m}
}^{t,\bm{x}'}_{t,T}(\bm{x}')\bigr)_1  \bigr{\vert} \, \le \, C\Vert \bm{F}\Vert_Hd^{\alpha+\beta}(\bm{x},\bm{x}')
\end{gather}
They can be obtained easily from Equation \eqref{eq:identification_theta_m} in Lemma \ref{lemma:identification_theta_m}
and the sensitivity controls (Lemmas \ref{lemma:Controls_on_Flow1} and \ref{lemma:Controls_on_means1}, respectively), taking $s=T$ and
$(\bm{y},\bm{y}')=(\bm{x},\bm{x}')$.

\paragraph{Proof of Proposition \ref{lemma:Holder_modulus_Non-Deg} in the Off-Diagonal Regime.}
From the Duhamel-type expansion \eqref{align:Representation2_deriv}, we can represent a mild solution $u$ of Equation for any fixed
$(\tau,\bm{\xi}),(\tau',\bm{\xi}')$ in $[0,T]\times \R^{nd}$ as
\begin{multline*}
\vert D_{\bm{x}_1}u(t,\bm{x})-D_{\bm{x}_1}u(t,\bm{x}')\vert \, \le \, \bigl{\vert}D_{\bm{x}_1}\tilde{P}^{\tau,\bm{\xi}}_{t,T}g(\bm{x}) -
D_{\bm{x}_1}\tilde{P}^{\tau'\!,\bm{\xi}'}_{t,T}\!g(\bm{x}')\bigr{\vert} + \bigl{\vert}D_{\bm{x}_1}\tilde{G}^{\tau,\bm{\xi}}_{t,T}f(t,\bm{x}) - D_{\bm{x}_1}\tilde{G}^{\tau'\!,\bm{\xi}'}_{t,T}\!f(t,\bm{x}') \bigr{\vert}\\
+\Bigl{\vert}\int_{t}^{T}D_{\bm{x}_1}\tilde{P}^{\tau,\bm{\xi}}_{t,s}R^{\tau,\bm{\xi}}(s,\bm{x}) -
D_{\bm{x}_1}\tilde{P}^{\tau'\!,\bm{\xi}'}_{t,s}\!R^{\tau'\!,\bm{\xi}'}\!(s,\bm{x}') \, ds\Bigr{\vert}.
\end{multline*}
After possible differentiations, we will choose $\tau=\tau'=t$, $\bm{\xi} = \bm{x}$ and $\bm{\xi}' = \bm{x}'$ in order to exploit the
sensitivity controls \eqref{eq:Sensitivity_on_flow1} and \eqref{eq:Sensitivity_on_flow}.

\emph{Control on the frozen semigroup.} It can be handled following the analogous part in the proof of the
H\"older control for the proxy (Lemma
\ref{lemma:Holder_modulus_proxy_Non-Deg}).  The only difference is that we cannot control
\[d(\tilde{\bm{m}}^{\tau,\bm{\xi}}_{t,T} (\bm{x}),\tilde{\bm{m}}^{\tau'\!,\bm{\xi}'}_{t,T}\!(\bm{x}'))\]
in Equation \eqref{zz1} using the affinity of the mapping $\bm{x} \to \tilde{\bm{m}}^{\tau,\bm{\xi}}_{t,T}(\bm{x})$, since the two freezing
point are now different. Instead, we can take $\tau = \tau' = t$, $\bm{\xi} = \bm{x}$ and $\bm{\xi}' = \bm{x}'$ and apply the sensitivity
control \eqref{eq:Sensitivity_on_flow} to write that
\[d(\tilde{\bm{m}}^{\tau,\bm{\xi}}_{t,T} (\bm{x}),\tilde{\bm{m}}^{\tau'\!,\bm{\xi}'}_{t,T}\!(\bm{x}')) \, = \,
d(\bm{\theta}_{t,T}(\bm{x}), \bm{\theta}_{t,T}(\bm{x}')) \, \le \, C\Vert \bm{F}\Vert_H d(\bm{x},\bm{x}').\]
%%%%%%%%%%%%%%%%%%%%%%%%%%%%%%%%%%%%%%%%%%%%%%%%%%%%%%%%%%%%%%%%%

\emph{Control on the Green kernel.} It follows immediately from the proof of the H\"older control (Lemma
\ref{lemma:Holder_modulus_proxy_Non-Deg}) for the proxy, noticing that $t_0=T$, since we are in the off-diagonal regime.
%%%%%%%%%%%%%%%%%%%%%%%%%%%%%%%%%%%%%%%%%%%%%%%%%%%%%%%

\emph{Control on the perturbative error.} Since we do not exploit the difference of the spatial points $(\bm{x},\bm{x}')$ in the
off-diagonal regime but instead we control the two contributions separately, we can rely  on the controls on the supremum norms we have
already shown in Lemma \ref{lemma:supremum_norm}. Namely, we start writing that
\begin{multline}\label{z1}
\Bigl{\vert}\int_{t}^{T}D_{\bm{x}_1}\tilde{P}^{\tau,\bm{\xi}}_{t,s}R^{\tau,\bm{\xi}}(s,\bm{x})- D_{\bm{x}_1}
\tilde{P}^{\tau'\!,\bm{\xi}'}_{t,s}\!R^{\tau'\!,\bm{\xi}'}\!(s,\bm{x}')\, ds\Bigr{\vert} \\
\le \, \Bigl{\vert}\int_{t}^{T}D_{\bm{x}_1}\tilde{P}^{\tau,\bm{\xi}}_{t,s}R^{\tau,\bm{\xi}}(s,\bm{x})\, ds\Bigr{\vert}+
\Bigl{\vert}\int_{t}^{T}D_{\bm{x}_1} \tilde{P}^{\tau'\!,\bm{\xi}'}_{t,s}\!R^{\tau'\!,\bm{\xi}'}\!(s,\bm{x}')\, ds\Bigr{\vert}
\end{multline}
Then, we can follow the same reasonings of Lemma \ref{lemma:supremum_norm} concerning the remainder term (cf. Equations
\eqref{Proof:Supremum_Schauder0}, \eqref{Proof:Supremum_Schauder1} and \eqref{Proof:Supremum_Schauder2}) to
show that
\begin{equation}\label{z2}
\Bigl{\vert}\int_{t}^{T}D_{\bm{x}_1}\tilde{P}^{\tau,\bm{\xi}}_{t,s}R^{\tau, \bm{\xi}}(s,\bm{x}) \, ds\Bigr{\vert} \, \le \, C\Vert
\bm{F}\Vert_H\Vert u \Vert_{L^{\infty}(C^{\alpha+\beta}_{b,d})} (T-t)^{\frac{\alpha+\beta-1}{\alpha}}.
\end{equation}
Using it in the above Equation \eqref{z1}, we can finally conclude that
\begin{equation}\label{z3}
\Bigl{\vert}\int_{t}^{T}D_{\bm{x}_1}\tilde{P}^{\tau,\bm{\xi}}_{t,s}R^{\tau,\bm{\xi}}(s,\bm{x})- D_{\bm{x}_1}
\tilde{P}^{\tau'\!,\bm{\xi}'}_{t,s}\!R^{\tau'\!,\bm{\xi}'}\!(s,\bm{x}')\, ds\Bigr{\vert} \, \le \,
C\Vert \bm{F}\Vert_H\Vert u \Vert_{L^{\infty}(C^{\alpha+\beta}_{b,d})} d^{\alpha+\beta-1}(\bm{x},\bm{x}')
\end{equation}
remembering that we assumed to be in the off-diagonal regime, i.e.\ $T-t\le c_0d^\alpha(\bm{x},\bm{x}')$ for some $c_0\le1$.

\paragraph{Proof of Proposition \ref{lemma:Holder_modulus_Deg} in the Off-Diagonal Regime.}
As done before, we are going to analyze separately the single terms appearing from the Duhamel-type representation
\eqref{align:Representation2} of a solution $u$:
\begin{multline*}
\vert u(t,\bm{x})-u(t,\bm{x}')\vert \, \le \, \bigl{\vert}\tilde{P}^{\tau,\bm{\xi}}_{t,T}g(\bm{x}) - \tilde{P}^{\tau'\!,\bm{\xi}'}_{t,T}\!g
(\bm{x}')\bigr{\vert} + \bigl{\vert}\tilde{G}^{\tau,\bm{\xi}}_{t,T}f(t,\bm{x}) - \tilde{G}^{\tau'\!,\bm{\xi}'}_{t,T}\!f(t,\bm{x}')\bigr{\vert}\\
+\Bigl{\vert}\int_{t}^{T}\tilde{P}^{\tau,\bm{\xi}}_{t,s}R^{\tau,\bm{\xi}}(s,\bm{x})-\tilde{P}^{\tau'\!,\bm{\xi}'}_{t,s}
\!R^{\tau'\!,\bm{\xi}'}\!(s,\bm{x}') \, ds\Bigr{\vert}
\end{multline*}
for some $(\tau,\bm{\xi}),(\tau',\bm{\xi}')$ in $[0,T]\times \R^{nd}$ fixed but to be chosen later as $\tau = \tau' = t$, $\bm{\xi} = \bm{x}$
and $\bm{\xi}' = \bm{x}'$.

\emph{Control on the frozen semigroup.} We can essentially follow the proof of the H\"older control (Lemma
\ref{lemma:Holder_modulus_proxy_Deg}) for the proxy. However, this time we cannot exploit the affinity of the mapping $\bm{x} \to
\tilde{\bm{m}}^{\tau,\bm{\xi}}_{t,T}(\bm{x})$ to control the difference
\[\bigl{\vert} g(\tilde{\bm{m}}^{\tau,\bm{\xi}}_{t,T}(\bm{x})-z)-g(\tilde{\bm{m}}^{\tau,\bm{\xi}}_{t,T}(\bm{x}')-z)\bigr{\vert}.\]
Instead, we notice now that we can bound it as
\[\bigl{\vert} g(\tilde{\bm{m}}^{\tau,\bm{\xi}}_{t,T}(\bm{x})-z)-g(\tilde{\bm{m}}^{\tau,\bm{\xi}}_{t,T}(\bm{x}')-z)\bigr{\vert} \,
\le \, C\Vert g \Vert_{C^{\alpha+\beta}_{b,d}}\Bigl[d^{\alpha+\beta}\bigl(\tilde{\bm{m}}^{\tau,\bm{\xi}}_{t,T}(\bm{x}),\tilde{
\bm{m}}^{\tau,\bm{\xi}}_{t,T}(\bm{x}')\bigr)+\bigl{\vert}\bigl(\tilde{\bm{m}}^{\tau,\bm{\xi}}_{t,T}(\bm{x})-\tilde{\bm{m}}^{
\tau,\bm{\xi}}_{t,T}(\bm{x}')\bigr)_1 \bigr{\vert}\Bigr]\]
since $g$ is differentiable and thus Lipschitz continuous, in the first non-degenerate variable.\newline
Taking now  $\tau = \tau' = t$, $\bm{\xi} = \bm{x}$ and $\bm{\xi}' = \bm{x}'$, we can use the sensitivity controls
\eqref{eq:Sensitivity_on_flow} and \eqref{eq:Sensitivity_on_flow1} (noticing that by assumption, $\bm{x}_1=\bm{x}'_1$) to write that
\[\bigl{\vert} g(\tilde{\bm{m}}^{\tau,\bm{\xi}}_{t,T}(\bm{x})-z)-g(\tilde{\bm{m}}^{\tau,\bm{\xi}}_{t,T}(\bm{x}')-z)\bigr{\vert} \, \le \,
C\Vert F\Vert_H\Vert g \Vert_{C^{\alpha+\beta}_{b,d}}d^{\alpha+\beta}(\bm{x},\bm{x}')\]

%%%%%%%%%%%%%%%%%%%%%%%%%%%%%%%%%%%%%%%%%%%%%%%%%%%%%%%%%%%%%%%%%%%%%%%%%%%%%%%%%%%%%%%%%%%%%%%%%%%%%

\emph{Control on the Green kernel.} It can be obtained following the analogous part in the proof of the H\"older control (Lemma
\ref{lemma:Holder_modulus_proxy_Deg}) for the proxy. Similarly to the paragraph "Control on the frozen semigroup" in the previous proof, we
need to take $(\tau,\bm{\xi})= (t,\bm{x})$, $(\tau,\bm{\xi}') = (t,\bm{x})$ and apply the sensitivity control
\eqref{eq:Sensitivity_on_flow} to control the term
\[d(\tilde{\bm{m}}^{\tau,\bm{\xi}}_{t,T} (\bm{x}),\tilde{\bm{m}}^{\tau'\!,\bm{\xi}'}_{t,T}\!(\bm{x}'))\]
appearing in Equation \eqref{zz2}.
%%%%%%%%%%%%%%%%%%%%%%%%%%%%%%%%%%%%%%%%%%%%%%%%%%%%%%%%%%%%%%%%%%%%%%%%%%%%%

\emph{Control on the perturbative error.} The proof of this estimate essentially matches the previous, analogous one in the
non-degenerate setting. Namely, Equations \eqref{z1}, \eqref{z2} and \eqref{z3} hold again with $(T-t)^{\frac{\beta+\alpha}{\alpha}}$
instead of $(T-t)^{\frac{\beta+\alpha-1}{\alpha}}$.

\subsubsection{Diagonal Regime}

Since the aim of this section is to prove Lemmas \ref{lemma:Holder_modulus_Non-Deg} and \ref{lemma:Holder_modulus_Deg} when a
diagonal regime is assumed, we will assume from this point further that $T-t \ge c_0d^\alpha(\bm{x},
\bm{x}')$ for some given $(t,\bm{x},\bm{x}')$ in $[0,T]\times \R^{2nd}$. \newline
As preannounced in the introduction of this section, we need here a modification of the Duhamel-type representation
\eqref{eq:Expansion_along_proxy} that allows to change the freezing points along the time integration variable. Remembering the previous
notations for $\tilde{G}^{\tau,\bm{\xi}}_{r,v}$ and $R^{\tau,\bm{\xi}}$ in \eqref{eq:def_Green_Kernel} and \eqref{eq:def_remainder_regul}
respectively, it holds that

\begin{lemma}[Change of Frozen Point] %Change of freezing point
Let $(\tau,\bm{\xi})$ be a freezing couple in $[0,T]\times\R^{nd}$ and $\tilde{\bm{\xi}}$ another freezing point in $\R^{nd}$. Then, any
classical solution $u$ in $L^\infty(0,T;C^{\alpha+\beta}_{b,d}(\R^{nd}))$ of Equation \eqref{Degenerate_Stable_PDE} can be represented for
any $(t,\bm{x})$ in $[0,T]\times\R^{nd}$ as
\begin{multline}\label{eq:Change_of_Freez_point_HOlder}
u(t,\bm{x}) \, = \, \tilde{P}^{\tau,\tilde{\bm{\xi}}}_{t,T}g(\bm{x}) + \tilde{G}^{\tau,\bm{\xi}}_{t,t_0}f(t,\bm{x}) +
\tilde{G}^{\tau,\tilde{\bm{\xi}}}_{t_0,T}f(t,\bm{x}) \\
+ \int_{t}^{t_0}  \tilde{P}^{\tau,\bm{\xi}}_{t,s}R^{\tau,\bm{\xi}} (s,\bm{x})\, ds +\int_{t_0}^{T} \tilde{P}^{\tau,\tilde{\bm{\xi}}}_{t,s}
R^{\tau,\tilde{\bm{\xi}}}(s,\bm{x})\, ds + \tilde{P}^{\tau,\bm{\xi}}_{t,t_0}u(t_0,\bm{x}) -\tilde{P}^{\tau,\tilde{\bm{\xi}}}_{t,
t_0}u(t_0,\bm{x})
\end{multline}
where $t_0$ is the change of regime time defined in \eqref{eq:def_t0}.
\end{lemma}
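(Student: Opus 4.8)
The plan is to glue together two Duhamel representations of $u$: one performed on $[t,t_0]$ with the freezing couple $(\tau,\bm{\xi})$, the other on $[t_0,T]$ with the freezing couple $(\tau,\tilde{\bm{\xi}})$, the junction being made through the Chapman--Kolmogorov property of the frozen semigroup. First I would record the two building blocks. Since $u$ is a classical solution of \eqref{Degenerate_Stable_PDE}, the computation preceding Proposition~\ref{prop:Expansion_along_proxy} shows that, for \emph{any} freezing couple, $u$ is a classical solution on $(0,T)\times\R^{nd}$ of the associated proxy equation with source $f+R$; hence the Duhamel representation of Proposition~\ref{prop:Expansion_along_proxy} (which itself rests on Proposition~\ref{prop:frozen_Duhamel_Formula}), localized to a subinterval with the appropriate terminal datum, gives on $[t,t_0]$ (terminal datum $u(t_0,\cdot)$, freezing $(\tau,\bm{\xi})$)
\begin{equation}\label{eq:CFP_first}
u(t,\bm{x}) \, = \, \tilde{P}^{\tau,\bm{\xi}}_{t,t_0}u(t_0,\bm{x}) + \tilde{G}^{\tau,\bm{\xi}}_{t,t_0}f(t,\bm{x}) + \int_{t}^{t_0}\tilde{P}^{\tau,\bm{\xi}}_{t,s}R^{\tau,\bm{\xi}}(s,\bm{x})\,ds,
\end{equation}
and on $[t_0,T]$ (terminal datum $g$, freezing $(\tau,\tilde{\bm{\xi}})$)
\begin{equation}\label{eq:CFP_second}
u(t_0,\bm{x}) \, = \, \tilde{P}^{\tau,\tilde{\bm{\xi}}}_{t_0,T}g(\bm{x}) + \tilde{G}^{\tau,\tilde{\bm{\xi}}}_{t_0,T}f(t_0,\bm{x}) + \int_{t_0}^{T}\tilde{P}^{\tau,\tilde{\bm{\xi}}}_{t_0,s}R^{\tau,\tilde{\bm{\xi}}}(s,\bm{x})\,ds.
\end{equation}

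Next I would apply the operator $\tilde{P}^{\tau,\tilde{\bm{\xi}}}_{t,t_0}$ to \eqref{eq:CFP_second}. The family $\bigl(\tilde{P}^{\tau,\tilde{\bm{\xi}}}_{v,r}\bigr)_{v\le r}$ is the two-parameter Markov semigroup generated by the time-inhomogeneous Ornstein--Uhlenbeck operator $L_\alpha+\langle A\bm{x}+\bm{F}(\cdot,\bm{\theta}_{\tau,\cdot}(\tilde{\bm{\xi}})),D_{\bm{x}}\rangle$, so it satisfies $\tilde{P}^{\tau,\tilde{\bm{\xi}}}_{t,t_0}\tilde{P}^{\tau,\tilde{\bm{\xi}}}_{t_0,r}=\tilde{P}^{\tau,\tilde{\bm{\xi}}}_{t,r}$ for $t\le t_0\le r$; and since (in the regularized framework of Section~3.2) $f$ and $R^{\tau,\tilde{\bm{\xi}}}$ are bounded while $\tilde{p}^{\tau,\tilde{\bm{\xi}}}$ has unit mass, Fubini's theorem lets me commute $\tilde{P}^{\tau,\tilde{\bm{\xi}}}_{t,t_0}$ with the time integral in \eqref{eq:CFP_second}. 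Recalling the definition \eqref{eq:def_Green_Kernel} of the Green kernel, this yields
\begin{equation}\label{eq:CFP_pushed}
\tilde{P}^{\tau,\tilde{\bm{\xi}}}_{t,t_0}u(t_0,\bm{x}) \, = \, \tilde{P}^{\tau,\tilde{\bm{\xi}}}_{t,T}g(\bm{x}) + \tilde{G}^{\tau,\tilde{\bm{\xi}}}_{t_0,T}f(t,\bm{x}) + \int_{t_0}^{T}\tilde{P}^{\tau,\tilde{\bm{\xi}}}_{t,s}R^{\tau,\tilde{\bm{\xi}}}(s,\bm{x})\,ds.
\end{equation}

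To conclude I would add and subtract $\tilde{P}^{\tau,\tilde{\bm{\xi}}}_{t,t_0}u(t_0,\bm{x})$ in the right-hand side of \eqref{eq:CFP_first} and substitute the expression \eqref{eq:CFP_pushed} for this term; collecting the six resulting contributions produces exactly \eqref{eq:Change_of_Freez_point_HOlder}. As a sanity check, when $t_0=T$ (the global off-diagonal regime) one has $u(t_0,\cdot)=g$, the kernel $\tilde{G}^{\tau,\tilde{\bm{\xi}}}_{t_0,T}$ and both integrals over $[t_0,T]$ vanish, the terms $\pm\,\tilde{P}^{\tau,\tilde{\bm{\xi}}}_{t,T}g$ cancel, and the identity collapses to the plain expansion \eqref{eq:Expansion_along_proxy}. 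I expect no genuine analytic difficulty: the only point requiring care is the bookkeeping of the "initial-time" argument of each frozen density when invoking Chapman--Kolmogorov, together with checking that \eqref{eq:CFP_first} is indeed a legitimate restriction to $[t,t_0]$ of the representation of Proposition~\ref{prop:frozen_Duhamel_Formula} — the underlying manipulations (Fourier transform in space, integration in time, mollification) being precisely those already carried out there.
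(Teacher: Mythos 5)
Your proposal is correct and yields exactly \eqref{eq:Change_of_Freez_point_HOlder}, but the mechanism is not the one used in the paper. The paper starts from the Duhamel expansion on $(0,r)$ with terminal datum $u(r,\cdot)$, observes that $u$ does not depend on $r$, differentiates the identity in $r$ to obtain $0=\partial_r\bigl[\tilde{P}^{\tau,\bm{\xi}}_{t,r}u(r,\bm{x})\bigr]+\tilde{P}^{\tau,\bm{\xi}}_{t,r}f(r,\bm{x})+\tilde{P}^{\tau,\bm{\xi}}_{t,r}R^{\tau,\bm{\xi}}(r,\bm{x})$, which holds for \emph{every} freezing point, and then integrates this identity in $r$ over $[t,t_0]$ with $\bm{\xi}$ and over $[t_0,T]$ with $\tilde{\bm{\xi}}$; the discontinuity term $\tilde{P}^{\tau,\bm{\xi}}_{t,t_0}u(t_0,\bm{x})-\tilde{P}^{\tau,\tilde{\bm{\xi}}}_{t,t_0}u(t_0,\bm{x})$ appears as the mismatch of the boundary evaluations at $r=t_0$. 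You instead write the two Duhamel representations on $[t,t_0]$ (freezing $(\tau,\bm{\xi})$, terminal datum $u(t_0,\cdot)$, which is legitimate since $u(t_0,\cdot)\in C^{\alpha+\beta}_{b,d}$) and on $[t_0,T]$ (freezing $(\tau,\tilde{\bm{\xi}})$, terminal datum $g$), push the second one through $\tilde{P}^{\tau,\tilde{\bm{\xi}}}_{t,t_0}$ via the two-parameter flow property $\tilde{P}^{\tau,\tilde{\bm{\xi}}}_{t,t_0}\tilde{P}^{\tau,\tilde{\bm{\xi}}}_{t_0,s}=\tilde{P}^{\tau,\tilde{\bm{\xi}}}_{t,s}$ together with Fubini, and conclude by adding and subtracting $\tilde{P}^{\tau,\tilde{\bm{\xi}}}_{t,t_0}u(t_0,\bm{x})$. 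The trade-off is clear: your route needs the Chapman--Kolmogorov identity for the inhomogeneous frozen semigroup (readable off the density \eqref{eq:definition_tilde_p} through the flow property of $\tilde{\bm{m}}^{\tau,\tilde{\bm{\xi}}}_{t,s}$, so you should at least record this) and the commutation of $\tilde{P}^{\tau,\tilde{\bm{\xi}}}_{t,t_0}$ with the time integral, whereas the paper's route needs the differentiability in $r$ of $r\mapsto\tilde{P}^{\tau,\bm{\xi}}_{t,r}u(r,\bm{x})$, available in the regularized framework; both rest on the same underlying input, namely that the Duhamel expansion of Proposition \ref{prop:Expansion_along_proxy} is valid on sub-intervals with the appropriate terminal condition. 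Your sanity check at $t_0=T$ is a nice addition that the paper does not spell out.
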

\begin{proof}
Fixed $t$ in $(0,T)$, we start considering another point $r$ in $(t,T)$. On $(0,r)$, it is clear that $u$ is again a mild
solution of equation \eqref{Degenerate_Stable_PDE} but with terminal condition $u(r,\bm{x})$. Then, Duhamel expansion
\eqref{eq:Expansion_along_proxy} can be applied with respect to the frozen couple $(\tau,\bm{\xi})$, allowing us to write that
\[u(t,\bm{x}) \, = \, \tilde{P}^{\tau,\bm{\xi}}_{t,r}g(\bm{x}) + \int_{t}^{r}\tilde{P}^{\tau,\bm{\xi}}_{t,s}f(s,\bm{x}) \, ds + \int_{t}^{r}
\tilde{P}^{\tau,\bm{\xi}}_{t,s}R^{\tau,\bm{\xi}} u(s,\bm{x})\, ds.\]
Noticing that $u$ is independent from $r$, it is possible now to differentiate the above equality with respect to $r$ in $(t,T)$ to show that
\begin{equation}\label{eq:change_frez_point0}
0 \, = \, \partial_r \bigl[\tilde{P}^{\tau,\bm{\xi}}_{t,r}u(r,\bm{x})\bigr] + \tilde{P}^{\tau,\bm{\xi}}_{t,r}f(r,\bm{x}) +
\tilde{P}^{\tau,\bm{\xi}}_{t,r}R^{\tau,\bm{\xi}}(r,\bm{x}).
\end{equation}
We highlight now that the above expression holds for any chosen frozen couple $(\tau,\bm{\xi})$ and any fixed time $r$. Thus, it is possible
to integrate it with respect to $r$ for a fixed $\bm{\xi}$ between $t$ and $t_0$ and for another frozen point
$\tilde{\bm{\xi}}$ between $t_0$ and $T$, leading to
\begin{multline*}
0 \, = \, \tilde{P}^{\tau,\bm{\xi}}_{t,t_0}u(t_0,\bm{x}) - \tilde{P}^{\tau,\bm{\xi}}_{t,t}u(t,\bm{x}) +\int_{t}^{t_0}\tilde{P}^{\tau,
\bm{\xi}}_{t,r}f(r,\bm{x}) \,  dr +  \int_{t}^{t_0} \tilde{P}^{\tau,\bm{\xi}}_{t,r}R^{\tau,\bm{\xi}}(r,\bm{x}) \, dr  \\
+ \tilde{P}^{\tau,\tilde{\bm{\xi}}}_{t,T}u(T,\bm{x}) - \tilde{P}^{\tau,\tilde{\bm{\xi}}}_{t,t_0}u(t_0,\bm{x}) + \int_{t_0}^{T}
\tilde{P}^{\tau,\tilde{\bm{\xi}}}_{t,r}f(r,\bm{x}) \,  dr +  \int_{t_0}^{T} \tilde{P}^{\tau,\tilde{\bm{\xi}}}_{t,r}
R^{\tau,\tilde{\bm{\xi}}}(r,\bm{x}) \,  dr.
\end{multline*}
With our previous notations, the above expression can be finally rewritten as
\begin{multline*}
0 \, = \, \tilde{P}^{\tau,\bm{\xi}}_{t,t_0}u(t_0,\bm{x}) - u(t,\bm{x}) + \tilde{G}^{\tau,\bm{\xi}}_{t,t_0}f(t,\bm{x}) +  \int_{t}^{t_0}
\tilde{P}^{\tau,\bm{\xi}}_{t,r}R^{\tau,\bm{\xi}}(r,\bm{x}) \, dr  \\
+ \tilde{P}^{\tau,\tilde{\bm{\xi}}}_{t,T}g(\bm{x}) - \tilde{P}^{\tau,\tilde{\bm{\xi}}}_{t,t_0}u(t_0,\bm{x}) +
\tilde{G}^{\tau,\tilde{\bm{\xi}}}_{t_0,T}f(t,\bm{x}) +  \int_{t_0}^{T}
\tilde{P}^{\tau,\tilde{\bm{\xi}}}_{t,r}R^{\tau,\tilde{\bm{\xi}}}(r,\bm{x}) \,  dr
\end{multline*}
and we have concluded.
\end{proof}

Similarly to the off-diagonal case, we are going to apply the auxiliary estimates associated with the proxy (Lemmas
\ref{lemma:Controls_on_means1} and \ref{lemma:Controls_on_means}) in the current diagonal regime. Namely, taking $s=t_0$ and
$(\bm{y},\bm{y}')=(\bm{x},\bm{x})$ in Lemma \ref{lemma:Controls_on_means1}, we know that there exists a constant
$C\ge1$ such that for any $t$ in $[0,T]$ and any $\bm{x},\bm{x}'$ in $\R^{nd}$,
\begin{equation}\label{eq:Sensitivity_on_mean1} \text{if }\bm{x}_1 \, = \, \bm{x}'_1, \quad
\bigl{\vert}(\tilde{\bm{m}}^{t,\bm{x}}_{t,t_0}(\bm{x})-
\tilde{\bm{m}}^{t,\bm{x}'}_{t,t_0}(\bm{x}))_1\bigr{\vert} \, \le \, C\Vert \bm{F}\Vert_Hd^{\alpha+\beta}(\bm{x},\bm{x}').
\end{equation}
Moreover, in order to control the perturbative term when a local diagonal regime appears, i.e.\ when the time integration variable $s$ is in
$[t_0,T]$, we will quite often use a Taylor expansion on the frozen density. To be able to exploit the already proven controls, such that
the smoothing effect for the frozen density (Equation \eqref{eq:Smoothing_effects_of_tilde_p}) or the Besov control (Lemma
\ref{lemma:Second_Besov_COntrols}), we will need the following:
\begin{equation}
\label{eq:translation_inv_for_density}
\text{if } s-t\ge c_0d^\alpha(\bm{x},\bm{x}'), \quad \bigl{\vert} D^\vartheta_{\bm{x}}\tilde{p}^{\tau,\bm{\xi}'} (t,s,\bm{x}+ \lambda(\bm{x}'-
\bm{x}),\bm{y})\bigr{\vert} \, \le \, C \bigl{\vert}D^\vartheta_{\bm{x}}\tilde{p}^{\tau,\bm{\xi}'} (t,s,\bm{x},\bm{y})\bigr{\vert}
\end{equation}
for any multi-index $\vartheta$ in $\N^d$ such that $\vert \vartheta\vert \le 2$ and any $\lambda$ in $[0,1]$.
The proof of these results can be found in Section A.$2$.

We are now ready to prove Propositions \ref{lemma:Holder_modulus_Non-Deg} and \ref{lemma:Holder_modulus_Deg} when a global
diagonal regime is considered.

\paragraph{Proof of Proposition \ref{lemma:Holder_modulus_Non-Deg} in the Diagonal Regime.}
We start recalling that in Lemma \ref{lemma:Holder_modulus_Non-Deg} we assumed fixed a time $t$ in $[0,T]$ and two spatial points
$\bm{x},\bm{x}'$ in $\R^{nd}$ such that $\bm{x}_j=\bm{x}'_j$ if $j \neq 1$.\newline
From the above representation \eqref{eq:Change_of_Freez_point_HOlder} and the Duhamel-type formula \eqref{eq:Expansion_along_proxy}, we know
that
\begin{multline*}
D_{\bm{x_1}}u(t,\bm{x}) - D_{\bm{x_1}}u(t,\bm{x}') \, = \, \Bigl(D_{\bm{x_1}}\tilde{P}^{\tau,\tilde{\bm{\xi}}}_{t,T}g(\bm{x}) -
D_{\bm{x_1}}\tilde{P}^{\tau'\!,\bm{\xi}'}_{t,T}\!g(\bm{x}')\Bigr) \\
+ \, \Bigl(D_{\bm{x_1}}\tilde{G}^{\tau,\bm{\xi}}_{t,t_0}f(t,\bm{x})+ D_{\bm{x_1}}\tilde{G}^{\tau,\tilde{\bm{\xi}}}_{t_0,T}f(t,\bm{x})
-D_{\bm{x_1}}\tilde{G}^{\tau'\!,\bm{\xi}'}_{t,T}\!f(t,\bm{x}')\Bigr) \\
+ \Bigl(\int_{t}^{t_0}  D_{\bm{x_1}}\tilde{P}^{\tau,\bm{\xi}}_{t,s}R^{\tau,\bm{\xi}}(s,\bm{x})\, ds +\int_{t_0}^{T}D_{\bm{x_1}}
\tilde{P}^{\tau,\tilde{\bm{\xi}}}_{t,s}R^{\tau,\tilde{\bm{\xi}}}(s,\bm{x})\, ds -\int_{t}^{T}  D_{\bm{x_1}}\tilde{P}^{\tau'\!,
\bm{\xi}'}_{t,s}\!R^{\tau'\!,\bm{\xi}'}\!(s,\bm{x}') \, ds\Bigr) \\
+ \Bigl(D_{\bm{x_1}}\tilde{P}^{\tau,\bm{\xi}}_{t,t_0}u(t_0,\bm{x})
-D_{\bm{x_1}}\tilde{P}^{\tau,\tilde{\bm{\xi}}}_{t,t_0}u(t_0,\bm{x})\Bigr)
\end{multline*}
for some freezing couples $(\tau,\bm{\xi}),(\tau,\tilde{\bm{\xi}}),(\tau',\bm{\xi}')$ in $[0,T]\times \R^{nd}$ fixed but to be chosen later.
To help the readability of the following, we assume from this point further $\tau=\tau'$ and $\tilde{\bm{\xi}}=\bm{\xi}'$.

\emph{Control on frozen semigroup.} We start focusing on the control of the frozen semigroup, i.e.
\[\bigl{\vert}D_{\bm{x_1}}\tilde{P}^{\tau,\bm{\xi}'}_{t,T}g(\bm{x}) - D_{\bm{x_1}}\tilde{P}^{\tau,\bm{\xi}'}_{t,T}g(\bm{x}')\bigr{\vert}.\]
Since the freezing couples coincide, the control on the frozen semigroup can be obtained following the proof of the H\"older control (Lemma
\ref{lemma:Holder_modulus_proxy_Non-Deg}) for the proxy.
%%%%%%%%%%%%%%%%%%%%%%%%%%%%%%%%%%%%%%%%%%%%%%%%%%%%%%%%%%%%%%%%%%%%%%%%%%%%%%%%%%%%%%%%%%%%%%%

\emph{Control on the Green kernel.} As done before, we split the analysis with respect to the change of regime time $t_0$. Namely, we write
\begin{multline*}
\bigl{\vert} D_{\bm{x}_1}\tilde{G}^{\tau,\bm{\xi}}_{t,t_0}f(t,\bm{x})+
D_{\bm{x}_1}\tilde{G}^{\tau,\tilde{\bm{\xi}}}_{t_0,T}f(t,\bm{x}) -D_{\bm{x}_1}\tilde{G}^{
\tau,\bm{\xi}'}_{t,T}f(t,\bm{x}')\bigl{\vert} \\
\le \, \bigl{\vert} D_{\bm{x}_1}\tilde{G}^{\tau,\bm{\xi}}_{t,t_0}f(t,\bm{x})-D_{\bm{x}_1}\tilde{G}^{\tau,\bm{\xi}'}_{t,t_0}
f(t,\bm{x}') \bigl{\vert}+ \bigl{\vert} D_{\bm{x}_1}\tilde{G}^{\tau,\tilde{\bm{\xi}}}_{t_0,T}f(t,\bm{x}) -
D_{\bm{x}_1}\tilde{G}^{\tau,\bm{\xi}'}_{t_0,T}f(t,\bm{x}')\bigl{\vert}.
\end{multline*}
While in the local off-diagonal regime, the first term in the r.h.s.\ of the above expression can be handled as in the
global off-diagonal regime, the local diagonal regime contribution represented by
\[\bigl{\vert} D_{\bm{x}_1} \tilde{G}^{\tau,\tilde{\bm{\xi}}}_{t_0,T}f(t,\bm{x}) - D_{\bm{x}_1}
\tilde{G}^{\tau,\bm{\xi}'}_{t_0,T}f(t,\bm{x}')\bigr{\vert} \, = \,\bigl{\vert} D_{\bm{x}_1} \tilde{G}^{\tau,\bm{\xi}'}_{t_0,T}f(t,\bm{x}) -
D_{\bm{x}_1} \tilde{G}^{\tau,\bm{\xi}'}_{t_0,T}f(t,\bm{x}')\bigr{\vert}\]
since $\tilde{\bm{\xi}}=\bm{\xi}'$, can be controlled following again the proof of the H\"older control (Lemma
\ref{lemma:Holder_modulus_proxy_Non-Deg}) for the proxy.
%%%%%%%%%%%%%%%%%%%%%%%%%%%%%%%%%%%%%%%%%%%%%%%%%%%%%%%%%%%%%%

\emph{Control on the discontinuity term.} We can now focus on the contribution
\[\bigl{\vert}D_{\bm{x_1}}\tilde{P}^{\tau,\bm{\xi}}_{t,t_0}u(t_0,\bm{x})
-D_{\bm{x_1}}\tilde{P}^{\tau,\tilde{\bm{\xi}}}_{t,t_0}u(t_0,\bm{x})\bigr{\vert},\]
arising from the change of freezing point in the representation \eqref{eq:Change_of_Freez_point_HOlder}.\newline
Since at fixed time $t_0$, the function $u$ shows the same spatial regularity of $g$, this control can be handled following the paragraph in
the proof of the H\"older control for the proxy (Lemma \ref{lemma:Holder_modulus_proxy_Non-Deg}) concerning the frozen semigroup in the
off-diagonal regime. The only main difference is in Equation \eqref{zz1} where, this time, we need to take $(\tau,\bm{\xi},\bm{\xi}') =
(t,\bm{x},\bm{x}')$ and exploit the sensitivity estimate (Lemma \ref{lemma:Controls_on_means}) to control the quantity
\[d(\tilde{\bm{m}}^{\tau,\bm{\xi}}_{t,t_0}(\bm{x}),\tilde{\bm{m}}^{\tau,\bm{\xi}'}_{t,t_0}(\bm{x})).\]
In the end, it is possible to show again (cf. Equation \eqref{a2}) that
\[\bigl{\vert}D_{\bm{x}_1}\tilde{P}^{\tau,\bm{\xi}}_{t,t_0}u(t_0,\bm{x})-D_{\bm{x_1}}\tilde{P}^{\tau,\tilde{\bm{\xi}}}_{t,t_0}
u(t_0,\bm{x})\bigr{\vert} \le \, C\Vert u\Vert_{L^\infty(C^{\alpha+\beta}_{b,d})}c_0^{\frac{\alpha + \beta
-1}{\alpha}}d^{\alpha+\beta-1}(\bm{x},\bm{x}').\]
%%%%%%%%%%%%%%%%%%%%%%%%%%%%%%%%%%%%%%%%%%%%%%%%%%%%%%%%%%%%%%%%%%%%%%%%%%%%%%%

\emph{Control on the perturbative term.} We start splitting the analysis into two cases with respect to the
critical time $t_0$ giving the change of regime. Namely, we write
\begin{multline*}
\Bigl{\vert}\int_{t}^{t_0}  D_{\bm{x}_1}\tilde{P}^{\tau,\bm{\xi}}_{t,s}R^{\tau,\bm{\xi}}(s,\bm{x})\, ds +\int_{t_0}^{T}
D_{\bm{x}_1}\tilde{P}^{\tau,\bm{\xi}'}_{t,s}R^{\tau,\bm{\xi}'}(s,\bm{x})\, ds -\int_{t}^{T}  D_{\bm{x}_1}\tilde{P}^{
\tau,\bm{\xi}'}_{t,s}R^{\tau,\bm{\xi}'}(s,\bm{x}') \, ds\Bigl{\vert} \\
\le \, \Bigl{\vert}\int_{t}^{t_0}D_{\bm{x}_1}\tilde{P}^{\tau,\bm{\xi}}_{t,s}R^{\tau,\bm{\xi}}(s,\bm{x})-D_{\bm{x}_1}
\tilde{P}^{\tau,\bm{\xi}'}_{t,s}R^{\tau,\bm{\xi}'}(s,\bm{x}')\, ds\Bigr{\vert} +\Bigl{\vert}\int_{t_0}^{T}D_{\bm{x}_1}\tilde{P}^{
\tau,\bm{\xi}'}_{t,s}R^{\tau,\bm{\xi}'}(s,\bm{x}) -D_{\bm{x}_1}\tilde{P}^{\tau,\bm{\xi}'}_{t,s}R^{\tau,\bm{\xi}'}(s,\bm{x}') \,
ds\Bigl{\vert}.
\end{multline*}
We then notice that the local off-diagonal regime represented by
\[\Bigl{\vert}\int_{t}^{t_0}D_{\bm{x}_1}\tilde{P}^{\tau,\bm{\xi}}_{t,s}R^{\tau,\bm{\xi}}(s,\bm{x})-D_{\bm{x}_1}
\tilde{P}^{\tau,\bm{\xi}'}_{t,s}R^{\tau,\bm{\xi}'}(s,\bm{x}')\, ds\Bigr{\vert}\]
can be handled following the proof in the global off-diagonal regime of Lemma \ref{lemma:Holder_modulus_Non-Deg}.\newline
We can then focus our attention on the local diagonal regime, i.e.
\[\Bigl{\vert}\int_{t_0}^{T}D_{\bm{x}_1}\tilde{P}^{\tau,\bm{\xi}'}_{t,s}R^{\tau,\bm{\xi}'}(s,\bm{x})
-D_{\bm{x}_1}\tilde{P}^{\tau,\bm{\xi}'}_{t,s}R^{\tau,\bm{\xi}'}(s,\bm{x}') \, ds\Bigl{\vert}.\]
Since the freezing couples coincide, we can use a Taylor expansion with respect to the first variable $\bm{x}_1$ and write that
\begin{multline*}
\Bigl{\vert}\int_{t_0}^{T}D_{\bm{x}_1}\tilde{P}^{\tau,\bm{\xi}'}_{t,s}R^{\tau,\bm{\xi}'}(s,\bm{x})-D_{\bm{x}_1}\tilde{P}^{
\tau,\bm{\xi}'}_{t,s}R^{\tau,\bm{\xi}'}(s,\bm{x}') \, ds\Bigl{\vert}\\
= \, \Bigl{\vert} \int_{t_0}^{T}\int_{\R^{nd}}\int_{0}^{1}D^2_{\bm{x}_1} \tilde{p}^{\tau,\bm{\xi}'} (t,s,\bm{x}+
\lambda(\bm{x}'-\bm{x}),\bm{y})(\bm{x}'-\bm{x})_1R^{\tau,\bm{\xi}'}(s,\bm{y}) \, d\bm{y} ds d\lambda \Bigr{\vert}.
\end{multline*}
Noticing that we are integrating from $t_0$ to $T$, equation \eqref{eq:translation_inv_for_density} can be rewritten as
\begin{multline}\label{Proof:H\"older_Perturbative_term_Id0}
\Bigl{\vert}\int_{t_0}^{T}D_{\bm{x}_1}\tilde{P}^{\tau,\bm{\xi}'}_{t,s}R^{\tau,\bm{\xi}'}(s,\bm{x})-D_{\bm{x}_1}\tilde{P}^{
\tau,\bm{\xi}'}_{t,s}R^{\tau,\bm{\xi}'}(s,\bm{x}') \, ds\Bigl{\vert}\, \le \, \vert(\bm{x}'-\bm{x})_1\vert\sum_{j=1}^{n}\int_{0}^{1}\int_{t_0
}^{T}\Bigl{\vert} \int_{\R^{nd}}D^2_{\bm{x}_1}\tilde{p}^{\tau,\bm{\xi}'} (t,s,\bm{x},\bm{y})\\
\Bigl{\{}\bigl[\bm{F}_j(s,\bm{y})-\bm{F}_j(s,\bm{\theta}_{t,s}(\bm{\xi}'))\bigr]\cdot D_{\bm{y}_j}u(s,\bm{y})\Bigr{\}} \, d\bm{y}
\Bigr{\vert} ds d\lambda \, =: \, \vert(\bm{x}-\bm{x}')_1\vert\sum_{j=1}^{n}\int_{t_0}^{T}I^d_j(s) ds
\end{multline}
As done before, we are going to treat separately the cases $j=1$ and $j>1$. In the first case, the term $I^{d}_1$ can be easily controlled by
\begin{multline}\label{Proof:H\"older_Perturbative_term_Id1}
I^{d}_1(s) \, \le \, \Vert D_{\bm{y}_1}u\Vert_{L^\infty(L^\infty)} \int_{\R^{nd}} \bigl{\vert}
D^2_{\bm{x}_1}\tilde{p}^{\tau,\bm{\xi}'}(t,s,\bm{x},\bm{y})\bigr{\vert}\,\bigl{\vert}\bm{F}_1(s,\bm{y})-
\bm{F}_1(s,\bm{\theta}_{t,s}(\bm{\xi}'))\bigr{\vert}\, d\bm{y} \\
\le \, C\Vert \bm{F} \Vert_H\Vert u \Vert_{L^{\infty}(C^{\alpha+\beta}_{b,d})}(s-t)^{\frac{\beta-2}{\alpha}}
\end{multline}
where in the last passage we used the smoothing effect for the frozen density $\tilde{p}^{\tau,\bm{\xi}}$ (Equation
\eqref{eq:Smoothing_effects_of_tilde_p}).\newline
On the other side, the case $j>1$ can be exploited using the second Besov control (Lemma \ref{lemma:Second_Besov_COntrols}). For this reason,
we start using integration by parts formula to show that
\[I^{d}_j(s) \, = \, \Bigl{\vert}\int_{\R^{nd}}D_{\bm{y}_j}\cdot\Bigl{\{}D^2_{\bm{x}_1}\tilde{p}^{\tau,\bm{\xi}'}(t,s,\bm{x},\bm{y})
\otimes\bigl[\bm{F}_j(s,\bm{y})-\bm{F}_j(s,\bm{\theta}_{t,s}(\bm{\xi}'))\bigr]\Bigr{\}}u(s,\bm{y})\, d\bm{y} \Bigr{\vert}.\]
Through the duality in Besov spaces \eqref{Besov:duality_in_Besov} and the identification \eqref{Besov:ident_Holder_Besov}, we
then write that
\begin{multline*}
I^{d}_j(s) \,\le \\
 C\Vert u \Vert_{L^\infty(C^{\alpha+\beta}_{b,d})}\int_{\R^{(n-1)d}} \Vert D_{\bm{y}_j}\cdot\Bigl{\{}D^2_{\bm{x}_1} \tilde{p}^{\tau,
\bm{\xi}'}(t,s,\bm{x},\bm{y}_{\smallsetminus j},\cdot)\otimes\bigl[\bm{F}_j(s,\bm{y}_{\smallsetminus j},\cdot) - \bm{F}_j(s,\bm{\theta}_{t,s}(
\bm{\xi}')) \bigr]\Bigr{\}}\Vert_{ B^{-(\alpha_j +\beta_j)}_{1,1}}\,d\bm{y}_{\smallsetminus j}.
\end{multline*}
We can now apply the Second Besov control (Lemma \ref{lemma:Second_Besov_COntrols}) to show that
\begin{equation}\label{Proof:H\"older_Perturbative_term_Idj}
I^d_j(s) \, \le \, C\Vert \bm{F}\Vert_H\Vert u \Vert_{L^\infty(C^{\alpha+\beta}_{b,d})}(s-t)^{\frac{\beta-2}{\alpha}}.
\end{equation}
Going back at Equations \eqref{Proof:H\"older_Perturbative_term_Id0} \eqref{Proof:H\"older_Perturbative_term_Id1} and
\eqref{Proof:H\"older_Perturbative_term_Idj}, we can write that
\begin{multline}\label{Proof:H\"older_Perturbative_term_Id2}
\Bigl{\vert}\int_{t_0}^{T}D_{\bm{x}_1}\tilde{P}^{\tau,\bm{\xi}'}_{t,s}\!R^{\tau,\bm{\xi}'}(s,\bm{x}) -D_{\bm{x}_1} \tilde{P}^{
\tau,\bm{\xi}'}_{t,s}R^{\tau,\bm{\xi}'}(s,\bm{x}') \, ds\Bigl{\vert} \, \le \, C\Vert \bm{F}\Vert_H\Vert u\Vert_{L^\infty(C^{\alpha+\beta}_{b,d})} \vert(\bm{x}-\bm{x}')_1\vert\int_{t_0}^{T}
(s-t)^{\frac{\beta-2}{\alpha}} ds \\
\le \, C\Vert \bm{F}\Vert_H\Vert u \Vert_{L^\infty(C^{\alpha+\beta}_{b,d})} \vert(\bm{x}-\bm{x}')_1\vert
(t_0-t)^{\frac{\alpha+\beta-2}{\alpha}}
\end{multline}
where in the last passage we used that $\frac{\alpha +\beta-2}{\alpha}<0$ to pick the starting point $t_0$ in the integral.\newline
Using that $t_0-t=c_0d^{\alpha}(\bm{x},\bm{x}')$, we can conclude that
\[\Bigl{\vert}\int_{t_0}^{T}D_{\bm{x}_1}\tilde{P}^{\tau,\bm{\xi}'}_{t,s}R^{\tau,\bm{\xi}'}(s,\bm{x})
-D_{\bm{x}_1}\tilde{P}^{\tau,\bm{\xi}'}_{t,s}R^{\tau,\bm{\xi}'}(s,\bm{x}') \, ds\Bigl{\vert}
\, \le \, C c^{\frac{\alpha+\beta-2}{\alpha}}_0\Vert \bm{F}\Vert_H\Vert u \Vert_{L^{\infty}(C^{\alpha+\beta}_{b,d})}
d^{\alpha+\beta-1}(\bm{x},\bm{x}').\]

We can conclude this section showing the H\"older control in the degenerate setting when a diagonal regime is assumed.

\paragraph{Proof of Proposition \ref{lemma:Holder_modulus_Deg} in Diagonal Regime.}
We start recalling that in proposition \eqref{lemma:Holder_modulus_Deg} we assumed fixed a time $t$ in $[0,T]$ and two spatial points
$\bm{x},\bm{x}'$ in $\R^{nd}$ such that $\bm{x}_j=\bm{x}'_j$ if $j \neq i$ for some $i$ in $\llbracket 2 ,n \rrbracket$.

Representation \eqref{eq:Change_of_Freez_point_HOlder} and Duhamel-type expansion \eqref{eq:Expansion_along_proxy} allows to control the
Holder modulus of a solution $u$ analyzing separately the
different terms:
\begin{multline*}
u(t,\bm{x}) - u(t,\bm{x}') \, = \, \Bigl(\tilde{P}^{\tau,\tilde{\bm{\xi}}}_{t,T}g(\bm{x}) - \tilde{P}^{\tau'\!,\bm{\xi}'}_{t,T}\!g(\bm{x}')
\Bigr) + \Bigl(\tilde{G}^{\tau,\bm{\xi}}_{t,t_0}f(t,\bm{x})+ \tilde{G}^{\tau,\tilde{\bm{\xi}}}_{t_0,T}f(t,\bm{x}) -
\tilde{G}^{\tau'\!,\bm{\xi}'}_{t,T}\!f(t,\bm{x}')\Bigr) \\
+ \Bigl(\int_{t}^{t_0}  \tilde{P}^{\tau,\bm{\xi}}_{t,s}R^{\tau,\bm{\xi}}(s,\bm{x})\, ds +\int_{t_0}^{T}
\tilde{P}^{\tau,\tilde{\bm{\xi}}}_{t,s}R^{\tau,\tilde{\bm{\xi}}}(s,\bm{x})\, ds -
\int_{t}^{T}  \tilde{P}^{\tau'\!,\bm{\xi}'}\!R^{\tau'\!,\bm{\xi}'}\!(s,\bm{x}') \, ds\Bigr) + \Bigl(\tilde{P}^{\tau,\bm{\xi}}_{t,t_0}u(t_0,\bm{x})
-\tilde{P}^{\tau,\tilde{\bm{\xi}}}_{t,t_0}u(t_0,\bm{x})\Bigr)
\end{multline*}
for some freezing couples $(\tau,\bm{\xi}),(\tau,\tilde{\bm{\xi}}),(\tau,\bm{\xi}')$ fixed but to be chosen later. As done before, we assume
however from this point further that $\tau=\tau'$ and $\tilde{\bm{\xi}}=\bm{\xi}'$.

\emph{Control on the frozen semigroup.} Noticing that we have taken the same freezing couples since $\tilde{\bm{\xi}}=\bm{\xi}'$, the
control on the frozen semigroup $\bigl{\vert}\tilde{P}^{\tau,\bm{\xi}'}_{t,T}g(\bm{x}) -
\tilde{P}^{\tau,\bm{\xi}'}_{t,T}g(\bm{x}')\bigr{\vert}$ can be obtained exploiting the same argument used in the proof of the H\"older
control (Lemma \ref{lemma:Holder_modulus_proxy_Deg}) for the proxy.
%%%%%%%%%%%%%%%%%%%%%%%%%%%%%%%%%%%%%%%%%%%%%%%%%%%%%%%%%%%%%%%%%%%%%%%%%%%%%%%%%%%%%%%%%%%%%%%

\emph{Control on the Green kernel.} The proof of this estimate essentially matches the previous, analogous one in the
non-degenerate setting. Namely, we follow the proof in the global off-diagonal regime of Proposition \ref{lemma:Holder_modulus_Deg} to
control the local off-diagonal regime contribution $\bigl{\vert} \tilde{G}^{\tau,\bm{\xi}}_{t,t_0}f(t,\bm{x})-\tilde{G}^{\tau,\bm{\xi}'
}_{t,t_0} f(t,\bm{x}') \bigl{\vert}$ while in the locally diagonal regime term
\[\bigl{\vert} \tilde{G}^{\tau,\bm{\xi}'}_{t_0,T} f(t,\bm{x}) - \tilde{G}^{\tau,\bm{\xi}'}_{t_0,T}f(t,\bm{x}')\bigr{\vert},\]
the freezing couples coincide and we can thus exploit the same argument used in the proof of the H\"older control (Lemma
\ref{lemma:Holder_modulus_proxy_Deg}) for the proxy.

\emph{Control on the discontinuity term.}
The proof of this result will follow essentially the one about the off-diagonal regime of the frozen semigroup with respect to the
degenerate variables. It holds that
\begin{multline*}
\tilde{P}^{\tau,\bm{\xi}}_{t,t_0}u(t_0,\bm{x}) \, = \, \int_{\R^{nd}}\tilde{p}^{\tau,\bm{\xi}}(t,t_0,\bm{x},\bm{y})u(t_0,\bm{y})
\, d\bm{y} \\
= \, \int_{\R^{nd}}\frac{1}{\det \bigl(\mathbb{M}_{t_0-t}\bigr)}p_S(t_0-t,\mathbb{M}^{-1}_{t_0-t}\bigl(\tilde{\bm{m}}^{\tau,\bm{\xi}}_{t,
t_0}(\bm{x})-\bm{y}\bigr )u(t_0,\bm{y}) \, d\bm{y} \\
= \, \int_{\R^{nd}}\frac{1}{\det\bigl(\mathbb{M}_{t_0-t}\bigr)}p_S(t_0-t,\mathbb{M}^{-1}_{t_0-t}\bm{z}\bigr)u(t_0,\tilde{\bm{m}}^{
\tau,\bm{\xi}}_{t,t_0}(\bm{x})-\bm{z}) \,d\bm{z}
\end{multline*}
where in the last passage we used the change of variable $\bm{z} = \tilde{\bm{m}}^{\tau,\bm{\xi}}_{t,t_0}(\bm{x})-\bm{y}$.
Since a similar argument works also for $\tilde{P}^{\bm{\xi}'}_{t,t_0}u(t_0,\bm{x})$, it then follows that
\begin{multline*}
\bigl{\vert}\tilde{P}^{\tau,\bm{\xi}}_{t,t_0}u(t_0,\bm{x}) - \tilde{P}^{\tau,\bm{\xi}'}_{t,t_0}u(t_0,\bm{x})\bigr{\vert} \\
=\, \Bigl{\vert}\int_{\R^{nd}}\frac{1}{\det\bigl(\mathbb{M}_{t_0-t}\bigr)}p_S\bigl(t_0-t,\mathbb{M}^{-1}_{t_0-t}\bm{z}\bigr)
\bigl[u(t_0,\tilde{\bm{m}}^{\tau,\bm{\xi}}_{t,t_0}(\bm{x})-\bm{z})-u(t_0,\tilde{\bm{m}}^{\tau,\bm{\xi}'}_{t,t_0}(\bm{x})-\bm{z})
\bigr]\,d\bm{z}\Bigr{\vert}
\end{multline*}
Remembering that $u(t_0,\cdot)$ is Lipschitz with respect to the first non-degenerate variable, we can write now that
\begin{multline*}
\bigl{\vert}\tilde{P}^{\tau,\bm{\xi}}_{t,t_0}u(t_0,\bm{x}) - \tilde{P}^{\tau,\bm{\xi}'}_{t,t_0}u(t_0,\bm{x})\bigr{\vert} \\
\le \, C\Vert u \Vert_{L^\infty(C^{\alpha+\beta}_{b,d})}\bigl[d^{\alpha+\beta}(\tilde{\bm{m}}^{\tau,\bm{\xi}}_{t,t_0}(\bm{x}),\tilde{
\bm{m}}^{\tau,\bm{\xi}'}_{t,t_0}(\bm{x}))+\bigl{\vert} (\tilde{\bm{m}}^{\tau,\bm{\xi}}_{t,t_0}(\bm{x})-\tilde{\bm{m}}^{
\tau,\bm{\xi}'}_{t,t_0}(\bm{x}))_1\bigr{\vert}\bigr]\int_{\R^{nd}}p_S\bigl(t_0-t,
\mathbb{M}^{-1}_{t_0-t} \bm{z}\bigr)\,\frac{d\bm{z}}{\det\bigl(\mathbb{M}_{t_0-t}\bigr)} \\
\le \,C\Vert u \Vert_{L^\infty(C^{\alpha+\beta}_{b,d})}\bigl[d^{\alpha+\beta}(\tilde{\bm{m}}^{\tau,\bm{\xi}}_{t,t_0}(\bm{x}),\tilde{
\bm{m}}^{\tau,\bm{\xi}'}_{t,t_0}(\bm{x}))+\bigl{\vert} (\tilde{\bm{m}}^{\tau,\bm{\xi}}_{t,t_0}(\bm{x})-\tilde{\bm{m}}^{
\tau,\bm{\xi}'}_{t,t_0}(\bm{x}))_1\bigr{\vert}\bigr].
\end{multline*}
Taking $(\bm{\xi}=\bm{\xi}'=\bm{x})$, we can then use the sensitivity controls (Lemma \ref{lemma:Controls_on_means} and Equation
\eqref{eq:Sensitivity_on_mean1}) to show that
\[\bigl{\vert}\tilde{P}^{\tau,\bm{\xi}}_{t,t_0}u(t_0,\bm{x}) - \tilde{P}^{\tau,\bm{\xi}'}_{t,t_0}u(t_0,\bm{x})\bigr{\vert} \, \le \,
C\Vert u\Vert_{L^\infty(C^{\alpha+\beta}_{b,d})}\Vert \bm{F} \Vert_Hc_0^{\frac{\alpha+\beta}{1+\alpha(n-1)}}d^{\alpha+\beta}
(\bm{x},\bm{x}).\]
%%%%%%%%%%%%%%%%%%%%%%%%%%%%%%%%%%%%%%%%%%%%%%%%%%%%%%%%%

\emph{Control on the perturbative term.}
The proof of this Estimate essentially matches the previous, analogous one in the non-degenerate setting. Namely, Inequalities
\eqref{Proof:H\"older_Perturbative_term_Id1}, \eqref{Proof:H\"older_Perturbative_term_Idj} and \eqref{Proof:H\"older_Perturbative_term_Id2}
hold again with $(s-t)^{\frac{\beta-2}{\alpha}}$ replaced by $(s-t)^{\frac{\beta}{\alpha}-\frac{1}{\alpha_i}}$.
\setcounter{equation}{0}

\subsubsection{Mollifying Procedure}
We now make the mollifying parameter $m$ appear again using the notations introduced in Section $3.2$ (see
Equation \eqref{eq:Expansion_along_proxy}). Then, Lemmas \ref{lemma:supremum_norm}, \ref{lemma:Holder_modulus_Non-Deg} and
\ref{lemma:Holder_modulus_Deg} rewrite together in the following way. There exists a constant $C>0$ such that for any $m$ in $\N$,
\begin{equation}\label{eq:A_Priori_Estimates_Reg}
 \Vert u_m \Vert_{L^\infty(C^{\alpha+\beta}_{b,d})} \, \le \, Cc_0^{\frac{\beta-\gamma_n}{\alpha}}\bigl[\Vert g_m
\Vert_{C^{\alpha+\beta}_{b,d}}+\Vert f_m \Vert_{L^\infty(C^{\beta}_{b,d})}\bigr]+C\bigl(c_0^{\frac{\beta-\gamma_n}{\alpha}}\Vert
\bm{F}_m\Vert_H + c_0^{\frac{\alpha+\beta-1}{1+\alpha(n-1)}}\bigr)\Vert u_m \Vert_{L^\infty(C^{\alpha+\beta}_{b,d})}
\end{equation}
where $c_0$ is assumed to be fixed but chosen later. Importantly, $c_0$ and $C$ does not depends on the regularizing parameter $m$. Thus,
letting $m$ go to $\infty$ and remembering the definition \ref{definition:mild_sol} of mild solution $u$, the above expression immediately
implies the A priori estimates (Proposition \ref{prop:A_Priori_Estimates}).

\section{Existence Result}
The aim of this section is to show the well-posedness in a mild sense of the original IPDE \eqref{Degenerate_Stable_PDE}.
Recalling Definition \ref{definition:mild_sol} for a mild solution of the IPDE \eqref{Degenerate_Stable_PDE}, let us consider three
sequences $\{f_m\}_{m\in \N}$, $\{g_m\}_{m\in \N}$
and $\{\bm{F}_m\}_{m\in \N}$ of "regularized" coefficients such that
\begin{itemize}
  \item $\{f_m\}_{m\in \N}$ is in $C^\infty_b((0,T)\times\R^{nd})$ and $f_m$ converges to $f$ in
      $L^\infty\bigl(0,T;C^\beta_{b,d}(\R^{nd})\bigr)$;
  \item $\{g_m\}_{m\in \N}$ is in $C^\infty_b(\R^{nd})$ and $g_m$ converges to $g$ in $C^{\alpha+\beta}_{b,d}(\R^{nd})$;
   \item $\{\bm{F}_m\}_{m\in \N}$ is in $C^\infty_b((0,T)\times\R^{nd};\R^{nd})$ and $\Vert\bm{F}_m-\bm{F}\Vert_H$ converges to $0$.
\end{itemize}
It can be derived through stochastic flows techniques (see e.g. \cite{book:Kunita04}) that there exists a solution $u_m$ in $C^\infty_b((0,T)\times\R^{nd})$ of the "regularized" IPDE:
\[
\begin{cases}
   \partial_t u_m(t,\bm{x})+L_\alpha u_m(t,\bm{x}) + \langle A \bm{x} + \bm{F}_m(t,\bm{x}), D_{\bm{x}}
   u_m(t,\bm{x})\rangle  \, = \, -f_m(t,\bm{x}) &\mbox{on }   (0,T)\times \R^{nd}, \\
    u_m(T,\bm{x}) \, = \, g_m(\bm{x}) & \mbox{on }\R^{nd}.
  \end{cases}
\]
In order to pass to the limit in $m$, we notice now the arguments above for the proof of the Schauder estimates (Equation
\eqref{equation:Schauder_Estimates}) can be applied to the above dynamics, too. Namely, there exists a constant $C>0$ such that
\[\Vert u_m \Vert_{L^\infty(C^{\alpha+\beta}_{b,d})} \, \le \, C\bigl[\Vert f_m\Vert_{L^\infty(C^{\beta}_{b,d})}+\Vert g_m
\Vert_{C^{\alpha+\beta}_{b,d}}\bigr] \, \le \,C\bigl[\Vert f\Vert_{L^\infty(C^{\beta}_{b,d})}+\Vert g
\Vert_{C^{\alpha+\beta}_{b,d}}\bigr].\]
Importantly, the above estimates is uniformly in $m$ and thus, the sequence $\{u_m\}_{m\in \N}$ is bounded in the space $L^\infty\bigl(C^{\alpha+\beta}_{b,d}(\R^{nd})\bigr)$.
From Arzel\`a-Ascoli Theorem, we deduce now that there exists $u$ in $L^\infty\bigl(C^{\alpha+\beta}_{b,d}(\R^{nd})\bigr)$ and a sequence
$\{u_{m_k}\}_{k \in \N}$ of smooth and bounded functions converging to $u$ in $L^\infty\bigl(C^{\alpha+\beta}_{b,d}(\R^{nd})\bigr)$ and such
that $u_{m_k}$ is solution of the "regularized" IPDE \eqref{Regularizied_PDE}. It is then clear that $u$ is a mild solution of the original
IPDE \eqref{Degenerate_Stable_PDE}.

\paragraph{From Mild to Weak Solutions}
We conclude showing that any mild solution $u$ of the IPDE \eqref{Degenerate_Stable_PDE} is indeed a weak solution. The proof
of this result will be essentially an application of the arguments presented before, especially the Second Besov Control (Lemma \ref{lemma:Second_Besov_COntrols}).
Let $u$ be a mild solution of the IPDE \eqref{Degenerate_Stable_PDE} in $L^\infty\bigl(0,T;C^{\alpha+\beta}_{b,d}(\R^{nd})\bigr)$. Recalling
the definition of weak solution in \eqref{eq:Def_weak_sol}, we start fixing a test function $\phi$ in $C^\infty_0\bigl((0,T]\times
\R^{nd}\bigr)$ and passing to the "regularized" setting (see Definition \ref{definition:mild_sol}), we then notice that it holds that
\[\int_{0}^{T}\int_{\R^{nd}}\phi(t,\bm{y})\Bigl(\partial_t+\mathcal{L}_{m,\alpha}\Bigr)u_m(t,\bm{y}) \, d\bm{y} \, = \,
-\int_{0}^{T}\int_{\R^{nd}}\phi(t,\bm{y})f_m(t,\bm{y}) \, d\bm{y}\]
where $\mathcal{L}_{m,\alpha}$ is the "complete" operator defined in \eqref{eq:Complete_Operator} but with respect to the regularized
coefficients. An integration by parts allows now to move the operators to the test function. Indeed, remembering that
$u_m(T,\cdot)=g_m(\cdot)$, it holds that
\begin{equation}\label{TOweak:1}
\int_{0}^{T}\int_{\R^{nd}}\Bigl(-\partial_t+\mathcal{L}_{m,\alpha}^*\Bigr)\phi(t,\bm{y})u_m(t,\bm{y}) \, d\bm{y}dt +
\int_{\R^{nd}}\phi(T,\bm{y})g_m(\bm{y}) \, d\bm{y} \, = \, -\int_{0}^{T}\int_{\R^{nd}}\phi(t,\bm{y})f_m(t,\bm{y}) \, d\bm{y}dt
\end{equation}
where $\mathcal{L}^*_{m,\alpha}$ denotes the formal adjoint of $\mathcal{L}_{m,\alpha}$. We would like now to go back to the solution $u$,
letting $m$ go to $\infty$. We start rewriting the right-hand side term  in the following way:
\[\int_{0}^{T}\int_{\R^{nd}}\phi(t,\bm{y})f_m(t,\bm{y}) \, d\bm{y}dt \, = \, \int_{0}^{T}\int_{\R^{nd}}\phi(t,\bm{y})f(t,\bm{y}) \,
d\bm{y}dt + \int_{0}^{T}\int_{\R^{nd}}\phi(t,\bm{y})\bigl[f_m-f\bigr](t,\bm{y}) \, d\bm{y}dt.\]
Exploiting that by assumption, $f_m$ converges to $f$ in $L^\infty(0,T;C^{\beta}_{b,d}(\R^{nd}))$, it is easy to see that the second
contribution above goes to $0$ if we let $m$ go to $\infty$. A similar argument can be used to show that
\[\int_{\R^{nd}}\phi(T,\bm{y})g_m(\bm{y}) \, d\bm{y} \, \overset{m}{\to} \, \int_{\R^{nd}}\phi(T,\bm{y})g(\bm{y}) \, d\bm{y}.\]
On the other hand, we can decompose the first term in the left-hand side of Equation \eqref{TOweak:1} as
\begin{equation}\label{weak:decom}
\int_{0}^{T}\int_{\R^{nd}}\Bigl(-\partial_t+\mathcal{L}_{m,\alpha}^*\Bigr)\phi(t,\bm{y})u_m(t,\bm{y}) \, d\bm{y}dt \, = \, \int_{0}^{T}\int_{\R^{nd}}\Bigl(-\partial_t+\mathcal{L}_\alpha^*\Bigr)\phi(t,\bm{y})u(t,\bm{y}) \, d\bm{y}dt + R^1_m + R^2_m
\end{equation}
where above we have denoted
\begin{align*}
  R^1_m \, & = \, \int_{0}^{T}\int_{\R^{nd}}\Bigl[\mathcal{L}_{\alpha}^*-\mathcal{L}_{m,\alpha}^*\Bigr]\phi(t,\bm{y})u_m(t,\bm{y}) \, d\bm{y}dt\\
  R^2_m \, & = \, \int_{0}^{T}\int_{\R^{nd}}\Bigl(-\partial_t+\mathcal{L}_{\alpha}^*\Bigr)\phi(t,\bm{y})\bigl[u_m(t,\bm{y})-u(t,\bm{y})\bigr] \, d\bm{y}dt
\end{align*}
with $\mathcal{L}_{\alpha}^*$ as the formal adjoint of the complete operator $\mathcal{L}_\alpha$. Noticing that
\[\Bigl[\mathcal{L}_{\alpha}^*-\mathcal{L}_{m,\alpha}^*\Bigr]\phi(t,\bm{y}) \, = \,D_{\bm{y}}\cdot\bigl{\{}\phi(t,\bm{y})[\bm{F}(t,\bm{y})-\bm{F}_m(t,\bm{y})] \bigr{\}},\]
it is clear that the remainder contribution $R^1_m$ can be essentially handled as in the introduction of Section $5.1$, exploiting that $\Vert \bm{F}-\bm{F}_m\Vert_H\to 0$.\newline
To control instead the second contribution $R^2_m$, we start decomposing it as
\begin{multline*}
R^2_m \, = \, -\int_{0}^{T}\int_{\R^{nd}}\partial_t\phi(t,\bm{y})\bigl[u_m-u\bigr](t,\bm{y}) \, d\bm{y}dt + \sum_{j=1}^{n}
\int_{0}^{T}\int_{\R^{nd}}D_{\bm{y}_j}\bigl[\phi \bm{F}_j\bigr](t,\bm{y})\bigl[u_m-u\bigr](t,\bm{y}) \, d\bm{y}dt \\
=: \, R^2_{0,m}+\sum_{j=1}^{n}R^2_{j,m}.
\end{multline*}
We firstly observe that $\vert R^2_{0,m}\vert$ goes to $0$ if we let $m$ go to $\infty$, since $\Vert u-u_m\Vert_{L^\infty(C^{\alpha+\beta}_{b,d})
}\overset{m}{\to} 0$. On the other hand, an integration by parts allows to show that
\[\vert R^2_{1,m} \vert \, = \, \Bigl{\vert}\int_{0}^{T}\int_{\R^{nd}}\bigl[\phi \bm{F}\bigr](t,\bm{y})D_{\bm{y}_j}\bigl[u_m-u\bigr](t,\bm{y}) \,
d\bm{y}dt\Bigr{\vert}\]
which again tends to $0$ when $m$ goes to $\infty$.
To control instead the contributions $R^m_{j,m}$ for $j>1$, the point is to use the Besov duality argument again. Namely, from Equations
\eqref{Besov:duality_in_Besov}, \eqref{Besov:ident_Holder_Besov} and with the notations in \eqref{eq:notation_smallsetminus}, it holds that
\begin{multline*}
\vert R^2_{j,m} \vert \, \le \, \int_{0}^{T}\int_{\R^{d(n-1)}}\bigl{\Vert}D_{\bm{y}_j}\bigl[\phi \bm{F}\bigr](t,\bm{y}_{\smallsetminus
j},\cdot)\bigr{\Vert}_{B^{-(\alpha_j+\beta_j)}_{1,1}}\bigl{\Vert}\bigl[u_m-u\bigr](t,\bm{y}_{\smallsetminus
j},\cdot)\bigr{\Vert}_{B^{\alpha_j+\beta_j}_{\infty,\infty}} \, d\bm{y}_{\smallsetminus j}dt \\
\le \, \int_{0}^{T}\int_{\R^{d(n-1)}}\bigl{\Vert}D_{\bm{y}_j}\bigl[\phi \bm{F}\bigr](t,\bm{y}_{\smallsetminus
j},\cdot)\bigr{\Vert}_{B^{-(\alpha_j+\beta_j)}_{1,1}}\bigl{\Vert}\bigl[u_m-u\bigr](t,\bm{y}_{\smallsetminus
j},\cdot)\bigr{\Vert}_{C^{\alpha_j+\beta_j}_b} \, d\bm{y}_{\smallsetminus j}dt.
\end{multline*}
Following the same arguments in the Proof of the Second Besov Control (Lemma \ref{lemma:Second_Besov_COntrols}), we know that there exists a
constant $C$ such that $\bigl{\Vert}D_{\bm{y}_j}\bigl[\phi \bm{F}\bigr](t,\bm{y}_{\smallsetminus
j},\cdot)\bigr{\Vert}_{B^{\alpha_j+\beta_j}_{1,1}} \le C\psi_j(t,\bm{y}_{\smallsetminus j})$ where $\psi_j$ has compact support on
$\R^{d(n-1)}$. \newline
Since moreover $\Vert u_m -u \Vert$ goes to zero with $m$, we easily deduce that $R^2_{m,j}\overset{m}{\to}0$ for any $j$ in $\llbracket
2,n\rrbracket$. From the above controls, we can deduce now that $R^1_m + R^2_m\overset{m}{\to} 0$. From Equation \eqref{weak:decom}, it then
follows that
\[\int_{0}^{T}\int_{\R^{nd}}\Bigl(-\partial_t+\mathcal{L}_{m,\alpha}^*\Bigr)\phi(t,\bm{y})u_m(t,\bm{y}) \, d\bm{y}dt \, \overset{m}{\to} \,
\int_{0}^{T}\int_{\R^{nd}}\Bigl(-\partial_t+\mathcal{L}_\alpha^*\Bigr)\phi(t,\bm{y})u(t,\bm{y}) \, d\bm{y}dt\]
and we have concluded.

\setcounter{equation}{0}
\section{Extensions}
As already said in the introduction, our assumption of (global) H\"older regularity on the drift $\bar{\bm{F}}$, as well as the choice of
considering a perturbed Ornstein-Uhlenbeck operator instead of a more general non-linear dynamics, was done to
preserve, as possible, the clarity and understandability of the article. In this conclusive section, we would like to explain briefly how it
possible to naturally extend it.
\subsection{General Drift}
We start illustrating how the perturbative method explained above can be easily adapted to work in a
more general setting. In particular, the same results (Schauder-type estimates and well-posedness of the IPDE \eqref{Degenerate_Stable_PDE})
can be shown also for an equation of the form:
\begin{equation}
\label{Ext:Degenerate_Stable_PDE}
\begin{cases}
   \partial_t u(t,\bm{x})+ L_\alpha u(t,\bm{x}) + \langle\bar{\bm{F}}(t,\bm{x}), D_{\bm{x}}u(t,\bm{x})\rangle \, =
   \, -f(t,\bm{x}), & \mbox{on } (0,T)\times \R^{nd} \\
    u(T,\bm{x}) \, = \, g(\bm{x}) & \mbox{on }\R^{nd}.
  \end{cases}
\end{equation}
where $\bar{\bm{F}}(t,\bm{x})=\bigl(\bar{\bm{F}}_1(t,\bm{x}),\dots,\bar{\bm{F}}_n(t,\bm{x})\bigr)$ has the following structure
\[\bar{\bm{F}}_i(t,\bm{x}_{(i-1)\vee1},\dots,\bm{x}_n).\]
We remark in particular that if for any $i$ in
$\llbracket 2,n\rrbracket$, $\bar{\bm{F}}_i$ is linear with respect to $\bm{x}_{i-1}$ and independent from time, the previous analysis
works since we can rewrite $\bar{\bm{F}}(t,\bm{x}) =A\bm{x}+\bm{F}(t,\bm{x})$.

In order to deal with this more general dynamics addressed in the diffusive setting in \cite{Chaudru:Honore:Menozzi18_Sharp}, we will need however to add some additional constraints and to modify slightly the ones
presented in assumption $(\bm{A})$. First of all, the non-degeneracy assumption $(\textbf{H})$ does not make sense in this new framework and
it will be replaced by the following condition:
\begin{description}
  \item[(H')] the matrix $D_{\bm{x}_{i-1}}\bar{\bm{F}}_i(t,\bm{x})$ has full rank $d$ for any $i$ in $\llbracket2,n
      \rrbracket$ and any $(t,\bm{x})$ in $[0,T]\times \R^{nd}$.
\end{description}
In particular, we will say that assumption $(\bar{\textbf{A}})$ is in force when
\begin{description}
   \item[(S')] assumption (\textbf{ND}) and (\textbf{H'}) are satisfied and the drift $\bar{\bm{F}}=(\bar{\bm{F}}_1,\dots,\bar{\bm{F}}_n)$ is such that for any
       $i$ in $\llbracket 2,n\rrbracket$, $\bar{\bm{F}}_i$ depends only on time and on the last $n-(i-2)\vee 0$ components, i.e.\
       $\bar{\bm{F}}_i(t,\bm{x}_{i-1},\dots,\bm{x}_n)$;
  \item[(P')] $\alpha$ is a number in $(0,2)$, $\beta$ is in $(0,1)$ and it holds that
  \[\beta<\alpha, \quad \alpha+\beta\in (1,2) \, \, \text{ and }\,\, \beta < (\alpha-1)(1+\alpha(n-1));\]
\item[(R')] The source $f$ is in $L^\infty(0,T;C^{\beta}_{b,d}(\R^{nd}))$, the terminal condition $g$ is in
      $C^{\alpha+\beta}_{b,d}(\R^{nd})$ and for any $i$ in $\llbracket 1,n\rrbracket$, $\bar{\bm{F}}_i$ belongs
      $L^\infty(0,T;C^{\gamma_i+\beta}_{d}(\R^{nd}))$ where $\gamma_i$ was defined in \eqref{Drift_assumptions}.
    \end{description}

To prove Schauder-type estimates for a solution of equation \eqref{Ext:Degenerate_Stable_PDE}, our idea is to adapt the perturbative
approach to this new dynamics. In particular, we can exploit the differentiability of $\bar{\bm{F}}_i$ with respect to
$\bm{x}_{i-1}$ to "linearize" it along a flow that takes into account the perturbation (cf. Section $3.1$). Namely, we
are interested in:
\begin{equation}\label{Extension:Frozen_PDE}
\begin{cases}
    \partial_t \bar{u}^{\tau,\bm{\xi}}(t,\bm{x})+L_\alpha \bar{u}^{\tau,\bm{\xi}}(t,\bm{x}) + \bigl{\langle}\bar{A}^{\tau,\bm{\xi}}_t
   \bigl(\bm{x}-\bar{\bm{\theta}}_{\tau,t}(\bm{\xi})\bigr) + \bar{\bm{F}}(t,\bar{\bm{\theta}}_{\tau,t}(\bm{\xi})), D_{\bm{x}}
   \bar{u}^{\tau,\bm{\xi}}(t,\bm{x})\bigr{\rangle} \, &= \, -f(t,\bm{x}), \\
    \bar{u}^{\tau,\bm{\xi}}(T,\bm{x}) \, &= \, g(\bm{x})
  \end{cases}
\end{equation}
where the time-dependent matrix $\bar{A}^{\tau,\bm{\xi}}_t$ is defined through
\[\bigl[\bar{A}^{\tau,\bm{\xi}}_t\bigr]_{i,j} \, = \,
\begin{cases}
  D_{\bm{x}_{i-1}}\bar{\bm{F}}_i(t,\bm{\theta}_{\tau,t}(\bm{\xi})), & \mbox{if } j=i-1 \\
  0_{d\times d}, & \mbox{otherwise}
\end{cases}
\]
and $\bar{\bm{\theta}}_{\tau,t}(\bm{\xi}))$ is a fixed flow satisfying the dynamics
\begin{equation}
\bar{\bm{\theta}}_{\tau,t}(\bm{\xi}) \, = \, \bm{\xi} +\int_{\tau}^{t} \bar{\bm{F}}(v,\bar{\bm{\theta}}_{\tau,v}(\bm{\xi})) \, dv.
\end{equation}
A first significant difference with respect to the previous approach consists in handling a time-dependent matrix
$\bar{A}^{\tau,\bm{\xi}}_t$.
Indeed, it is possible to modify slightly the presentation in \cite{Priola:Zabczyk09} (allowing time-dependency on $A$) in order to
show that under assumption (\textbf{S'}), the two parameters semigroup $\bigl(\bar{P}^{\tau,\bm{\xi}}_{t,s}\bigr)_{t\le s}$  associated with
the proxy operator
\[L_\alpha + \langle\bar{A}^{\tau,\bm{\xi}}_t \bigl(\bm{x}-\bar{\bm{\theta}}_{\tau,t}(\bm{\xi})\bigr) +
\bar{\bm{F}}(t,\bar{\bm{\theta}}_{\tau,t}(\bm{\xi})), D_{\bm{x}} \rangle\]
admits a density $\bar{p}^{\tau,\bm{\xi}}$ and that it can be rewritten as
\[ \bar{p}^{\tau,\bm{\xi}}(t,s,x,y)\,  = \, \frac{1}{\det (\mathbb{M}_{s-t})} p_S\bigl( s-t,\mathbb{M}^{-1}_{s-t} (\bm{y} -
\bar{\bm{m}}^{\tau,\bm{\xi}}_{t,s}(\bm{x}))\bigr).\]
Here, the notations for $p_S$ and $\mathbb{M}_{t}$ remain the same of above while this time the shift $\bar{\bm{m}}^{\tau,\bm{\xi}}_{t,s}$
is defined through
\[\bar{\bm{m}}^{\tau,\bm{\xi}}_{t,s}(\bm{x}) \, = \, \mathcal{R}^{\tau,\bm{\xi}}_{t,s}\bm{x} + \int_{t}^{s}\mathcal{R}^{\tau,\bm{\xi}}_{v,s}
\bigl[\bar{\bm{F}}(v, \bar{\bm{\theta}}_{\tau,v}(\bm{\xi}))-\bar{A}^{\tau,\bm{\xi}}_v\bar{\bm{\theta}}_{\tau,v}(\bm{\xi})\bigr] \, dv\]
where $\mathcal{R}^{\tau,\bm{\xi}}_{t,s}$ is the time-ordered resolvent of $\bar{A}^{\tau,\bm{\xi}}_s$ starting at time $t$, i.e.
\[\begin{cases}
    d\mathcal{R}^{\tau,\bm{\xi}}_{t,s} \, = \,  \bar{A}^{\tau,\bm{\xi}}_s\mathcal{R}^{\tau,\bm{\xi}}_{t,s}ds, & \mbox{on } [t,T] \\
    \mathcal{R}^{\tau,\bm{\xi}}_{t,t}\, = \, I.
  \end{cases}\]
We can as well refer to \cite{Huang:Menozzi15} for related issues (see Proposition $3.2$ and Section C about the
linearization, therein).

Following the same reasonings of Propositions \ref{prop:frozen_Duhamel_Formula} and \ref{prop:Expansion_along_proxy}, it is then possible to
state a Duhamel type formula suitable for the IPDE \ref{Ext:Degenerate_Stable_PDE}:
\begin{equation}\label{Ext:Expansion_along_proxy}
u(t,\bm{x}) \, = \, \bar{P}^{\tau,\bm{\xi}}_{t,T}g(\bm{x}) + \int_{t}^{T}\bar{P}^{\tau,\bm{\xi}}_{t,s} \bigl[f(s,\cdot) +\bar{R}^{\tau,
\xi}(s,\cdot)\bigr](\bm{x}) \, ds
\end{equation}
where the remainder term is given now by
\[\bar{R}^{\tau,\xi}(t,\bm{x}) \, = \, \langle\bm{F}(t,\bm{x})-\bm{F}(t,\bar{\bm{\theta}}_{\tau,t}(\bm{\xi}))-
\bar{A}_t^{\tau,\bm{\xi}}\bigl(\bm{x}-\bar{\bm{\theta}}_{\tau,t}(\bm{\xi})\bigr),D_{\bm{x}}u(t,\bm{x})\rangle.\]

Looking back at the first part of the article, it is important to notice that the main steps of proof (cf. Equation
\eqref{eq:Smoothing_effects_of_tilde_p}, Propositions \ref{prop:Schauder_Estimates_for_proxy}, \ref{prop:A_Priori_Estimates} and Section
$3.3$) does not rely on the explicit formulas for $\bar{\bm{m}}^{\tau,\bm{\xi}}_{t,s}(\bm{x})$ and
$\bar{R}^{\tau,\xi}$ but instead, they
exploit only the Besov controls for the remainder $\bar{R}^{\tau,\xi}$ (cf. Section $5.1$ ) and the controls on the shift
$\bar{\bm{m}}^{\tau,\bm{\xi}}_{t,s}(\bm{x})$ (Section A.$2$).
Hence, once we have proven the suitable controls, the proofs of the analogous results for the new dynamics \eqref{Ext:Degenerate_Stable_PDE}
can be obtained easily modifying slightly the notations and following the same reasonings above.\newline
For example, exploiting that
\[\bar{\bm{m}}^{\tau,\bm{\xi}}_{t,s}(\bm{x}) \, = \, \bm{x} + \int_{t}^{s}\mathcal{R}^{\tau,\bm{\xi}}_{t,v}\Bigl(\bar{\bm{m}}^{\tau,
\bm{\xi}}_{t,v}(\bm{x}) - \bm{\theta}_{\tau,v}(\bm{\xi})\Bigr) + \bm{F}(v, \bm{\theta}_{\tau,v}(\bm{\xi})) \, dv,\]
we can follow the same method of proof in the above lemma \ref{lemma:identification_theta_m} to show again that
\[\bar{\bm{m}}^{\tau,\bm{\xi}}_{t,s}(\bm{x}) \, = \, \bar{\bm{\theta}}_{\tau,s}(\bm{\xi})\]
taking $\tau=t$ and $\bm{\xi}=\bm{x}$.

Letting the interested reader look in the appendix for the suggestions on how to extend the controls on the shift
$\bar{\bm{m}}^{\tau,\bm{\xi}}_{t,s}(\bm{x})$ in this more general setting, we will focus now on proving the Besov controls.
First of all, we notice immediately that the proof of the first Besov control \ref{lemma:First_Besov_COntrols} relies essentially only on
the smoothing effect \eqref{eq:Smoothing_effects_of_tilde_p} and thus, it can be obtained following the same reasoning above. The proof of
the second Besov control (Lemma \ref{lemma:Second_Besov_COntrols}) in this framework is a bit more involved and we are going to
explain it below more in details.\newline
We start noticing that the second Besov Lemma \ref{lemma:Second_Besov_COntrols} can be reformulated for the new dynamics in the
following way
\[\int_{\R^{(n-1)d}}\Bigl{\Vert}D_{\bm{y}_j}\cdot\Bigl{\{}D^{\vartheta}_{\bm{x}}\bar{p}^{\tau,\bm{\xi}}(t,s,\bm{x},
\bm{y}_{\smallsetminus j},\cdot) \otimes \bar{\Delta}^{\tau,\bm{\xi}}_j(s,\bm{y}_{\smallsetminus j},\cdot) \Bigr{\}}
\Bigr{\Vert}_{B^{-(\alpha_j+\beta_j)}_{1,1}} \, d\bm{y}_{\smallsetminus j}\, \le \, C\Vert \bar{\bm{F}}\Vert_H
(s-t)^{\frac{\beta}{\alpha} -\sum_{k=1}^{n}\frac{\vartheta_k}{\alpha_k}}\]
taking $(\tau,\bm{\xi})=(t,\bm{x})$, where we have denoted for simplicity
\[\bar{\Delta}^{\tau,\bm{\xi}}_j(s,\bm{y}) \, := \bar{\bm{F}}_j(s,\bm{y})-\bar{\bm{F}}_j(s,\bm{\theta}_{\tau,s}(\bm{\xi}))
-D_{\bm{x}_{j-1}}\bar{\bm{F}}_j(s,\bm{\theta}_{\tau,s}(\bm{\xi}))\bigl(\bm{y}-\bm{\theta}_{\tau,s}(\bm{\xi})\bigr)_{j-1}\]
for any $j$ in $\llbracket 2,n\rrbracket$.
The above control can be obtained mimicking the proof in the second Besov control (Lemma \ref{lemma:Second_Besov_COntrols}), exploiting this
time that
\[\vert \bar{\Delta}^{\tau,\bm{\xi}}_j(s,\bm{y}) \vert \le
C\Vert\bar{\bm{F}}\Vert_Hd^{1+\alpha(j-2)+\beta}_{j-1:n}\bigl(\bm{y},\bar{\bm{\theta}}_{\tau,s}(\bm{\xi})\bigr)\]
and the additional assumption (\textbf{P'}) in order to make the partial smoothing effect (Equation \eqref{eq:Partial_Smoothing_Effect})
work in this framework too.\newline
The main difference in the proof is related to the control of the component $J_2(v,\bm{y}_{\smallsetminus j},z)$ appearing in
Equation \eqref{eq:J_2_component_for_extension}. Namely,
\[\int_{\R^d}D_z \partial_vp_h(v,z-\bm{y}_j)\cdot\Bigl{\{}\bar{\Delta}^{\tau,\bm{\xi}}_j(s,\bm{y}_{\smallsetminus j},z)\otimes
\int_{0}^{1}D_{\bm{y}_j}D^\vartheta_{\bm{x}} \bar{p}^{\tau,\bm{\xi}}(t,s,\bm{x},\bm{y}_{\smallsetminus j},z+\lambda(\bm{y}_j-z)) \cdot
(\bm{y}_j-z)\Bigr{\}} \,d\lambda d\bm{y}_j\]
with our new notations. Indeed, the dependence of $\bar{\bm{F}}$ on $\bm{x}_{j-1}$ pushes us to add a new term in the difference $\vert
\bar{\bm{F}}_j(s,\bm{y}_{\smallsetminus j}, z)-\bar{\bm{F}}_j(s,\bm{\theta}_{\tau,s}(\bm{\xi})) \vert$ (now,
$\vert\bar{\Delta}^{\tau,\bm{\xi}}_j (s,\bm{y}_{\smallsetminus j},z)\vert$) before splitting it up. In particular,
\begin{multline*}
\vert \bar{\Delta}^{\tau,\bm{\xi}}_j (s,\bm{y}_{\smallsetminus j},z) \vert \\
= \, \bigl{\vert} \bar{\bm{F}}_j(s,\bm{y}_{\smallsetminus j},z)-\bar{\bm{F}}_j(s,\bm{\theta}_{\tau,s}(\bm{\xi}))-D_{\bm{x}_{j-1}}
\bar{\bm{F}}_j(s,\bar{\bm{\theta}}_{\tau,s}(\bm{\xi}))\bigl(\bm{y}-\bar{\bm{\theta}}_{\tau,s}(\bm{\xi})\bigr)_{j-1} \pm
\bar{\bm{F}}_j(s,\bm{y}_{1:j-1},\bigl(\bar{\bm{\theta}}_{\tau,s} (\bm{\xi})\bigr)_{j:n}) \bigr{\vert} \\
\le \, C\Vert \bar{\bm{F}}\Vert_H\bigl(\vert z-\bigl(\bar{\bm{\theta}}_{\tau,s}(\bm{\xi})\bigr)_j\vert^{\frac{1+\alpha(j-2)+\beta}{1
+\alpha(j-1)}}+\sum_{k=j+1}^{n}\vert \bigl(\bm{y}-\bar{\bm{\theta}}_{\tau,s}(\bm{\xi})\bigr)_k
\vert^{\frac{1+\alpha(j-2)+\beta}{1+\alpha(k-1)}}+\vert \bigl(\bm{y}-\bar{\bm{\theta}}_{\tau,s}(\bm{\xi})\bigr)_{j-1}\vert^{\frac{1+
\alpha (j-2)+\beta}{1+\alpha (j-2)}}\bigr) \\
\le \, C\Vert \bar{\bm{F}} \Vert_H\bigl(\vert \lambda(z-\bm{y}_j) \vert^{\frac{1+\alpha(j-2)+\beta}{1 +\alpha(j-1)}}+\vert
z+\lambda(\bm{y}_j-z)-\bm{\theta}_{\tau,s}(\bm{\xi})_j\vert^{\frac{1+ \alpha(j-2) +\beta}{1 +\alpha(j-1)}}+\sum_{k=j+1}^{n}\vert
\bm{y}-\bar{\bm{\theta}}_{\tau,s}(\bm{\xi})_k\vert^{\frac{1+\alpha(j-2)+\beta}{1+\alpha(k-1)}}\\
+\vert \bigl(\bm{y} - \bar{\bm{\theta}}_{\tau,s}(\bm{\xi})
\bigr)_{j-1}\vert^{\frac{1+\alpha (j-2)+\beta}{1+\alpha (j-2)}}\bigr)\, \le \, C\Vert \bar{\bm{F}} \Vert_H\bigl(\vert z-\bm{y}_j
\vert^{\frac{1+\alpha(j-2)+\beta}{1 +\alpha(j-1)}} +
d^{1+\alpha(j-2)+\beta}_{j+1:n}((\bm{y}_{\smallsetminus j},z+\lambda(\bm{y}_j-z)),\bar{\bm{\theta}}_{\tau,s}(\bm{\xi}))\bigr).
\end{multline*}
The remaining part of the proof exactly matches the original method in Lemma \ref{lemma:Second_Besov_COntrols}.

Even in this more general framework, it is thus possible to obtain the following:

\begin{theorem}[Well-posedness]
Under ${(\bar{\rm\textbf{A}})}$, there exists a unique mild solution $u$ of \eqref{Ext:Degenerate_Stable_PDE} such that
\[\Vert u \Vert_{L^\infty(C^{\alpha+\beta}_d)} \, \le \, C \bigl[\Vert f \Vert_{L^\infty(C^{\beta}_{b,d})} + \Vert g
\Vert_{C^{\alpha+\beta}_{b,d}}\bigr].\]
\end{theorem}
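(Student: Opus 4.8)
The plan is to reproduce, step by step, the perturbative scheme of Sections 3--6, replacing everywhere the Ornstein--Uhlenbeck proxy of Section 3.1 by the linearized proxy \eqref{Extension:Frozen_PDE}, whose density $\bar{p}^{\tau,\bm{\xi}}$ is expressed through the time-ordered resolvent $\mathcal{R}^{\tau,\bm{\xi}}_{t,s}$ and the shift $\bar{\bm{m}}^{\tau,\bm{\xi}}_{t,s}$. First I would establish, adapting \cite{Priola:Zabczyk09} to the time-inhomogeneous matrix $\bar{A}^{\tau,\bm{\xi}}_t$, that the two-parameter semigroup $(\bar{P}^{\tau,\bm{\xi}}_{t,s})_{t\le s}$ admits the stated density and that $\mathcal{R}^{\tau,\bm{\xi}}_{t,s}$ inherits from the sub-diagonal, full-rank structure of $\bar{A}^{\tau,\bm{\xi}}_t$ (a consequence of $(\textbf{H'})$) the scaling bounds $\bigl|[\mathcal{R}^{\tau,\bm{\xi}}_{t,s}]_{i,j}\bigr|\le C(s-t)^{i-j}$ for $j\le i$ and $0$ otherwise, together with the matching bounds on its inverse, uniformly in $(\tau,\bm{\xi})$; this is precisely where one invokes \cite{Huang:Menozzi15}. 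These bounds yield the analog of the Scaling Lemma \ref{lemma:Scaling_Lemma} (hence of Lemma \ref{lemma:link_derivative_density}) and, through the same change of variables as in Lemma \ref{lemma:Smoothing_effect_frozen}, the smoothing effects \eqref{eq:Smoothing_effects_of_tilde_p} and the partial smoothing effect \eqref{eq:Partial_Smoothing_Effect} for $\bar{p}^{\tau,\bm{\xi}}$; here the extra hypothesis $\beta<(\alpha-1)(1+\alpha(n-1))$ in $(\textbf{P'})$ is exactly what makes the exponent $1+\alpha(j-2)+\beta$ integrable against $\bar{p}^{\tau,\bm{\xi}}$ at every level, as in \eqref{a1}.

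Granted these building blocks, the argument is structurally identical to the one already carried out. The First Besov Control (Lemma \ref{lemma:First_Besov_COntrols}) depends only on the smoothing effect, so it transfers verbatim and gives, as in Section 4.2, the Schauder estimates for the proxy, $\Vert \bar{u}^{\tau,\bm{\xi}}\Vert_{L^\infty(C^{\alpha+\beta}_{b,d})}\le C[\Vert g\Vert_{C^{\alpha+\beta}_{b,d}}+\Vert f\Vert_{L^\infty(C^\beta_{b,d})}]$. Next I would record the Duhamel expansion \eqref{Ext:Expansion_along_proxy} with remainder $\bar{R}^{\tau,\xi}$, as in Proposition \ref{prop:Expansion_along_proxy}, and the identification $\bar{\bm{m}}^{\tau,\bm{\xi}}_{t,s}(\bm{x})=\bar{\bm{\theta}}_{\tau,s}(\bm{\xi})$ for $(\tau,\bm{\xi})=(t,\bm{x})$, proved by the Gr\"onwall argument of Lemma \ref{lemma:identification_theta_m}. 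The only genuinely new computation is the Second Besov Control for $\bar{R}^{\tau,\xi}$: one writes $\bar{R}^{\tau,\xi}(s,\bm{y})=\sum_j \bar{\Delta}^{\tau,\bm{\xi}}_j(s,\bm{y})\cdot D_{\bm{y}_j}u(s,\bm{y})$ and uses the pointwise bound $|\bar{\Delta}^{\tau,\bm{\xi}}_j(s,\bm{y})|\le C\Vert\bar{\bm{F}}\Vert_H d^{1+\alpha(j-2)+\beta}_{j-1:n}(\bm{y},\bar{\bm{\theta}}_{\tau,s}(\bm{\xi}))$, which follows from the $C^{\gamma_j+\beta}_d$-regularity of $\bar{\bm{F}}_j$ in $(\textbf{R'})$ together with a first-order Taylor expansion in $\bm{x}_{j-1}$. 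Reproducing the cancellation and Taylor argument of Lemma \ref{lemma:Second_Besov_COntrols} with this bound --- the only change being the additional harmless term in the splitting of $J_2$ coming from the dependence of $\bar{\bm{F}}_j$ on $\bm{x}_{j-1}$, handled exactly as displayed in Section 7.1 --- yields the A priori estimate \eqref{eq:A_Priori_Estimates} with $\Vert\bar{\bm{F}}\Vert_H$ in place of $\Vert\bm{F}\Vert_H$, once the flow-sensitivity estimates of Appendix A.2 are transferred to $\bar{\bm{\theta}}$ and $\bar{\bm{m}}$.

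From here the conclusion follows as in Section 3.3 and Section 6. The scaling $(t,\bm{x})\mapsto(\lambda t,\mathbb{T}_\lambda\bm{x})$ transforms \eqref{Ext:Degenerate_Stable_PDE} into an equation of the same form with $\bar{\bm{F}}_\lambda$ again satisfying $(\bar{\textbf{A}})$, with $\Vert\bar{\bm{F}}_\lambda\Vert_H=\lambda^{\beta/\alpha}\Vert\bar{\bm{F}}\Vert_H$ and the homogeneity relations of Lemma \ref{lemma:Scaling_Homogeneity_Norms} intact; choosing first $c_0$ so that $Cc_0^{(\alpha+\beta-1)/(1+\alpha(n-1))}=1/4$ and then $\lambda$ so that $Cc_0^{(\beta-\gamma_n)/\alpha}\Vert\bar{\bm{F}}_\lambda\Vert_H=1/4$ lets the $\Vert u\Vert_{L^\infty(C^{\alpha+\beta}_{b,d})}$-term be absorbed, and the backward iteration over $\lceil T_0/T\rceil$ subintervals removes the small-time restriction, giving the Schauder bound on $[0,T_0]$ for arbitrary $T_0$. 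Uniqueness is then immediate by linearity of \eqref{Ext:Degenerate_Stable_PDE} (apply the Schauder estimate with $f=g=0$), and existence follows by mollifying $f,g,\bar{\bm{F}}$, solving the regularized equations via stochastic-flow methods (\cite{book:Kunita04}) as in Section 6, and passing to the limit through the uniform Schauder bound and Arzel\`a--Ascoli. The main obstacle is the first paragraph: obtaining, uniformly in the freezing couple, the density representation and the two-sided scaling bounds for the time-ordered resolvent $\mathcal{R}^{\tau,\bm{\xi}}_{t,s}$ of the time-dependent generator, since the clean polynomial structure $[e^{At}]_{i,j}=C_{i,j}t^{j-i}$ exploited throughout Sections 2--5 is no longer available; once that estimate is secured, every subsequent step is a notational adaptation of the Ornstein--Uhlenbeck case.
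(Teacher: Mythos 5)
Your plan coincides with the route the paper itself takes in Section $7.1$: replace the Ornstein--Uhlenbeck proxy by the linearized proxy \eqref{Extension:Frozen_PDE}, represent its density through the time-ordered resolvent $\mathcal{R}^{\tau,\bm{\xi}}_{t,s}$ (adapting \cite{Priola:Zabczyk09}, \cite{Huang:Menozzi15}) so that the smoothing effects and the First Besov control transfer, prove the Second Besov control from the bound $\vert\bar{\Delta}^{\tau,\bm{\xi}}_j(s,\bm{y})\vert\le C\Vert\bar{\bm{F}}\Vert_H d^{1+\alpha(j-2)+\beta}_{j-1:n}(\bm{y},\bar{\bm{\theta}}_{\tau,s}(\bm{\xi}))$, and then run the a priori estimate, scaling, time iteration, uniqueness by linearity and existence by mollification and compactness. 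Up to the level of detail the paper itself gives, this is the same argument.

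One step, however, would fail as written: the identity $\Vert\bar{\bm{F}}_\lambda\Vert_H=\lambda^{\beta/\alpha}\Vert\bar{\bm{F}}\Vert_H$. In the extended setting the norm $\Vert\cdot\Vert_H$, built on $C^{\gamma_i+\beta}_d$, contains at level $i$ in the variable $\bm{x}_{i-1}$ the supremum norm of $D_{\bm{x}_{i-1}}\bar{\bm{F}}_i$, since the directional exponent $\frac{\gamma_i+\beta}{1+\alpha(i-2)}$ exceeds one. For $\bar{\bm{F}}_\lambda=\lambda\mathbb{T}^{-1}_\lambda\bar{\bm{F}}(\lambda t,\mathbb{T}_\lambda\cdot)$ this component picks up the factor $\lambda^{1-\frac{1+\alpha(i-1)}{\alpha}+\frac{1+\alpha(i-2)}{\alpha}}=\lambda^{0}$, i.e.\ it is scale-invariant, and by (\textbf{H'}) it is bounded away from zero; hence $\Vert\bar{\bm{F}}_\lambda\Vert_H$ does not tend to $0$ and no $\lambda$ achieves $Cc_0^{(\beta-\gamma_n)/\alpha}\Vert\bar{\bm{F}}_\lambda\Vert_H=1/4$, so the absorption argument as you state it breaks down. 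The repair is consistent with the structure of the proof: the remainder $\bar{R}^{\tau,\bm{\xi}}$ is estimated only through $\bar{\Delta}^{\tau,\bm{\xi}}_j$, i.e.\ through the H\"older seminorm of $D_{\bm{x}_{j-1}}\bar{\bm{F}}_j$ and the H\"older seminorms of $\bar{\bm{F}}_j$ in the variables $j,\dots,n$, and these quantities do contract like $\lambda^{\beta/\alpha}$ under the scaling; the scale-invariant part $\Vert D_{\bm{x}_{j-1}}\bar{\bm{F}}_j\Vert_{L^\infty}$ enters only through the frozen matrix $\bar{A}^{\tau,\bm{\xi}}_t$, hence through the (uniform in $\lambda$) constants of the smoothing and Besov estimates, exactly as the matrix $A$ did in the original setting, where $A_\lambda=A$. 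State the a priori estimate with that seminorm in place of the full $\Vert\bar{\bm{F}}\Vert_H$ and your scaling and iteration steps, and with them the well-posedness conclusion, go through as in the paper.
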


\subsection{Locally H\"older Drift}
This part is designed to give a brief explanation on how it is possible to deal with the general IPDE \eqref{Ext:Degenerate_Stable_PDE} when
the drift $\bar{\bm{F}}$ is only locally H\"older continuous in space. Namely, we assume with the notations in \eqref{Drift_assumptions} that
\begin{description}
  \item[(LR')] there exists a constant $K_0>0$ such that for any $i$ in $\llbracket1,n\rrbracket$
\[d(\bar{\bm{F}}(t,\bm{x}),\bar{\bm{F}}(t,\bm{x}')) \,\le \, K_0d^{\beta+\gamma_i}(\bm{x},\bm{x}'), \quad t\, \in [0,T], \,
\bm{x},\bm{x}'\in \R^{nd} \, \text{ s.t. } d(\bm{x},\bm{x}')<1.\]
\end{description}
In other words, it is required that $\bar{\bm{F}}_i$ is in $L^\infty(0,T;C^{\beta+\gamma_i}(B(x_0,1/2)))$, uniformly in $x_0\in \R^{nd}$.

Under assumption $(\bar{\textbf{A}})$ (with condition (\textbf{R'}) replaced by (\textbf{LR'})), it is possible to recover the Schauder-type
estimates (Theorem \ref{theorem:Schauder_Estimates}), following the approach developed successfully in \cite{Chaudru:Menozzi:Priola19} for
the non-degenerate, super-critical stable setting.  Roughly speaking, in order to handle the local assumption, as well as the potentially unboundedness of the drift $\bar{\bm{F}}$,
we need to introduce a "localized" version of the Duhamel formulation (cf. Equation \eqref{eq:Expansion_along_proxy}). The key point here is
to multiply a solution $u$ by a suitable bump function $\bar{\eta}^{\tau,\bm{\xi}}$ that "localizes" in space along the deterministic flow
$\bar{\theta}_{\tau,t}(\bm{\xi})$ that characterizes the proxy. Namely, we fix a smooth function $\rho$ that is equal to $1$ on $B(0,1/2)$
and vanishes outside $B(0,1)$) and then define for any $(\tau,\bm{\xi})$ in $[0,T]\times\R^{nd}$,
\[\bar{\eta}^{\tau,\bm{\xi}}(t,\bm{x}) \, := \, \rho(\bm{x}-\bar{\theta}_{\tau,t}(\bm{\xi})).\]

We mention however that in the setting of \cite{Chaudru:Menozzi:Priola19}, the "localization" with the cut-off function
$\bar{\eta}^{\tau,\bm{\xi}}$ is not simply motivated by the local H\"older continuity condition but it is also needed to give a proper
meaning to the Duhamel formulation for a solution (cf. Proposition \ref{prop:Expansion_along_proxy}) when $\alpha<1/2$, because of the low
integrability properties of the underlying stable density. Such a problem does not however appear here since condition (\textbf{P}) forces
us to consider only the case $\alpha>1/2$.

Given a mild solution $u$ of the IPDE \eqref{Ext:Degenerate_Stable_PDE} assuming $\bar{\bm{F}}$ to be only locally H\"older
continuous as in [\textbf{LR'}], it is possible to show, at least formally, that the function
$\bar{v}^{\tau,\bm{\xi}}:=u\bar{\eta}^{\tau,\bm{\xi}}$ solves the following equation
\begin{equation}
\label{Localized_Degenerate_Stable_PDE}
\begin{cases}
 \partial_t \bar{v}^{\tau,\bm{\xi}}(t,\bm{x}) + \langle \bar{\bm{F}}(t,\bm{x}),D_{\bm{x}}\bar{v}^{\tau,\bm{\xi}}(t,\bm{x})\rangle +
 L_\alpha \bar{v}^{\tau,\bm{\xi}}(t,\bm{x}) \, = \,
 -\bigl[\bar{\eta}^{\tau,\bm{\xi}}f+\bar{\mathcal{S}}^{\tau,\bm{\xi}}\bigr](t,\bm{x}) & \mbox{on } [0,T]\times \R^{nd}; \\
 \bar{v}^{\tau,\bm{\xi}}(T,\bm{x}) \, = \, \bar{\eta}^{\tau,\bm{\xi}}(T,\bm{x})g(\bm{x}) & \mbox{on }\R^{nd},
  \end{cases}
\end{equation}
where we have denoted above
\begin{multline*}
\bar{\mathcal{S}}^{\tau,\bm{\xi}}(t,\bm{x}) \, := \,\int_{\R^{d}}\bigl[u(t,\bm{x}+B\bm{y})-u(t,\bm{x}) \bigr] \bigl[ \bar{\eta}^{
\tau,\bm{\xi}}(t,t,\bm{x}+B\bm{y})- \bar{\eta}^{\tau,\bm{\xi}}(t,\bm{x})\bigr] \, \nu_\alpha(dy)\\
-u(t,\bm{x})\langle\bar{\bm{F}}(t,\bm{x})-\bar{\bm{F}}(t,\bar{\theta}_{\tau,t}(\bm{\xi})), D\rho(x-\bar{\theta}_{\tau,t}(\bm{\xi}))\rangle.
\end{multline*}
The IPDE \eqref{Localized_Degenerate_Stable_PDE} can be seen essentially as a "local" version of the original one
\eqref{Ext:Degenerate_Stable_PDE}, depending on the freezing parameter $(\tau,\bm{\xi})$. In particular, it is important to notice that the
difference
\[\bar{\bm{F}}(t,\bm{x})-\bar{\bm{F}}(t,\bar{\theta}_{\tau,t}(\bm{\xi}))\]
appearing in the "localizing" error $\bar{\mathcal{S}}^{\tau,\bm{\xi}}$ can be controlled exactly because it is multiplied by the
derivative of the bump function $\rho$ in the right point $\bm{x}-\bar{\theta}_{\tau,t}(\bm{\xi})$, allowing us to exploit the
\emph{local} H\"older regularity. On the other hand, the first integral term in the r.h.s. can be seen as a commutator which involves only
the non-degenerate variables and thus, that can be handled with interpolation techniques as in \cite{Chaudru:Menozzi:Priola19}.

Even with the additional difficulty in controlling the remainder term, the perturbative approach explained
in Section $3$ can be applied, leading to show Schauder-type estimates as in Theorem \ref{theorem:Schauder_Estimates} and the
well-posedness of the IPDE \eqref{Ext:Degenerate_Stable_PDE} when assuming $\bar{\bm{F}}$ to be only locally H\"older continuous.

Our procedure could be also used in order to establish Schauder-type estimates for the full Ornstein-Uhlenbeck operator as done, for
example, in \cite{Lunardi97} for the diffusive case. Indeed, a general operator of the form $\langle
\bar{A}\bm{x},D_{\bm{x}}\rangle+L_\alpha$ can be treated, decomposing the matrix as $\bar{A}=A+U$ where $A$ is, as before, the sub-diagonal
matrix that makes the Ornstein-Ulhenbeck operator invariant by the dilation operator associated with the distance $d$, while $U$ is an upper
triangular matrix that could be seen as an additional \emph{locally} H\"older term.

\subsection{Diffusion Coefficient}
We conclude the article showing briefly how an additional diffusion coefficient $\sigma\colon [0,T]\times\R^{nd}\to
\R^d\otimes \R^d$ can be handled in the IPDE \eqref{Ext:Degenerate_Stable_PDE} with an operator $L_alpha$ of the form:
\[L_\alpha\phi(t,\bm{x})\, := \, \text{p.v.}\int_{\R^d}\bigl[\phi(t,\bm{x}+B\sigma
(t,\bm{x})y) -\phi(t,\bm{x})\bigr]\nu_\alpha(dy).\]
In this framework, it is quite standard (cf. \cite{Hao:Wu:Zhang19} and \cite{Zhang:Zhao18}) to assume the L\'evy measure
$\nu_\alpha$ to be absolutely continuous with respect to the Lebesgue measure on $\R^d$ i.e.\ $\nu_\alpha(dy)=f(y)dy$, for some Lipschitz
function $f\colon \R^d \to \R$. In particular, since $\nu_\alpha$ is a symmetric, $\alpha$-stable L\'evy measure, it holds passing to polar
coordinates $y=\rho s$ where $(\rho,s)\in [0,\infty)\times \mathbb{S}^{d-1}$ that
\[f(y) \, = \, \frac{g(s)}{\rho^{d+\alpha}}\]
for an even, Lipschitz function $g$ on $\mathbb{S}^{d-1}$ (see also Equation \eqref{eq:decomposition_measure}). Moreover, $\sigma$ is
considered uniformly elliptic and in
$L^\infty(0,T;C^{\beta}(\R^n,\R))$. \newline
Introducing now the "frozen" operator $\bar{L}^{\tau,\bm{\xi}}_\alpha\phi(t,x)= \text{p.v.}\int_{\R^d}\bigl[\phi(t,\bm{x}+B\sigma
(t,\bar{\bm{\theta}}_{\tau,t}(\bm{\xi}))y) -\phi(t,\bm{x})\bigr]\nu_\alpha(dy)$, this would lead to consider for the IPDE an
additional term in the Duhamel formula (cf. Equation \eqref{Ext:Expansion_along_proxy}) that would write:
\begin{equation}\label{Duhamel_formula_for_additional}
u(t,\bm{x}) \, = \, \breve{P}^{\tau,\bm{\xi}}_{t,T}g(\bm{x}) +
\int_{t}^{T}\breve{P}^{\tau,\bm{\xi}}_{t,s}f(s,\bm{x})+\breve{P}^{\tau,\bm{\xi}}_{t,s}\bar{R}^{\tau,\bm{\xi}}(s,\bm{x})+
\breve{P}^{\tau,\bm{\xi}}_{t,s}\bigl[\bigl(L_\alpha-\bar{L}^{\tau,\bm{\xi}}_\alpha\bigr)u(s,\cdot)\bigr](\bm{x}) \, ds.
\end{equation}
Here, $\bigl(\breve{P}^{\tau,\bm{\xi}}_{t,s}\bigr)_{t\le s}$ denotes the two parameter semigroup associated with the proxy operator
\[\bar{L}^{\tau,\bm{\xi}}_\alpha+\langle\bar{A}^{\tau,\bm{\xi}}_t \bigl(\bm{x}-\bar{\bm{\theta}}_{\tau,t}(\bm{\xi})\bigr) +
\bar{\bm{F}}(t,\bm{\bar{\theta}}_{\tau,t}(\bm{\xi})), D_{\bm{x}} \rangle.\]
Let us focus on the last term in the integral of Equation \eqref{Duhamel_formula_for_additional}. Looking back at the proof of the A Priori
Estimates (Proposition \ref{prop:A_Priori_Estimates}), we notice in
particular that we aim to establish the following control:
\begin{equation}\label{zzz1}
\bigl{\vert} \bigl(L_\alpha- \bar{L}^{\tau,\bm{\xi}}_\alpha\bigr)u(t,\bm{x})\bigr{\vert} \, \le \, C\Vert \sigma
\Vert_{L^\infty(C^{\beta}_{b,d})}\Vert u
\Vert_{L^\infty(C^{\alpha+\beta}_{b,d})}d^\beta(\bm{x},\bar{\bm{\theta}}_{\tau,t}(\bm{\xi}))
\end{equation}
in order to apply the same reasoning above in this new framework. To this end, we write that
\begin{multline*}
\bigl(L_\alpha- \bar{L}^{\tau,\bm{\xi}}_\alpha\bigr)u(t,\bm{x}) \\
= \, \text{p.v.}\int_{\R^{d}}\bigl{\{}u(t,\bm{x}+B\sigma(t,\bm{x})y)-u(t,\bm{x})\bigr{\}} \, \nu_\alpha(dy) -
\int_{\R^{d}}\bigl{\{}u(t,\bm{x}+B\sigma(t,\bar{\bm{\theta}}_{\tau,t}(\bm{\xi}))y)-u(t,\bm{x})\bigr{\}} \, \nu_\alpha(dy) \\
= \, \text{p.v.}\int_{\R^{d}}\bigl{\{}u(t,\bm{x}+Bz)-u(t,\bm{x})\bigr{\}}\frac{f\bigl(\sigma^{-1}(t,\bm{x})z\bigr)}{\det \sigma(t,\bm{x})} \, dz -
\int_{\R^{d}}\bigl{\{}u(t,\bm{x}+Bz)-u(t,\bm{x})\bigr{\}}\frac{f\bigl(\sigma^{-1}(t,\bar{\bm{\theta}}_{\tau,t}(\bm{\xi}))z\bigr)}{\det
\sigma(t,\bar{\bm{\theta}}_{\tau,t}(\bm{\xi}))} \, dz \\
= \,\text{p.v.}\int_{0}^\infty \frac{1}{\rho^{1+\alpha}}\int_{\mathbb{S}^{d-1}}\bigl{\{}u(t,\bm{x}+B\rho
s)-u(t,\bm{x})\bigr{\}}\bar{D}^{\tau,\bm{\xi}}(t,\bm{x},s)
\, dsd\rho
\end{multline*}
where we have denoted, for notational convenience
\[\bar{D}^{\tau,\bm{\xi}}(t,\bm{x},s) \, := \, \Bigl{\{}\frac{g\bigl(\frac{\sigma^{-1}(t,\bm{x})s}{\vert
\sigma^{-1}(t,\bm{x})s\vert}\bigr)}{\vert
\sigma^{-1}(t,\bm{x})s\vert^{d+\alpha}\det \sigma(t,\bm{x})} -\frac{g\bigl(\frac{\sigma^{-1}(t,\bar{\bm{\theta}}_{\tau,t}
(\bm{\xi}))s}{\vert\sigma^{-1}(t,\bar{\bm{\theta}}_{\tau,t} (\bm{\xi}))s\vert}\bigr)}{\vert \sigma^{-1}(t,\bar{\bm{\theta}}_{\tau,t}
(\bm{\xi}))s\vert^{d+\alpha}\det\sigma(t,\bar{\bm{\theta}}_{\tau,t}(\bm{\xi}))}\Bigr{\}}.\]
Using now that $g$ is Lipschitz and the assumptions on $\sigma$, we can show that
\begin{equation}\label{eq:COntrol_on_D}
  \vert \bar{D}^{\tau,\bm{\xi}}(t,\bm{x},s) \vert \, \le \,  C\vert \sigma(t,\bm{x}) - \sigma(t,\bar{\bm{\theta}}_{\tau,t} (\bm{\xi}))\vert
\, \le \, C\Vert \sigma \Vert_{L^\infty(C^{\beta}_{b,d})} d^\beta(\bm{x},\bar{\bm{\theta}}_{\tau,t}(\bm{\xi})).
\end{equation}
Finally, Equation \eqref{zzz1} follows from the previous controls using Taylor expansions and the symmetry condition on $\nu_\alpha$.
Namely, considering the case $\alpha\ge 1$, which is the most delicate one for this part and precisely requires the symmetry of $g$, we write that
\begin{multline}\label{eq:Control_additional}
\Bigl{\vert}\bigl(L_\alpha- \bar{L}^{\tau,\bm{\xi}}_\alpha\bigr)u(t,\bm{x})\Bigr{\vert} \, = \, \Bigl{|}\text{p.v.}\int_{0}^\infty
\frac{1}{\rho^{1+\alpha}}\int_{\mathbb{S}^{d-1}}\bigl{\{}u(t,\bm{x}+B\rho s)-u(t,\bm{x})\bigr{\}}\bar{D}^{\tau,\bm{\xi}}(t,\bm{x},s)\,
dsd\rho\Bigr{|} \\
\le \, \Bigl{|}\text{p.v.}\int_{(0,1)}
\frac{1}{\rho^{1+\alpha}}\int_{\mathbb{S}^{d-1}}\bigl{\{}u(t,\bm{x}+B\rho s)-u(t,\bm{x})\bigr{\}}\bar{D}^{\tau,\bm{\xi}}(t,\bm{x},s)\,
dsd\rho \Bigl{|}\\
+ \int_{(1,\infty)} \frac{1}{\rho^{1+\alpha}}\int_{\mathbb{S}^{d-1}}\bigl{|}u(t,\bm{x}+B\rho
s)-u(t,\bm{x})\bigr{|}\,|\bar{D}^{\tau,\bm{\xi}}(t,\bm{x},s)|\, dsd\rho \, =: \bigl[\bar{I}^{\tau,\bm{\xi}}_s + \bar{I}^{\tau,\bm{\xi}}_l\bigr](t,\bm{x}).
\end{multline}
The \emph{large jump} contribution $\bar{I}^{\tau,\bm{\xi}}_l$ is easily handled from Equation \eqref{eq:COntrol_on_D}. We get that
\begin{equation}\label{eq:Control_additional1}
\bar{I}^{\tau,\bm{\xi}}_l(t,\bm{x}) \,\le \, 2C\Vert \sigma \Vert_{L^\infty(C^{\beta}_{b,d})}\Vert u \Vert_{L^\infty(L^\infty)}
d^\beta(\bm{x},\bar{\bm{\theta}}_{\tau,t}(\bm{\xi})) \, \le \, 2C\Vert \sigma \Vert_{L^\infty(C^{\beta}_{b,d})}\Vert u
\Vert_{L^\infty(C^{\alpha+\beta}_{b,d})} d^\beta(\bm{x},\bar{\bm{\theta}}_{\tau,t}(\bm{\xi})).
\end{equation}
On the other hand, from the symmetry assumption on $\nu_\alpha$, which transfers to $g$, we can control the \emph{small jump} contribution
$\bar{I}^{\tau,\bm{\xi}}_s$ through Taylor expansion and a centering
argument. Indeed,
\begin{multline}\label{eq:Control_additional2}
\bar{I}^{\tau,\bm{\xi}}_s(t,\bm{x}) \, = \, \Bigl{|}\text{p.v.}\int_{(0,1)}
\frac{1}{\rho^{1+\alpha}}\int_{\mathbb{S}^{d-1}}\int_{0}^{1}\bigl[D_{\bm{x}_1}u(t,\bm{x}+\lambda B\rho s)-D_{\bm{x}_1}u(t,\bm{x})\bigr]\rho
s\bar{D}^{\tau,\bm{\xi}}(t,\bm{x},s)\, d\lambda dsd\rho  \Bigl{|}\\
\le \, C\Vert \sigma \Vert_{L^\infty(C^{\beta}_{b,d})} d^\beta(\bm{x},\bar{\bm{\theta}}_{\tau,t}(\bm{\xi}))\int_{(0,1)}
\frac{1}{\rho^\alpha}\int_{\mathbb{S}^{d-1}}\int_{0}^{1}\bigl{\vert}D_{\bm{x}_1}u(t,\bm{x}+\lambda B\rho
s)-D_{\bm{x}_1}u(t,\bm{x})\bigr{\vert} \, d\lambda dsd\rho \\
\le \, C\Vert \sigma \Vert_{L^\infty(C^{\beta}_{b,d})} \Vert D_{\bm{x}_1}u
\Vert_{L^\infty(C^{\alpha+\beta-1}_{b,d})}d^\beta(\bm{x},\bar{\bm{\theta}}_{\tau,t}(\bm{\xi}))\int_{(0,1)}
\frac{1}{\rho^{\alpha}} \rho^{\alpha+\beta -1} \, d\rho \\
\le \, C\Vert \sigma \Vert_{L^\infty(C^{\beta}_{b,d})} \Vert u
\Vert_{L^\infty(C^{\alpha+\beta}_{b,d})}d^\beta(\bm{x},\bar{\bm{\theta}}_{\tau,t}(\bm{\xi})).
\end{multline}
Using Controls \eqref{eq:Control_additional1} and \eqref{eq:Control_additional2} in the decomposition \eqref{eq:Control_additional}, we
obtain the expected bound (Equation \eqref{zzz1}). We remark that the case $\alpha<1$ could be handled similarly for
the contribution $\bar{I}^{\tau,\bm{\xi}}_l$ and even more directly for $\bar{I}^{\tau,\bm{\xi}}_s$. Indeed, in that case, the centering argument is not needed since the Taylor
expansion already yields an integrable singularity.

\setcounter{equation}{0}
\appendix

\section{Appendix}

\subsection{Smoothing Effects for Ornstein-Ulhenbeck Operator}

We state and prove here some of the key properties of the Ornstein-Uhlenbeck operator. Namely, we will prove the representation
\eqref{eq:Representation_of_p_ou} and the associated $\alpha$-smoothing effect \eqref{Smoothing_effect_of_S}. We highlight however that
these results are only a slight modification to our purpose of those in  \cite{Huang:Menozzi:Priola19}.

The two lemma below presents a deep connection with stochastic analysis and their proofs relies on tools that are more familiar in the
probabilistic realm. For this reason, we are going to consider the stochastic counterpart of the Ornstein-Ulhenbeck operator $L^{ou}$.
Namely, for a given starting point $\bm{x}$ in $\R^{nd}$, we are interested in the following dynamics
\begin{equation}\label{eq:Stochastic_OU_equation}
\begin{cases}
dX_t \, = \, AX_tdt+BdZ_t, & \mbox{on } [0,T] \\
X_0 \, = \, \bm{x}
\end{cases}
\end{equation}
where $(Z_t)_{t\ge 0}$ is an $\alpha$-stable, $\R^{nd}$-dimensional process with L\'evy measure $\nu_\alpha$, defined on some complete
probability space $(\Omega,\mathcal{F},\mathcal{F},\mathbb{P})$.

\begin{lemma}[Representation]
Under (\textbf{A}), the semigroup $\bigl(P^{ou}_t\bigr)_{t>0}$ generated by the Ornstein-Ulehnbeck operator $L_{ou}$ (defined in \eqref{eq:def_of_OU_operator})  admits for any fixed $t>0$, a density $p^{ou}(t,\cdot)$ which writes for any $t>0$ and any $\bm{x},\bm{y}$
in $\R^{nd}$
\[p^{ou}(t,\bm{x},\bm{y}) \, = \, \frac{1}{\det \mathbb{M}_t}p_S(t,\mathbb{M}^{-1}_t\bigl(e^{At}\bm{x}-\bm{y})\bigr)\]
where $\mathbb{M}_t$ is the matrix defined in \eqref{eq:def_of_Mt} and $p_S$ is the smooth density of an $\R^{nd}$-valued, symmetric and
$\alpha$-stable process $S$ whose L\'evy measure $\mu_S$ satisfies the non-degeneracy assumption (\textbf{ND}) on $\R^{nd}$.
\end{lemma}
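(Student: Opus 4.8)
The plan is to derive the representation from the well-known structure of the solution to the linear SDE \eqref{eq:Stochastic_OU_equation} together with the fact that the driving noise $BZ_t$ is degenerate but the time-rescaled dynamics is non-degenerate on the whole space. First I would recall that since $A$ is deterministic and $Z$ is a L\'evy process, the unique (strong) solution of \eqref{eq:Stochastic_OU_equation} started at $\bm{x}$ is the Ornstein-Uhlenbeck process
\[
X_t \, = \, e^{At}\bm{x} + \int_0^t e^{A(t-s)}B \, dZ_s,
\]
so that $P^{ou}_t\phi(\bm{x}) = \mathbb{E}[\phi(X_t)]$ and the law of $X_t$ is the law of $\int_0^t e^{A(t-s)}B\, dZ_s$ shifted by $e^{At}\bm{x}$. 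Hence it suffices to identify the law of the stochastic convolution $\bm{\Gamma}_t := \int_0^t e^{A(t-s)}B\, dZ_s$ and show it has a smooth density.

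The key step is a scaling argument: I would show that $\bm{\Gamma}_t$ has the same law as $\mathbb{M}_t S_t$, where $\mathbb{M}_t$ is the diagonal dilation matrix from \eqref{eq:def_of_Mt} and $(S_r)_{r\ge 0}$ is a fixed $\R^{nd}$-valued symmetric $\alpha$-stable process whose L\'evy measure $\mu_S$ is non-degenerate on $\R^{nd}$ in the sense of (\textbf{ND}). Concretely, using the explicit form of $e^{At}$ from \eqref{Proof:Scaling_Lemma1} (block $(i,j)$-entry $C_{i,j}t^{j-i}$ for $j\ge i$), one computes $e^{A(t-s)}B$ componentwise; factoring out $\mathbb{M}_t$ (which carries the powers $t^{i-1}$ at level $i$) and performing the change of variable $s = tu$, $u\in[0,1]$, the remaining integrand becomes a fixed matrix-valued function of $u$ independent of $t$ multiplying $dZ_{tu}$. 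By the $\alpha$-self-similarity of $Z$, $dZ_{tu} \stackrel{d}{=} t^{1/\alpha} dZ_u$, which is exactly absorbed into the definition of $\mathbb{M}_t$ as normalized in the statement (recall $\mathbb{T}_t = t^{1/\alpha}\mathbb{M}_t$); this produces the representation $\bm{\Gamma}_t \stackrel{d}{=} \mathbb{M}_t S_t$ with $S_t := \int_0^1 \Phi(u)\, d\widetilde{Z}_u$ for a suitable fixed matrix $\Phi$ and a scaled copy $\widetilde Z$. Taking the characteristic function and using the L\'evy-Khintchine formula identifies the L\'evy measure $\mu_S$ of $S$ as the push-forward of $\nu_\alpha$ along the rays spanned by $\Phi(u)B$; the H\"ormander condition (\textbf{H}) (full rank of each $A_{i,i-1}$) guarantees that these rays span all of $\R^{nd}$, which is precisely what makes $\mu_S$ non-degenerate, i.e.\ its symbol comparable to $|\,\cdot\,|^\alpha$ as in (\textbf{ND}). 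This non-degeneracy, via standard Fourier inversion (the symbol decays like $e^{-c|p|^\alpha}$), yields that $S_t$ has a smooth density $p_S(t,\cdot)$, and then a linear change of variables gives
\[
p^{ou}(t,\bm{x},\bm{y}) \, = \, \frac{1}{\det \mathbb{M}_t} p_S\bigl(t, \mathbb{M}_t^{-1}(e^{At}\bm{x}-\bm{y})\bigr).
\]

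The main obstacle I expect is the bookkeeping in the scaling identity: one has to check carefully that the powers of $t$ coming from the blocks of $e^{A(t-s)}$, the Jacobian of the substitution $s=tu$, and the self-similarity exponent $1/\alpha$ of the stable noise combine in exactly the way encoded by $\mathbb{M}_t$ (this is where the exponents $i-1$ at level $i$, and hence the intrinsic scales $t^{(1+\alpha(i-1))/\alpha}$ of the distance $d_P$, come from), and simultaneously that the limiting L\'evy measure $\mu_S$ is genuinely non-degenerate on the big space rather than just on the first $d$ coordinates — this is the only place where assumption (\textbf{H}) enters, and it must be invoked to rule out that $\mu_S$ charges only a proper subspace. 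Everything else (existence and smoothness of the OU density, Fourier inversion) is classical and can be quoted, for instance, from \cite{Priola:Zabczyk09} and \cite{book:Sato99}; as noted in the excerpt, the argument is a minor adaptation of Proposition $2.3$ in \cite{Huang:Menozzi:Priola19}, and I would present it in that spirit, giving full details only for the scaling computation and the non-degeneracy of $\mu_S$.
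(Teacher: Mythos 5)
Your proposal is correct and follows essentially the same route as the paper: solve the linear SDE explicitly, factor the stochastic convolution through the dilation $\mathbb{M}_t$ (the paper does this on the Fourier side via the conjugation $e^{tA}=\mathbb{M}_t e^{A}\mathbb{M}_t^{-1}$ and a Riemann-sum computation of the characteristic function), identify the resulting spherical L\'evy measure $\mu_S$ on $\R^{nd}$, obtain its non-degeneracy from (\textbf{ND}) together with the Kalman-type spanning forced by (\textbf{H}) (the paper quotes Lemma A.$1$ of \cite{Huang:Menozzi:Priola19} for this), and conclude by Fourier inversion. Only minor bookkeeping remarks: the factor $t^{1/\alpha}$ is carried by evaluating $p_S$ at time $t$ rather than by $\mathbb{M}_t$ itself, and the non-degeneracy of $\mu_S$ uses (\textbf{ND}) for $\mu$ on $\R^d$ in addition to (\textbf{H}).
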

\begin{proof}
We start noticing that the above dynamics \eqref{eq:Stochastic_OU_equation} can be explicitly integrated and gives
\[X_t \, = \, e^{tA}\bm{x}+\int_{0}^{t}e^{(t-s)A}B \, dZ_s.\]
It is then readily derived from \cite{Priola:Zabczyk09} that, for any $t>0$, the random variable $X_t$ has a density $p_X(t,\bm{x},\cdot)$
with respect to the Lebesgue measure on $\R^{nd}$ and it is moreover well known (see for example \cite{book:Dynkin65}) that $p_X$ coincides
with the density $p^{ou}$ of the Ornstein-Ulhenbeck operator $L_{ou}$ .\newline
For this reason, we fix $t\ge 0$ and consider, for a given $N$ in $\N$, a uniform partition $\{t_i\}_{i\in\llbracket 0,N\rrbracket}$ of
$[0,t]$. Then, it holds for any $\bm{p}$ in $\R^{nd}$,
\[\mathbb{E}\Bigl[\text{exp}\Bigl(i\langle \bm{p}, \sum_{i=1}^{N}e^{(t-t_{i-1})A}B\bigl(Z_{t_i}-Z_{t_{i-1}}\bigr)\Bigr)\rangle\Bigr] \, = \,
\text{exp}\Bigl(-\frac{1}{N}\sum_{i=1}^{N}\int_{\mathbb{S}^{d-1}}\vert \langle B^\ast e^{(t-t_{i-1})A^\ast}\bm{p},s\rangle\vert^\alpha \,
\mu(ds)\Bigr)\]
where $\mu$ is the spherical measure associated with $\nu_\alpha$ (see Equation \eqref{def:Levy_Symbol_stable}). By dominated convergence
theorem, we let $m$ goes to infinity and show that
\[\mathbb{E}\Bigl[\text{exp}\Bigl(i\langle \bm{p},\int_{0}^{t}e^{(t-s)A}B \, dZ_s\Bigr)\Bigr] \, = \,
\text{exp}\Bigl(-\int_{0}^{t}\int_{\mathbb{S}^{d-1}}\vert \langle e^{uA^\ast}\bm{p},Bs\rangle \, \mu(ds)du\Bigr).\]
Thanks to the above equation, we can rewrite the characteristic function of $X_t$ as:
\begin{multline*}
\psi_{X_t}(\bm{p}) \, = \,\mathbb{E}\Bigl[\text{exp}\Bigl(i\langle \bm{p},e^{tA}\bm{x}+\int_{0}^{t}e^{(t-s)A}B \, dZ_s\Bigr)\Bigr] \,
= \, \text{exp}\Bigl(i\langle \bm{p},e^{tA}\bm{x}\rangle -\int_{0}^{t}\int_{\mathbb{S}^{d-1}}\vert \langle e^{uA^\ast} \bm{p}, Bs \rangle
\vert^\alpha \, \mu(ds)du \Bigr)\\
= \, \text{exp}\Bigl(i\langle \bm{p},e^{tA}\bm{x}\rangle -t\int_{0}^{1}\int_{\mathbb{S}^{d-1}}\vert \langle e^{vtA^\ast} \bm{p}, Bs \rangle
\vert^\alpha \, \mu(ds)dv \Bigr)
\end{multline*}
where in the last passage we used the change of variables $u=vt$. For the next step, we firstly notice that it holds
\[e^{tA} \, = \, \mathbb{M}_te^A\mathbb{M}^{-1}_t,\]
shown using the definition of matrix exponential and the trivial relation $\mathbb{M}_tA\mathbb{M}^{-1}_t = tA$. Exploiting the above
identity, we then find that
\begin{multline*}
  \psi_{X_t}(\bm{p}) \, = \, \text{exp}\Bigl(i\langle \bm{p},e^{tA}\bm{x}\rangle -t\int_{0}^{1}\int_{\mathbb{S}^{d-1}}\vert \langle
  \mathbb{M}_t\bm{p},e^{vA}\mathbb{M}_t^{-1} Bs\rangle\vert^\alpha\, \mu(ds)dv \Bigr)\\
  = \, \text{exp}\Bigl(i\langle \bm{p},e^{tA}\bm{x}\rangle -t\int_{0}^{1}\int_{\mathbb{S}^{d-1}}\vert \langle
  \mathbb{M}_t\bm{p},e^{vA}Bs\rangle\vert^\alpha\, \mu(ds)dv \Bigr)
\end{multline*}
where in the last passage we used the straightforward identity $\mathbb{M}_t^{1}B\bm{y}=B\bm{y}$.
We focus now only on the double integral
\[\int_{0}^{1}\int_{\mathbb{S}^{d-1}}\vert \langle \mathbb{M}_t\bm{p},e^{vA}Bs\rangle\vert^\alpha\, \mu(ds)dv.\]
If we consider the measure $m_\alpha(dv,ds) := \vert e^{vA}Bs \vert^\alpha \mu(ds)dv$ on $[0,1]\times \mathbb{S}^{d-1}$ and the
normalized lift function $l\colon [0,1]\times \mathbb{S}^{d-1}\to \mathbb{S}^{nd-1}$ given by
\[l(v,s) \, := \, \frac{e^{vA}Bs}{\vert e^{vA}Bs \vert},\]
it then follows that
\[\int_{0}^{1}\int_{\mathbb{S}^{d-1}}\vert \langle \mathbb{M}_t\bm{p},e^{vA}Bs\rangle\vert^\alpha\, \mu(ds)dv \, = \,
\int_{0}^{1}\int_{\mathbb{S}^{d-1}}\vert \langle \mathbb{M}_t\bm{p},\frac{e^{vA}Bs}{\vert e^{vA}Bs\vert}\rangle\vert^\alpha\,
m_\alpha(ds,dv)\, = \, \int_{\mathbb{S}^{nd-1}}\vert \langle \mathbb{M}_t\bm{p},\bm{\xi}\rangle\vert^\alpha\, \mu_S(d\bm{\xi})\]
where $\mu_S\, := \, \text{Sym}(l_\ast(m_\alpha))$ is the symmetrized version of the measure $m_\alpha$ push-forwarded through $l$. Noticing
that $\mu_S$ is the L\'evy measure of a symmetric $\alpha$-stable process $(S_t)_{t\ge 0}$ satisfying assumption (\textbf{ND}) on $\R^{nd}$,
we can finally write that
\[\psi_{X_t}(\bm{p}) \, = \, \text{exp}\Bigl(i\langle \bm{p},e^{tA}\bm{x}\rangle -t\Psi_S(\mathbb{M}_t\bm{p}) \Bigr)\]
where $\Psi_S$ is the L\'evy symbol associated with $S_t$ (cf. Equation \eqref{def:Levy_Symbol_stable}).\newline
From Lemma A.$1$ in \cite{Huang:Menozzi:Priola19}, we know that under assumption (\textbf{ND}), the above calculations implies that
\[\int_{0}^{1}\int_{\mathbb{S}^{d-1}}\bigl{\vert}(\mathbb{M}_t\bm{p}) \cdot (e^{Av}Bs)\bigr{\vert}^\alpha \, \mu_S(ds)dv \, \ge \, C\vert
\mathbb{M}_t\bm{p}\vert^\alpha\]
for some constant $C> 0$. It follows in particular that the function $\bm{p}\to \psi_{X_t}(\bm{p})$ is in $L^1(\R^{nd})$. Thus, by inverse
fourier transform and a change of variables we can prove that
\begin{multline*}
\mathcal{F}^{-1}\bigl[\psi_{X_t}\bigr](\bm{y}) \, = \, \frac{1}{(2\pi)^{nd}}\int_{\R^{nd}} e^{-i\langle\bm{p},
\bm{y}\rangle}\text{exp}\Bigl(i\langle \bm{p},e^{tA}\bm{x}\rangle -t\Psi_S(\mathbb{M}_t\bm{p}) \Bigr)\, d\bm{p} \\
=\, \frac{\det(\mathbb{M}^{-1}_t)}{(2\pi)^{nd}}\int_{\R^{nd}} \text{exp}\Bigl(-i\bigl{\langle}\mathbb{M}^{-1}_t\bm{p}, \bm{y}-e^{tA}\bm{x}
\bigr{\rangle}\Bigr)e^{-t\Psi(\bm{p})}\, d\bm{p} \\
= \, \frac{\det (\mathbb{M}^{-1}_t)}{(2\pi)^{nd}}\int_{\R^{nd}} \text{exp}\Bigl(-i\bigl{\langle}\bm{p},
\mathbb{M}^{-1}_t\bigl(\bm{y}-e^{tA}\bm{x}\bigr)\bigr{\rangle}\Bigr) e^{-t\Psi(\bm{p})}\, d\bm{p} \, = \,
\frac{1}{\det(\mathbb{M}_t)}p_S(t,\mathbb{M}^{-1}(y-e^{At}\bm{x}))
\end{multline*}
and we have concluded since $p_S$ is symmetric.
\end{proof}

We can now point out the smoothing effect (Equation \eqref{Smoothing_effect_of_S}) associated with the Ornstein-Uhlenbeck density $p^{ou}$.

\begin{lemma}[Smoothing Effect]
\label{Appendix:Smoothing_effect}
Under $(\textbf{A})$, there exists a family $\{q(t,\cdot)\colon t \in [0,T]\}$ of densities on $\R^{nd}$ such that
\begin{itemize}
  \item  for any $l$ in $\llbracket 0,3 \rrbracket$, there exists a constant $C:=C(l,nd)$ such that $\vert D^l_{y}p_S(t,\bm{y})\vert \, \le \,
      Cq(t,\bm{y})t^{-l/\alpha}$ for any $t$ in $[0,T]$ and any $\bm{y}$ in $\R^{nd}$;
  \item (stable scaling property) $q(t,\bm{y})=t^{-nd/\alpha}q(1,t^{-1/\alpha}\bm{y})$ for any $t$ in $[0,T]$ and any $\bm{y}$ in
      $\R^{nd}$;
  \item (stable smoothing effect) for any $\gamma$ in $[0,\alpha)$, there exists a constant $c:=c(\gamma,nd)$ such that
  \begin{equation}\label{equation:integration_prop_of_q}
    \int_{\R^{nd}} q(t,\bm{y}) \, \vert \bm{y} \vert^\gamma \, d\bm{y} \, \le \, ct^{\gamma/\alpha} \,\, \text{for any }t>0.
  \end{equation}
\end{itemize}
\end{lemma}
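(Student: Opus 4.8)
The plan is to establish the existence of the density family $\{q(t,\cdot)\}$ by first reducing everything to the time $t=1$ via the $\alpha$-scaling of $p_S$, and then extracting a suitable dominating profile from known estimates on stable densities and their derivatives. Recall from the previous lemma that $p_S$ is the smooth density of a symmetric $\alpha$-stable process on $\R^{nd}$ whose spherical L\'evy measure $\mu_S$ satisfies the non-degeneracy condition (\textbf{ND}) on $\R^{nd}$. First I would invoke the $\alpha$-scaling property $p_S(t,\bm{y}) = t^{-nd/\alpha}p_S(1,t^{-1/\alpha}\bm{y})$, which follows by uniqueness of the characteristic function $\exp(-t\Psi_S(\bm{p}))$ exactly as in the proof of the representation lemma, differentiating to get $D^l_{\bm{y}}p_S(t,\bm{y}) = t^{-(nd+l)/\alpha} (D^l p_S)(1,t^{-1/\alpha}\bm{y})$. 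Thus it suffices to construct a single density $q(1,\cdot)$ on $\R^{nd}$ with $\vert D^l p_S(1,\bm{z})\vert \le C q(1,\bm{z})$ for $l \in \llbracket 0,3\rrbracket$ and with $\int q(1,\bm{z})\vert \bm{z}\vert^\gamma\, d\bm{z} < \infty$ for all $\gamma \in [0,\alpha)$; then one simply \emph{defines} $q(t,\bm{y}) := t^{-nd/\alpha}q(1,t^{-1/\alpha}\bm{y})$, which automatically yields the scaling property, the pointwise domination (after absorbing the $t^{-l/\alpha}$ factor), and, by the change of variables $\bm{z} = t^{-1/\alpha}\bm{y}$, the bound $\int q(t,\bm{y})\vert \bm{y}\vert^\gamma\, d\bm{y} = t^{\gamma/\alpha}\int q(1,\bm{z})\vert \bm{z}\vert^\gamma\, d\bm{z} \le c t^{\gamma/\alpha}$.

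The construction of $q(1,\cdot)$ is where the non-degeneracy assumption enters. The natural candidate is $q(1,\bm{z}) := C_0 (1+\vert \bm{z}\vert)^{-(nd+\alpha)}$ (suitably normalized to be a probability density), and the task becomes proving $\vert D^l p_S(1,\bm{z})\vert \lesssim (1+\vert\bm{z}\vert)^{-(nd+\alpha)}$ for $l = 0,1,2,3$. For this I would cite Kolokoltsov \cite{Kolokoltsov00} (and the companion estimates in \cite{Bogdan:Jakubowski07}), which under a condition equivalent to (\textbf{ND}) give precisely such two-sided and derivative bounds on non-degenerate stable densities: $p_S(1,\bm{z}) \asymp (1+\vert\bm{z}\vert)^{-(nd+\alpha)}$ together with $\vert D^l p_S(1,\bm{z})\vert \le C_l (1+\vert\bm{z}\vert)^{-(nd+\alpha)}$. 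The integrability $\int_{\R^{nd}} (1+\vert\bm{z}\vert)^{-(nd+\alpha)}\vert\bm{z}\vert^\gamma\, d\bm{z} < \infty$ holds exactly when $\gamma < \alpha$, passing to polar coordinates and checking the convergence of $\int_1^\infty \rho^{nd-1+\gamma-(nd+\alpha)}\, d\rho = \int_1^\infty \rho^{\gamma-\alpha-1}\, d\rho$, which is the source of the sharp threshold $\gamma \in [0,\alpha)$.

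Finally, I would remark that the density $p^{ou}$ itself inherits the analogous smoothing effect through the representation $p^{ou}(t,\bm{x},\bm{y}) = (\det\mathbb{M}_t)^{-1}p_S(t,\mathbb{M}^{-1}_t(e^{At}\bm{x}-\bm{y}))$ established in the previous lemma, although the statement as given is phrased purely in terms of $p_S$ and $q$, so this is a corollary rather than part of the proof proper. The main obstacle I anticipate is not conceptual but bibliographic precision: one must make sure the cited estimates from \cite{Kolokoltsov00} are stated for the full range of derivatives $l \le 3$ and under a hypothesis genuinely equivalent to (\textbf{ND}) on $\R^{nd}$ (non-degeneracy of the spherical measure $\mu_S$), since the process $S$ here is the lifted process constructed in the representation lemma and its spherical measure, while non-degenerate, need not be absolutely continuous; if a direct citation covering $l=3$ and singular spherical measures is unavailable, one falls back on the Fourier-analytic argument, writing $D^l p_S(1,\bm{z}) = (2\pi)^{-nd}\int (i\bm{p})^{\otimes l} e^{-i\langle \bm{p},\bm{z}\rangle} e^{-\Psi_S(\bm{p})}\, d\bm{p}$ and using $\Psi_S(\bm{p}) \ge \eta^{-1}\vert\bm{p}\vert^\alpha$ from (\textbf{ND}) to justify differentiation under the integral and to obtain decay in $\bm{z}$ by integrating by parts in $\bm{p}$ away from the origin.
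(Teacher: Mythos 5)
Your reduction to $t=1$ via the self-similarity $p_S(t,\bm{y})=t^{-nd/\alpha}p_S(1,t^{-1/\alpha}\bm{y})$ and the definition $q(t,\bm{y}):=t^{-nd/\alpha}q(1,t^{-1/\alpha}\bm{y})$ are fine, but the core step is not: the candidate $q(1,\bm{z})=C_0(1+\vert\bm{z}\vert)^{-(nd+\alpha)}$ together with the claimed bound $\vert D^l p_S(1,\bm{z})\vert\lesssim(1+\vert\bm{z}\vert)^{-(nd+\alpha)}$ is false in the generality required here. Assumption (\textbf{ND}) only asks that the L\'evy symbol be comparable to $\vert\bm{p}\vert^\alpha$ and explicitly allows very singular spherical measures (the paper's own example is the sum of Dirac masses along the coordinate directions); moreover, the process $S$ in the representation lemma has spectral measure $\mu_S$ obtained by pushing forward a measure carried by the lifted set $\{e^{vA}Bs/\vert e^{vA}Bs\vert\}$, which is a lower-dimensional subset of $\mathbb{S}^{nd-1}$, so $\mu_S$ is never absolutely continuous when $n\ge 2$. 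For such measures the isotropic decay fails: already for the cylindrical case (independent one-dimensional $\alpha$-stable components) one has $p_S(1,(R,0,\dots,0))\asymp R^{-(1+\alpha)}$, which is far larger than $(1+R)^{-(nd+\alpha)}$. The two-sided Kolokoltsov-type estimates you want to cite require an absolutely continuous (indeed bounded above and below) spectral density, and your Fourier fallback cannot rescue the claim either, since $\Psi_S(\bm{p})=\int\vert\langle\bm{p},s\rangle\vert^\alpha\mu_S(ds)$ is not smooth across the hyperplanes $\langle\bm{p},s\rangle=0$ for singular $\mu_S$, so repeated integration by parts cannot produce decay of order $nd+\alpha$ — and indeed no argument can, because the bound itself is wrong.

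The paper avoids this by never asking for a pointwise profile of $p_S$ at all. For each fixed $t$ it performs the It\^o--L\'evy decomposition $S_t=M_t+N_t$ at the characteristic threshold $t^{1/\alpha}$, so that $p_S(t,\cdot)=p_M(t,\cdot)\ast P_{N_t}$; derivatives then fall only on the small-jump density $p_M$, which by Lemma A.$2$ of \cite{Huang:Menozzi:Priola19} (with $m=3$) satisfies $\vert D^l_{\bm{y}}p_M(t,\bm{y})\vert\le C\,t^{-l/\alpha}p_{\overline{M}}(t,\bm{y})$ for a density $p_{\overline{M}}$ with fast polynomial decay, while the heavy-tailed large-jump part enters only through its law. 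One then \emph{defines} $q(t,\cdot):=p_{\overline{M}}(t,\cdot)\ast P_{N_t}$, i.e.\ the density of $\overline{S}_t=\overline{M}_t+N_t$; the choice of cutoff at $t^{1/\alpha}$ makes both $\overline{M}$ and $N$ $\alpha$-self-similar, which gives the scaling property of $q$, and the moment bound \eqref{equation:integration_prop_of_q} follows from $\mathbb{E}\vert\overline{M}_1\vert^\gamma+\mathbb{E}\vert N_1\vert^\gamma<\infty$ for $\gamma<\alpha$. If you want to keep your scaling-first presentation, you must replace your candidate $q(1,\cdot)$ by such a convolution-type density (or otherwise prove existence of a dominating density with $\gamma$-moments for all $\gamma<\alpha$ without assuming isotropic decay); as written, the proposal has a genuine gap at its central estimate.
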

\begin{proof}
Fixed a time $t>0$, we start applying the Ito-L\'evy decomposition to $S$ at the associated characteristic stable time
scale, i.e. we choose to truncate at threshold $t^{1/\alpha}$, so that we can write $S_t = M_t+N_t$ for some $M_t,N_t$ independent random
variables corresponding to the small jumps part and the large jumps part, respectively. Namely, we denote for any $s>0$
\[N_s \, := \, \int_{0}^{s}\int_{\vert \bm{x} \vert >t^{1/\alpha}}\bm{x}P(du,d\bm{x}) \,\, \text{ and } \,\, M_s: \,= \, S_s-N_s
\]
where $P$ is the Poisson random measure associated with the process $S$. We can thus rewrite the density $p_S$ in the following way
\[p_S(t,\bm{x}) \, = \, \int_{\R^{nd}}p_M(t,\bm{x}-\bm{y})P_{N_t}(d\bm{y})\]
where $p_M(t,\cdot)$ corresponds to the density of $M_t$ and $P_{N_t}$ is the law of $N_t$.\newline
It is important now to notice that it is precisely our choice of the cutting threshold $t^{1/\alpha}$ that gives $M$ and $N$ the
$\alpha$-similarity property (for any fixed $t$)
\[N_t \, \overset{law}{=} \, t^{1/\alpha}N_1 \, \, \text{ and } M_t \, \overset{law}{=} \, t^{1/\alpha}M_1\]
we will need below. Indeed, to show the assertion for $N$, we can start from the L\'evy-Khintchine formula for the characteristic function of
$N$:
\[\mathbb{E}\bigl[e^{i \langle p,N_t\rangle}\bigr] \, = \, \exp\Bigl[t\int_{\mathbb{S}^{nd-1}}\int_{t^{1/\alpha}}^{\infty}\bigl(\cos(\langle
p,r\bm{\xi} \rangle)-1\bigr)\frac{dr}{r^{1+\alpha}}\overline{\mu}_S(d\bm{\xi})\Bigr]\]
for any $p$ in $\R^{nd}$. We then use the change of variable $rt^{-1/\alpha}=s$ to get that
\[\mathbb{E}\bigl[e^{i \langle p,N_t\rangle}\bigr] \, = \, \mathbb{E}\bigl[e^{i \langle p,t^{1/\alpha}N_1\rangle}].\]
This implies in particular our assertion on $N$. In a similar way, it is possible to get the analogous assertion on $M$.\newline
From lemma A.$2$ in \cite{Huang:Menozzi:Priola19}  with $m =3$, we know that there exist a family $\{p_{\overline{M}}(t,\cdot)\}_{t>0}$ of
densities on $\R^{nd}$ and a constant $C:=C(m,\alpha)$ such that
\[\vert D^l_{\bm{y}}p_M(t,\bm{y}) \vert \, \le \, Cp_{\overline{M}}(t,\bm{y})t^{-l/\alpha}\]
for any $t>0$, any $\bm{x}$ in $\R^{nd}$ and any $l\in\{0,1,2\}$.\newline
Moreover, denoting $\overline{M}_t$ the random variable with density $p_{\overline{M}}(t,\cdot)$ and independent from $N_t$, we can easily
check from $p_{\overline{M}}(t,\bm{y})=t^{-nd/\alpha}p_{\overline{M}}(1,t^{-1/\alpha}\bm{x})$
that $\overline{M}$ is $\alpha$-selfsimilar
\[\overline{M}_t \, \overset{law}{=} \, t^{1/\alpha}\overline{M}_1.\]
We can finally define the family $\{q(t,\cdot)\}_{t>0}$ of densities as
\[q(t,\bm{x}) \,:=\, \int_{\R^{nd}}p_{\overline{M}}(t,\bm{x}-\bm{y})P_{N_t}(d\bm{y})\]
corresponding to the density of the random variable
\[\overline{S}_t \, ; = \, \overline{M}_t +N_t\]
for any fixed $t>0$.
Using Fourier transform and the already proven $\alpha$-selfsimilarity of $\overline{M}$ and $N$, we can show now that
\[\overline{S}_t \, \overset{law}{=} \, t^{1/\alpha}\overline{S}_1\]
or equivalently, that
\[q(t,\bm{y})=t^{-nd/\alpha}q(1,t^{-1/\alpha}\bm{y})\]
for any $t$ in $[0,T]$ and any $\bm{y}$ in $\R^{nd}$. Moreover,
\[\mathbb{E}[\vert \overline{S}_t\vert^\gamma] \, = \, \mathbb{E}[\vert \overline{M}_t+N_t\vert^\gamma] \, = \,
Ct^{\gamma/\alpha}\bigl(\mathbb{E}[\vert \overline{M}_1\vert^\gamma]+\mathbb{E}[\vert N_t\vert^\gamma]\bigr) \, \le \,  Ct^{\gamma/\alpha}.\]
This shows in particular that equation \eqref{equation:integration_prop_of_q} holds.
\end{proof}

We conclude this sub-section showing Control \eqref{eq:translation_inv_for_density} appearing in the proof of Proposition
\ref{prop:A_Priori_Estimates} for the diagonal regime. First of all, we will need the following lemma:

\begin{lemma}
Let $t$ in $[0,T]$, $\bm{x},\bm{b}$ in $\R^{nd}$ such that $\vert \bm{b}\vert \le ct^{1/\alpha}$ for some constant $c>0$.
Under (\textbf{A}), there exists a constant $C:=C(c)$ such that
\[\vert D^l_{\bm{x}}p_S\bigl(t,\bm{x}+\bm{b})\vert \, \le \, \tilde{C} \vert D^l_{\bm{x}}p_S\bigl(t,\bm{x})\vert\]
\end{lemma}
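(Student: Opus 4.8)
The plan is to use the explicit representation of $p_S$ via the It\^o--L\'evy decomposition at the characteristic stable time scale, exactly as in the proof of Lemma \ref{Appendix:Smoothing_effect}. Recall that there we wrote $S_t = M_t + N_t$ with $M$ the small-jumps part (jumps of size $\le t^{1/\alpha}$) and $N$ the large-jumps part, so that
\[
p_S(t,\bm{x}) \, = \, \int_{\R^{nd}} p_M(t,\bm{x}-\bm{y}) \, P_{N_t}(d\bm{y}),
\]
and $|D^l_{\bm{x}} p_M(t,\bm{x})| \le C p_{\overline{M}}(t,\bm{x}) t^{-l/\alpha}$ for $l \in \llbracket 0,3 \rrbracket$, where $p_{\overline{M}}(t,\cdot)$ is a density enjoying the $\alpha$-scaling property $p_{\overline{M}}(t,\bm{y}) = t^{-nd/\alpha} p_{\overline{M}}(1, t^{-1/\alpha}\bm{y})$. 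Differentiating under the integral sign, the statement $|D^l_{\bm{x}} p_S(t,\bm{x}+\bm{b})| \le \tilde C |D^l_{\bm{x}} p_S(t,\bm{x})|$ would follow from the pointwise comparison $p_{\overline{M}}(t,\bm{x}+\bm{b}) \le \tilde C \, p_{\overline{M}}(t,\bm{x})$ valid whenever $|\bm{b}| \le c t^{1/\alpha}$, plus a matching lower bound on $|D^l_{\bm{x}} p_S(t,\bm{x})|$ by (a multiple of) $\int p_{\overline{M}}(t,\bm{x}-\bm{y}) t^{-l/\alpha} P_{N_t}(d\bm{y})$. So the whole thing reduces to a translation-invariance property of the auxiliary density $p_{\overline{M}}$ at its natural scale.

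First I would reduce to the case $t=1$ by the scaling identity: writing $\bm{b} = t^{1/\alpha}\tilde{\bm{b}}$ with $|\tilde{\bm{b}}| \le c$, and $\bm{x} = t^{1/\alpha}\tilde{\bm{x}}$, the claim for $(t,\bm{x},\bm{b})$ is equivalent to the claim for $(1,\tilde{\bm{x}},\tilde{\bm{b}})$ with $|\tilde{\bm{b}}|$ bounded by the fixed constant $c$. Then I would invoke the standard Gaussian-type (or rather, since $M$ has bounded jumps, sub-exponential) upper and lower bounds on $p_{\overline{M}}(1,\cdot)$ and its derivatives: $p_{\overline M}(1,\cdot)$ is a smooth density with fast decay and is bounded below on compact sets, and more precisely one has two-sided estimates of the form $p_{\overline M}(1,\bm z) \asymp e^{-\lambda |\bm z|\log(1+|\bm z|)}$ (truncated-jump processes have lighter-than-polynomial tails). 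From such a two-sided bound, for $|\tilde{\bm b}| \le c$ one gets $p_{\overline M}(1, \tilde{\bm x}+\tilde{\bm b}) \le \tilde C p_{\overline M}(1,\tilde{\bm x})$ for all $\tilde{\bm x}$, because the ratio of the exponential weights at $\tilde{\bm x}+\tilde{\bm b}$ and $\tilde{\bm x}$ is bounded uniformly in $\tilde{\bm x}$ when the shift is bounded. The same argument applied to the derivative bound $|D^l p_M(1,\cdot)| \le C p_{\overline M}(1,\cdot)$, combined with a lower bound $|D^l_{\bm x} p_S(1,\bm x)| \ge c' \int p_{\overline M}(1,\bm x - \bm y) P_{N_1}(d\bm y)$ (which in turn follows from the nondegeneracy of $M$, i.e. that $D^l p_M$ does not oscillate too wildly — or alternatively one avoids needing a lower bound by observing the asserted inequality is only needed as an \emph{upper} bound on $|D^l p_S(t,\bm x + \bm b)|$ against $|D^l p_S(t, \bm x)|$, and one reruns the comparison directly on $p_S$ itself using its own two-sided heat-kernel estimate).

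The cleanest route, which I would actually follow, is to bypass $N$ altogether: the convolution $p_S(t,\cdot) = p_M(t,\cdot) * P_{N_t}$ preserves the comparison, so it suffices to prove $|D^l_{\bm x} p_M(t,\bm x + \bm b)| \le \tilde C |D^l_{\bm x} p_M(t,\bm x)|$ for $|\bm b| \le ct^{1/\alpha}$ and then integrate against $P_{N_t}(d\bm b')$ in the argument $\bm x - \bm b'$, noting $|\bm b + \bm b'| \le c t^{1/\alpha} + |\bm b'|$ and being careful that $\bm b'$ ranges over all of $\R^{nd}$ (here one uses the Gaussian-like domination that $p_{\overline M}$ at a shifted point is controlled by $p_{\overline M}$ at the original point times a factor depending only on the \emph{bounded} part of the shift). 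Concretely I would cite the pointwise bounds from Lemma A.2 of \cite{Huang:Menozzi:Priola19} together with the elementary fact that a density of the form $t^{-nd/\alpha} q(1, t^{-1/\alpha}\cdot)$ with $q(1,\cdot)$ admitting two-sided exponential-type estimates satisfies $q(t, \bm x + \bm b) \le C(c) q(t, \bm x)$ whenever $|\bm b| \le c t^{1/\alpha}$. The main obstacle is pinning down the precise form of the two-sided estimate for $p_{\overline M}(1,\cdot)$ and its first three derivatives needed to make the ratio bound uniform in $\bm x$; this is where I would lean on the cited references rather than reprove the heat-kernel bounds, since $\overline M$ is a truncated stable process and such estimates are classical (e.g. via \cite{book:Sato99} or the Kolokoltsov-type bounds already invoked for $p_h$). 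Everything else is routine scaling and convolution bookkeeping.
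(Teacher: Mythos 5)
Your first paragraph is exactly the paper's route: decompose $S_t=M_t+N_t$ at the threshold $t^{1/\alpha}$, write $D^l_{\bm{x}}p_S(t,\cdot)=\bigl(D^l_{\bm{x}}p_M(t,\cdot)\bigr)\ast P_{N_t}$, use $\vert D^l_{\bm{x}}p_M(t,\bm{z})\vert \le C t^{-l/\alpha}p_{\overline{M}}(t,\bm{z})$ from Lemma A.2 of \cite{Huang:Menozzi:Priola19}, and reduce everything to the shift comparison $p_{\overline{M}}(t,\bm{z}+\bm{b})\le C(c)\,p_{\overline{M}}(t,\bm{z})$ for $\vert\bm{b}\vert\le ct^{1/\alpha}$. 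Note, though, that the majorant there is the explicit polynomially decaying density $p_{\overline{M}}(t,\bm{z})=Ct^{-nd/\alpha}\bigl(1+\vert\bm{z}\vert/t^{1/\alpha}\bigr)^{-3}$, so the ratio bound is a one-line elementary estimate; the two-sided exponential-type bounds $p_{\overline M}(1,\bm z)\asymp e^{-\lambda\vert\bm z\vert\log(1+\vert\bm z\vert)}$ you invoke are neither needed nor safe to assume here, since (\textbf{ND}) allows very singular spherical measures (e.g.\ sums of Dirac masses) for which matching lower bounds of that rotationally invariant form are not available.

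The genuine gap is in how you try to close the argument so as to get $\vert D^l_{\bm{x}}p_S(t,\bm{x})\vert$ itself on the right-hand side. The proposed lower bound $\vert D^l_{\bm x}p_S(1,\bm x)\vert \ge c'\int p_{\overline M}(1,\bm x-\bm y)P_{N_1}(d\bm y)$ is false for $l\ge 1$: a derivative of a density vanishes at critical points (by symmetry $D_{\bm x}p_S(t,0)=0$), so no pointwise lower bound by a strictly positive density can hold; for the same reason your "cleanest route", a pointwise comparison $\vert D^l p_M(t,\bm x+\bm b)\vert\le C\vert D^l p_M(t,\bm x)\vert$, also fails. Your fallback via "two-sided heat-kernel estimates on $p_S$" is likewise unavailable under (\textbf{ND}). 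The way out — and what the paper's own proof actually delivers and uses — is to settle for the majorant form of the conclusion: $\vert D^l_{\bm x}p_S(t,\bm x+\bm b)\vert \le C t^{-l/\alpha}\bigl(p_{\overline M}(t,\cdot)\ast P_{N_t}\bigr)(\bm x)$, obtained by combining the convolution representation, the derivative bound on $p_M$, and the elementary shift comparison of $p_{\overline M}$. This weaker statement is all that is ever needed downstream (in the proof of \eqref{eq:translation_inv_for_density}, the bound is only fed into the smoothing-effect machinery of Lemma \ref{Appendix:Smoothing_effect}, which works with the majorant $q(t,\cdot)$ and not with $p_S$ itself), so no lower bound on $p_S$ or its derivatives should be attempted.
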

\begin{proof}
Looking back at the proof of the previous lemma \ref{Appendix:Smoothing_effect}, we know that
\[D^l_{\bm{x}}p_S(t,\bm{x}+\bm{b}) \, = \, \int_{\R^{nd}}D^l_{\bm{x}}p_M(t,\bm{x}+\bm{b}-\bm{y})P_{N_t}(d\bm{y})\]
where $p_M(t,\cdot)$ is the density of $M_t$ and $P_{N_t}$ is the law of $N_t$, corresponding to the small and big jumps in the
Ito-L\'evy decomposition.\newline
From lemma $A.2$ in \cite{Huang:Menozzi:Priola19} we know moreover that
\[\vert D^l_{\bm{x}}p_M(t,\bm{x}+\bm{b}-\bm{y})\vert \, \le \, \frac{C}{t^{\frac{l}{\alpha}}}p_{\overline{M}}(t,\bm{x}+\bm{b}-\bm{y}) \,\,
\text{ where } \,\, p_{\overline{M}}(t,\bm{z}) \, = \, \frac{C}{t^{\frac{nd}{\alpha}}}\frac{1}{\Bigl(1+\frac{\vert \bm{z}\vert}{t^
\frac{1}{\alpha}}\Bigr)^3}.\]
It is then enough to show that
\begin{multline*}
p_{\overline{M}}(t,\bm{z}+\bm{b}) \, = \, \frac{C}{t^{\frac{nd}{\alpha}}}\frac{1}{\Bigl(1+\frac{\vert \bm{z}+\bm{b}\vert}{t^
\frac{1}{\alpha}}\Bigr)^3} \, \le \, \frac{\tilde{C}}{t^{\frac{nd}{\alpha}}}\frac{1}{\Bigl(1+c+\frac{\vert \bm{z}+\bm{b}\vert}{t^
\frac{1}{\alpha}}\Bigr)^3} \\
\le \, \frac{C}{t^{\frac{nd}{\alpha}}}\frac{1}{\Bigl(1+c\frac{\vert \bm{z}\vert}{t^\frac{1}{\alpha}}-\frac{\vert \bm{b}\vert}{t^\frac{1}{
\alpha}}\Bigr)^3} \, \le \, \frac{C}{t^{\frac{nd}{\alpha}}}\frac{1}{\Bigl(1+\frac{\vert \bm{z}\vert}{t^\frac{1}{\alpha}}\Bigr)^3} \, \le \,
Cp_{\overline{M}}(t,\bm{z}).
\end{multline*}
to conclude the proof.
\end{proof}

\begin{proof}[Proof of Equation \eqref{eq:translation_inv_for_density}]
We start looking back to the proof of Lemma \ref{lemma:Smoothing_effect_frozen} to find that
\[\bigl{\vert} D^\vartheta_{\bm{x}}\tilde{p}^{\tau,\bm{\xi}'} (t,s,\bm{x}+ \lambda(\bm{x}'- \bm{x}),\bm{y})\bigr{\vert} \,  = \,
C(s-t)^{-\sum_{k=1}^{n}\frac{\vartheta_k}{\alpha_k}}\frac{1}{\det (\mathbb{M}_{s-t})}
\bigl{\vert}D^{\vert \vartheta\vert}_{\bm{z}}p_S\bigl(s-t,\cdot)(\mathbb{M}^{-1}_{s-t} (\tilde{\bm{m}}^{\tau,\bm{\xi}}_{t,s}
(\bm{x})-\bm{y})\bigr)\bigr{\vert}\]
Moreover, we notice that
\[\mathbb{M}^{-1}_{s-t} \bigl(\tilde{\bm{m}}^{\tau,\bm{\xi}}_{t,s} (\bm{x}+\lambda(\bm{x}-\bm{x}'))-\bm{y}\bigr) \, = \,\mathbb{M}^{-1
}_{s-t} \bigl(\tilde{\bm{m}}^{\tau,\bm{\xi}}_{t,s} (\bm{x})-\bm{y}\bigr)+\lambda\mathbb{M}^{-1}_{s-t} e^{A(s-t)}(\bm{x}-\bm{x}').\]
Then, Control \eqref{eq:translation_inv_for_density} follows immediately from the previous lemma once we have shown that
\[\bigl{\vert}\lambda\mathbb{M}^{-1}_{s-t} e^{A(s-t)}(\bm{x}-\bm{x}')\bigr{\vert} \, \le \, C(s-t)^{1/\alpha}\]
for some constant $C:=C(A)$. Indeed, fixed $i$ in $\llbracket 1,n\rrbracket$, we can exploit the structure of $A$ and $\mathbb{M}_{s-t}$
(cf. Equation \eqref{Proof:Scaling_Lemma2} in Scaling Lemma \ref{lemma:Scaling_Lemma}) to write that
\[\bigl[\mathbb{M}^{-1}_{s-t} e^{A(s-t)}(\bm{x}-\bm{x}') \bigr]_i \, = \, \sum_{j=1}^{n}\sum_{k=1}^{n}\bigl[\mathbb{M}^{-1}_{s-t}\bigr]_{i,k}
\bigl[e^{A(s-t)}\bigr]_{k,j}(\bm{x}-\bm{x}')_j \, = \, \sum_{j=i}^{n}(s-t)^{-(i-1)}C_j(s-t)^{i-j}(\bm{x}-\bm{x}')_j.\]
Since moreover we assumed to be in a local diagonal regime, i.e.\ $d^\alpha(\bm{x},\bm{x}')\le (s-t)^{1/\alpha}$, we can conclude that
\[\bigl{\vert}\bigl[\mathbb{M}^{-1}_{s-t} e^{A(s-t)}(\bm{x}-\bm{x}') \bigr]_i\bigr{\vert} \, \le \, C\sum_{j=i}^{n}(s-t)^{-(j-1)}
\vert(\bm{x}-\bm{x}')_j\vert \, \le \, C\sum_{j=i}^{n}(s-t)^{-(j-1)}(s-t)^{\frac{1+\alpha(j-1)}{\alpha}} \, = \,C(s-t)^{1/\alpha}.\qedhere\]
\end{proof}

\subsection{Technical Tools}

In this section, we present the proof of some technical results already used in the article, for the sake of completeness. \newline
We recall moreover that the results below can be proven also for the flow $\bar{\bm{\theta}}_{\tau,s}(\bm{\xi})$ driven by a more general
perturbation $\bm{F}$ under assumption $(\bar{\textbf{A}})$ (cf. Section $7.1$), exploiting that
$\bar{\bm{F}}_i$ is Lipschitz continuous in the $\bm{x}_{i-1}$ variable for any $i$ in $\llbracket 2, n \rrbracket$.

We begin proving Lemma \ref{lemma:Controls_on_Flow1} about the sensitivity of the H\"older flows, appearing in the proof of the
a priori estimates \eqref{eq:A_Priori_Estimates} of Proposition \ref{prop:A_Priori_Estimates}. For this reason, we will assume from this
point further to be under assumption (\textbf{A'}).

\paragraph{Proof of Lemma \ref{lemma:Controls_on_Flow1}.}
We start noticing that our result follows immediately using Young inequality, once we have shown that it holds
\begin{equation}\label{equation:Controls_on_Flow}
\bigl{\vert} (\bm{\theta}_{t,s}(\bm{x})-\bm{\theta}_{t,s}(\bm{x}'))_i\bigr{\vert} \, \le \, C\Bigl[(s-t)^{\frac{1+\alpha(i-1)}{\alpha}}+d^{1+\alpha(i-1)}(
\bm{x},\bm{x}')\Bigr] \quad \text{for any $i$ in }\llbracket 1,n \rrbracket.
\end{equation}
Our proof will rely essentially in iterative applications of the Gr\"onwall lemma. We notice however that under (\textbf{A}), the
perturbation $\bm{F}_i$ is only H\"older continuous with respect to its $i$-th variable. To overcome this problem, we are going to mollify
(but only with respect to the variable of interest) the function $\bm{F}$ in the following way: fixed a mollifier $\rho$ on $\R^d$, i.e. a
compactly supported, non-negative, smooth function such that $\Vert \rho \Vert_{L^1}=1$ and a family $\delta_i$ of positive constants to be
chosen later, the mollified version of the perturbation is given by $\bm{F}^\delta=(\bm{F}_1,\bm{F}^{\delta_2}_2,\dots,\bm{F}^{\delta_n}_n)$
where
\[\bm{F}^{\delta_i}_i(t,\bm{z}_{i:n}) \, := \, \bm{F}_i \ast_i \rho_{\delta_i}(t,\bm{z}_{i:n}) \, = \, \int_{\R^d}
\bm{F}_i(t,\bm{z}_i-\omega,\bm{z}_{i+1},\dots,\bm{z}_n)\frac{1}{\delta_i^d} \rho(\frac{\omega}{\delta_i}) \, d\omega.\]
We remark in particular that we do not need to mollify the first component $\bm{F}_1$ since it is regular enough, say $\beta$-H\"older
continuous in the first $d$-dimensional variable $\bm{x}_1$, by assumption (\textbf{R}).\newline
Then, standard results on mollifier theory and our current assumptions on $\bm{F}$ show us that the following controls hold
\begin{align}
\label{Proof:Controls_on_flows_mollifier} \vert \bm{F}_i(u,\bm{z}) - \bm{F}^\delta_i(u,\bm{z})\vert \, &\le \,\Vert \bm{F}_i \Vert_{L^\infty(C^{\gamma_+\beta}_d)}\delta_i^{\frac{\gamma_i+\beta}{1+\alpha(i-1)}}, \\
\label{Proof:Controls_on_flows_mollifier1} \vert \bm{F}^\delta_i(u,\bm{z}) - \bm{F}^\delta_i(u,\bm{z}')\vert \, &\le \,C\Vert \bm{F}_i \Vert_{L^\infty(C^{\gamma_+\beta}_d)} \bigl[\delta_i^{
\frac{\gamma_i+\beta}{1+\alpha(i-1)}-1}\vert(\bm{z}-\bm{z}')_i\vert +\sum_{j=i+1}^{n}\vert(\bm{z}-\bm{z}')_{j}\vert^{\frac{\gamma_i+\beta}{1+
\alpha(j-1)}}\bigr].
\end{align}
We choose now $\delta_i$ for any $i$ in $\llbracket 2,n\rrbracket$ in order to have any contribution associated with the mollification
appearing in \eqref{Proof:Controls_on_flows_mollifier} at a good current scale time. Namely, we would like $\delta_i$ to satisfy
\[\bigl{\vert} \bigl((s-t)^{\frac{1}{\alpha}}\mathbb{M}_{s-t}\bigr)^{-1}\bigl(\bm{F}(u,\bm{z})-\bm{F}^\delta(u,\bm{z})\bigr) \bigr{\vert} \,
\le \, C(s-t)^{-1}\]
for any $u$ in $[t,s]$ and any $\bm{z}$ in $\R^{nd}$.
Using the mollifier controls \eqref{Proof:Controls_on_flows_mollifier}, it is enough to ask for
\[\sum_{i=2}^n(s-t)^{-\frac{1}{\alpha_i}}\delta_i^{\frac{\gamma_i+\beta}{1+\alpha(i-1)}} \, \le \, C(s-t)^{-1}.\]
Recalling that $\gamma_i:=1+\alpha(i-2)$ by assumption (\textbf{R}), this is true if we fix for example,
\begin{equation}\label{Proof:Controls_on_Flows_Choice_delta}
\delta_i \, = \, (s-t)^{\frac{\gamma_i}{\alpha}\frac{1+\alpha(i-1)}{\gamma_i+\beta}} \quad \text{for $i$ in $\llbracket 2,n\rrbracket$.}
\end{equation}
After this introductive part, we start controlling the last component of the flow. By construction of $\bm{\theta}_{t,s}$, we can write
that
\begin{multline}
\label{Proof:Control_on_Flow_eq1}
\bigl{\vert} (\bm{\theta}_{t,s}(\bm{x})-\bm{\theta}_{t,s}(\bm{x}'))_n\bigr{\vert} \, = \, \Bigl{\vert} (\bm{x}-\bm{x}')_n +
\int_{t}^{s}\bigl{\{}\bigl[A(\bm{\theta}_{t,v}(\bm{x})-\bm{\theta}_{t,v}(\bm{x}'))\bigr]_n + \bm{F}_n(v,\bm{\theta}_{t,v}(\bm{x}))
-\bm{F}_n(v,\bm{\theta}_{t,v}(\bm{x}'))\bigr{\}} \,  dv \Bigr{\vert} \\
\le \, \vert (\bm{x}-\bm{x}')_n\vert + \int_{t}^{s}\bigl{\{}A_{n,n-1}\vert(\bm{\theta}_{t,v}(\bm{x})- \bm{\theta}_{t,v}(\bm{x}'))_{n-1}
\vert + \bigl{\vert}\bm{F}_n(v,\bm{\theta}_{t,v}(\bm{x})) - \bm{F}_n(v,\bm{\theta}_{t,v}(\bm{x}'))\bigr{\vert}\bigr{\}} \,  dv
\end{multline}
where in the last passage we have exploited the sub-diagonal structure of $A$ (cf. Equation \eqref{eq:def_matrix_A}). If we focus only on
the last term involving the difference of the drifts, It holds now that
\begin{multline*}
\bigl{\vert} \bm{F}_n(v,\bm{\theta}_{t,v}(\bm{x})) - \bm{F}_n(v,\bm{\theta}_{t,v}(\bm{x}'))\bigr{\vert} \, \le \, \bigl{\vert} \bm{F}_n(v,\bm{\theta}_{t,v}(\bm{x})) \pm \bm{F}^\delta_n(v,\bm{\theta}_{t,v}(\bm{x})) - \bm{F}_n(v,\bm{\theta}_{t,v}(\bm{x}')) \pm
\bm{F}^\delta_n(v,\bm{\theta}_{t,v}(\bm{x}'))\bigr{\vert} \\
\le \, \bigl{\vert} \bm{F}_n(v,\bm{\theta}_{t,v}(\bm{x})) - \bm{F}^\delta_n(v,\bm{\theta}_{t,v}(\bm{x}))\bigr{\vert} +
\bigl{\vert}\bm{F}_n(v,\bm{\theta}_{t,v}(\bm{x}')) -
\bm{F}^\delta_n(v,\bm{\theta}_{t,v}(\bm{x}') )\bigr{\vert} + \bigl{\vert} \bm{F}^\delta_n( v,\bm{\theta}_{t,v}(\bm{x}))-\bm{F}^\delta_n(
v,\bm{\theta}_{t,v}(\bm{x}'))\bigr{\vert}.
\end{multline*}
Using the controls \eqref{Proof:Controls_on_flows_mollifier}, \eqref{Proof:Controls_on_flows_mollifier1} on the mollified drifts, we then
write from \eqref{Proof:Control_on_Flow_eq1} and the previous equation that
\begin{multline*}
\bigl{\vert} (\bm{\theta}_{t,s}(\bm{x})-\bm{\theta}_{t,s}(\bm{x}'))_n\bigr{\vert} \,\le \\
\vert (\bm{x}-\bm{x}')_n\vert + 2(s-t)\delta_{n}^{\frac{\gamma_n+\beta}{1+\alpha(n-1)}}+C\int_{t}^{s}
\bigl{\{}\bigl{\vert}(\bm{\theta}_{t,v}(\bm{x})- \bm{\theta}_{t,v}(\bm{x}'))_{n-1} \bigr{\vert} +
\delta_n^{\frac{\gamma_n+\beta}{1+\alpha(n-1)}-1} \bigl{\vert}(\bm{\theta}_{t,v}(\bm{x})-
\bm{\theta}_{t,v}(\bm{x}'))_n \bigr{\vert}\bigr{\}} \,  dv.
\end{multline*}
We now apply the Gr\"onwall lemma to show that
\[\bigl{\vert} (\bm{\theta}_{t,s}(\bm{x})-\bm{\theta}_{t,s}(\bm{x}'))_n\bigr{\vert} \, \le \, C\Bigl[\vert(\bm{x}-\bm{x}')_n\vert+(s-t)
\delta_{n}^{\frac{\gamma_n+ \beta}{1+\alpha(n-1)}} + \int_{t}^{s} \bigl{\vert}(\bm{\theta}_{t,v}(\bm{x})- \bm{\theta}_{t,v}(\bm{x}'))_{n-1}
\bigr{\vert} \,  dv\Bigr].\]
From our previous choice for $\delta_n$ (cf. Equation \eqref{Proof:Controls_on_Flows_Choice_delta}), we know that
$(s-t)^{-\frac{1}{\alpha_n}}\delta_n^{\frac{\gamma_n+\beta}{1+\alpha(n-1)}} \le C(s-t)^{-1}$ and thus, we can rewrite the last inequality as
\begin{equation}\label{Proof:Controls_on_flows1}
\bigl{\vert}(\bm{\theta}_{t,s}(\bm{x}) - \bm{\theta}_{t,s}(\bm{x}'))_n\bigr{\vert} \, \le \,C\Bigl[\bigl{\vert}(\bm{x} -
\bm{x}')_n\bigr{\vert} +(s-t)^{\frac{1+\alpha(n-1)}{\alpha}} + \int_{t}^{s}\bigl{\vert}(\bm{\theta}_{t,v}(\bm{x}) -
\bm{\theta}_{t,v}(\bm{x}'))_{n-1}\bigr{\vert} \, dv\Bigr].
\end{equation}
We would like now to obtain a similar control on the $(n-1)$-th term. As already done at the beginning of the proof, we can write that
\begin{multline*}
\bigl{\vert}(\bm{\theta}_{t,s}(\bm{x}) - \bm{\theta}_{t,s}(\bm{x}'))_{n-1}\bigr{\vert} \, \le \, \bigl{\vert}(\bm{x} - \bm{x}')_{n-1}\bigr{\vert} + C\delta_{n-1}^{\frac{\gamma_{n-1} + \beta}{1+\alpha(n-2)}}(s-t)+\int_{t}^{s}
\bigl{\vert}(\bm{\theta}_{t,v}(\bm{x}) - \bm{\theta}_{t,sv}(\bm{x}'))_{n-2}\bigr{\vert} \\
+ \delta_{n-1}^{\frac{\gamma_{n-1} +\beta}{1+\alpha(n-2)}-1} \bigl{\vert}(\bm{\theta}_{t,v}(\bm{x})
-\bm{\theta}_{t,v}(\bm{x}'))_{n-1}\bigr{\vert} +
\bigl{\vert}(\bm{\theta}_{t,v}(\bm{x})-\bm{\theta}_{t,v}(\bm{x}'))_{n}\bigr{\vert}^{\frac{\gamma_{n-1} +\beta}{1+\alpha(n-1)}} \, dv
\end{multline*}
We then apply the Gr\"onwall lemma to find that
\begin{multline*}
\bigl{\vert}(\bm{\theta}_{t,s}(\bm{x}) - \bm{\theta}_{t,s}(\bm{x}'))_{n-1}\bigr{\vert} \, \le \, C\Bigl[\bigl{\vert}(\bm{x} -
\bm{x}')_{n-1}\bigr{\vert} + \delta_{n-1}^{\frac{\gamma_{n-1} + \beta}{1+\alpha(n-2)}}(s-t)\\
+\int_{t}^{s}\bigl{\{}\bigl{\vert}(\bm{\theta}_{t,v}(\bm{x}) - \bm{\theta}_{t,sv}(\bm{x}'))_{n-2}\bigr{\vert} + \bigl{\vert}
(\bm{\theta}_{t,v}(\bm{x}) - \bm{\theta}_{t,v}(\bm{x}'))_{n} \bigr{\vert}^{\frac{\gamma_{n-1} +\beta}{1+\alpha(n-1)}}\bigr{\}} \, dv\Bigr].
\end{multline*}
Remembering our previous choice of $\delta_{n-1}$, it holds now that
\begin{multline}
\label{Proof:Controls_on_flows1/2}
\bigl{\vert}(\bm{\theta}_{t,s}(\bm{x}) - \bm{\theta}_{t,s}(\bm{x}'))_{n-1}\bigr{\vert} \, \le \, C
\Bigl[\vert(\bm{x}-\bm{x}')_{n-1}\vert+(s-t)^{\frac{1+\alpha(n-2)}{\alpha}}+\int_{t}^{s}\bigl{\vert}(\bm{\theta}_{t,v}(\bm{x}) -
\bm{\theta}_{t,v}(\bm{x}'))_{n-2}\bigr{\vert} \\
+ \bigl{\vert}(\bm{\theta}_{t,v}(\bm{x}) - \bm{\theta}_{t,v}(\bm{x}'))_n\bigr{\vert}^{\frac{\gamma_{n-1}+\beta}{1+\alpha(n-1)}} \, dv\Bigr].
\end{multline}
We then use equation \eqref{Proof:Controls_on_flows1} and the Jensen inequality to write
\begin{multline}
\label{Proof:Controls_on_flows1/3}
\bigl{\vert}(\bm{\theta}_{t,s}(\bm{x}) - \bm{\theta}_{t,s}(\bm{x}'))_{n-1}\bigr{\vert} \\
\le \, C \Bigl[\vert(\bm{x}-\bm{x}')_{n-1}\vert + (s-t)^{\frac{1+\alpha(n-2)}{\alpha}} +\int_{t}^{s}\bigl{\{}\bigl{\vert}(\bm{\theta}_{t,v}(\bm{x}) -
\bm{\theta}_{t,v}(\bm{x}'))_{n-2} \bigr{\vert} + \bigl{\vert}(\bm{x} - \bm{x}')_n\bigr{\vert}^{\frac{\gamma_{n-1}+\beta}{1+\alpha(n-1)}} \\
+(v-t)^{\frac{\gamma_{n-1}+\beta}{\alpha}}+ \Bigl(\int_{t}^{v}\bigl{\vert}(\bm{\theta}_{t,\omega}(\bm{x}) - \bm{\theta}_{t,\omega}(\bm{x}'))_{n-1}
\bigr{\vert}\,d\omega\Bigr)^{\frac{\gamma_{n-1}+\beta}{1+\alpha(n-1)}} \bigr{\}}\, dv\Bigr].
\end{multline}
The idea now is to use Gr\"onwall lemma again. To do so, we firstly move the exponent from the last integral term involving the $(n-1)$-th
term using the Young inequality:
\[\Bigl(\int_{t}^{v}\bigl{\vert}(\bm{\theta}_{t,\omega}(\bm{x}) - \bm{\theta}_{t,\omega}(\bm{x}'))_{n-1}\bigr{\vert} \, d\omega
\Bigr)^{\frac{\gamma_{n-1}+\beta}{1+\alpha(n-1)}} \, \le \, B^{-\frac{1+\alpha(n-1)}{\gamma_{n-1}+\beta}} \int_{t}^{v} \bigl{\vert}
(\bm{\theta}_{t,\omega}(\bm{x}) - \bm{\theta}_{t,\omega}(\bm{x}'))_{n-1}\bigr{\vert} \, d\omega + B^{\frac{1+\alpha(n-1)}{2\alpha - \beta}}\]
for a quantity $B$ to be fixed later.\newline
Since we need homogeneity with respect to time in equation \eqref{Proof:Controls_on_flows1/2}, we choose $B$ such that
\[B^{\frac{1+\alpha(n-1)}{2\alpha - \beta}} \, = \, (v-t)^{\frac{\gamma_{n-1}+\beta}{\alpha}} \, \Leftrightarrow \, B=(v
-t)^{\frac{\gamma_{n-1}+\beta}{\alpha}\frac{2\alpha - \beta}{1+\alpha(n-1)}}.\]
Plugging it into the general expression \eqref{Proof:Controls_on_flows1/3}, we find that
\begin{multline*}
\bigl{\vert}(\bm{\theta}_{t,s}(\bm{x}) - \bm{\theta}_{t,s}(\bm{x}'))_{n-1}\bigr{\vert} \, \le \, C \Bigl[\vert(\bm{x}-\bm{x}')_{n-1}\vert +
(s-t)^{\frac{1+\alpha(n-2)}{\alpha}} \\
+\int_{t}^{s}\Bigl{\{}\bigl{\vert}(\bm{\theta}_{t,v}(\bm{x}) - \bm{\theta}_{t,v}(\bm{x}'))_{n-2} \bigr{\vert} + \bigl{\vert}(\bm{x}
-\bm{x}')_n\bigr{\vert}^{\frac{\gamma_{n-1}+\beta}{1+\alpha(n-1)}} +(v - t)^{\frac{\gamma_{n-1}+\beta}{\alpha}} \\
+(v-t)^{\frac{\beta}{\alpha}-2}\int_{t}^{v}\bigl{\vert}(\bm{\theta}_{t,\omega}(\bm{x}) - \bm{\theta}_{t,\omega}(\bm{x}'))_{n-1}
\bigr{\vert} \, d\omega \Bigr{\}}\, dv\Bigr] \\
\le \, C \Bigl[\vert(\bm{x}-\bm{x}')_{n-1}\vert +(s-t)^{\frac{1+\alpha(n-1)}{\alpha}} + (s-t)\bigl{\vert}(\bm{x}-\bm{x}')_n \bigr{\vert}^{ \frac{\gamma_{n-1}+
\beta}{1+\alpha(n-1)}} + (s - t)^{\frac{\gamma_{n-1}+\beta+\alpha}{\alpha}}\\
+ \int_{t}^{s}\Bigl{\{}\bigl{\vert} (\bm{\theta}_{t,v}(\bm{x}) - \bm{\theta}_{t,v}(\bm{x}'))_{n-2} \bigr{\vert} + (v-t)^{\frac{\beta}{\alpha}-1} \sup_{\omega\in[t,v]}
\bigl{\vert}(\bm{\theta}_{t,\omega}(\bm{x}) -\bm{\theta}_{t,\omega}(\bm{x}'))_{n-1} \bigr{\vert}\Bigr{\}} \, dv\Bigr].
\end{multline*}
Since the previous inequality is also true for any $\overline{s}$ in $[t,s]$, it follows that
\begin{multline*}
\sup_{\overline{s}\in[0,s]}\bigl{\vert}(\bm{\theta}_{t,\overline{s}}(\bm{x}) - \bm{\theta}_{t,\overline{s}}(\bm{x}'))_{n-1}\bigr{\vert} \,
\le \, C \Bigl[\vert(\bm{x}-\bm{x}')_{n-1}\vert + (s-t)^{\frac{1+\alpha(n-2)}{\alpha}} + (s-t)\bigl{\vert}(\bm{x} - \bm{x}')_n \bigr{\vert}^{
\frac{\gamma_{n-1}+\beta}{1+\alpha(n-1)}}\\
+(s - t)^{\frac{\gamma_{n-1}+\beta+\alpha}{\alpha}} +\int_{t}^{s} \Bigl{\{}\bigl{\vert} (\bm{\theta}_{t,v}(\bm{x}) -
\bm{\theta}_{t,v}(\bm{x}'))_{n-2}\bigr{\vert} + (v-t)^{\frac{\beta}{\alpha}-1} \sup_{\omega \in[t,v]}\bigl{\vert}(\bm{\theta}_{t,\omega}
(\bm{x}) - \bm{\theta}_{t,\omega}(\bm{x}'))_{n-1} \bigr{\vert}\Bigr{\}} \, dv\Bigr].
\end{multline*}
We can finally apply the Gr\"onwall lemma to show that for any $s$ in $[t,T]$, there exists a constant $C$ such that
\begin{multline*}
\bigl{\vert}(\bm{\theta}_{t,s}(\bm{x}) - \bm{\theta}_{t,s}(\bm{x}'))_{n-1}\bigr{\vert} \\
\le \, C \Bigl[\vert(\bm{x}-\bm{x}')_{n-1}\vert + (s-t)^{\frac{1+\alpha(n-2)}{\alpha}} + (s-t)\vert(\bm{x} - \bm{x}')_n\vert^{\frac{\gamma_{n-1}+\beta}{
1+\alpha(n-1)}} + \int_{t}^{s} \bigl{\vert} (\bm{\theta}_{t,v}(\bm{x}) - \bm{\theta}_{t,v}(\bm{x}'))_{n-2}
\bigr{\vert} \, dv\Bigr].
\end{multline*}
Moreover, thanks to the Young inequality we know that
\[(s-t)\bigl{\vert}(\bm{x} - \bm{x}')_n \bigr{\vert}^{\frac{\gamma_{n-1}+\beta}{1+\alpha(n-1)}} \, \le \,
C\bigl{\{}(s-t)^{\frac{1+\alpha(n-2)}{\alpha}} +\vert(\bm{x} - \bm{x}')_n \vert^{\frac{\gamma_{n-1}+\beta}{1+\alpha(n-1)}
\frac{1+\alpha(n-2)}{1+\alpha(n-3)}}\bigr{\}}\]
and remembering that $d(\bm{x},\bm{x}')\le 1$ by hypothesis,
\[\vert(\bm{x} - \bm{x}')_n \vert^{\frac{\gamma_{n-1}+\beta}{1+\alpha(n-1)}\frac{1+\alpha(n-2)}{1+\alpha(n-3)}} \, \le \, \vert(\bm{x} -
\bm{x}')_n \vert^{\frac{\gamma_{n-1}+\beta}{\gamma_{n-1}}\frac{1+\alpha(n-2)}{1+\alpha(n-1)}} \, \le \, \vert(\bm{x} - \bm{x}')_n
\vert^{\frac{1+\alpha(n-2)}{1+\alpha(n-1)}}.\]
We then use it to write for any $v$ in $[t,T]$,
\begin{multline*}
\bigl{\vert}(\bm{\theta}_{t,v}(\bm{x}) - \bm{\theta}_{t,v}(\bm{x}'))_{n-1}\bigr{\vert} \\
\le \, C \Bigl[\vert(\bm{x}-\bm{x}')_{n-1}\vert + (v-t)^{\frac{1+\alpha(n-2)}{\alpha}} + \vert(\bm{x} - \bm{x}')_n \vert^{\frac{
1+\alpha(n-2)}{1+\alpha(n-1)}} + \int_{t}^{v} \bigl{\vert} (\bm{\theta}_{t,\omega}(\bm{x}) - \bm{\theta}_{t,\omega}(\bm{x}'))_{n-2}
\bigr{\vert} \, d\omega\Bigr].
\end{multline*}
Going back to equation \eqref{Proof:Controls_on_flows1}, we plug in the last bound to find that
\begin{multline*}
\bigl{\vert}(\bm{\theta}_{t,s}(\bm{x}) - \bm{\theta}_{t,s}(\bm{x}'))_n\bigr{\vert} \, \le \, C\Bigl[\vert(\bm{x} - \bm{x}')_n \vert
+(s-t)^{\frac{1+\alpha(n-1)}{\alpha}} + (s-t)\vert(\bm{x} - \bm{x}')_{n-1}\vert \\
+ (s-t)\vert(\bm{x} - \bm{x}')_n \vert^{\frac{1+\alpha(n-2)}{1+\alpha(n-1)}} + \int_{t}^{s}\int_{t}^{v} \bigl{\vert} (\bm{\theta}_{t,
\omega}(\bm{x}) - \bm{\theta}_{t,\omega}(\bm{x}'))_{n-2}\bigr{\vert}\, d\omega dv \Bigr] \\
\le \, C\Bigl[\vert(\bm{x} - \bm{x}')_n\vert + (s-t)^{\frac{1+\alpha(n-1)}{\alpha}} + \vert(\bm{x} - \bm{x}')_{n-1} \vert^{\frac{1+
\alpha(n-1)}{1+\alpha(n-2)}} + \int_{t}^{s}\int_{t}^{v} \bigl{\vert}(\bm{\theta}_{t,\omega}(\bm{x}) - \bm{\theta}_{t,\omega}(\bm{x}'))_{n-2}
\bigr{\vert}\, d\omega dv \Bigr]
\end{multline*}
where in the last passage we used again the Young inequality to show that
\[(s-t)\vert(\bm{x} - \bm{x}')_{n-1}\vert \, \le \, C(s-t)^{\frac{1+\alpha(n-1)}{\alpha}}+ \vert(\bm{x} - \bm{x}')_{n-1}\vert^{\frac{
1+\alpha(n-1)}{1+\alpha( n-2)}}\]
and
\[(s-t)\vert(\bm{x} - \bm{x}')_n\vert^{\frac{1+\alpha(n-2)}{1+\alpha(n-1)}} \, \le \, C(s-t)^{\frac{1+\alpha(n-1)}{\alpha}}+ \vert(\bm{x} -
\bm{x}')_n \vert.\]
This approach may be naturally iterated up to the first term of the chain, so that
\begin{multline*}
\bigl{\vert}(\bm{\theta}_{t,s}(\bm{x}) - \bm{\theta}_{t,s}(\bm{x}'))_n\bigr{\vert} \\
\le \, C \Bigl[\sum_{j=2}^{n}\vert(\bm{x}-\bm{x}')_j\vert^{\frac{1+\alpha(n-1)}{1+\alpha(j-1)}} + (s-t)^{\frac{1+\alpha(n-1)}{\alpha}} +
\int_{t}^{v_n=s} dv_{n-1}\dots\int_{t}^{v=2}dv_1\bigl{\vert} (\bm{\theta}_{t,v_1}(\bm{x}) - \bm{\theta}_{t,v_1}(\bm{x}'))_1 \bigr{\vert}\Bigr].
\end{multline*}
In a similar manner, we can show for any $i$ in $\llbracket 2,n\rrbracket$,
\begin{multline}\label{Proof:Controls_on_flows2}
\bigl{\vert}(\bm{\theta}_{t,s}(\bm{x}) - \bm{\theta}_{t,s}(\bm{x}'))_i\bigr{\vert} \\
\le \, C \Bigl[\sum_{j=2}^{n}\vert(\bm{x}-\bm{x}')_j \vert^{ \frac{1+\alpha(i-1)}{1+\alpha(j-1)}} + (s-t)^{\frac{1+\alpha(i-1)}{\alpha}} +
\int_{t}^{v_i=s}dv_{i-1}\dots\int_{t}^{v=2}dv_1\bigl{\vert} (\bm{\theta}_{t, v_1}(\bm{x}) - \bm{\theta}_{t,v_1}(\bm{x}'))_1 \bigr{\vert}\Bigr].
\end{multline}
Since all the non-integral terms in \eqref{Proof:Controls_on_flows2} are compatible with the statement of the lemma, it remains to find the proper bound for the first component of the flow. As before, let us consider $\overline{s}$ in $[t,s]$. We can write
\[\vert (\bm{\theta}_{t,\overline{s}}(\bm{x})-\bm{\theta}_{t,\overline{s}}(\bm{x}'))_1\vert \, \le \, \vert (\bm{x}-\bm{x}')_1\vert
+C\sum_{j=1}^{n}\int_{t}^{\overline{s}}
\vert (\bm{\theta}_{t,v}(\bm{x})-\bm{\theta}_{t,v}(\bm{x}'))_j\vert^{\frac{\beta}{1+\alpha(j-1)}} \, dv\]
or, passing to the supremum on both sides,
\begin{multline*}
\sup_{\overline{s}\in[t,s]}\vert( \bm{\theta}_{t,\overline{s}}(\bm{x})-\bm{\theta}_{t,\overline{s}}(\bm{x}'))_1\vert \\
\le \, \vert (\bm{x}-\bm{x}')_1\vert +C\Bigl{\{}(s-t)\bigl(\sup_{v\in[t,s]}\vert (\bm{\theta}_{t,v}(\bm{x})-\bm{\theta}_{t,v}(\bm{x}'))_1\vert\bigr)^\beta +
\sum_{j=2}^{n}\int_{t}^{s} \vert (\bm{\theta}_{t,v}(\bm{x})-\bm{\theta}_{t,v}(\bm{x}'))_j\vert^{\frac{\beta}{1+\alpha(j-1)}} \, dv\Bigr{\}}.
\end{multline*}
Using equation \eqref{Proof:Controls_on_flows2}, it holds now that
\begin{multline}\label{Proof:Controls_on_flows3}
\sup_{\overline{s}\in[t,s]}(\vert \bm{\theta}_{t,\overline{s}}(\bm{x})-\bm{\theta}_{t,\overline{s}}(\bm{x}'))_1\vert \, \le \, \vert
(\bm{x}-\bm{x}')_1\vert +C\Bigl{\{}(s-t)\bigl(\sup_{v\in[t,s]}\vert
(\bm{\theta}_{t,v}(\bm{x})-\bm{\theta}_{t,v}(\bm{x}'))_1\vert\bigr)^\beta\\
+ \sum_{j=2}^{n}\Bigl[(s-t)\bigl((s-t)^{\frac{1+\alpha(j-1)}{\alpha}} +\sum_{k=2}^{n} \vert(\bm{x}-\bm{x}')_k \vert^{
\frac{1+\alpha(j-1)}{1+\alpha(k-1)}} + (s-t)^{j-1}\sup_{v\in[t,s]}\vert (\bm{\theta}_{t,v}(\bm{x})-\bm{\theta}_{t,v}(\bm{x}'))_1
\vert\bigr)^{\frac{\beta}{1+\alpha(j-1)}}\Bigr]\Bigr).
\end{multline}
We then apply the Jensen inequality to show that
\begin{multline}\label{Proof:Controls_on_flows4}
\sup_{\overline{s}\in[t,s]}(\vert \bm{\theta}_{t,\overline{s}}(\bm{x})-\bm{\theta}_{t,\overline{s}}(\bm{x}'))_1\vert \, \le \, \vert
(\bm{x}-\bm{x}')_1\vert + C\Bigl{\{}(s-t)\bigl[\sup_{v\in[t,s]}\vert(\bm{\theta}_{t,v}(\bm{x})-\bm{\theta}_{t,v}(\bm{x}'))_1\vert
\bigr]^\beta+\sum_{j=2}^{n}C(s-t)\bigl[(s-t)^{\frac{\beta}{\alpha}}\\
+ \sum_{k=2}^{n} \vert(\bm{x}-\bm{x}')_k \vert^{ \frac{\beta}{1+\alpha(k-1)}} + (s-t)^{\frac{(j-1)\beta}{1+\alpha(j-1)}}\sup_{v\in[t,s]}\vert
(\bm{\theta}_{t,v}(\bm{x})-\bm{\theta}_{t,v}(\bm{x}'))_1 \vert^{\frac{\beta}{1+\alpha(j-1)}} \bigr]\Bigr{\}} \\
\le \,C\Bigl{\{}\vert (\bm{x}-\bm{x}')_1\vert + (s-t)^{\frac{\alpha+\beta}{\alpha}} + (s-t)\sum_{k=2}^{n}
\vert(\bm{x}-\bm{x}')_k\vert^{\frac{\beta}{1+\alpha(k-1)}}\\
+\sum_{j=1}^{n}(s-t)^{1+\frac{(j-1)\beta}{1+\alpha(j-1)}}\sup_{v\in[t,s]}\vert (\bm{\theta}_{t,v}(\bm{x})-\bm{\theta}_{t,v}(\bm{x}'))_1
\vert^{\frac{\beta}{1+\alpha(j-1)}}\Bigr{\}}.
\end{multline}
From Young inequality, we can deduce now that
\[(s-t)\vert(\bm{x}-\bm{x}')_k\vert^{\frac{\beta}{1+\alpha(k-1)}} \, \le \,C\bigl((s-t)^{\frac{1}{1-\beta}}+ \vert(\bm{x}-\bm{x}')_k
\vert^{\frac{1}{1+\alpha(k-1)}}\bigr)\]
and
\[(s-t)^{1+\frac{(j-1)\beta}{1+\alpha(j-1)}}\sup_{v\in[t,s]}\vert
(\bm{\theta}_{t,v}(\bm{x})-\bm{\theta}_{t,v}(\bm{x}'))_1\vert^{\frac{\beta}{1+\alpha(j-1)}}\, \le \, C\Bigl{\{}(s-t)^{\frac{1+(\alpha+\beta)(j-1)}{1+\alpha(j-1)-\beta}}+\sup_{v\in[t,s]}\vert (\bm{\theta}_{t,v}(\bm{x})-\bm{\theta}_{t,v}(\bm{x}'))_1\vert
\Bigr{\}}\]
Plugging these inequalities in the main one \eqref{Proof:Controls_on_flows4}, we find that
\begin{multline*}
\sup_{\overline{s}\in[t,s]}(\vert \bm{\theta}_{t,\overline{s}}(\bm{x})-\bm{\theta}_{t,\overline{s}}(\bm{x}'))_1\vert \, \le \,
C\Bigl{\{}\vert (\bm{x}-\bm{x}')_1\vert +
(s-t)^{\frac{\alpha+\beta}{\alpha}}+\sum_{k=2}^{n} \vert(\bm{x}-\bm{x}')_k\vert^{\frac{1}{1+\alpha(k-1)}}\\
+\sum_{j=1}^{n}(s-t)^{\frac{1+(\alpha+\beta)(j-1)}{1+\alpha(j-1)-\beta}}+\sup_{v\in[t,s]}\vert
(\bm{\theta}_{t,v}(\bm{x})-\bm{\theta}_{t,v}(\bm{x}'))_1\vert\Bigr{\}} \\
\le \,C\Bigl{\{}(s-t)^{\frac{\alpha+\beta}{\alpha}} +(s-t)^{\frac{1}{1-\beta}}+d(\bm{x},\bm{x}')+(s-t)^{\frac{1+(\alpha+\beta)(j-1)}{1+
\alpha(j-1)-\beta}}+\sup_{v\in[t,s]}\vert (\bm{\theta}_{t,v}(\bm{x})-\bm{\theta}_{t,v}(\bm{x}'))_1\vert\Bigr{\}}
\end{multline*}
Remembering that $s-t\le T-t\le 1$, it finally holds that
\[\vert \bm{\theta}_{t,s}(\bm{x})-\bm{\theta}_{t,s}(\bm{x}'))_1\vert \, \le \, C\bigl((s-t)^{1/\alpha} + d(\bm{x},\bm{x}')\bigr)\]
since by assumption (\textbf{P}),
\[\frac{\alpha+\beta}{\alpha}\, > \, \frac{1}{1-\beta} \, > \, \frac{1}{\alpha}\]
and
\[\frac{1+(\alpha+\beta)(j-1)}{1+\alpha(j-1)-\beta} \, = \, 1+\frac{\beta j}{1+\alpha j-(\alpha+\beta)} \, > \, 1+\frac{\beta j}{\alpha j}
\, > \, 1+\Bigl(\frac{1-\alpha}{\alpha}\Bigr)\, = \, \frac{1}{\alpha}.\]
Plugging this control in equation \eqref{Proof:Controls_on_flows2}, we then conclude since
\begin{multline*}
\bigl{\vert}(\bm{\theta}_{t,s}(\bm{x}) - \bm{\theta}_{t,s}(\bm{x}'))_i\bigr{\vert} \\
\le \,C\Bigl(d^{1+\alpha(i-1)}(\bm{x},\bm{x}')+ (s-t)^{\frac{1+\alpha(i-1)}{\alpha}} +(s-t)^{i-1}\sup_{\overline{s}\in[t,s]}(\vert
\bm{\theta}_{t,\overline{s}}(\bm{x})-\bm{\theta}_{t,\overline{s}}(\bm{x}'))_1\vert\Bigr) \\
\le \, C\Bigl(d^{1+\alpha(i-1)}(\bm{x},\bm{x}')+ (s-t)^{\frac{1+\alpha(i-1)}{\alpha}}+(s-t)^{i-1}\bigl((s-t)^{1/\alpha} +
d(\bm{x},\bm{x}')\bigr)\Bigr) \\
\le \, C\Bigl((s-t)^{\frac{1+\alpha(i-1)}{\alpha}} + d^{1+\alpha(i-1)}(\bm{x},\bm{x}')\Bigr),
\end{multline*}
using again the Young inequality in the last passage. The proof is complete.

We can now prove the two results (Lemmas \ref{lemma:Controls_on_means1} and Lemma \ref{lemma:Controls_on_means}) concerning the sensitivity
of the frozen shift $\tilde{\bm{m}}^{\tau,\bm{\xi}}_{t,s}$.

\paragraph{Proof of Lemma \ref{lemma:Controls_on_means1}.}
From the integral representation of $\tilde{\bm{m}}^{t,\bm{x}}_{t,s}(\bm{y})$ (cf. Equation \eqref{eq:def_tilde_m}), we can write that
\begin{multline*}
\bigl{\vert}\bigl(\tilde{\bm{m}}^{t,\bm{x}}_{t,s}(\bm{y})-\tilde{\bm{m}}^{t,\bm{x}'}_{t,s} (\bm{y}')\bigr)_1\bigr{\vert} \, \le \, \int_{t}^{s}\bigl{\vert} \bm{F}_1(v,\bm{\theta}_{t,v}(\bm{x})) - \bm{F}_1(v,\bm{\theta}_{t,v}(\bm{x}')) \bigr{\vert} \, dv \\
 \le \, C\Vert \bm{F}\Vert_H\int_{t}^{s}d^\beta\bigl(\bm{\theta}_{t,v}(\bm{x}),\bm{\theta}_{t,v}(\bm{x}')\bigr) \, dv
\end{multline*}
where in the second passage we used that $\bm{F}_1$ is in $C^{\beta}_{b,d}(\R^{nd})$.
Thanks to the Control on the flows (Lemma \ref{lemma:Controls_on_Flow1}), it then holds that
\[\bigl{\vert} \bigl(\tilde{\bm{m}}^{t,\bm{x}}_{t,s}(\bm{y})-\tilde{\bm{m}}^{t,\bm{x}'}_{t,s} (\bm{y}')\bigr)_1\bigr{\vert} \, \le \,C\Vert
\bm{F}\Vert_H(s-t)\bigl[d^\beta(\bm{x},\bm{x}')+(s-t)^{\frac{\beta}{\alpha}}\bigr]\]
and we have concluded.

\paragraph{Proof of Lemma \ref{lemma:Controls_on_means}.}
We know from Lemma \ref{lemma:identification_theta_m} that $\tilde{\bm{m}}^{t,\bm{x}'}_{t,t_0} (\bm{x}') =
\bm{\theta}_{t,t_0}(\bm{x}')$. Fixed $i$ in $\llbracket 1, n\rrbracket$, we can then write that
\[\bigl( \tilde{\bm{m}}^{t,\bm{x}}_{t,t_0}(\bm{x}') -\tilde{\bm{m}}^{t,\bm{x}'}_{t,t_0} (\bm{x}')\bigr)_i \, = \,
\bigl(\tilde{\bm{m}}^{t,\bm{x}}_{t,t_0}(\bm{x}') -\bm{\theta}_{t,t_0}(\bm{x}')\bigr)_i \,
= \, \bigl(\tilde{\bm{m}}^{t,\bm{x}}_{t,t_0}(\bm{x}')-\bm{\theta}_{t,t_0}(\bm{x})\bigr)_i +  \bigl(\bm{\theta}_{t,t_0}(\bm{x})
-\bm{\theta}_{t,t_0}(\bm{x}')\bigr)_i.\]
We start focusing on the first term of the above expression. From the integral representation of $\tilde{\bm{m}}^{t,\bm{x}}_{t,t_0}(
\bm{x}')$ and $\bm{\theta}_{t,t_0}(\bm{x})$, it holds that
\begin{equation}\label{Proof:Controls_on_means_1}
\tilde{\bm{m}}^{t,\bm{x}}_{t,t_0}(\bm{x}') -\bm{\theta}_{t,t_0}(\bm{x}) \, = \, \bm{x}'-\bm{x}+\int_{t}^{t_0}A\bigl[
\tilde{\bm{m}}^{t,\bm{x}}_{t,v}(\bm{x}') -\bm{\theta}_{t,v}(\bm{x})\bigr] \, dv.
\end{equation}
Remembering from \eqref{eq:def_matrix_A} that $A$ is sub-diagonal, it follows that
\begin{equation}\label{Proof:Controls_on_means_2}
\bigl(\tilde{\bm{m}}^{t,\bm{x}}_{t,t_0}(\bm{x}') -\bm{\theta}_{t,t_0}(\bm{x})\bigr)_i \, = \, (\bm{x}'-\bm{x})_i+A_{i,i-1}\int_{t}^{t_0}
\bigl( \tilde{\bm{m}}^{t,\bm{x}}_{t,v}(\bm{x}')-\bm{\theta}_{t,v}(\bm{x})\bigr)_{i-1} \, dv
\end{equation}
for any $i$ in $\llbracket 2,n\rrbracket$ and
\[\bigl(\tilde{\bm{m}}^{t,\bm{x}}_{t,t_0}(\bm{x}') -\bm{\theta}_{t,t_0}(\bm{x})\bigr)_1 \, = \, (\bm{x}'-\bm{x})_1.\]
Iterating the process, we  can find that
\[\bigl{\vert}\bigl(\tilde{\bm{m}}^{t,\bm{x}}_{t,t_0}(\bm{x}') -\bm{\theta}_{t,t_0}(\bm{x})\bigr)_i \bigr{\vert} \, \le \,
C\sum_{k=1}^{i}\bigl{\vert}(\bm{x}'-\bm{x})_k\bigr{\vert}(t_0-t)^{i-k}.\]
On the other side, the integral representation of $\bm{\theta}_{\tau,s}(\bm{\xi})$ (Equation \eqref{Flow}) allows us to write that
\begin{multline}\label{Proof:Controls_on_means_3}
\bigl(\bm{\theta}_{t,t_0}(\bm{x}) -\bm{\theta}_{t,t_0}(\bm{x}')\bigr)_i \\
= \, (\bm{x}-\bm{x}')_i+A_{i,i-1}\int_{t}^{t_0} \bigl{\{}\bigl(
\bm{\theta}_{t,t_0}(\bm{x}) -
\bm{\theta}_{t,t_0}(\bm{x}')\bigr)_{i-1} + \bm{F}_i(v,\bm{\theta}_{t,v}(\bm{x}))-\bm{F}_i(v,\bm{\theta}_{t,v}(\bm{x}'))\bigr{\}} \, dv
\end{multline}
for any $i$ in $\llbracket 2,n\rrbracket$ and
\begin{equation}\label{Proof:Controls_on_means_4}
\bigl(\bm{\theta}_{t,t_0}(\bm{x}) -\bm{\theta}_{t,t_0}(\bm{x}')\bigr)_1 \, = \, (\bm{x}-\bm{x}')_1 +\int_{t}^{t_0} \bigl{\{}
\bm{F}_1(v,\bm{\theta}_{t,v}(\bm{x}))-\bm{F}_1(v,\bm{\theta}_{t,v}(\bm{x}')) \bigr{\}} \,
dv.
\end{equation}
Fixed $i$ in $\llbracket 2,n\rrbracket$, it then follows from \eqref{Proof:Controls_on_means_1} and \eqref{Proof:Controls_on_means_3} that
\begin{multline*}
\bigl{\vert}\bigl( \tilde{\bm{m}}^{t,\bm{x}}_{t,t_0}(\bm{x}') -\tilde{\bm{m}}^{t,\bm{x}'}_{t,t_0} (\bm{x}')\bigr)_i\bigr{\vert} \, \le \,
C\Vert \bm{F} \Vert_H \Bigl(\sum_{k=1}^{i-1}\vert(\bm{x}'-\bm{x})_k\vert(t_0-t)^{i-k}\\
+\int_{t}^{t_0}\Bigl{\{}\bigl{\vert}\bigl(\bm{\theta}_{t,v}(\bm{x}) -\bm{\theta}_{t,v}(\bm{x}')\bigr)_{i-1}\bigr{\vert}+
\sum_{j=i}^{n}\bigl{\vert} \bigl(\bm{\theta}_{t,v}(\bm{x}) - \bm{\theta}_{t,v}(\bm{x}') \bigr)_{j}
\bigr{\vert}^{\frac{\gamma_i+\beta}{1+\alpha(j-1)}}\Bigr{\}} \, dv \Bigr).
\end{multline*}
Also, from \eqref{Proof:Controls_on_means_2} and \eqref{Proof:Controls_on_means_4}, it holds that
\[\bigl{\vert}\bigl( \tilde{\bm{m}}^{t,\bm{x}}_{t,t_0}(\bm{x}') -\tilde{\bm{m}}^{t,\bm{x}'}_{t,t_0} (\bm{x}')\bigr)_1\bigr{\vert} \, \le
\,C\Vert \bm{F} \Vert_H\int_{t}^{t_0}\sum_{j=1}^{n}\bigl{\vert}\bigl(\bm{\theta}_{t,v}(\bm{x}) - \bm{\theta}_{t,v}(\bm{x}') \bigr)_{j}
\bigr{\vert}^{\frac{\beta}{1+\alpha(j-1)}} \, dv.\]
Using now Lemma \ref{lemma:Controls_on_Flow1}, we can show that
\begin{multline*}
\bigl{\vert}\bigl( \tilde{\bm{m}}^{t,\bm{x}}_{t,t_0}(\bm{x}') -\tilde{\bm{m}}^{t,\bm{x}'}_{t,t_0} (\bm{x}')\bigr)_i\bigr{\vert}\, \le \,
C\Vert \bm{F} \Vert_H\Bigl(\sum_{k=1}^{i-1}\vert(\bm{x}'-\bm{x})_k\vert(t_0-t)^{i-k} +(t_0-t)^{\frac{1+\alpha(i-2)}{\alpha}+1}\\
+(t_0-t)d^{1+\alpha(i-2)}(\bm{x},\bm{x}')+(t_0-t)^{\frac{1+\alpha(i-2)+\beta}{\alpha}+1}+
(t_0-t)d^{1+\alpha(i-2)+\beta}(\bm{x},\bm{x}')\Bigr)
\end{multline*}
for any $i$ in $\llbracket 2,n\rrbracket$ and
\[\bigl{\vert}\bigl( \tilde{\bm{m}}^{t,\bm{x}}_{t,t_0}(\bm{x}') -\tilde{\bm{m}}^{t,\bm{x}'}_{t,t_0} (\bm{x}')\bigr)_1\bigr{\vert} \, \le
\,C\Vert \bm{F} \Vert_H(t_0-t)^{\frac{\beta+\alpha}{\alpha}}+(t_0-t)d^{\beta}(\bm{x},\bm{x}').\]
Since $t_0-t=c_0d^{\alpha}(\bm{x},\bm{x}')$ by Equation \eqref{eq:def_t0}, we can conclude that
\begin{multline*}
\bigl{\vert}\bigl( \tilde{\bm{m}}^{t,\bm{x}}_{t,t_0}(\bm{x}') -\tilde{\bm{m}}^{t,\bm{x}'}_{t,t_0} (\bm{x}')\bigr)_i\bigr{\vert}\, \le \,
C\Vert \bm{F} \Vert_H \Bigl(\sum_{k=1}^{i-1}d^{1+\alpha(k-1)}(\bm{x}',\bm{x})c^{i-k}_0d^{\alpha(i-k)}(\bm{x},\bm{x}')+
c^{\frac{1+\alpha(i-1)}{\alpha}}_0d^{1+\alpha(i-1)}(\bm{x},\bm{x}')\\ +
c_0d^{1+\alpha(i-1)}(\bm{x},\bm{x}')+c_0^{\frac{1+\alpha(i-2)+\beta}{\alpha}+1}d^{1+\alpha(i-1)+\beta}(\bm{x},\bm{x}')+c_0
d^{1+\alpha(i-1)+\beta}(\bm{x},\bm{x}')\Bigr) \\
\le \, C\Vert \bm{F} \Vert_H \Bigl[\bigl(c_0+c^{\frac{1+\alpha(i-1)}{\alpha}}_0\bigr)d^{1+\alpha(i-1)}(\bm{x},\bm{x}')
+\bigl(c_0+ c_0^{\frac{1+\alpha(i-1)+\beta}{\alpha}}\bigr)d^{1+\alpha(i-1)+\beta}(\bm{x},\bm{x}')\Bigr] \\
\le \, Cc_0\Vert \bm{F} \Vert_Hd^{1+\alpha(i-1)}(\bm{x},\bm{x}')
\end{multline*}
for any $i$ in $\llbracket 2,n\rrbracket$ and
\[\bigl{\vert}\bigl( \tilde{\bm{m}}^{t,\bm{x}}_{t,t_0}(\bm{x}') -\tilde{\bm{m}}^{t,\bm{x}'}_{t,t_0} (\bm{x}')\bigr)_1\bigr{\vert} \, \le
\,C\Vert \bm{F}\Vert_H\bigl(c^{\frac{\beta+\alpha}{\alpha}}_0+c_0\bigr)d^{\alpha+\beta}(\bm{x},\bm{x}')\, \le \,Cc_0\Vert
\bm{F} \Vert_Hd^{\alpha+\beta}(\bm{x},\bm{x}')\]
where in the last passage we used that $c_0\le 1$ and $d(\bm{x},\bm{x}')\le 1$.
After summing all the terms together at the right scale, we finally show that
\[d(\tilde{\bm{m}}^{t,\bm{x}}_{t,t_0}(\bm{x}'),\tilde{\bm{m}}^{t,\bm{x}'}_{t,t_0} (\bm{x}')) \, \le \, Cc_0^{\frac{1}{1+\alpha(n-1)}}
\Vert \bm{F}\Vert_Hd(\bm{x},\bm{x}')\]
thanks to convexity inequalities and $c_0\le 1$.

We conclude this section showing the reverse Taylor formula which was used in the proof of Proposition \ref{lemma:Holder_modulus_Non-Deg} in
the diagonal regime to handle the discontinuity term:

\begin{lemma}[Reverse Taylor Expansion]
\label{lemma:Reverse_Taylor_Expansion}
Let $\gamma$ be in $(1,2)$, $\phi$ a function in $C^\gamma_{b,d}(\R^{nd})$ and $\bm{x},\bm{x}'$ two points in $\R^{nd}$. Then, there
exists a constant $C:=C(\gamma)$ such that
\[\vert D_{\bm{x}_1}\phi(\bm{x})-D_{\bm{x}_1}\phi(\bm{x}')\vert \, \le \, C \Vert \phi \Vert_{C^\gamma_{b,d}}
d^{\gamma-1}(\bm{x},\bm{x}').\]
\end{lemma}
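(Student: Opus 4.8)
The plan is to reduce the anisotropic gradient estimate to one‑dimensional Taylor and H\"older bounds along the non‑degenerate direction, using an intermediate point to separate the first block from the degenerate ones, and a finite‑difference (incremental‑ratio) trick to treat the degenerate blocks, where no derivative of $\phi$ is available.

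First I would dispose of the regime $d(\bm{x},\bm{x}')\ge 1$: since $\phi\in C^\gamma_{b,d}(\R^{nd})$ with $\gamma>1$, each restriction $\Pi^1_z\phi$ lies in $C^\gamma(\R^d)$ with norm $\le\Vert\phi\Vert_{C^\gamma_{b,d}}$, so $\Vert D_{\bm{x}_1}\phi\Vert_{L^\infty}\le\Vert\phi\Vert_{C^\gamma_{b,d}}$ and the left‑hand side is $\le 2\Vert\phi\Vert_{C^\gamma_{b,d}}\le 2\Vert\phi\Vert_{C^\gamma_{b,d}}\,d^{\gamma-1}(\bm{x},\bm{x}')$ because $\gamma-1>0$. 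So I may assume $d(\bm{x},\bm{x}')\le 1$. Then I introduce $\bm{x}''=(\bm{x}'_1,\bm{x}_2,\dots,\bm{x}_n)$ and split
\[
|D_{\bm{x}_1}\phi(\bm{x})-D_{\bm{x}_1}\phi(\bm{x}')|\le |D_{\bm{x}_1}\phi(\bm{x})-D_{\bm{x}_1}\phi(\bm{x}'')|+|D_{\bm{x}_1}\phi(\bm{x}'')-D_{\bm{x}_1}\phi(\bm{x}')|.
\]
In the first term $\bm{x}$ and $\bm{x}''$ differ only in their first block, so $D_{\bm{x}_1}\phi(\bm{x})-D_{\bm{x}_1}\phi(\bm{x}'')=D\bigl(\Pi^1_z\phi\bigr)(\bm{x}_1)-D\bigl(\Pi^1_z\phi\bigr)(\bm{x}'_1)$ with $z=(\bm{x}_2,\dots,\bm{x}_n)$; since $\Pi^1_z\phi\in C^\gamma(\R^d)$ its gradient is $(\gamma-1)$‑H\"older with seminorm $\le\Vert\phi\Vert_{C^\gamma_{b,d}}$, whence $|D_{\bm{x}_1}\phi(\bm{x})-D_{\bm{x}_1}\phi(\bm{x}'')|\le\Vert\phi\Vert_{C^\gamma_{b,d}}|\bm{x}_1-\bm{x}'_1|^{\gamma-1}\le\Vert\phi\Vert_{C^\gamma_{b,d}}\,d^{\gamma-1}(\bm{x},\bm{x}')$, the last step because $|\bm{x}_1-\bm{x}'_1|$ is the $j=1$ contribution to $d$.

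The core is the second term, where $\bm{x}''$ and $\bm{x}'$ share the first block $\bm{x}'_1$ but differ in the degenerate ones. Here I would use the one‑dimensional Taylor expansion along the first variable, valid since $\Pi^1_z\phi\in C^\gamma$ with $\gamma\in(1,2)$: for any unit $e\in\R^d$ and $\tau>0$, $\phi(\bm{z}_1+\tau e,\bm{z}_2,\dots,\bm{z}_n)-\phi(\bm{z})-\tau D_{\bm{x}_1}\phi(\bm{z})\cdot e=O\bigl(\Vert\phi\Vert_{C^\gamma_{b,d}}\tau^\gamma\bigr)$, uniformly in $(\bm{z}_2,\dots,\bm{z}_n)$. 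Writing this at $\bm{z}=\bm{x}''$ and at $\bm{z}=\bm{x}'$ (both with first block $\bm{x}'_1$) and subtracting gives, with $\bm{x}_{2:n}:=(\bm{x}_2,\dots,\bm{x}_n)$,
\[
\tau\bigl(D_{\bm{x}_1}\phi(\bm{x}'')-D_{\bm{x}_1}\phi(\bm{x}')\bigr)\cdot e=\bigl[\phi(\bm{x}'_1+\tau e,\bm{x}_{2:n})-\phi(\bm{x}'_1+\tau e,\bm{x}'_{2:n})\bigr]-\bigl[\phi(\bm{x}'_1,\bm{x}_{2:n})-\phi(\bm{x}'_1,\bm{x}'_{2:n})\bigr]+O\bigl(\Vert\phi\Vert_{C^\gamma_{b,d}}\tau^\gamma\bigr).
\]
Each bracket is an increment of $\phi$ in the degenerate variables only; telescoping block by block over $i=2,\dots,n$ and using that $\Pi^i_z\phi$ is H\"older of order $\gamma/(1+\alpha(i-1))<1$ (the exponent being $<1$ in the parameter range of interest) with seminorm $\le\Vert\phi\Vert_{C^\gamma_{b,d}}$, each bracket is $\le C\Vert\phi\Vert_{C^\gamma_{b,d}}\sum_{i=2}^n|(\bm{x}-\bm{x}')_i|^{\gamma/(1+\alpha(i-1))}\le C\Vert\phi\Vert_{C^\gamma_{b,d}}\,d^\gamma(\bm{x},\bm{x}')$, the last inequality by superadditivity of $t\mapsto t^\gamma$ for $\gamma\ge1$. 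Dividing by $\tau$ yields $|\bigl(D_{\bm{x}_1}\phi(\bm{x}'')-D_{\bm{x}_1}\phi(\bm{x}')\bigr)\cdot e|\le C\Vert\phi\Vert_{C^\gamma_{b,d}}\bigl(\tau^{-1}d^\gamma(\bm{x},\bm{x}')+\tau^{\gamma-1}\bigr)$ for all $\tau>0$, and choosing $\tau=d(\bm{x},\bm{x}')$ optimizes the right‑hand side to $C\Vert\phi\Vert_{C^\gamma_{b,d}}\,d^{\gamma-1}(\bm{x},\bm{x}')$; taking the supremum over unit $e$ and combining with the first term concludes. I expect the finite‑difference step together with the optimization over $\tau$ to be the delicate point — it is exactly the device that trades the unavailable derivative control in the degenerate directions for the available plain‑H\"older control, losing precisely one power of $d$ — and one must check that the Taylor remainder is genuinely uniform in the frozen degenerate coordinates so that the subtraction above is legitimate, and that the intermediate point (rather than a direct comparison of $\bm{x}$ and $\bm{x}'$) is used, since only then does the finite difference live at a single first‑block value and produce the sharp exponent.
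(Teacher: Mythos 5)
Your proof is correct and is essentially the paper's own argument: both compare $D_{\bm{x}_1}\phi$ with a finite-difference quotient at scale $\tau=d(\bm{x},\bm{x}')$ along the non-degenerate direction, paying $\tau^{\gamma-1}$ through the $(\gamma-1)$-H\"older regularity of the first-variable gradient and $d^{\gamma}(\bm{x},\bm{x}')/\tau$ through the purely degenerate increments of $\phi$. The paper simply hard-wires the optimal choice $\tau=d(\bm{x},\bm{x}')$ into its decomposition $I_1+I_2+I_3$, using the exact fundamental theorem of calculus in place of your Taylor remainders, intermediate point $\bm{x}''$ and optimization over $\tau$ (and, like you, it implicitly uses that the degenerate exponents $\gamma/(1+\alpha(i-1))$ are below one), so your reduction to $d(\bm{x},\bm{x}')\le 1$ and the $\tau$-optimization are harmless repackagings of the same idea.
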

\begin{proof}
We start rewriting the left-hand side in the following way
\begin{multline*}
D_{\bm{x}_1}\phi(\bm{x})-D_{\bm{x}_1}\phi(\bm{x}') \\
= \, \Bigl(\int_{0}^{1} D_{\bm{x}_1}\phi(\bm{x})-D_{\bm{x}_1}\phi(\bm{x}_1+\lambda d(\bm{x},\bm{x}'),(\bm{x})_{2:n})
\, d\lambda\Bigr) - \Bigl(\int_{0}^{1} D_{\bm{x}_1}\phi(\bm{x}')-D_{\bm{x}_1}\phi(\bm{x}_1+\lambda
d(\bm{x},\bm{x}'),(\bm{x}')_{2:n}) \, d\lambda\Bigr) \\
- \Bigl(\int_{0}^{1} D_{\bm{x}_1}\phi(\bm{x}_1+\lambda d(\bm{x},\bm{x}'),(\bm{x}')_{2:n}) -
D_{\bm{x}_1}\phi(\bm{x}_1+\lambda d(\bm{x},\bm{x}'),(\bm{x})_{2:n}) \, d\lambda\Bigr) \, =: \, I_1+I_2+I_3.
\end{multline*}
The first two components can be treated directly using that $D_{\bm{x}_1}\phi$ is in $C^{\gamma-1}(\R^d)$ with respect to the first
non-degenerate variable. Indeed,
\begin{multline*}
\vert I_1\vert \, \le \, \int_{0}^{1} \vert D_{\bm{x}_1}\phi(\bm{x})-D_{\bm{x}_1}\phi(\bm{x}_1+\lambda d(\bm{x},\bm{x}'),(\bm{x})_{2:n})
\vert\, d\lambda \\
\le \, C\Vert \phi \Vert_{C^\gamma}\int_{0}^{1} \vert\lambda d(\bm{x},\bm{x}') \vert^{\gamma-1} \, d\lambda \, \le \, C\Vert \phi
\Vert_{C^\gamma}d^{
\gamma-1}(\bm{x},\bm{x}')
\end{multline*}
and
\begin{multline*}
\vert I_2\vert \, \le \, \int_{0}^{1} \vert D_{\bm{x}_1}\phi(\bm{x}')-D_{\bm{x}_1}\phi(\bm{x}_1+\lambda d(\bm{x},\bm{x}'),(\bm{x}')_{2:n})
\vert\, d\lambda \\
\le\,C\Vert\phi\Vert_{C^\gamma}\int_{0}^{1} \vert (\bm{x}'-\bm{x})_1 + \lambda d(\bm{x},\bm{x}')
\vert^{\gamma-1}\,d\lambda\,\le\,C\Vert\phi\Vert_{C^\gamma}d^{
\gamma-1}(\bm{x},\bm{x}')
\end{multline*}
where in the last expression we used Young inequality.\newline
To control the last term, we assume for the sake of brevity to be in the scalar case, i.e.\ $d=1$. In the general setting, the proof below
can be reproduced component-wise. The idea is to use a reverse Taylor expansion to pass from the derivative to the function itself. Namely,
\begin{multline*}
\vert I_3 \vert \, = \, \frac{1}{d(\bm{x},\bm{x}')}\Bigl{\vert}\int_{0}^{1}\bigl[\partial_{\lambda}\phi(\bm{x}_1+\lambda
d(\bm{x},\bm{x}'),(\bm{x}')_{2:n}) - \partial_\lambda\phi(\bm{x}_1+\lambda d(\bm{x},\bm{x}'),(\bm{x})_{2:n})\bigr] \, d\lambda\Bigr{\vert} \\
\le \, \frac{1}{d(\bm{x},\bm{x}')} \bigl{\vert} \phi(\bm{x}_1+ d(\bm{x},\bm{x}'),(\bm{x}')_{2:n}) - \phi(\bm{x}_1,(\bm{x}')_{2:n}) +
\phi(\bm{x}_1+ d(\bm{x},\bm{x}'),(\bm{x})_{2:n}) - \phi(\bm{x})\bigr{\vert} \\
\le \, C \Vert \phi \Vert_{C^\gamma}d^{\gamma-1}(\bm{x},\bm{x}').
\end{multline*}
\end{proof}

\bibliography{bibli}
\bibliographystyle{alpha}

\end{document}